\documentclass[reqno,10pt]{amsart}

\usepackage[a4paper,left=35mm,right=35mm,top=30mm,bottom=30mm,marginpar=25mm]{geometry}

\usepackage{amsmath}
\usepackage{amssymb}
\usepackage{amsthm}

\usepackage{graphicx}
\usepackage[colorlinks=true, pdfstartview=FitV, 
 linkcolor=blue, citecolor=blue, urlcolor=blue]{hyperref}
\usepackage[dvipsnames]{xcolor}
\usepackage[nocompress, space]{cite}

\usepackage{mathtools}
\mathtoolsset{showonlyrefs=false}

\usepackage{enumitem}

\allowdisplaybreaks

\newcommand{\supp}{\operatorname{supp}}
\newcommand{\esssup}{\operatorname*{esssup}}

\renewcommand{\div}{\operatorname{div}}

\newcommand{\RR}{{\mathbb{R}}}

\newcommand{\Nn}{{\mathbb{N}}}

\newcommand{\Tt}{{\mathbb{T}}}

\newcommand{\epsi}{\varepsilon}

\def\leq{\leqslant}
\def\geq{\geqslant}

\numberwithin{equation}{section}

\newtheoremstyle{thmlemcorr}{10pt}{10pt}{\itshape}{}{\bfseries}{.}{10pt}{{\thmname{#1}\thmnumber{
        #2}\thmnote{ (#3)}}}

\theoremstyle{thmlemcorr}
\newtheorem{theorem}     {Theorem}
\numberwithin{theorem}{section} 
\newtheorem{lemma}      [theorem]{Lemma}
\newtheorem{corollary}   [theorem]{Corollary}
\newtheorem{proposition}  [theorem]{Proposition}
\newtheorem{assumption}        {Assumption} 

\newtheoremstyle{defi}{10pt}{10pt}{\itshape}{}{\bfseries}{.}{10pt}{{\thmname{#1}\thmnumber{
        #2}\thmnote{ (#3)}}}

\theoremstyle{defi}
\newtheorem{definition}  [theorem]{Definition}
\newtheorem{problem}     {Problem}  

\newtheoremstyle{remexample}{10pt}{10pt}{}{}{\bfseries}{.}{10pt}{{\thmname{#1}\thmnumber{
        #2}\thmnote{ (#3)}}}

\theoremstyle{remexample}
\newtheorem{remark}     [theorem]{Remark}
\newtheorem{example}     [theorem]{Example}

\makeatletter
\newcommand{\myulabel}[2]{%
#1\protected@edef\@currentlabel{#1}\label{#2}%
}
\makeatother

\begin{document}
     
     \title[First-Order Time-Dependent MFGs with Local Couplings]{Existence and Structure for First-Order Time-Dependent Mean-Field Games with Local Couplings}
     
\author{Diogo Gomes}
\address[D. Gomes]{
     King Abdullah University of Science and Technology (KAUST),
     CEMSE Division, Thuwal 23955-6900, Saudi Arabia.}
\email{diogo.gomes@kaust.edu.sa}

\author{Melih \"U\c{c}er}
\address[M. \"U\c{c}er]{
 King Abdullah University of Science and Technology (KAUST),
     CEMSE Division, Thuwal 23955-6900, Saudi Arabia.
     }
\email{melih.ucer@kaust.edu.sa}
     
\keywords{Mean-field games; first-order MFGs; local couplings; initial/terminal conditions; Hamilton--Jacobi equation; transport equation; monotone operators; variational inequalities; weak solution; bounded variation functions.}

\subjclass[2020]
{35Q89, 
47H05, 
47J20, 
35F21, 
49N80, 
35D30. 
}

\thanks{The research reported in this publication was supported by funding from King Abdullah University of Science and Technology (KAUST)}

\date{\today}

\begin{abstract}
We develop a Banach-space framework for first-order time-dependent
mean-field games with local couplings, using monotone operator theory and
low-order \(p\)-Laplacian regularization to avoid high-order elliptic
smoothing. Under monotonicity and power-growth assumptions, together with
either a Lagrangian lower bound or strict positivity of the initial density,
we prove existence of weak variational-inequality solutions by Minty's
method. The constructed solutions satisfy uniform \(L^\beta\) estimates on
the density, \(L^\alpha\) estimates on the spatial gradient of the value
function, and space-time shift estimates sufficient to identify the limiting
PDE system. We prove that any variational-inequality solution satisfying
these bounds, regardless of how it is obtained, is a MFG solution satisfying
the Hamilton--Jacobi and transport equations in the \(BV\) sense. This
separates the construction of VI-solutions from the verification of the PDE
system, a feature not directly available in the existing stationary
Banach-space framework. Finally, for each fixed density \(m\), we establish
a maximal value function among Hamilton--Jacobi subsolutions; every MFG
value function coincides with this maximal representative on \(\{m>0\}\)
and initially on \(\{m_0>0\}\). Under semi-strict monotonicity, the density
\(m\) itself is unique. Our results apply to non-separable Hamiltonians with
power growth and impose no dimension restrictions.
\end{abstract}

\maketitle
 
\section{Introduction}\label{sec:intro}

Mean-field game (MFG) theory, introduced independently in~\cite{ll1,ll2} and~\cite{Caines2}, provides a mathematical framework for modeling strategic interactions among large populations of rational agents. The equilibrium of a mean-field game is characterized by a coupled system of nonlinear partial differential equations: a Hamilton--Jacobi equation for the agents' value function~$u$ and a transport (or Fokker--Planck) equation for the population density~$m$. The theory has found applications ranging from price formation in financial markets to crowd dynamics in urban planning; we refer to~\cite{lasryMeanFieldGames2007, LCDF} and the monograph~\cite{GPV} for a comprehensive account.

In this paper, we study the existence, uniqueness, and structural regularity properties of solutions to a first-order, time-dependent, MFG problem, which models the local interactions of agents over a finite-time horizon $[0,T]$, on the $d$-dimensional flat torus $\Tt^d$ ($d\in\Nn$) as the state space. More precisely, we consider the following problem.

\begin{problem}[Mean Field Game]\label{prob.mfg}
Let $Q := (0,T)\times\Tt^d$.
Consider a Hamiltonian function, $H\colon Q\times \RR^d \times \RR^+_0 \to \RR$, which is Lebesgue measurable and such that the map $(p,m) \mapsto H(t,x,p,m)$ and its $p$-gradient $D_pH(t,x,p,m)$ are continuous on $\RR^d \times \RR^+_0$ for a.e.~$(t,x)\in Q$.
Given an initial distribution $m_0\in C(\Tt^d;\RR^+_0)$ and a terminal cost $u_T\in C^1(\Tt^d;\RR)$,
we seek pairs \((m,u)\), with suitable weak regularity specified in
Definition~\ref{def.strong.sol}, solving the system
  \begin{equation}\label{mfg}
     \begin{aligned}
   \begin{cases}
        -u_t + H(t,x,Du,m) =0,\\
        m_t - \div(m D_pH(t,x, Du,m)) =0,\\
        m(0,x)=m_{0}(x),\enspace u(T,x)=u_T(x),
   \end{cases}
     \end{aligned}
  \end{equation}
\end{problem}

The analysis below establishes existence and studies the uniqueness and structural properties of such pairs. Here, $u$ is the value function of a representative agent, $m$ is the distribution of the population, and the Hamiltonian~$H$ encodes the running cost and the agents' dynamics. The first equation is a Hamilton--Jacobi equation governing the optimal strategy of each agent, while the second is a transport equation describing the evolution of the population distribution. The initial distribution $m_0$ and the terminal cost~$u_T$ are prescribed data.
Under structural assumptions on the Hamiltonian~$H$, stated in Section~\ref{sec:assume}, we prove existence of solutions to Problem~\ref{prob.mfg}. The key component of our analysis is to consider an associated variational inequality problem (Problem~\ref{prob.vi}), 
establish that it admits solutions, and prove that these solutions also solve Problem~\ref{prob.mfg} in the sense defined below. We then prove the uniqueness of density and establish a corresponding canonical maximal value function.

While the stationary counterpart of this system has been extensively studied (see, e.g.,~\cite{FG2, FGT1, ferreiraSolvingMeanFieldGames2025} and references therein), the time-dependent problem is substantially harder. The first-order (non-diffusive) setting, in which the individual agents' dynamics are deterministic, presents specific analytical challenges: the value function $u$ has limited a priori regularity, which forces us to seek solutions in the framework of BV functions rather than Sobolev spaces.

Research on first-order MFG systems has developed along two main threads. The first thread uses variational methods rooted in optimal transport: existence and uniqueness for first-order systems with separable Hamiltonians was proved in~\cite{cardaliaguetMeanFieldGames2014}; weak solutions with local coupling were treated in~\cite{Card1order}; and Sobolev regularity results were obtained in~\cite{graber2017sobolev}. Both classical and weak solutions were established through elliptic regularity techniques in~\cite{munozClassicalWeakSolutions2022}, while interior H\"{o}lder continuity for weak solutions was proved in~\cite{alharbiRegularityWeakSolutions2025} without requiring monotonicity or convexity. In the time-dependent first-order setting, the planning problem in one dimension was studied via a potential approach in~\cite{DRT2021Potential}, and the case with Neumann boundary conditions was addressed in~\cite{gomesTimeDependentFirstOrder2024}. We also note the extensive literature on classical solutions for parabolic (second-order) MFGs~\cite{gomesTimeDependentMeanFieldGames2015, gomesTimedependentMeanfieldGames2016a, Gomes2015b, GM, GPatVrt, PV15, CirGo20} and for weak solutions to second-order systems~\cite{porretta, porretta2, bocorsporr, cgbt, MR3691806, AMFS}.

The second thread, and the one most relevant to our work, exploits the monotonicity condition on the Hamiltonian---originally central to the uniqueness proofs in~\cite{lasryMeanFieldGames2007}---as a tool for proving existence. A monotone operator approach for stationary MFGs through variational inequalities was introduced in~\cite{FG2}. This was extended to Dirichlet boundary conditions in~\cite{FGT1} and to the time-dependent setting in~\cite{FeGoTa21}, where existence of weak solutions was established via high-order elliptic regularization and Minty's method. The role of various monotonicity conditions in this context was analyzed in~\cite{graberMonotonicityConditionsMean2023}. More recently, a Banach space framework for stationary MFGs was developed in~\cite{ferreiraSolvingMeanFieldGames2025}, introducing a low-order $p$-Laplacian regularization that avoids the high-order smoothing of earlier approaches and yields strong solutions for models with power growth and congestion. The monotone operator viewpoint has also inspired numerical methods~\cite{almullaTwoNumericalApproaches2017, GJS2, nurbekyanMonotoneInclusionMethods2024}. We remark that results for non-monotone MFGs are typically local in nature, relying on contraction arguments valid only over short time intervals~\cite{ciranttonon, Ambrose}.

Despite these advances, existing monotonicity-based proofs for time-dependent MFGs~\cite{FeGoTa21} rely on Hilbert space methods with high-order regularization, which can be technically cumbersome, yield only weak solutions in the monotone operator sense, and are not well suited for the design of modern numerical algorithms. The Banach space framework of~\cite{ferreiraSolvingMeanFieldGames2025}, which overcomes these limitations, has so far been developed only for the stationary case. Extending it to the time-dependent setting poses genuinely new challenges: the value function has a BV structure whose singular part must be carefully controlled, and the initial and terminal conditions must be incorporated into the variational inequality formulation. Moreover, the space-time setting requires new uniform a priori estimates that have no direct stationary counterpart.

We work with two concepts of solutions, which we respectively call MFG-solutions and Variational Inequality (VI) solutions. To avoid the terminological ambiguity present in prior literature---where solutions analogous to our MFG-solutions have been variously referred to as strong solutions in the variational sense \cite{FG2, FGT1, FeGoTa21, ferreiraSolvingMeanFieldGames2025} or weak solutions \cite{cardaliaguetMeanFieldGames2014}---we adopt this distinct nomenclature. The VI-solutions correspond to the weak solutions used in \cite{ferreiraSolvingMeanFieldGames2025}.
The notation \(u_t^{\text{ac}}\), \(u_t^{\text{s}}\), \(Du\), and the
temporal traces of \(BV\)-functions are recalled in
Section~\ref{sec:note}; in the definitions below, \(Du\) denotes the
absolutely continuous spatial derivative.
\begin{definition}[MFG-Solution]\label{def.strong.sol}
 A pair $(m,u) \in L^1(Q; \RR^+_0) \times BV(Q)$ is a \emph{MFG-solution} to Problem~\ref{prob.mfg} if
\begin{enumerate}
  \item We have
  \begin{equation}\label{basic-integrability}
     \begin{aligned}
       & Du,\enskip mD_pH(t,x,Du,m)\in L^1(Q; \RR^d), \\
       & m(u_t^{\text{ac}} - D_pH(t,x,Du,m)\cdot Du)\in L^1(Q);
     \end{aligned}
  \end{equation}\smallskip

  \item The Hamilton--Jacobi equation in~\eqref{mfg} holds in the sense that
  \begin{equation}\label{def:hjb-strong}
     \begin{alignedat}{2}
       & -u_t + H(t,x,Du,m) \leq 0 \qquad && \text{in } Q = (0,T)\times\Tt^d, \\
       & -u_t^{\text{ac}} + H(t,x,Du,m) = 0 \qquad && \text{a.e.~in the set } \{m>0\},
     \end{alignedat}
  \end{equation}
    where the inequality is understood in the sense of distributions. By the Lebesgue decomposition of the measure $du_t$, this distributional inequality is equivalent to the pair of conditions
 \[
 u_t^{\text{s}} \geq 0 \qquad\text{and}\qquad -u_t^{\text{ac}} + H(t,x,Du,m) \leq 0 \quad\text{a.e.~in } Q.
 \]
  \smallskip

  \item The transport equation in~\eqref{mfg} with the initial condition $m(0,x) = m_0(x)$ holds in the sense that
  \begin{equation}\label{def:transport-strong}
     \int_Q m\bigl(-\varphi_t + D_pH(t,x,Du,m)\cdot D\varphi\bigr) \,dxdt = \int_{\Tt^d} m_0(x)\varphi(0,x)\,dx
  \end{equation}
  for all  $\varphi\in C^1(\bar{Q})$ satisfying $\varphi(T,x)=0$;\smallskip

  \item The terminal condition holds as an inequality $u(T,x) \leq u_T(x)$ in the sense of trace;\smallskip

  \item Moreover, we have \begin{equation}\label{def:singular-and-terminal}
     \begin{aligned}
       &\int_Q m\bigl(u_t^{\text{ac}} + D_pH(t,x,Du,m)\cdot (Du_T-Du)\bigr) \,dxdt \\
       & \qquad\quad \geq \int_{\Tt^d} m_0(x)(u_T(x)-u(0,x))\,dx.
     \end{aligned}
  \end{equation}
\end{enumerate}
\end{definition}

\begin{remark}
    If we formally integrate the transport equation $m_t - \div(m D_p H) = 0$ against the test function $\varphi(t,x) = u_T(x) - u(t,x)$, we obtain boundary terms at both $t=0$ and $t=T$. This yields the identity
    \begin{equation*}
        \begin{aligned}
        & \int_Q m\,du_t + \int_Q m D_pH(t,x,Du,m)\cdot (Du_T-Du)\,dxdt \\
        & \qquad = \int_{\Tt^d} m_0(x)(u_T(x)-u(0,x))\,dx - \int_{\Tt^d} m(T,x)(u_T(x)-u(T,x))\,dx.
        \end{aligned}
    \end{equation*}
    Decomposing the time derivative as $du_t = u_t^{\text{ac}}\,dxdt + du_t^{\text{s}}$ and rearranging, this becomes
    \begin{equation*}
        \begin{aligned}
        & \int_Q m\bigl(u_t^{\text{ac}} + D_pH(t,x,Du,m)\cdot (Du_T-Du)\bigr) \,dxdt - \int_{\Tt^d} m_0(x)(u_T(x)-u(0,x))\,dx \\
        & \qquad = -\int_Q m\,du_t^{\text{s}} - \int_{\Tt^d} m(T,x)(u_T(x)-u(T,x))\,dx.
        \end{aligned}
    \end{equation*}
    Condition~\eqref{def:singular-and-terminal} dictates that the left-hand side of this identity is non-negative. Therefore, the right-hand side must also be non-negative, which implies
    \[
        \int_Q m\,du_t^{\text{s}} + \int_{\Tt^d} m(T,x)(u_T(x)-u(T,x))\,dx \leq 0.
    \]
    Since the inequalities given earlier establish that $u_t^{\text{s}} \geq 0$ and $u(T,x) \leq u_T(x)$, both terms on the left-hand side are non-negative and must identically vanish. 
Consequently, condition~\eqref{def:singular-and-terminal} encodes that $u_t^{\text{s}} = 0$ in the set $\{m>0\}$ and $u(T,x) = u_T(x)$ in the set $\{m(T,x) > 0\}$. We emphasize that this derivation is purely formal---since $u$ lacks the regularity to be used as a test function and the product $m\,du_t^{\text{s}}$ is not rigorously defined---but it serves to motivate the inclusion of condition~\eqref{def:singular-and-terminal}.
\end{remark}

Our analysis relies on the observation that solutions to Problem~\ref{prob.mfg} in the sense of Definition~\ref{def.strong.sol} also solve a variational inequality associated with the formal operator
\[
A\begin{bmatrix}\mu\\ \upsilon\end{bmatrix}
=
\begin{bmatrix}
\upsilon_t-H(t,x,D\upsilon,\mu)\\
\mu_t-\div\bigl(\mu D_pH(t,x,D\upsilon,\mu)\bigr)
\end{bmatrix}.
\]
With the appropriate initial and terminal conditions, this operator is
monotone under Assumption~\ref{onH.monotone}. This observation is proved in
Lemma~\ref{strong.is.weak}. It motivates the following problem and the
corresponding notion of solution.

\begin{problem}[Variational Inequality]\label{prob.vi}
 Consider the setting of Problem~\ref{prob.mfg}. Find pairs \((m,u)\)
 satisfying the monotonicity-based variational inequality associated with
 \eqref{mfg}, in the precise sense of Definition~\ref{def.weak.sol}.
\end{problem}

\begin{definition}[VI-Solution]\label{def.weak.sol}
A pair $(m,u) \in L^1(Q; \RR^+_0) \times BV(Q)$ is a \emph{VI-solution} to Problem~\ref{prob.vi} if $Du\in L^1(Q;\RR^d)$ and
\begin{equation}
\label{weak-sol}
  \begin{aligned}
     & 0\leq \int_Q \bigl((\mu-m)(\upsilon_t-H(t,x,D\upsilon,\mu)) + \mu D_pH(t,x,D\upsilon,\mu)\cdot (D\upsilon-Du)\bigr) \,dxdt \\
     & \qquad + \int_Q \mu du_t- \int_Q \mu\upsilon_t \,dxdt \\
     & \qquad - \int_{\Tt^d} m_0(\upsilon(0,x)-u(0,x))\,dx + \int_{\Tt^d} \mu(T,x)(u_T(x)-u(T,x))\,dx,
  \end{aligned}
  \end{equation}
for every $(\mu, \upsilon) \in C(\bar{Q}; \RR^+_0)\times C^1(\bar{Q})$ satisfying $\mu(0,x)=m_0(x)$ and $\upsilon(T,x) = u_T(x)$, where all terms in \eqref{weak-sol} are required to be well-defined.
\end{definition}

\begin{remark}
The first space-time integral in \eqref{weak-sol} is understood as an
extended Lebesgue integral, required to be well-defined with value in
\((-\infty,+\infty]\). All remaining terms are finite and are understood in
the usual sense.
\end{remark}


Our main results are given in the following three theorems and a corollary. Theorem~\ref{weak.exists} establishes the existence of VI-solutions satisfying further integrability conditions. Theorem~\ref{weak.is.strong} shows that any such VI-solution is an MFG-solution; this equivalence is independent of the method used to construct the VI-solution, and in particular does not rely on the specific regularization scheme of this paper. Theorem~\ref{maximal-u-exists} establishes, for each density $m$ for which solutions exist, the existence of a unique maximal value function $u^*$ dominating all Hamilton--Jacobi subsolutions, and proves that all MFG value functions for that density agree with $u^*$ on $\{m>0\}$ and initially on $\{m_0>0\}$; moreover, under a semi-strict monotonicity condition, the density $m$ itself is uniquely determined. As a direct corollary, there exists a unique special MFG solution in the sense made precise by Corollary~\ref{strong.exists}.

\begin{theorem}[Existence of VI-Solutions]\label{weak.exists}
Consider the setting of Problem~\ref{prob.mfg}. Suppose that the Hamiltonian~$H$ satisfies the monotonicity condition (Assumption~\ref{onH.monotone}) and the basic power growth (Assumption~\ref{onH.powergrowth}).  Furthermore, assume that either the Lagrangian lower bound condition (Assumption~\ref{onH.Lagrangianlowerbound}) or the strict positivity of the initial density (Assumption~\ref{onm0.lowerbound}) holds.

Then, there exists a solution $(m,u)$ to Problem~\ref{prob.vi} in the sense of Definition~\ref{def.weak.sol}, which satisfies the following integrability bounds:
  \begin{align}
     & m\in L^{\beta}(Q; \RR^+_0), \label{integrability.further.m} \\
     & Du\in L^{\alpha}(Q;\RR^d), \label{integrability.further.du} \\
     & (u_t)^{-}\in L^{\beta'}(Q), \label{integrability.further.ut} \\
     & \sup_{(s,y)\in \bar{Q}} \left(\int_s^T\int_{\Tt^d} m(t-s,x-y)|Du(t,x)|^\alpha\,dxdt\right) < \infty. \label{integrability.further.mdu}
  \end{align}

Moreover, if the condition of no density dependence in high-order momentum terms (Assumption~\ref{onH.high-order-p.has-no-m}) also holds, this solution additionally satisfies:
\begin{equation}\label{integrability.further.H}
     (-u_t+H(t,x,Du,0))^+ \in L^{\beta'}(Q).
  \end{equation}
\end{theorem}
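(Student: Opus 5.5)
The construction goes through a regularized monotone problem, uniform estimates, and a passage to the limit by Minty's method. For $\epsilon>0$ I will consider the operator
\[
A_\epsilon\begin{bmatrix}\mu\\ \upsilon\end{bmatrix}
=A\begin{bmatrix}\mu\\ \upsilon\end{bmatrix}
+\epsilon\begin{bmatrix}|\mu|^{\beta-2}\mu-\Delta_{\beta}\mu\\ |\upsilon|^{\alpha-2}\upsilon-\Delta_{\alpha}\upsilon\end{bmatrix},
\]
where $\Delta_q$ denotes the space-time $q$-Laplacian. It acts on the affine Banach spaces
\[
\mathcal M=\{\mu\in W^{1,\beta}(Q):\mu(0)=m_0\},\qquad
\mathcal U=\{\upsilon\in W^{1,\alpha}(Q):\upsilon(T)=u_T\}.
\]
To keep the domain convex I extend $H$ to negative $\mu$ by the value at $0$, as in \cite{ferreiraSolvingMeanFieldGames2025}. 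The exponents $\alpha,\beta$ are those of Assumption~\ref{onH.powergrowth}. The initial and terminal data are built in, so integration by parts in time produces exactly the boundary terms appearing in \eqref{weak-sol}. By Assumption~\ref{onH.monotone}, $A_\epsilon$ is monotone. The $p$-Laplacian terms make it coercive and hemicontinuous on the reflexive space $W^{1,\beta}\times W^{1,\alpha}$. Browder--Minty then gives $(m_\epsilon,u_\epsilon)$ solving the regularized variational inequality against all admissible test pairs. Testing with $\mu=m_\epsilon^+$ should show $m_\epsilon\geq 0$.

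\textbf{Uniform estimates.} I will test the regularized inequality with the reference pair $(\mu,\upsilon)=(\bar m,u_T)$, where $\bar m$ is a smooth interpolation starting at $m_0$. The power growth of $H$, together with Young's inequality, should give
\[
\int_Q m_\epsilon^\beta+\int_Q m_\epsilon|Du_\epsilon|^\alpha+\int_Q|Du_\epsilon|^\alpha\leq C
\]
uniformly in $\epsilon$. The bound $\int_Q|Du_\epsilon|^\alpha\leq C$ is not automatic where $m_\epsilon$ is small, and this is where the alternative hypotheses enter. Under Assumption~\ref{onH.Lagrangianlowerbound}, the Lagrangian term $m(D_pH\cdot p-H)$ controls $|p|^\alpha$ with an $m$-independent constant. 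Under Assumption~\ref{onm0.lowerbound}, $m_0\geq c>0$, and I would test with the translates $\mu=m_0(\cdot)$ propagated in time. For the negative part $(u_t)^-$, the regularized HJ equation reads $u_t=H-\epsilon(\cdots)$. Since $H\geq -C(1+m^{\beta-1})$ from below by power growth, $(u_{\epsilon,t})^-\lesssim 1+m_\epsilon^{\beta-1}\in L^{\beta'}$, with the $\epsilon$-term controlled by the regularization bounds. The shift estimate \eqref{integrability.further.mdu} is the delicate one, and I expect it to be the main obstacle. The plan is to test with the shifted pair $\mu(t,x)=m_0$-compatible modifications of $m_\epsilon(t-s,x-y)$, or equivalently to compare the solution with its own space-time translate using monotonicity. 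This should yield
\[
\int_s^T\!\!\int m_\epsilon(t-s,x-y)|Du_\epsilon|^\alpha\leq C
\]
uniformly in $(s,y)$. Gluing the shifted density to $m_0$ near $t=0$ requires care, for example linear interpolation on $[0,s]$. I will treat this step first, since the estimate is uniform in $\epsilon$ only if the boundary error from the gluing is controlled by the already-established $L^\beta$ bound.

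\textbf{Passage to the limit.} With these bounds, $m_\epsilon\rightharpoonup m$ in $L^\beta$ and $u_\epsilon$ is bounded in $BV$. The BV bound holds because $u_t=(u_t)^+-(u_t)^-$, the total mass of $u_t$ is fixed by the boundary data, and $(u_t)^-$ is bounded in $L^1$. So $u_\epsilon\to u$ strongly in $L^1$, $Du_\epsilon\rightharpoonup Du$ in $L^\alpha$, and $u_{\epsilon,t}\rightharpoonup^* du_t$ as measures. Using monotonicity, I rewrite the regularized inequality with the operator evaluated at the test pair $(\mu,\upsilon)$, which is the Minty form. All terms in the test functions are fixed, and the $\epsilon$-terms vanish by the estimates. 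Pairing $\int\mu\,u_{\epsilon,t}$ against continuous $\mu$ passes to $\int\mu\,du_t$. The traces $u_\epsilon(0)$ and $u_\epsilon(T)$ are handled through the BV trace, giving lower semicontinuity of $\int\mu(T)(u_T-u(T))$ in the right direction. This yields \eqref{weak-sol}. The bounds \eqref{integrability.further.m}--\eqref{integrability.further.mdu} follow by weak lower semicontinuity, using convexity of $|p|^\alpha$ and Fatou along weakly converging $m_\epsilon$ for the shift estimate. For \eqref{integrability.further.H}, Assumption~\ref{onH.high-order-p.has-no-m} lets me write
\[
-u_t+H(Du,0)=\bigl(-u_t+H(Du,m)\bigr)+\bigl(H(Du,0)-H(Du,m)\bigr),
\]
where the last difference is bounded by $C(1+m^{\beta-1}+|Du|^{\gamma})$ with lower-order $\gamma$. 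The positive part is then in $L^{\beta'}$ by the preceding bounds, passed to the limit by lower semicontinuity of convex functionals.
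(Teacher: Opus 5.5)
The proposal has genuine gaps, and the first is in the functional setting. With $\upsilon\in W^{1,\alpha}(Q)$ and $\mu\in W^{1,\beta}(Q)$, your operator does not map into the dual space.

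\begin{itemize}
\item $H(t,x,D\upsilon,\mu)$ grows like $|D\upsilon|^\alpha$ and is only in $L^1(Q)$, so $\int_Q H(t,x,D\upsilon,\mu)\,\eta\,dxdt$ is undefined for general $\eta\in W^{1,\beta}(Q)$.
\item In $\int_Q\mu\,D_pH(t,x,D\upsilon,\mu)\cdot D\nu\,dxdt$ we have $|D_pH|\sim|D\upsilon|^{\alpha-1}\in L^{\alpha'}$ and $D\nu\in L^\alpha$, which forces $\mu\in L^\infty$.
\end{itemize}
Both require $W^{1,\beta}(Q)\subset L^\infty$, i.e.\ $\beta>d+1$. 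That is exactly the dimensional restriction the theorem avoids.

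The paper does it differently:
\begin{itemize}
\item It regularizes only the value function, by a space-time $\gamma$-Laplacian with $\gamma=\alpha\beta'$. Then $|D\upsilon|^\alpha\in L^{\beta'}$ pairs with $\mu\in L^\beta$, and $\frac{1}{\beta}+\frac{\alpha-1}{\gamma}+\frac{1}{\gamma}=1$.
\item The density stays in $L^\beta(Q)$ under the convex constraint $\mu\geq0$.
\item $m_0$ enters only through the boundary term $-\int_{\Tt^d}m_0\nu(0,x)\,dx$, so no trace of $\mu$ is needed. This matters because $m_0\in C(\Tt^d)$ need not be a $W^{1,\beta}$-trace.
\end{itemize}
No extension to $\mu<0$ is needed. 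In fact, $H(t,x,p,\mu^+)$ would in general violate \eqref{hmon} at $m_1=m_2<0$.

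More importantly, because $\mu$ is not regularized, variations in $\mu$ give the exact complementarity: $-(u_\epsi)_t+H(t,x,Du_\epsi,m_\epsi)\leq0$ a.e., with equality on $\{m_\epsi>0\}$. Your $\epsi$-regularization in the $\mu$-component destroys this. The term $\epsi\Delta_\beta m_\epsi$ is small only in $W^{-1,\beta'}$, so it gives no uniform $L^1$ (let alone $L^{\beta'}$) control of $((u_\epsi)_t)^-$. Every later step uses that pointwise inequality: the BV compactness, the bound on $u_\epsi^+(0,\cdot)$, the bound on $|Du_\epsi|^\alpha$, the shifted inequality, and \eqref{integrability.further.H}.

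The estimates also misplace the alternative hypotheses. The total mass $\int_Q(u_\epsi)_t\,dxdt=\int_{\Tt^d}(u_T-u_\epsi(0,x))\,dx$ is not fixed by the data, because the initial value is free. Both the BV bound and the bound on $\int_Q|Du_\epsi|^\alpha$ (via $|Du_\epsi|^\alpha\leq C(m_\epsi^{\beta-1}+V+((u_\epsi)_t)^+)$) need a lower bound on $u_\epsi(0,\cdot)$. That is precisely where Assumption~\ref{onH.Lagrangianlowerbound} or \ref{onm0.lowerbound} enters:
\begin{itemize}
\item Under the former, by a comparison argument: test the transport equation with $\varphi^-$ for $\varphi=u_\epsi+c(T-t)+\|u_T\|_{L^\infty}$, which gives $u_\epsi\geq-cT-\|u_T\|_{L^\infty}$.
\item Under the latter, by combining $\int_{\Tt^d}m_0u_\epsi^-(0,x)\,dx\leq C$ with $\inf m_0>0$.
\end{itemize}
Your mechanism for Assumption~\ref{onH.Lagrangianlowerbound} cannot work. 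The term $m(D_pH\cdot p-H)$ degenerates where $m$ is small, and \eqref{dphdotp-minus-h.lower.bound} holds without that assumption anyway. The energy estimate itself closes by integrating the exact HJ inequality in time at fixed $x$. This gives $\int_{\Tt^d}u_\epsi^+(0,x)^{\beta'}\,dx\leq C\int_Q m_\epsi^\beta+C$, which absorbs the term $\int_{\Tt^d}m_0u_\epsi(0,x)\,dx$ in the energy identity.

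Finally, \eqref{integrability.further.mdu} does not follow from weak lower semicontinuity and Fatou. Both $m_\epsi$ and $Du_\epsi$ converge only weakly, and $(m,p)\mapsto m|p|^\alpha$ is not jointly convex. The paper instead proves the cross estimate $\int_0^{T-s}\int_{\Tt^d}m_{\epsi'}|Du_\epsi^{[s,y]}|^\alpha\,dxdt\leq C$ for $\epsi'\leq\epsi$:
\begin{itemize}
\item It tests the transport equation of $(m_{\epsi'},u_{\epsi'})$ with $u_\epsi^{[s,y]}-u_T^{[y]}$, together with the shifted HJ inequality multiplied by $m_{\epsi'}$. It shifts $u$, extended by $u_T^{[y]}$ after $T-s$, so no gluing to $m_0$ is needed.
\item It then lets $\epsi'\to0$ first, which is linear in $m$.
\item Afterwards it lets $\epsi\to0$ with the truncated weight $\min(m,n)$.
\end{itemize}
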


\begin{theorem}[Equivalence of VI-Solutions and MFG-Solutions]\label{weak.is.strong}
  Consider the setting of Problem~\ref{prob.mfg}. Suppose that the Hamiltonian~$H$ satisfies the monotonicity condition (Assumption~\ref{onH.monotone}) and the basic power growth (Assumption~\ref{onH.powergrowth}). Furthermore, assume that the condition of no time/space/density dependence in the highest-order momentum term (Assumption~\ref{onH.highest-order-p.has-no-txm}) holds.
  
  Let $(m,u)$ be a solution to Problem~\ref{prob.vi} in the sense of Definition~\ref{def.weak.sol} which satisfies~\eqref{integrability.further.m}, \eqref{integrability.further.du}, \eqref{integrability.further.ut}, \eqref{integrability.further.mdu}, and~\eqref{integrability.further.H}. Then $(m,u)$ is a solution to Problem~\ref{prob.mfg} in the sense of Definition~\ref{def.strong.sol}.
\end{theorem}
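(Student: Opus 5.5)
The plan is Minty's method: plug well-chosen test pairs $(\mu,\upsilon)$ into \eqref{weak-sol}, use the integrability bounds \eqref{integrability.further.m}--\eqref{integrability.further.H} to pass to limits, and read off each item of Definition~\ref{def.strong.sol} in turn. A preliminary step shows that \eqref{weak-sol} extends, by density, from $C(\bar Q)\times C^1(\bar Q)$ to less regular test pairs. These are $\mu\in L^\beta$ with $\mu\ge 0$, together with perturbations of $m$, and $\upsilon$ in a Sobolev/BV class with $D\upsilon\in L^\alpha$. For the extension I would mollify in space-time and control the nonlinear terms through power growth (Assumption~\ref{onH.powergrowth}). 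Integral convergence of $\mu D_pH(t,x,D\upsilon,\mu)\cdot Du$ is where \eqref{integrability.further.mdu} enters: under mollification of $\mu$ one gets integrals of $|Du|^\alpha$ against shifted copies of $m$. Assumption~\ref{onH.highest-order-p.has-no-txm} lets the leading $|p|^\alpha$-term of $H$ be handled as a convex function of $Du$ alone. Jensen-type inequalities then survive mollification, with the error terms involving $m$ and $t,x$ of lower order.

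\textbf{Transport equation and HJ equation via Minty's trick.}
\begin{enumerate}
\item Take $\mu=m$ (after the extension step) and $\upsilon=u+\epsilon\varphi$, where $\varphi\in C^1(\bar Q)$ and $\varphi(T,\cdot)=0$; use $\pm\epsilon$. The terms $\int m\,du_t-\int m\upsilon_t$ leave $-\epsilon\int m\varphi_t$.
\item Divide by $\epsilon$ and let $\epsilon\to0$, using continuity of $D_pH$ and dominated convergence justified by \eqref{basic-integrability}-type bounds derived from \eqref{integrability.further.m}--\eqref{integrability.further.mdu}. This yields \eqref{def:transport-strong}; the initial trace term supplies $\int m_0\varphi(0,\cdot)$.
\item Take $\upsilon=u$ (regularized) and $\mu=m+\epsilon\psi$ with $\psi\ge0$ (or $\psi$ supported where $m>\delta$ when signed). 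This gives $\int\psi(u_t-H(Du,m))\ge0$ as a measure inequality, i.e.\ $-u_t+H\le0$ in $\mathcal D'$.
\item Signed $\psi$ on $\{m>0\}$ gives equality for the absolutely continuous part there. The singular part is killed because \eqref{integrability.further.ut} forces $u_t^{\text s}\ge0$ while $m$ is only $L^\beta$.
\end{enumerate}
Choosing $\mu$ concentrating near $t=T$ gives $u(T)\le u_T$ from the term $\int\mu(T)(u_T-u(T))$. The integrability in \eqref{basic-integrability}, notably $m(u_t^{\text{ac}}-D_pH\cdot Du)\in L^1$, follows from \eqref{integrability.further.H} combined with the HJ equality on $\{m>0\}$ and power growth.

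\textbf{Condition \eqref{def:singular-and-terminal}.} Test with $\mu=m$ and $\upsilon=u_T$, the latter extended constantly in time. The first integral reduces to $\int m D_pH(Du_T,m)\cdot(Du_T-Du)$ plus $\int m\,du_t$ and the boundary terms. Monotonicity along the segment $Du_T\to Du$ and convexity in $p$, which I expect from Assumption~\ref{onH.monotone}, convert this into \eqref{def:singular-and-terminal}. An alternative is a convex combination $\upsilon=u+\theta(u_T-u)$ with $\theta\to0$ after regularization. The term $\int m\,du_t$ must be read as $\int m u_t^{\text{ac}}$ plus a nonnegative singular contribution, which has the correct sign to drop.

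\textbf{Main obstacle.} The hardest step is the density argument: inserting $\mu=m$, which is only $L^\beta$ and has no time trace, together with $\upsilon\approx u$ in BV, into \eqref{weak-sol}. The difficulties are making sense of $\int \mu\,du_t$ and of the trace terms, and passing to the limit in $\int\mu D_pH\cdot(D\upsilon-Du)$. I would first mollify $m$ by space-time shifts, which \eqref{integrability.further.mdu} is tailored to. I would pair this with one-sided-in-time mollification of $u$, backward near $T$ and forward near $0$, so that the traces converge. Lower semicontinuity would control the singular part, using $u_t^{\text s}\ge0$.
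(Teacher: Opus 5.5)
\textbf{Main gap: the singular and terminal terms.} Your plan has the same shape as the paper's: enlarge the test class, linearize in $\upsilon$ for the transport equation, and vary $\mu$ for the Hamilton--Jacobi relation. However, the step you yourself call the main obstacle is not resolved, and the mechanisms you propose for it point the wrong way.

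In \eqref{weak-sol}, the $\upsilon_t$-terms combine to $-\int_Q m\,\upsilon_t$, while the test density $\mu$ pairs with $du_t$. The terms $\int_Q\mu\,du_t^{\text{s}}$ and $\int_{\Tt^d}\mu(T,x)(u_T(x)-u(T,x))\,dx$ are nonnegative and sit on the side of the inequality that must be bounded \emph{above}. You therefore cannot drop them as having ``the correct sign'', and lower semicontinuity only bounds them from below. Knowing $u_t^{\text{s}}\ge 0$ does not help. You are right that this follows directly from \eqref{integrability.further.ut}, since the negative part of $u_t$ is absolutely continuous.

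If you push $\mu$ toward $m$ by mollifying $m$, the continuous approximants are generally positive on the jump set of $u$. Nothing forces $\int\mu_n\,du_t^{\text{s}}\to0$, and $m$ has no trace at $t=T$. So the formal cancellation between $\int m\,du_t$ and $-\int m\,d\upsilon_t$ for $\upsilon=u\pm\epsilon\varphi$ is not available. The same problem hits the equality on $\{m>0\}$: ``$\psi$ supported where $m>\delta$'' has no meaning for a Lebesgue-singular measure. Without this, neither \eqref{def:transport-strong} nor \eqref{def:singular-and-terminal} follows.

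\textbf{The missing idea (Lemma~\ref{lem:mu-improved}).} The reduced inequality only sees $\mu$ Lebesgue-a.e., so you may choose the representative of $\mu$ freely on a Lebesgue-null set. Take continuous $\mu_n$ with $\mu_n(0,\cdot)=m_0$ that converge simultaneously Lebesgue-a.e.\ in $Q$, a.e.\ on $\{T\}\times\Tt^d$, and $|u_t|$-a.e.\ (Lemma~\ref{lem:dense.restr-cont}). The limit is a bounded Borel function that you prescribe arbitrarily on the null carrier $S$ of $u_t^{\text{s}}$ and on the terminal slice. Prescribing zero there deletes both terms exactly. Prescribing large values gives $u_t^{\text{s}}\ge0$ and $u(T,\cdot)\le u_T$.

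\textbf{Order of the extensions.} Only after this step should $\upsilon$ be extended to $BV$, by forward-in-time mollification. There the singular part of $\upsilon_t$ enters only through $-\int m\,d\upsilon_t$ and can be discarded with the favorable sign, provided $\upsilon_t^{\text{s}}\ge0$ and $\upsilon(T,\cdot)\le u_T$. Finally, $\mu$ is extended to $L^\beta$ with $\esssup(\mu-m)<\infty$ by truncation.

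\textbf{Your primary route to \eqref{def:singular-and-terminal} also fails on its own.} Testing with $\upsilon=u_T$ produces $m\,D_pH(t,x,Du_T,m)\cdot(Du_T-Du)$. Convexity gives
\[
D_pH(t,x,Du_T,m)\cdot(Du_T-Du)\ \ge\ D_pH(t,x,Du,m)\cdot(Du_T-Du),
\]
which is the wrong direction for replacing the former by the latter on the larger side of the inequality. Your fallback works, once the singular and terminal terms have been removed as above: test with the convex combination $(1-\theta)u+\theta u_T$, divide by $\theta$, and let $\theta\to0^+$. This is Lemma~\ref{lem:transport-tested-upsilon.minus.u} applied with $\upsilon=u_T$.

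\textbf{Where \eqref{integrability.further.mdu} is used.} Its essential role is not the term $\mu D_pH\cdot Du$. It enters in the $\upsilon$-mollification step, together with Assumption~\ref{onH.highest-order-p.has-no-txm}. There it yields a uniform $L^q(Q,m)$ bound, with $q>1$, on the commutator error in $\int_Q m\,H(t,x,D\upsilon_n,\mu)$. That bound gives the uniform integrability needed to pass to the limit.
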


We next record structural properties of MFG-solutions, including regularity, stability, comparison, maximality, and uniqueness. To state these results precisely, we work under the assumptions of Theorem~\ref{weak.is.strong} and introduce two classes of value functions associated with a fixed density \(m\).
\begin{definition}\label{def:u-classes-at-given-m}
Given \(m\in L^\beta(Q;\RR^+_0)\), define the following two classes.
\begin{enumerate}
\item[\myulabel{$\mathbf{(\mathcal{U})}$}{def:Um}]
\(\mathcal{U}(m)\), the class of \emph{MFG value functions associated with \(m\)}, denotes the set of \(u\in BV(Q)\) which satisfies
    \eqref{integrability.further.du}, \eqref{integrability.further.ut}, \eqref{integrability.further.mdu}, \eqref{integrability.further.H}, and the following two equivalent conditions (see Theorem~\ref{weak.is.strong} and Lemma~\ref{strong.is.weak}):
    \begin{itemize}[label=$\bullet$]
        \item $(m,u)$ is a solution to Problem~\ref{prob.mfg} in the sense of Definition~\ref{def.strong.sol}.
        \item $(m,u)$ is a solution to Problem~\ref{prob.vi} in the sense of Definition~\ref{def.weak.sol}.
    \end{itemize}
    
\item[\myulabel{$\mathbf{(\mathcal{S})}$}{def:Um-tilde}]
\(\mathcal{S}(m)\), the class of \emph{admissible MFG subsolutions associated with \(m\)}, denotes the set of \(u\in BV(Q)\) which satisfies
    \eqref{integrability.further.du}, \eqref{integrability.further.ut}, \eqref{integrability.further.mdu}, \eqref{integrability.further.H}, and the following two conditions:
    \begin{itemize}[label=$\bullet$]
        \item the inequality part of~\eqref{def:hjb-strong}, namely \( -u_t + H(t,x,Du,m) \leq 0\),
        \item the terminal inequality \(u(T,x)\leq u_T(x)\).
    \end{itemize}
\end{enumerate}
\end{definition}

From the definition, the inclusion $\mathcal{U}(m)\subset\mathcal{S}(m)$ is clear. Moreover, we can read the conclusion of Theorem~\ref{weak.exists} as follows: there exists $m\in L^\beta(Q)$ such that $\mathcal{U}(m)$ is non-empty.

\begin{theorem}[Maximal MFG-Solutions]\label{maximal-u-exists}
    Consider the setting of Problem~\ref{prob.mfg}. Suppose that the Hamiltonian~$H$ satisfies the monotonicity condition (Assumption~\ref{onH.monotone}) and the basic power growth (Assumption~\ref{onH.powergrowth}). Furthermore, assume that the condition of no time/space/density dependence in the highest-order momentum term (Assumption~\ref{onH.highest-order-p.has-no-txm}) holds. Recall the notation introduced in Definition~\ref{def:u-classes-at-given-m}.
    \begin{enumerate}[label=(\Roman*)]
        \item\label{thmitem:maximal.u}
        Let $m\in L^\beta(Q;\RR^+_0)$ be such that $\mathcal{U}(m)$ is non-empty.
        
        If the conditions of no density dependence in high-order momentum terms (Assumption~\ref{onH.high-order-p.has-no-m}) and no time/space dependence in high-order momentum terms (Assumption~\ref{onH.remedy-bound}) also hold, there exists (a necessarily unique) $u^*\in \mathcal{U}(m)$ such that
    \begin{itemize}
        \item $u\leq u^*$ a.e.~for all $u\in \mathcal{S}(m)$,
        \item $\mathcal{U}(m)$ consists exactly of those $u\in \mathcal{S}(m)$ which satisfies $u=u^*$ a.e.~in the set $\{m>0\}$ and $u(0,\cdot) = u^*(0,\cdot)$ a.e.~in the set $\{m_0 > 0\}$.
        
    \end{itemize}

    \item\label{thmitem:unique.m} If the conditions of semi-strict monotonicity (Assumption~\ref{onH.strict-monotone}) and no time/space dependence or inter-dependence in high order momentum/density terms (Assumption~\ref{onH.high-order-pm.has-no-tx}) also hold, there exists at most one $m\in L^\beta(Q;\RR^+_0)$ such that $\mathcal{U}(m)$ is non-empty.
        
    \end{enumerate} 
\end{theorem}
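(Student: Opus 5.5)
For part~\ref{thmitem:maximal.u}, the plan is to build $u^*$ as the pointwise supremum of the subsolution class. First I would show that $\mathcal{S}(m)$ is closed under pairwise maxima. If $u_1,u_2\in\mathcal{S}(m)$, then $w=\max(u_1,u_2)$ is in $BV(Q)$, $Dw$ equals $Du_i$ a.e.\ on $\{w=u_i\}$, and the distributional inequality $-w_t+H(t,x,Dw,m)\le 0$ holds. This is the standard fact that a maximum of subsolutions of a convex-in-$p$ HJ inequality is a subsolution; here it is proved by regularizing $\max$ with convex approximations and using the BV chain rule, which keeps the singular part $w_t^{\text{s}}\ge 0$. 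The integrability bounds \eqref{integrability.further.du}--\eqref{integrability.further.H} pass to $w$ directly, since they are pointwise dominated by the corresponding quantities for $u_1,u_2$. The terminal inequality also passes to $w$, because traces commute with $\max$.

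Next I would show that $\mathcal{S}(m)$ is bounded above, with the bounds \eqref{integrability.further.du}--\eqref{integrability.further.mdu} uniform over the class. The key tool is a comparison with a fixed $\bar u\in\mathcal{U}(m)$. Take $u\in\mathcal{S}(m)$ and test the VI \eqref{weak-sol} for $(m,\bar u)$, or directly the transport identity \eqref{def:transport-strong} with $\varphi\approx u-\bar u$ mollified. Convexity of $H$ in $p$ then gives
\[
\int_{\Tt^d} m_0\,(u-\bar u)(0,\cdot)\,dx\le 0,
\qquad u\le \bar u \text{ on } \{m>0\}.
\]
Off $\{m>0\}$, $H(t,x,p,m)=H(t,x,p,0)$, and Assumptions~\ref{onH.high-order-p.has-no-m} and~\ref{onH.remedy-bound} make the highest-order growth in $p$ independent of $(t,x,m)$. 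This yields an a priori $L^\infty$ (or $L^1$) upper bound along backward characteristics from the terminal data $u_T$, giving $u\le C$ uniformly. It also yields uniform $\|Du\|_{L^\alpha}$ and $\|(u_t)^-\|_{L^{\beta'}}$ bounds by integrating $u_t\ge H(\cdot,Du,m)\ge c|Du|^\alpha-C(1+m^\beta)$ in time against the terminal bound.

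With these bounds, I take an increasing sequence $u_k\in\mathcal{S}(m)$ whose integrals approach $\sup_{\mathcal{S}(m)}\int_Q u$, and set $u^*=\lim u_k$. By BV compactness, weak lower semicontinuity of the convex functional, and stability of the distributional inequality (convexity of $H$ in $p$, plus the weak convergence $Du_k\rightharpoonup Du^*$ in $L^\alpha$), $u^*\in\mathcal{S}(m)$ and $u\le u^*$ for every $u\in\mathcal{S}(m)$. Uniqueness of $u^*$ is immediate.

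To show $u^*\in\mathcal{U}(m)$, I would use $u^*\ge\bar u$ together with the comparison above, which gives $u^*=\bar u$ on $\{m>0\}$ and $u^*(0)=\bar u(0)$ on $\{m_0>0\}$. I then check conditions (2)--(5) of Definition~\ref{def.strong.sol}. The equality part of \eqref{def:hjb-strong} on $\{m>0\}$ follows because $Du^*=D\bar u$ and $u^{*,\text{ac}}_t=\bar u^{\text{ac}}_t$ a.e.\ on that set. In \eqref{def:transport-strong} and \eqref{def:singular-and-terminal} only values on $\{m>0\}$ and $\{m_0>0\}$ enter, so these conditions transfer from $\bar u$ to $u^*$. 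The same transfer argument proves the characterization of $\mathcal{U}(m)$: any $u\in\mathcal{S}(m)$ agreeing with $u^*$ on those sets satisfies all conditions, and conversely the comparison forces agreement.

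The main obstacle is the comparison step. It requires justifying $u-\bar u$ as a test function in \eqref{def:transport-strong} despite the singular parts, and \eqref{integrability.further.mdu} is exactly what controls the mollification error $\int m\,D_pH\cdot D(u-\bar u)$ under shifts.

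For part~\ref{thmitem:unique.m}, I take $(m_1,u_1),(m_2,u_2)$ MFG-solutions and run the Lasry--Lions argument through the VI. Using Lemma~\ref{strong.is.weak}, I insert mollified versions of one solution as the test pair in the other's VI. Summing the two inequalities and passing to the limit with the integrability bounds, while Assumption~\ref{onH.high-order-pm.has-no-tx} keeps the shift/mollification commutator errors vanishing, gives
\[
\int_Q \langle A(m_1,u_1)-A(m_2,u_2),(m_1-m_2,u_1-u_2)\rangle\le 0 .
\]
Semi-strict monotonicity (Assumption~\ref{onH.strict-monotone}) then forces $m_1=m_2$.
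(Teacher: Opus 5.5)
\textbf{Part (I).} There is a genuine gap in your second step. The class $\mathcal{S}(m)$ carries no uniform estimates, and the shifted bound \eqref{integrability.further.mdu} cannot be obtained from the Hamilton--Jacobi inequality at all.

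For example, take $H=|p|^\alpha-m^{\beta-1}+w$ with $w\in L^\infty(Q)$. Every function $\phi(x)+Mt-c$, with $\phi\in C^1(\Tt^d)$ arbitrary and $M,c$ large, lies in $\mathcal{S}(m)$. So neither $\|Du\|_{L^\alpha}$ nor $\sup_{(s,y)}\int m\,|Du(\cdot+s,\cdot+y)|^\alpha$ is bounded on $\mathcal{S}(m)$.

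Integrating $u_t\ge\frac1C|Du|^\alpha-C(m^{\beta-1}+V)$ in time controls $\int_Q|Du|^\alpha$ only through a \emph{lower} bound on $\int_{\Tt^d}u^-(0,x)\,dx$. You do get that along an increasing sequence, from $u_k\ge u_1$. The shifted weighted bound, however, would require controlling $\int m(t,x)\,du_t(t+s,x+y)$. This is not controlled, since $m\in L^\beta$ only while $(u_t)^+$ is merely a measure.

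Without \eqref{integrability.further.mdu} for the limit, you cannot show $u^*\in\mathcal{S}(m)$. Then neither your $\max(u,u^*)$ maximality argument nor the comparison with $\bar u$ can be applied to $u^*$.

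Your ``$L^\infty$ bound along backward characteristics'' is also unavailable. What one has is the pointwise bound $u(s,x)\le u_T(x)+C\int_0^T(m^{\beta-1}+V)\,dt$, which follows from \eqref{eq:assH.lower.simple} alone. It suffices for $\sup_{\mathcal{S}(m)}\int_Q u<\infty$.

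The fix is to reverse the order of the argument:
\begin{enumerate}
\item With $\bar u\in\mathcal{U}(m)$ fixed, replace your maximizing sequence by $v_k=\max(\bar u,u_1,\dots,u_k)$. This lies in $\mathcal{U}(m)$ by the lattice property together with comparison.
\item For elements of $\mathcal{U}(m)$, test the transport equation against the shifted value function. This yields \eqref{integrability.further.mdu} with a constant depending only on $\int v_k^-(0,\cdot)\le\int\bar u^-(0,\cdot)$. Assumption~\ref{onH.remedy-bound} is what makes the shifted function an admissible test function. Assumption~\ref{onH.high-order-p.has-no-m} gives \eqref{integrability.further.H} in the limit.
\item The VI \eqref{weak-sol} is linear in $u$ for fixed $m$, so $u^*=\lim v_k$ lies directly in $\mathcal{U}(m)$.
\end{enumerate}
Your maximizing-integral construction then works.

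Your comparison step also needs repair. Testing with $u-\bar u$ gives only $\lambda(0)\int m_0(u-\bar u)(0,\cdot)+\int_Q\lambda' m(u-\bar u)\le0$. This says nothing pointwise, because $u-\bar u$ has no sign. Moreover, $\bar u+\lambda(u-\bar u)$ may violate \eqref{upsilon-HJB-upper} because of the term $-\lambda'(u-\bar u)$. Apply the argument instead to $w=\max(u,\bar u)\ge\bar u$. Both issues disappear, and you obtain $w=\bar u$ on $\{m>0\}$.

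\textbf{Part (II).} Your architecture is right: cross-test, sum, use semi-strict monotonicity, and let Assumption~\ref{onH.high-order-pm.has-no-tx} control commutators. But inserting a mollified $m_2$ as the test density in the VI of $(m_1,u_1)$ creates two problems:
\begin{enumerate}
\item It produces $\int\mu_n\,d(u_1)_t^{\mathrm{s}}\ge0$ on the wrong side of the inequality, and nothing makes this term vanish.
\item It produces terms $m_1(-\upsilon_t+H(D\upsilon,\mu_n))$ that can only be controlled as a combination, since no cross bound on $\int m_1|Du_2|^\alpha$ is available.
\end{enumerate}
The workable route keeps $\mu=m_1$. Test only the transport inequality of $(m_1,u_1)$ against the $BV$ function $\upsilon=u_2$, approximated by a forward-in-time one-sided mollification. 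Then the nonnegative singular part $(u_2)_t^{\mathrm{s}}$ drops with the favourable sign. The bound $(-(u_2)_t^{\mathrm{ac}}+H(Du_2,0))^+\in L^{\beta'}$ (i.e.\ $\ell=0$) lets both Hamiltonians in the commutator be evaluated at the same momentum. Assumption~\ref{onH.high-order-pm.has-no-tx}, via Lemma~\ref{lem:highest-order-m.separated}, then bounds the commutator in $L^{q\beta'}$ without shift-integrability.
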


As a direct corollary of the above theorems, we obtain the existence of a unique special solution to Problem~\ref{prob.mfg}. 
As we discuss in the next section, Remark~\ref{rmk:last-deviation-implies-the-earlier-two}, under Assumption~\ref{onH.powergrowth},
Assumption~\ref{onH.high-order-pm.has-no-tx} implies
Assumptions~\ref{onH.high-order-p.has-no-m}, \ref{onH.remedy-bound}, and
\ref{onH.highest-order-p.has-no-txm}.

\begin{corollary}[Existence of unique maximal MFG-Solutions]\label{strong.exists}
  Consider the setting of Problem~\ref{prob.mfg} and suppose that Assumptions~\ref{onH.strict-monotone}, \ref{onH.powergrowth}, \ref{onH.high-order-pm.has-no-tx}, and one of \ref{onH.Lagrangianlowerbound}/\ref{onm0.lowerbound} hold. Then there exists a unique pair $(m,u)$ with the following properties:
  \begin{itemize}
      \item $(m,u)$ is a solution to Problem~\ref{prob.mfg} in the sense of Definition~\ref{def.strong.sol}.
      \item $(m,u)$ satisfies \eqref{integrability.further.m}, \eqref{integrability.further.du}, \eqref{integrability.further.ut}, \eqref{integrability.further.mdu}, 
      and \eqref{integrability.further.H}.
      \item If $(m',u')$ is a solution to Problem~\ref{prob.mfg} in the sense of Definition~\ref{def.strong.sol}, which satisfies \eqref{integrability.further.m}, \eqref{integrability.further.du}, \eqref{integrability.further.ut}, \eqref{integrability.further.mdu}, and \eqref{integrability.further.H}, then
      \[\begin{aligned}
       & m' = m \text{ a.e. in } Q, \\
       & u'\leq u \text{ a.e. in } Q, \quad u'= u \text{ a.e. in the set } \{m>0\},\\ 
       & u'(0,\cdot) = u(0,\cdot) \text{ a.e. in the set } \{m_0>0\}.    
      \end{aligned}
      \]
  \end{itemize}
\end{corollary}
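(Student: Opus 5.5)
The corollary follows by combining the three theorems, so the plan is to assemble them in order and check that the hypotheses match.

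\emph{Checking the assumptions.} By Remark~\ref{rmk:last-deviation-implies-the-earlier-two}, under Assumption~\ref{onH.powergrowth}, Assumption~\ref{onH.high-order-pm.has-no-tx} implies Assumptions~\ref{onH.high-order-p.has-no-m}, \ref{onH.remedy-bound}, and \ref{onH.highest-order-p.has-no-txm}. I also need semi-strict monotonicity (Assumption~\ref{onH.strict-monotone}) to imply the monotonicity condition (Assumption~\ref{onH.monotone}). This should be immediate from the definitions in Section~\ref{sec:assume}, and it is the only point I would verify explicitly there. With that, every theorem's hypotheses are available.

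\emph{Existence.} Theorem~\ref{weak.exists} applies, using the assumed alternative \ref{onH.Lagrangianlowerbound} or \ref{onm0.lowerbound}. Since Assumption~\ref{onH.high-order-p.has-no-m} holds, it gives a VI-solution $(m,\tilde u)$ satisfying \eqref{integrability.further.m}--\eqref{integrability.further.H}. Theorem~\ref{weak.is.strong} then makes $(m,\tilde u)$ an MFG-solution, so $\tilde u\in\mathcal U(m)$ and $\mathcal U(m)\neq\emptyset$. Part~\ref{thmitem:maximal.u} of Theorem~\ref{maximal-u-exists} now yields the maximal $u:=u^*\in\mathcal U(m)$. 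The pair $(m,u)$ satisfies the first two bullets of the corollary, because membership in $\mathcal U(m)$ includes all the integrability conditions.

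\emph{Comparison and uniqueness.} Let $(m',u')$ be as in the third bullet. Then $u'\in\mathcal U(m')$, so $\mathcal U(m')\neq\emptyset$. Part~\ref{thmitem:unique.m} gives $m'=m$ a.e., and hence $u'\in\mathcal U(m)\subset\mathcal S(m)$. By part~\ref{thmitem:maximal.u}, $u'\le u^*=u$ a.e., and by the characterization of $\mathcal U(m)$, $u'=u$ a.e.\ on $\{m>0\}$ and $u'(0,\cdot)=u(0,\cdot)$ a.e.\ on $\{m_0>0\}$.

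For uniqueness of the pair, suppose $(\bar m,\bar u)$ also satisfies all three bullets. Applying the third bullet in both directions gives $\bar m=m$, $\bar u\le u$, and $u\le\bar u$ a.e. Hence $(\bar m,\bar u)=(m,u)$ as elements of $L^1\times BV$.

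The only step that is not pure bookkeeping is checking that Assumption~\ref{onH.strict-monotone} implies Assumption~\ref{onH.monotone}. I expect this to hold by construction of the semi-strict condition.
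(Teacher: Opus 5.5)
Your proposal is correct and follows the paper's own route: the corollary is stated there as a direct consequence of Theorems~\ref{weak.exists}, \ref{weak.is.strong}, and \ref{maximal-u-exists}, with Remark~\ref{rmk:last-deviation-implies-the-earlier-two} supplying the auxiliary assumptions, and you assemble these pieces in exactly that way. The step you flagged is immediate, because Assumption~\ref{onH.strict-monotone} is defined as Assumption~\ref{onH.monotone} plus strictness of \eqref{hmon} when $m_1\neq m_2$.
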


The hypothesis of Corollary~\ref{strong.exists} is satisfied by, for example, the Hamiltonian
\[H(t,x,p,m) = |p|^4 - m^3 + w_1(t,x)\frac{|p|^2}{1+m} - w_2(t,x),\]
for $w_1 \in L^5(Q;\RR^+_0)$ and $w_2\in L^2(Q;\RR^+_0)$. Note that this example is non-separable, need not possess continuity or boundedness in time and space, and imposes no restriction on the dimension of the domain in terms of the growth parameters. Another interesting example is
\[H(t,x,p,m) = |p|^\alpha- m^{\beta-1} + w(t,x)\]
with $\alpha > 1$, $\beta>1$, and $w\in L^{\infty}(Q)$,
which shows that the standard range \(\alpha>1\), \(\beta>1\) of power-growth exponents can be treated. For a more complete discussion on the scope of our analysis, see Section~\ref{sec:example}.

The main improvement over previous monotonicity-based approaches to
time-dependent first-order MFGs is that the existence proof is carried
out in a Banach-space framework with a low-order $p$-Laplacian
regularization, rather than through high-order elliptic regularization.
This allows us to obtain the BV compactness and space-time estimates
needed for local first-order couplings while keeping the variational
inequality formulation close to the original MFG system. 
Consequently, Theorem~\ref{weak.is.strong} establishes the equivalence between VI-solutions satisfying natural integrability bounds and MFG-solutions, under structural assumptions on the Hamiltonian alone, independently of how the VI-solution was obtained. This equivalence is entirely absent from previous works, including the stationary Banach space framework of~\cite{ferreiraSolvingMeanFieldGames2025}. Furthermore,
the comparison and lattice arguments leading to
Theorem~\ref{maximal-u-exists} identify a canonical maximal value
function: for a fixed density $m$ for which solutions exist, all
admissible Hamilton--Jacobi subsolutions lie below this maximal
representative, and the possible MFG value functions are characterized
by equality with it on the set where agents are present. 

Our approach also goes beyond traditional variational methods. Those methods
typically rely on a global minimization structure and are therefore most
directly suited to potential games, in particular to separable Hamiltonians.
The monotonicity-based Banach-space framework used here does not require such
a variational structure, and therefore accommodates genuinely non-separable
Hamiltonians.

Several natural questions remain open. First, regarding the interpretation of the maximal value function $u^*$: the classical Hamilton--Jacobi viscosity theory typically requires Lipschitz continuity of the solution (see, e.g.,~\cite{MR2153516}), and even the one-sided characterization of~\cite{Card1order} requires the dimension bound $d<\alpha/(\beta-1)$ to ensure H\"older continuity of $u^*$, whereas our results hold in all dimensions. The broader question of characterizing a maximal subsolution in viscosity or optimal control terms, when one is known to exist but may not even be continuous, appears to be open and is naturally motivated by the present work. Closely related is the derivation of $\epsi$-Nash equilibrium properties for the corresponding $N$-player games from the MFG solutions constructed here, which we leave for future investigation.

Second, regarding the scope of the assumptions: while~\cite{ferreiraSolvingMeanFieldGames2025} treats congestion models and Hamiltonians with weak growth in the stationary setting, extending these results to the time-dependent case requires further analysis. Also, we accommodate initial distributions that vanish by relying on the Lagrangian lower bound, but it would be desirable to completely drop both Assumption~\ref{onH.Lagrangianlowerbound} and Assumption~\ref{onm0.lowerbound}, as one of the two currently plays a necessary role in establishing the global a priori estimates.

The remainder of this paper is organized as follows.
In Sections~\ref{sec:assume} and~\ref{sec:example}, we present our assumptions on the
Hamiltonian and the data, including the monotonicity condition and growth
hypotheses, as well as their consequences and sufficient conditions ensuring them. Sections~\ref{sec:monotone} and~\ref{sec:meas-bv} collect
the tools from monotone operator theory and the theory of BV functions
used in our proofs. Section~\ref{sec:proof-weak-exists} is devoted to
the proof of Theorem~\ref{weak.exists}: we introduce the regularized
problem, derive uniform estimates, establish compactness, and pass to
the limit. In Section~\ref{sec:proof-weak-is-strong}, we prove
Theorem~\ref{weak.is.strong} by enlarging the class of admissible test
functions. Finally, in Section~\ref{sec:maximal-u}, we prove
Theorem~\ref{maximal-u-exists} using stability, comparison, and lattice
arguments.

\section{Preliminaries}

In this section, we collect the notation, structural assumptions, and analytical tools used throughout the paper. We first fix notation and state the assumptions on the Hamiltonian and the data. We then recall the monotone-operator and BV facts used in our proofs.

\subsection{Notation}\label{sec:note}

We denote the set of non-negative real numbers by $\RR^+_0 := [0, \infty)$,
and we write $\bar{Q} := [0,T]\times\Tt^d$ for the closure of the space-time
cylinder $Q = (0,T)\times\Tt^d$. For a real-valued function $v$, we denote its
positive and negative parts by $v^+ := \max(v,0)$ and $v^- := \max(-v,0)$; we
use the same notation $v^+$, $v^-$ for the positive and negative parts in the
Jordan decomposition of a signed Radon measure $v$. In the same vein, we denote $|v| := v^+ + v^-$ for the absolute value of a real-valued function or the total variation measure of a signed Radon measure.

For a bounded-variation function $u = u(t,x)$ on a space-time domain, we denote its distributional
derivative in time by $u_t$ and its spatial distributional derivative by $Du$.
The latter is, a priori, an $\RR^d$-valued Radon measure; throughout we work
under the standing condition $Du \in L^1(Q;\RR^d)$, so that this measure is
absolutely continuous with respect to the $(d+1)$-dimensional Lebesgue measure $dxdt$, its singular part vanishes, and we
denote its density again by $Du$. In particular, whenever $Du$ appears as an
argument of the Hamiltonian~$H$, it is this $L^1$ density that is meant.

By the Lebesgue decomposition theorem, we uniquely write
$du_t = u_t^{\text{ac}} \,dxdt + du_t^{\text{s}}$, where
$u_t^{\text{ac}} \in L^1(Q)$ is the density of the absolutely continuous part
with respect to the $(d+1)$-dimensional Lebesgue measure $dxdt$, and
$du_t^{\text{s}}$ is the mutually singular part. We write $du_t^{\text{s}}$ for
this measure inside integrals and simply $u_t^{\text{s}}$ otherwise (so that, for
instance, $u_t^{\text{s}} \geq 0$ means the singular measure is non-negative).
Point evaluations at the temporal boundaries, such as $u(0,\cdot)$ and
$u(T,\cdot)$ for $u \in BV(Q)$, are understood as interior traces.

Finally, $C$ denotes a positive constant, independent of the relevant
parameters, that may change from line to line; all inequalities are written so as
to remain valid when $C$ is replaced by any larger value.


\subsection{Assumptions}\label{sec:assume}

Now, we present our structural assumptions on the Hamiltonian~$H$ and the initial data in the setting of Problem~\ref{prob.mfg}. To facilitate the analysis, our hypotheses fall into four functional categories: (1) classical monotonicity conditions, which provide the fundamental variational structure and ensure uniqueness; (2) basic power growth conditions, which guarantee coercivity and global \textit{a priori} estimates; (3) novel deviation bounds, which relax strict classical separability assumptions to safely control error terms when approximating non-smooth $BV$ test functions; and (4) mutually independent lower-bounding conditions on the Lagrangian or initial density to secure global \emph{a priori} estimates.

\begin{assumption}[Monotonicity]
\label{onH.monotone}
The following monotonicity condition holds for a.e.~$(t,x)\in Q$ and for all $(p_1,m_1),\, (p_2,m_2) \in\RR^d\times\RR^+_0$:
     \begin{equation}
  \label{hmon}
   \begin{aligned}
        &\bigl(- H(t,x,p_1,m_1) + H(t,x,p_2,m_2)\bigr) (m_1 - m_2)\\ &\quad +
        \bigl( m_1 D_p H(t,x,p_1,m_1) - m_2 D_p H(t,x,p_2,m_2)\bigr) \cdot (p_1-p_2) \geq 0.
   \end{aligned}
     \end{equation}
\end{assumption}

This is a classical monotonicity condition from \cite{lasryMeanFieldGames2007}, which underlies the variational inequality formulation throughout this paper. Unpacking Definition~\ref{def:prelim} below, it states that for $K = \RR^+_0\times\RR^d\subset \RR^{d+1}$, the map $A\colon K \to \RR^{d+1}$  defined by
\[A(m,p) = \bigl(-H(t,x,p,m),\, mD_pH(t,x,p,m)\bigr)\]
is monotone for a.e~$(t,x)\in Q$, with respect to the standard bilinear pairing of $\RR^{d+1}$ with itself. We now point out some consequences of this fundamental monotonicity assumption.

\begin{remark}\label{rmk:cxty}
Suppose that~$H$ satisfies Assumption~\ref{onH.monotone}.
It follows by taking $p_1=p_2$ in \eqref{hmon} that for a.e.~$(t,x)\in Q$ and for all $p\in\RR^d$, the function $H(t,x,p,\cdot)$ is non-increasing in $\RR^+_0$. 
Similarly, taking $m_1=m_2=m>0$ in~\eqref{hmon} shows that, for a.e.~$(t,x)\in Q$, the map $p\mapsto D_pH(t,x,p,m)$ is monotone; hence $H(t,x,\cdot,m)$ is convex. The case $m=0$ follows by continuity in $m$.
\end{remark}

The convexity in $p$ and monotonicity in $m$ of the above remark are used repeatedly throughout the paper, in particular when enlarging the class of test functions and for the comparison principle. On the other hand, the following lemma shows that any MFG-solution is automatically a VI-solution. The converse direction, which is more subtle, is the content of Theorem~\ref{weak.is.strong}.

\begin{lemma}\label{strong.is.weak}
  Consider the setting of Problem~\ref{prob.mfg} and suppose that $H$ satisfies the monotonicity condition (Assumption~\ref{onH.monotone}). Let $(m,u)$ be a solution to Problem~\ref{prob.mfg} in the sense of Definition~\ref{def.strong.sol}. Then $(m,u)$ is a solution to Problem~\ref{prob.vi} in the sense of Definition~\ref{def.weak.sol}.
\end{lemma}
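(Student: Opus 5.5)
The plan is to verify \eqref{weak-sol} directly for a fixed admissible pair $(\mu,\upsilon)\in C(\bar Q;\RR^+_0)\times C^1(\bar Q)$ with $\mu(0,\cdot)=m_0$ and $\upsilon(T,\cdot)=u_T$. The argument uses, in order, pointwise monotonicity, the Hamilton--Jacobi relations \eqref{def:hjb-strong}, the transport identity \eqref{def:transport-strong} with a suitable test function, and finally \eqref{def:singular-and-terminal} together with the terminal inequality.

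\emph{Step 1: monotonicity and the HJ relations.} Apply \eqref{hmon} at a.e.\ $(t,x)$ with $(p_1,m_1)=(D\upsilon,\mu)$ and $(p_2,m_2)=(Du,m)$. This gives the pointwise bound
\[
(\mu-m)\bigl(\upsilon_t-H(D\upsilon,\mu)\bigr)+\mu D_pH(D\upsilon,\mu)\cdot(D\upsilon-Du)\ \geq\ (\mu-m)\bigl(\upsilon_t-H(Du,m)\bigr)+mD_pH(Du,m)\cdot(D\upsilon-Du).
\]
Next use \eqref{def:hjb-strong}. Since $H(Du,m)\leq u_t^{\text{ac}}$ a.e.\ and $\mu\geq0$, we have $-\mu H(Du,m)\geq-\mu u_t^{\text{ac}}$. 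Since $H(Du,m)=u_t^{\text{ac}}$ a.e.\ on $\{m>0\}$, we have $mH(Du,m)=mu_t^{\text{ac}}$ a.e. Hence the integrand of the first integral in \eqref{weak-sol} is bounded below by
\[
g:=(\mu-m)\upsilon_t-\mu u_t^{\text{ac}}+m\bigl(u_t^{\text{ac}}-D_pH(Du,m)\cdot Du\bigr)+mD_pH(Du,m)\cdot D\upsilon .
\]
By \eqref{basic-integrability}, $g\in L^1(Q)$: the combination $m(u_t^{\text{ac}}-D_pH\cdot Du)$ is integrable, $mD_pH\in L^1$ and $D\upsilon$ is bounded, and $m,\ u_t^{\text{ac}}\in L^1$ while $\mu,\ \upsilon_t$ are bounded. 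Therefore the integrand minus $g$ is nonnegative, so the first integral is well defined in $(-\infty,+\infty]$ and is at least $\int_Q g$. The remaining terms of \eqref{weak-sol} are finite because $\mu$ is continuous, $u\in BV(Q)$, and its traces are in $L^1$.

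\emph{Step 2: the singular part.} Add $\int_Q\mu\,du_t-\int_Q\mu\upsilon_t$ to $\int_Q g$, splitting $du_t=u_t^{\text{ac}}\,dxdt+du_t^{\text{s}}$. The $\mu\upsilon_t$ and $\mu u_t^{\text{ac}}$ terms cancel, and we are left with
\[
\int_Q\mu\,du_t^{\text{s}}+\int_Q m\bigl(-\upsilon_t+u_t^{\text{ac}}+D_pH\cdot(D\upsilon-Du)\bigr).
\]
The term $\int_Q\mu\,du_t^{\text{s}}$ is nonnegative, since $\mu\geq0$ is continuous and $u_t^{\text{s}}\geq0$; we drop it.

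\emph{Step 3: transport.} Apply \eqref{def:transport-strong} with $\varphi=\upsilon-u_T\in C^1(\bar Q)$, which satisfies $\varphi(T,\cdot)=0$. This yields
\[
\int_Q m\bigl(-\upsilon_t+D_pH\cdot D\upsilon\bigr)=\int_Q mD_pH\cdot Du_T+\int_{\Tt^d}m_0\bigl(\upsilon(0)-u_T\bigr).
\]
All splittings here are legitimate because each piece is integrable. Substitute this into the expression from Step 2. The $m_0\upsilon(0)$ terms then cancel against $-\int_{\Tt^d} m_0(\upsilon(0,x)-u(0,x))\,dx$, and the right-hand side of \eqref{weak-sol} is bounded below by
\[
\Bigl[\int_Q m\bigl(u_t^{\text{ac}}+D_pH\cdot(Du_T-Du)\bigr)-\int_{\Tt^d}m_0\bigl(u_T-u(0)\bigr)\Bigr]+\int_{\Tt^d}\mu(T)\bigl(u_T-u(T)\bigr).
\]
The bracket is nonnegative by \eqref{def:singular-and-terminal}. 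The last integral is nonnegative because $\mu(T)\geq0$ and $u(T)\leq u_T$.

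The main obstacle is bookkeeping rather than any single estimate. Neither $mu_t^{\text{ac}}$ nor $mD_pH\cdot Du$ is separately known to be integrable, so every rearrangement must keep them inside the combination controlled by \eqref{basic-integrability}. The lower bound $g$ is chosen precisely to respect this constraint, and it is also what makes the extended integral in Definition~\ref{def.weak.sol} well defined.
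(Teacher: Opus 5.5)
Your proof is correct and follows essentially the same route as the paper. Both arguments apply the monotonicity inequality \eqref{hmon} to the pairs $(Du,m)$ and $(D\upsilon,\mu)$, use the two Hamilton--Jacobi relations, test the transport identity with $\varphi=\upsilon-u_T$, invoke \eqref{def:singular-and-terminal}, and finish with the nonnegativity of $\int_Q\mu\,du_t^{\text{s}}$ and $\int_{\Tt^d}\mu(T,x)(u_T(x)-u(T,x))\,dx$. Your explicit $L^1$ lower bound $g$ is a slightly more careful version of the paper's remark that the remaining right-hand side is integrable, and it also shows that the extended integral in Definition~\ref{def.weak.sol} is well defined.
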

\begin{proof}
    Using~\eqref{hmon} with $(p_1,m_1) := (Du,m)$ and $(p_2,m_2) := (D\upsilon,\mu)$, and then adding and subtracting like terms, we obtain
  \begin{equation*}
  \begin{aligned}
     & (\mu-m)(\upsilon_t-H(t,x,D\upsilon,\mu)) + \mu D_pH(t,x,D\upsilon,\mu)\cdot(D\upsilon-Du) + \mu(u_t^{\text{ac}}-\upsilon_t) \\
     & \qquad \geq -m(u_t^{\text{ac}}-H(t,x,Du,m)) + \mu(u_t^{\text{ac}}-H(t,x,Du,m)) \\
     & \qquad\qquad + m(-\upsilon_t + D_pH(t,x,Du,m)\cdot (D\upsilon-Du_T)) \\
     & \qquad\qquad  
     + m\bigl(u_t^{\text{ac}} + D_pH(t,x,Du,m)\cdot(Du_T-Du)\bigr)
  \end{aligned}
  \end{equation*}
  a.e.~in~$Q$. 
  The Hamilton--Jacobi conditions in~\eqref{def:hjb-strong} imply that the first two terms on the right-hand side are non-negative, so we may discard them. Next, we note that the rest of the right-hand side is in $L^1(Q)$, thus we integrate both sides, use~\eqref{def:transport-strong} with $\varphi(t,x):=\upsilon(t,x)-u_T(x)$, and use~\eqref{def:singular-and-terminal} to obtain
  \begin{equation*}
  \begin{aligned}
     & \int_Q \bigl((\mu-m)(\upsilon_t-H(t,x,D\upsilon,\mu)) + 
     \mu D_pH(t,x,D\upsilon,\mu)\cdot(D\upsilon-Du)\bigr) \,dxdt \\
     & \qquad + \int_Q \mu(u_t^{\text{ac}}-\upsilon_t) \,dxdt \geq \int_{\Tt^d} m_0(\upsilon(0,x)-u(0,x))\,dx.
  \end{aligned}
  \end{equation*}
Adding the terms
\[
\int_Q\mu\,du_t^{\text{s}}
\qquad\text{and}\qquad
\int_{\Tt^d}\mu(T,x)(u_T(x)-u(T,x))\,dx,
\]
which are nonnegative by $u_t^{\text{s}}\geq0$ and $u(T,\cdot)\leq u_T$, yields~\eqref{weak-sol}.
\end{proof}

We also have a strict inequality version of Assumption~\ref{onH.monotone}, which we only use in the uniqueness proof.

\addtocounter{assumption}{-1}
\renewcommand{\theassumption}{\arabic{assumption}$^+$}
\begin{assumption}[Semi-Strict Monotonicity]\label{onH.strict-monotone}
    In addition to Assumption~\ref{onH.monotone}, the inequality in \eqref{hmon} is strict whenever $m_1 \neq m_2$.
\end{assumption}
\renewcommand{\theassumption}{\arabic{assumption}}

Next, we set forth our growth assumptions. The bounds that we assume indicate a power-like asymptotic behavior of~$H$ as $|p|\to\infty$ and $m\to\infty$, which is uniform for $(t,x)\in Q$. For all of these conditions, let the parameters
\begin{equation}\label{eq:alpha.beta}
  \alpha > 1 \qquad\text{and}\qquad \beta > 1
\end{equation}
be fixed. Our basic assumption, upon which all our further growth assumptions are built, essentially asserts that $p\mapsto H(t,x,p,m)$ grows like $|p|^\alpha$ and $m\mapsto -H(t,x,p,m)$ grows like~$m^{\beta-1}$. 

\begin{assumption}[Basic Power Growth] \label{onH.powergrowth}   
  There exists a positive constant $C$ and some $V\in L^{\beta'}(Q;\RR^+_0)$,  such that the following bounds hold
  for a.e.~$(t,x)\in Q$ and all $(p,m)\in\RR^d\times\RR^+_0$:
     \begin{align}
     H(t,x,0,m) & \leq -\frac{1}{C}m^{{\beta-1}} + CV(t,x), \label{eq:assH.upper.simple} \\
     |D_pH(t,x,p,m)| & \leq C\bigl( |p|^{\alpha-1} + m^{\frac{(\alpha-1)}{\alpha}(\beta-1)}+V(t,x)^{\frac{\alpha-1}{\alpha}}\bigr),\label{eq:assH.DpH.upper.simple}\\
     H(t,x,p,m) & \geq \frac{1}{C}|p|^{\alpha} - C( m^{{\beta-1}}+V(t,x)),\label{eq:assH.lower.simple}
     \end{align}
where $\alpha$, $\beta$ are as in~\eqref{eq:alpha.beta} and $\beta' := \beta/(\beta-1)$. By adding a positive constant to $V$ and increasing $C$ if necessary, we assume throughout that
\begin{equation}\label{eq:assH.potential}
 V(t,x)\geq 1 \quad\text{for a.e. }(t,x)\in Q,
\qquad
\int_Q V(t,x)^{\beta'}\,dxdt \leq C.
\end{equation}
\end{assumption}

The bounds in~\eqref{eq:assH.upper.simple} and~\eqref{eq:assH.DpH.upper.simple} together imply the upper bound
\begin{equation}\label{res:H.upper.simple}
  H(t,x,p,m) \leq C(|p|^{\alpha}+V(t,x)) - \frac{1}{C}m^{{\beta-1}}
\end{equation}
on the Hamiltonian, which complements the lower bound in~\eqref{eq:assH.lower.simple}. Moreover, Assumption~\ref{onH.monotone} and Assumption~\ref{onH.powergrowth} together imply
\begin{equation}\label{dphdotp-minus-h.lower.bound}
     D_p H(t,x,p,m) \cdot p - H(t,x,p,m) \geq \frac{1}{C}(|p|^\alpha + m^{\beta-1}) - CV(t,x),
\end{equation}
which is a lower bound on the corresponding Lagrangian. We refer to~\cite[Lemma~2.11 and Remark~2.12]{ferreiraSolvingMeanFieldGames2025} for the proofs of these facts, which are based on Fundamental Theorem of Calculus identities followed by Young's inequality estimates. The proof of Lemma~\ref{lem:deviation-bound} below follows the same idea.

Assumption~\ref{onH.powergrowth} is not sufficient for all of our analysis. In some estimates, we need more stringent upper bounds on the deviation of~$H$ between various quadruples $(t,x,p,m)$, which we impose under additional assumptions below. 
The first two of these amount to requiring that the part of the Hamiltonian that grows as
 $|p|\to\infty$ with a power greater than $\alpha(1-1/\beta)$ is independent of $m$ and $(t,x)$ respectively, so that it cancels in an expression of the form $H(t,x,p,m)-H(t,x,p,0)$ or $H(t,x,p,0)-H(t',x',p,0)$, respectively. These assumptions are used to obtain the $L^{\beta'}$ bound on $(-u_t+H(t,x,Du,0))^+$ where $u$ stands for the value function or its space-time shifts.

\begin{assumption}[High-Order Momentum Terms Free of Density]\label{onH.high-order-p.has-no-m}
  In addition to Assumption~\ref{onH.powergrowth}, the following bound holds for a.e.~$(t,x)\in Q$ and all $(p,m)\in\RR^d\times\RR^+_0$:
\begin{equation}\label{eq:high-order-p.has-no-m}
     |H(t,x,p,m) - H(t,x,p,0)| \leq C(|p|^{\alpha\frac{(\beta-1)}{\beta}} + m^{{\beta-1}} + V(t,x)),
  \end{equation}
  where $\alpha$, $\beta$, $C$, and $V$ are as in Assumption~\ref{onH.powergrowth}.
\end{assumption}

\begin{assumption}[High-Order Momentum Terms Free of Time/Space]
\label{onH.remedy-bound}
In addition to Assumption~\ref{onH.powergrowth}, the following bound holds for a.e.~$(t,x)\in Q$, a.e.~$(t',x')\in Q$, and all $p\in\RR^d$:
\begin{equation}\label{remedy-bound}
|H(t,x,p,0) - H(t',x',p,0)| \leq C\bigl( |p|^{\alpha\frac{(\beta-1)}{\beta}} + V(t,x) + V(t',x') \bigr),
\end{equation}
where $\alpha$, $\beta$, $C$, and $V$ are as in Assumption~\ref{onH.powergrowth}.
\end{assumption}

Our next assumption on deviations of $H$ means the following: the term in the Hamiltonian that grows proportionally to $|p|^\alpha$ is only a function of $p$, so that it cancels in an expression of the form $H(t,x,p,m)-H(t',x',p,0)$, which is then left with a growth power of $\alpha(1-1/C)$ for some sufficiently large constant $C$.

\begin{assumption}[Highest Order Momentum Term Free of Time/Space/Density]\label{onH.highest-order-p.has-no-txm}
  In addition to Assumption~\ref{onH.powergrowth}, there exists a non-decreasing function $R\colon\RR^+_0\to\RR^+_0$ such that the following bound holds for a.e.~$(t,x)\in Q$,  a.e.~$(t',x')\in Q$, and all $(p,m)\in\RR^d\times\RR^+_0$:
    \begin{equation}\label{eq:devH-at-fixed.p-bound}
        |H(t,x,p,m)-H(t',x',p,0)| \leq CR(m)\bigl(|p|^\alpha + V(t,x) +V(t',x')\bigr)^{1-1/C},
    \end{equation}
 where $\alpha$, $\beta$, $C$, and $V$ are as in Assumption~\ref{onH.powergrowth}.
 By adding a positive constant if necessary, we assume throughout that $R(m)\geq 1$ for all $m\in\RR^+_0$.
\end{assumption}

The bound \eqref{eq:devH-at-fixed.p-bound} can also be combined with~\eqref{eq:assH.DpH.upper.simple} to yield a bound on the deviation of~$H$ between arbitrary quadruples $(t,x,p,m)$ as follows.
\begin{lemma}\label{lem:deviation-bound}
  Suppose that $H$ satisfies Assumption~\ref{onH.highest-order-p.has-no-txm}. Then, the following bound holds for a.e.~$(t,x)\in Q$,  a.e.~$(t',x')\in Q$, and all $(p_1,m_1)\in\RR^d\times\RR^+_0$, $(p_2,m_2)\in\RR^d\times\RR^+_0$:
    \begin{equation}\label{eq:devH-quadruple}
    \begin{aligned}
        & |H(t,x,p_1,m_1) - H(t',x',p_2,m_2)| \\
        & \qquad \leq C \bigl( R(m_1+m_2)+|p_1-p_2|\bigr)\bigl(|p_1|^\alpha + |p_2|^\alpha + V(t,x) +V(t',x')\bigr)^{1-1/C},
    \end{aligned}
    \end{equation}
    where $C$ is possibly redefined.
\end{lemma}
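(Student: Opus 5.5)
The plan is to telescope the difference through intermediate points at which only one argument changes. The density changes are controlled by \eqref{eq:devH-at-fixed.p-bound}, and the momentum change is controlled by \eqref{eq:assH.DpH.upper.simple} at $m=0$. Throughout, write $W:=|p_1|^\alpha+|p_2|^\alpha+V(t,x)+V(t',x')\geq 1$, using \eqref{eq:assH.potential}. We will use repeatedly that $W^{\gamma}\leq W^{\gamma'}$ whenever $0\leq\gamma\leq\gamma'$, so the exponent can always be raised to $1-1/C$ once $C$ is enlarged.

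\textbf{Decomposition.} We write
\[
\begin{aligned}
H(t,x,p_1,m_1)-H(t',x',p_2,m_2)
&=\bigl[H(t,x,p_1,m_1)-H(t',x',p_1,0)\bigr]\\
&\quad+\bigl[H(t',x',p_1,0)-H(t',x',p_2,0)\bigr]\\
&\quad+\bigl[H(t',x',p_2,0)-H(t',x',p_2,m_2)\bigr].
\end{aligned}
\]

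\textbf{First bracket.} By \eqref{eq:devH-at-fixed.p-bound} it is at most $CR(m_1)(|p_1|^\alpha+V(t,x)+V(t',x'))^{1-1/C}\leq CR(m_1+m_2)W^{1-1/C}$, because $R$ is non-decreasing.

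\textbf{Second bracket.} We use the fundamental theorem of calculus along the segment $q_\theta=p_2+\theta(p_1-p_2)$, which is allowed since $D_pH$ is continuous in $p$. By \eqref{eq:assH.DpH.upper.simple} with $m=0$, together with $|q_\theta|\leq|p_1|+|p_2|$,
\[
|D_pH(t',x',q_\theta,0)|\leq C\bigl(|q_\theta|^{\alpha-1}+V(t',x')^{\frac{\alpha-1}{\alpha}}\bigr)\leq C\,W^{\frac{\alpha-1}{\alpha}}.
\]
Hence the second bracket is at most $C|p_1-p_2|W^{1-1/\alpha}\leq C|p_1-p_2|W^{1-1/C}$ once $C\geq\alpha$.

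\textbf{Third bracket and the main obstacle.} This bracket compares two values of $H$ at the \emph{same} point $(t',x')$. Assumption~\ref{onH.highest-order-p.has-no-txm} holds only for a.e.\ pair of points, and the diagonal is a null set in $Q\times Q$, so the assumption cannot be applied with $(t,x)=(t',x')$ directly. To get around this, we route through an auxiliary point $(s,y)$.

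By Fubini, for a.e.\ $(t',x')$ the inequality \eqref{eq:devH-at-fixed.p-bound} holds for a.e.\ partner point. Since $V\in L^{\beta'}(Q)$ and $\int_Q V^{\beta'}\leq C$, the set $\{V\leq M\}$ has positive measure for a fixed constant $M$. We can therefore choose $(s,y)$ in this set such that the assumption applies to both pairs $((t',x'),(s,y))$ and $((s,y),(t',x'))$; this can be done uniformly by intersecting full-measure sets, with only countably many constraints at play after using continuity in $(p,m)$. Then
\[
\begin{aligned}
|H(t',x',p_2,m_2)-H(t',x',p_2,0)|
&\leq|H(t',x',p_2,m_2)-H(s,y,p_2,0)|+|H(s,y,p_2,0)-H(t',x',p_2,0)|\\
&\leq C\bigl(R(m_2)+R(0)\bigr)\bigl(|p_2|^\alpha+V(t',x')+M\bigr)^{1-1/C}.
\end{aligned}
\]
Since $M\leq M\,V(t',x')$ and $R(0)\leq R(m_2)\leq R(m_1+m_2)$, this is at most $C\,R(m_1+m_2)\,W^{1-1/C}$ after enlarging $C$. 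This selection of $(s,y)$ is the only delicate point; everything else is routine bookkeeping.

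\textbf{Conclusion.} Summing the three bounds and taking $C$ to be the maximum of the constants that appeared gives \eqref{eq:devH-quadruple} in the stated sum form $C\bigl(R(m_1+m_2)+|p_1-p_2|\bigr)W^{1-1/C}$.
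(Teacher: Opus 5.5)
Your proof is correct and is essentially the paper's argument: you telescope through $m=0$ using \eqref{eq:devH-at-fixed.p-bound}, treat the momentum change by the fundamental theorem of calculus with \eqref{eq:assH.DpH.upper.simple} at $m=0$, and get around the null diagonal with an auxiliary point where $V$ is controlled. The only difference is bookkeeping. The paper sends both $H(t,x,p_1,m_1)$ and $H(t',x',p_2,m_2)$ to a single auxiliary point $(\bar t,\bar x)$, does the momentum step there, and then takes the essential infimum of $V(\bar t,\bar x)$. You instead apply the assumption directly to the pair $\bigl((t,x),(t',x')\bigr)$ and need the auxiliary point only for the same-point term $H(t',x',p_2,m_2)-H(t',x',p_2,0)$.
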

\begin{proof}
    First, by \eqref{eq:devH-at-fixed.p-bound}, we have
    \begin{equation*}
    \begin{aligned}
        |H(t,x,p_1,m_1)-H(\bar{t},\bar{x},p_1,0)| & \leq CR(m_1)\bigl(|p_1|^\alpha + V(t,x) +V(\bar{t},\bar{x})\bigr)^{1-1/C}, \\
        |H(t',x',p_2,m_2)-H(\bar{t},\bar{x},p_2,0)| & \leq CR(m_2)\bigl(|p_2|^\alpha + V(t',x') +V(\bar{t},\bar{x})\bigr)^{1-1/C},
    \end{aligned}
    \end{equation*}
    for a.e.~$(t,x)\in Q$, a.e.~$(t',x')\in Q$, and a.e.~$(\bar{t},\bar{x})\in Q$. Replacing $R(m_1)$, $R(m_2)$ on the right-hand sides by $R(m_1+m_2)$ and adding the two bounds, then combining with the triangle inequality, we get
    \begin{equation}\label{eq:devH-at-fixed.p-assumption-used}
        \begin{aligned}
            & |H(t,x,p_1,m_1) - H(t',x',p_2,m_2)| \\
            & \qquad \leq |H(\bar{t},\bar{x},p_1,0) - H(\bar{t},\bar{x},p_2,0)| \\
            & \qquad\qquad + CR(m_1+m_2)\bigl(|p_1|^\alpha + |p_2|^\alpha + V(t,x) + V(t',x') + V(\bar{t},\bar{x})\bigr)^{1-1/C}.
        \end{aligned}
    \end{equation}
    Then, we use the Fundamental Theorem of Calculus to express
  \begin{equation*}
     H(\bar{t},\bar{x},p_1,0)-H(\bar{t},\bar{x},p_2,0) = \int_0^1 D_pH(\bar{t},\bar{x}, \lambda p_1 +(1-\lambda) p_2, 0)\cdot(p_1-p_2)\,d\lambda,
  \end{equation*}
  and estimate the integrand with~\eqref{eq:assH.DpH.upper.simple} to get
  \begin{equation}\label{devH.DpH.FTC+estimate}
     |H(\bar{t},\bar{x},p_1,0)-H(\bar{t},\bar{x},p_2,0)| \leq C|p_1-p_2|(|p_1|^\alpha+|p_2|^\alpha + V(\bar{t},\bar{x}))^{1-\frac{1}{\alpha}}.
  \end{equation}
    Finally, we replace the terms $R(m_1+m_2)$ and $|p_1-p_2|$ on the right-hand sides of \eqref{eq:devH-at-fixed.p-assumption-used} and~\eqref{devH.DpH.FTC+estimate} by the greater $\bigl(R(m_1+m_2)+|p_1-p_2|\bigr)$, we replace the power of $(1-\frac{1}{\alpha})$ in \eqref{devH.DpH.FTC+estimate} by the greater $(1-1/C)$, we plug the latter bound into the former, and we take the essential infimum over $(\bar{t},\bar{x})\in Q$ to replace $V(\bar{t},\bar{x})$ by $C$, consequently we conclude \eqref{eq:devH-quadruple}.
\end{proof}

The above lemma is used in the proof of Proposition~\ref{lem:hjb-limsup} to control the error when approximating BV test functions by smooth ones.

Finally, our last assumption on deviations of $H$ indicates the following: those parts of $H$ which grow proportionally to $|p|^\alpha$ and $m^{\beta-1}$ are functions of $p$ and $m$ only, so that they cancel in a triple difference of the form $H(t,x,p,m) - H(\bar{t},\bar{x},p,0) - H(\tilde{t},\tilde{x},0,m)$. This assumption controls the error when approximating BV test functions without the shift-integrable condition, and is used in the uniqueness proof for the density $m$ (Theorem~\ref{maximal-u-exists}~\ref{thmitem:unique.m}).

\begin{assumption}[High-Order Momentum and Density Terms Independent of Time and Space]\label{onH.high-order-pm.has-no-tx}
    In addition to Assumption~\ref{onH.powergrowth}, the following bound holds for a.e.~$(t,x)\in Q$, a.e.~$(\bar{t},\bar{x})\in Q$, a.e~$(\tilde{t},\tilde{x})\in Q$, and all $(p,m)\in\RR^d\times\RR^+_0$:
\begin{equation}\label{eq:H.triple-deviation}
    \begin{aligned}
        & |H(t,x,p,m) - H(\bar{t},\bar{x},p,0) - H(\tilde{t},\tilde{x},0,m)| \\
        & \qquad \leq C\bigl(|p|^{\alpha\frac{(\beta-1)}{\beta}}+m^{\beta-1} + V(t,x)+ V(\bar{t},\bar{x}) + V(\tilde{t},\tilde{x})\bigr)^{1-1/C}, 
    \end{aligned}
    \end{equation}
 where $\alpha$, $\beta$, $C$, and $V$ are as in Assumption~\ref{onH.powergrowth}.
\end{assumption}

\begin{lemma}\label{lem:highest-order-m.separated}
    Suppose that $H$ satisfies Assumption~\ref{onH.high-order-pm.has-no-tx}. Then, there exists a continuous function $f\colon \RR^+_0 \to \RR$ satisfying
    \begin{equation}\label{eq:highest-order-m.separated}
        |f(m)|\leq C(m^{\beta-1}+1)
    \end{equation}
    for all $m\in \RR^+_0$, such that the following bound holds for a.e.~$(t,x)\in Q$,  a.e.~$(t',x')\in Q$, and all $(p,m_1,m_2)\in\RR^d\times\RR^+_0\times\RR^+_0$:
    \begin{equation}\label{eq:deviation-at-fixed-pm-improved-to-m1m2}
    \begin{aligned}
        & |H(t,x,p,m_1) + f(m_1) - H(t',x',p,m_2) - f(m_2)|\\
        & \qquad \leq C\bigl(|p|^{\alpha\frac{(\beta-1)}{\beta}}+ m_1^{\beta-1} + m_2^{\beta-1} + V(t,x) + V(t',x')\bigr)^{1-1/C},
    \end{aligned}
    \end{equation}
    for a possibly larger constant $C$.
\end{lemma}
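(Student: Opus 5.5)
The plan is to build $f$ by freezing $p=0$ and a reference point in space-time, and then to compare two general quadruples through that frozen point. The natural candidate is
\[
f(m) := -H(\tilde t,\tilde x,0,m)
\]
for a suitably chosen $(\tilde t,\tilde x)\in Q$. Then $H(t,x,p,m)+f(m)$ differs from $H(\bar t,\bar x,p,0)$ by the triple deviation controlled in \eqref{eq:H.triple-deviation}.

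The first task is choosing $(\tilde t,\tilde x)$. Since \eqref{eq:H.triple-deviation} holds only for a.e.\ triple of points, Fubini gives a full-measure set of $(\tilde t,\tilde x)$ for which the bound holds for a.e.\ $(t,x)$, a.e.\ $(\bar t,\bar x)$ and all $(p,m)$. Because $V\in L^{\beta'}$, we may additionally pick $(\tilde t,\tilde x)$ with $V(\tilde t,\tilde x)\le C$, and with $m\mapsto H(\tilde t,\tilde x,0,m)$ continuous, which holds for a.e.\ point by the standing hypotheses of Problem~\ref{prob.mfg}. Continuity of $f$ is then immediate.

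The growth bound \eqref{eq:highest-order-m.separated} comes from the basic growth. Inequality \eqref{eq:assH.upper.simple} gives $-f(m)\le -\frac1C m^{\beta-1}+C$, and \eqref{eq:assH.lower.simple} at $p=0$ gives $-f(m)\ge -C(m^{\beta-1}+C)$. Together these yield $|f(m)|\le C(m^{\beta-1}+1)$.

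For the main bound \eqref{eq:deviation-at-fixed-pm-improved-to-m1m2}, fix an auxiliary a.e.\ point $(\bar t,\bar x)$ with $V(\bar t,\bar x)\le C$. Apply \eqref{eq:H.triple-deviation} twice: once to $(t,x,p,m_1)$ and once to $(t',x',p,m_2)$, both with the same $(\bar t,\bar x)$ and $(\tilde t,\tilde x)$. Subtracting, the common term $H(\bar t,\bar x,p,0)$ cancels, leaving
\[
|H(t,x,p,m_1)+f(m_1)-H(t',x',p,m_2)-f(m_2)|\le C(\cdots)_1^{1-1/C}+C(\cdots)_2^{1-1/C}.
\]
Each bracket is bounded by $|p|^{\alpha(\beta-1)/\beta}+m_1^{\beta-1}+m_2^{\beta-1}+V(t,x)+V(t',x')+C$. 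Since $V\ge1$, the additive constant can be absorbed into $V(t,x)$ at the cost of enlarging $C$, and the sum of the two powers is at most twice the power of the combined bracket. This gives \eqref{eq:deviation-at-fixed-pm-improved-to-m1m2}.

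The main obstacle is the measure-theoretic bookkeeping: the "a.e." quantifiers must be arranged so that one fixed $(\tilde t,\tilde x)$ and one $(\bar t,\bar x)$ work simultaneously for a.e.\ $(t,x)$ and a.e.\ $(t',x')$, uniformly in all $(p,m)$. The uniformity in $(p,m)$ is already built into the assumption, so only countably many exceptional-set selections via Fubini are needed. The key point is that $f$ must not depend on $(t,x)$, which is exactly why $(\tilde t,\tilde x)$ is fixed before any other choice.
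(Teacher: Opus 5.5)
Your proposal is correct and is essentially the paper's proof. The paper also sets $f(m):=-H(\tilde t,\tilde x,0,m)$ at a fixed reference point with $V(\tilde t,\tilde x)<\infty$. It then writes the difference as the difference of two triple deviations sharing the term $H(\bar t,\bar x,p,0)$, absorbs $V(\bar t,\bar x)$ and $V(\tilde t,\tilde x)$ into $C$ (via an essential infimum over $(\bar t,\bar x)$ rather than a pointwise choice), and gets the growth of $f$ from \eqref{eq:assH.upper.simple} and \eqref{eq:assH.lower.simple}. Your explicit Fubini bookkeeping and your check that $f$ is continuous are details the paper leaves implicit.
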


\begin{proof}
    We consider the identity
    \[\begin{aligned}
        & H(t,x,p,m_1) - H(\tilde{t},\tilde{x},0,m_1) - H(t',x',p,m_2) + H(\tilde{t},\tilde{x},0,m_2) \\
        & = \bigl(H(t,x,p,m_1) - H(\bar{t}, \bar{x}, p, 0) - H(\tilde{t},\tilde{x},0,m_1)\bigr) \\
        & \qquad - \bigl(H(t',x',p,m_2) - H(\bar{t}, \bar{x}, p, 0) - H(\tilde{t},\tilde{x},0,m_2)\bigr). 
    \end{aligned}
    \]
   We estimate both of the terms on the right-hand side with \eqref{eq:H.triple-deviation} and take essential infimum over $(\bar{t},\bar{x})\in Q$ to absorb $V(\bar{t},\bar{x})$ into the constant $C$, thus we get
    \begin{equation}\label{eq:deviation-at-fixed-pm-improved-to-m1m2:tilde-variables}
        \begin{aligned}
        & |H(t,x,p,m_1) - H(\tilde{t},\tilde{x},0,m_1) - H(t',x',p,m_2) + H(\tilde{t},\tilde{x},0,m_2)| \\
        & \qquad \leq C\bigl(|p|^{\alpha\frac{(\beta-1)}{\beta}}+m_1^{\beta-1} + m_2^{\beta-1} + V(t,x)+V(t',x')+V(\tilde{t},\tilde{x})\bigr)^{1-1/C}, 
    \end{aligned}
    \end{equation}
    for a.e.~$(t,x)\in Q$,  a.e.~$(t',x')\in Q$, a.e.~$(\tilde{t},\tilde{x})\in Q$, and all $(p,m_1,m_2)\in\RR^d\times\RR^+_0\times\RR^+_0$. Next, we fix $(\tilde{t},\tilde{x})\in Q$ such that $V(\tilde{t},\tilde{x})$ is finite and \eqref{eq:deviation-at-fixed-pm-improved-to-m1m2:tilde-variables} holds for a.e.~$(t,x)\in Q$ and a.e.~$(t',x')\in Q$. Then, we define $f(m) := -H(\tilde{t},\tilde{x},0,m)$. Thus, \eqref{eq:deviation-at-fixed-pm-improved-to-m1m2} immediately follows from \eqref{eq:deviation-at-fixed-pm-improved-to-m1m2:tilde-variables} after absorbing $V(\tilde{t},\tilde{x})$ into the constant $C$. On the other hand, \eqref{eq:highest-order-m.separated} follows from \eqref{eq:assH.upper.simple} and \eqref{eq:assH.lower.simple}.
\end{proof}

\begin{remark}\label{rmk:last-deviation-implies-the-earlier-two}
Under Assumption~\ref{onH.powergrowth},
Assumption~\ref{onH.high-order-pm.has-no-tx} implies
Assumptions~\ref{onH.high-order-p.has-no-m}, \ref{onH.remedy-bound}, and
\ref{onH.highest-order-p.has-no-txm}, with possibly redefined \(C\).

We first show that the deviation estimate \eqref{eq:H.triple-deviation} in
Assumption~\ref{onH.high-order-pm.has-no-tx} directly implies
Assumption~\ref{onH.remedy-bound}, without using the power-growth bounds. Indeed, applying \eqref{eq:H.triple-deviation} with \(m=0\), we write
\[
\begin{aligned}
& H(t,x,p,0)-H(t',x',p,0) \\
&\quad =
\bigl(H(t,x,p,0)-H(\bar t,\bar x,p,0)-H(\tilde t,\tilde x,0,0)\bigr) \\
&\qquad -
\bigl(H(t',x',p,0)-H(\bar t,\bar x,p,0)-H(\tilde t,\tilde x,0,0)\bigr).
\end{aligned}
\]
Therefore,
\[
\begin{aligned}
& |H(t,x,p,0)-H(t',x',p,0)| \\
&\quad \leq
C\bigl(|p|^{\alpha\frac{\beta-1}{\beta}}
      +V(t,x)+V(\bar t,\bar x)+V(\tilde t,\tilde x)\bigr)^{1-1/C} \\
&\qquad +
C\bigl(|p|^{\alpha\frac{\beta-1}{\beta}}
      +V(t',x')+V(\bar t,\bar x)+V(\tilde t,\tilde x)\bigr)^{1-1/C},
\end{aligned}
\]
for a.e.~$(t,x)\in Q$, a.e.~$(t',x')\in Q$, a.e.~$(\bar{t},\bar{x})\in Q$, and a.e~$(\tilde{t},\tilde{x})\in Q$. Since \(V\geq1\), the base expressions are greater than $1$, hence the powers \(1-1/C\) can be dropped. Hence
\[
\begin{aligned}
|H(t,x,p,0)-H(t',x',p,0)|
&\leq
C\bigl(|p|^{\alpha\frac{\beta-1}{\beta}}
      +V(t,x)+V(t',x')  \\
&\qquad\qquad
      +V(\bar t,\bar x)+V(\tilde t,\tilde x)\bigr).
\end{aligned}
\]
Taking the essential infimum over \((\bar t,\bar x)\) and
\((\tilde t,\tilde x)\), and absorbing the resulting finite quantity into
\(C\), gives
\[
|H(t,x,p,0)-H(t',x',p,0)|
\leq
C\bigl(|p|^{\alpha\frac{\beta-1}{\beta}}
      +V(t,x)+V(t',x')\bigr),
\]
which is precisely \eqref{remedy-bound}.

We now prove the remaining two implications, using
Assumption~\ref{onH.powergrowth}.
Considering the identity
    \[\begin{aligned}
        H(t,x,p,m) - H(t,x,p,0) & = \bigl(H(t,x,p,m) - H(\bar{t},\bar{x},p,0) - H(\tilde{t},\tilde{x},0,m)\bigr) \\
        & \qquad - \bigl(H(t,x,p,0) - H(\bar{t},\bar{x},p,0) - H(\tilde{t},\tilde{x},0,0)\bigr) \\
        & \qquad + H(\tilde{t},\tilde{x},0,m) - H(\tilde{t},\tilde{x},0,0)
    \end{aligned}\]
and estimating the terms on the right-hand side with \eqref{eq:H.triple-deviation} (noting that the exponent $1-1/C$ can be dropped due to $V \geq 1$), as well as \eqref{eq:assH.upper.simple} and \eqref{eq:assH.lower.simple}, we get
    \[\begin{aligned}
        |H(t,x,p,m) - H(t,x,p,0)| & \leq C\bigl(|p|^{\alpha\frac{(\beta-1)}{\beta}}+m^{\beta-1} + V(t,x)+ V(\bar{t},\bar{x}) + V(\tilde{t},\tilde{x})\bigr) \\
        & \qquad + C\bigl(|p|^{\alpha\frac{(\beta-1)}{\beta}} + V(t,x)+ V(\bar{t},\bar{x}) + V(\tilde{t},\tilde{x})\bigr) \\
        & \qquad + C(m^{\beta-1}+V(\tilde{t},\tilde{x})) + CV(\tilde{t},\tilde{x}) \\
        & \leq C\bigl(|p|^{\alpha\frac{(\beta-1)}{\beta}}+m^{\beta-1} + V(t,x)+ V(\bar{t},\bar{x}) + V(\tilde{t},\tilde{x})\bigr)
    \end{aligned}\]
    for a.e.~$(t,x)\in Q$, a.e.~$(\bar{t},\bar{x})\in Q$, a.e~$(\tilde{t},\tilde{x})\in Q$. Taking the essential infimum on the right-hand side over $(\bar{t},\bar{x})$ and $(\tilde{t},\tilde{x})$, we conclude \eqref{eq:high-order-p.has-no-m}. 
    
    Similarly, using \eqref{eq:highest-order-m.separated} and \eqref{eq:deviation-at-fixed-pm-improved-to-m1m2} of Lemma~\ref{lem:highest-order-m.separated}, we estimate
    \[\begin{aligned}
        |H(t,x,p,m) - H(t',x',p,0)| & \leq |H(t,x,p,m) + f(m) - H(t',x',p,0) - f(0)| \\
        & \qquad + |f(m)| + |f(0)| \\
        & \leq C\bigl(|p|^{\alpha\frac{(\beta-1)}{\beta}} + V(t,x) + V(t',x')\bigr)^{1-1/C} \\
        & \qquad + C(m^{\beta-1}+1)^{1-1/C} + C(m^{\beta-1}+1) + C \\
        & \leq C\bigl(|p|^\alpha + V(t,x) + V(t',x')\bigr)^{1-1/C}  + C(m^{\beta-1}+1) \\
        & \leq C(m^{\beta-1}+1)\bigl(|p|^{\alpha} + V(t,x) + V(t',x')\bigr)^{1-1/C},
    \end{aligned}\]
    where, in the last step, we replaced the sum of two terms by their product noting that the terms are greater than or equal to $1$ (redefining $C$ and using the inequality $a+b\leq 2 a b$ for $a,b\geq 1$). Thus, we obtain \eqref{eq:devH-at-fixed.p-bound} with $R(m):= m^{\beta-1}+1$.

\end{remark}

The preceding assumptions give integrability bounds for $H(t,x,0,0)$, but the following one-sided uniform bound is needed for a global lower bound on the associated Lagrangian, and is used in Proposition~\ref{prop:uepsi-lower-bound-A} to obtain a pointwise lower bound on $u_\epsi$.

\renewcommand{\theassumption}{\arabic{assumption}A}
\begin{assumption}[Lagrangian Lower Bound]
\label{onH.Lagrangianlowerbound}
The following one-sided bound holds: \[\esssup_{(t,x)\in Q} H(t, x, 0, 0) < \infty.\]
\end{assumption}
The Lagrangian $L$ is defined via the Legendre transform of the Hamiltonian,
\[
L(t,x,v,m) = \sup_{p\in \RR^d} \{-p \cdot v - H(t,x,p,m)\}. 
\]
Therefore, $L(t,x,v,m) \geq -H(t,x,0,m)$ by evaluating the function inside the supremum at $p=0$. If $H$ is non-increasing in $m$ as in Remark~\ref{rmk:cxty}, then $-H(t,x,0,m) \geq -H(t,x,0,0)$, so the condition $\esssup H(t,x,0,0) < \infty$ yields $L(t,x,v,m) \geq -C$ for some constant $C$. This prevents the optimal control problem from becoming ill-posed due to unbounded negative costs.

Alternatively, if the global lower bound on the Lagrangian is not guaranteed, our existence framework can still proceed, provided that the initial population density is strictly positive. Thus, the following assumption serves as a mutually independent alternative to Assumption~\ref{onH.Lagrangianlowerbound}; our main results require only one of these two conditions to hold.

\addtocounter{assumption}{-1}
\renewcommand{\theassumption}{\arabic{assumption}B}
\begin{assumption}[Strict Positivity of Initial Density]
  \label{onm0.lowerbound}
  We have
  $m_0(x)>0$ for every $x\in\Tt^d$.
\end{assumption}
\renewcommand{\theassumption}{\arabic{assumption}}

The condition $m_0 > 0$ in Assumption~\ref{onm0.lowerbound} has both modeling and analytical significance. From a modeling perspective, it excludes initial vacuum regions: agents are present throughout the state space $\Tt^d$. Mathematically, since the torus is compact, continuous strict positivity implies
\[\inf_{x\in\Tt^d} m_0(x) > 0.\] 
This uniform lower bound is essential for obtaining global a priori estimates. In energy and monotonicity arguments, integrating the transport equation against the value function produces terms of the form $\int_{\Tt^d} u(0,x)\, m_0(x) \,dx$. Bounding $m_0$ away from zero ensures that this integral controls $\int_{\Tt^d} u(0,x) \,dx$.

\subsection{Examples of Admissible Hamiltonians}\label{sec:example}
In this subsection, we present examples of Hamiltonians~$H$ that simultaneously satisfy Assumptions \ref{onH.strict-monotone}, \ref{onH.powergrowth}, \ref{onH.high-order-p.has-no-m}, \ref{onH.remedy-bound}, \ref{onH.highest-order-p.has-no-txm}, \ref{onH.high-order-pm.has-no-tx}, and \ref{onH.Lagrangianlowerbound}, thereby showing that Corollary~\ref{strong.exists} is not vacuous, for arbitrary $m_0\in C(\Tt^d;\RR^+_0)$ and $u_T\in C^1(\Tt^d)$. Throughout, we consider $H$ of the form
\begin{equation}\label{Hamiltonian-split-form}
    H(t,x,p,m) = H_0(p) - f(m) + g(t,x,p,m),
\end{equation}
and observe that the following conditions~\ref{exH0}, \ref{exf}, and \ref{exg}, on $H_0\colon\RR^d\to \RR$, $f\colon\RR^+_0\to \RR$, and $g\colon Q\times \RR^d\times \RR^+_0 \to \RR$, are sufficient to ensure that $H$ in~\eqref{Hamiltonian-split-form} satisfies the aforementioned assumptions.\smallskip
\begin{itemize}
        \item[\myulabel{$\mathbf{(H_0)}$}{exH0}] The principal Hamiltonian $H_0\colon\RR^d\to\RR$ is continuously differentiable and convex.
       Moreover, it satisfies
        \begin{equation}\label{onH0.growth}
            \begin{aligned}
                |D_pH_0(p)| & \leq C(|p|^{\alpha-1}+1),\\
            H_0(p) & \geq \frac{1}{C}|p|^\alpha - C,
            \end{aligned}
        \end{equation}
        for some positive constant $C$.\smallskip
        
        \item[\myulabel{$\mathbf{(f)}$}{exf}] The principal coupling  $f\colon \RR^+_0\to \RR$ is continuous and strictly increasing. Moreover, it satisfies
        \begin{equation}\label{onf.growth}
            \frac{1}{C}m^{\beta-1} - C \leq f(m) \leq C(m^{\beta-1}+1),
        \end{equation}
        for some positive constant $C$. \smallskip
        
        \item[\myulabel{$\mathbf{(g)}$}{exg}] The lower-order term $g\colon Q\times\RR^d\times\RR^+_0\to \RR$ is measurable and the maps $(p,m)\mapsto g(t,x,p,m)$ and $(p,m)\mapsto D_p g(t,x,p,m)$ are continuous for a.e.~$(t,x)\in Q$. Moreover, the monotonicity condition
\begin{equation}
  \label{gmon}
   \begin{aligned}
        &\bigl(- g(t,x,p_1,m_1) + g(t,x,p_2,m_2)\bigr) (m_1 - m_2)\\ &\quad +
        \bigl( m_1 D_p g(t,x,p_1,m_1) - m_2 D_p g(t,x,p_2,m_2)\bigr) \cdot (p_1-p_2) \geq 0
   \end{aligned}
     \end{equation}
holds for a.e.~$(t,x)\in Q$ and all $(p_1,m_1),\, (p_2,m_2)\in\RR^d\times\RR^+_0$, and the upper bounds
        \begin{align}
            |g(t,x,p,m)| & \leq C\bigl(|p|^{\alpha\frac{(\beta-1)}{\beta}} + m^{\beta-1}+V(t,x)\bigr)^{1-1/C}, \label{ong-upper-growth}\\
            |D_p g(t,x,p,m)| & \leq C\bigl(|p|^{\alpha-1} + m^{\frac{(\alpha-1)}{\alpha}(\beta-1)}+V(t,x)^{\frac{\alpha-1}{\alpha}}\bigr), \label{onDpg-upper-growth}
        \end{align}
hold for a.e~$(t,x)\in Q$ and all $(p,m)\in\RR^d\times\RR^+_0$, for a positive constant $C$ and some $V\in L^{\beta'}(Q)$ with $V\geq 1$ a.e. Finally, we have the one-sided bound
\begin{equation}\label{gLagrange-bound}
    \esssup_{(t,x)\in Q} g(t,x,0,0) < \infty.
\end{equation} \smallskip
\end{itemize}
We state and justify this observation in a lemma below.
\begin{lemma}\label{lem:H0fg-implies}
    Consider the setting above and suppose that the conditions in~\ref{exH0}, \ref{exf}, and \ref{exg} hold. Then the Hamiltonian $H$ defined in \eqref{Hamiltonian-split-form} satisfies Assumptions \ref{onH.strict-monotone}, \ref{onH.powergrowth}, \ref{onH.high-order-p.has-no-m}, \ref{onH.remedy-bound}, \ref{onH.highest-order-p.has-no-txm}, \ref{onH.high-order-pm.has-no-tx}, and \ref{onH.Lagrangianlowerbound}, possibly with a redefined value of $C$.
\end{lemma}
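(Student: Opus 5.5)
The plan is to verify each assumption directly from the split form \eqref{Hamiltonian-split-form}, checking the structural conditions (monotonicity) first and then the growth and deviation bounds. The key observation is that $H_0$ and $f$ carry all the top-order growth and are independent of $(t,x)$, while $g$ is of strictly lower order by \eqref{ong-upper-growth}. Since $\alpha(\beta-1)/\beta<\alpha$ and $\beta-1$ powers appear raised to $1-1/C$, $g$ can be absorbed everywhere with Young's inequality.

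\textbf{Monotonicity (Assumption~\ref{onH.strict-monotone}).} I would split the left-hand side of \eqref{hmon} into three contributions: one from $H_0$, one from $-f$, and one from $g$.
\begin{itemize}
\item The $g$-part is non-negative by \eqref{gmon}.
\item The $-f$-part is $(f(m_1)-f(m_2))(m_1-m_2)$, since $-f$ does not depend on $p$ and so contributes nothing to $D_pH$.
\item The $H_0$-part is
\[(-H_0(p_1)+H_0(p_2))(m_1-m_2)+(m_1D_pH_0(p_1)-m_2D_pH_0(p_2))\cdot(p_1-p_2).\]
I would rewrite it as $m_1\bigl(H_0(p_2)-H_0(p_1)-D_pH_0(p_1)\cdot(p_2-p_1)\bigr)+m_2\bigl(H_0(p_1)-H_0(p_2)-D_pH_0(p_2)\cdot(p_1-p_2)\bigr)$. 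Both brackets are non-negative by convexity of $H_0$, and $m_i\geq 0$.
\end{itemize}
Strict monotonicity of $f$ makes the $f$-term strictly positive when $m_1\neq m_2$. This gives semi-strict monotonicity.

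\textbf{Power growth (Assumption~\ref{onH.powergrowth}).} I would take $V$ from \ref{exg}, increased by a constant so that \eqref{eq:assH.potential} holds.
\begin{itemize}
\item For \eqref{eq:assH.upper.simple}: $H(t,x,0,m)=H_0(0)-f(m)+g$. Using $-f(m)\leq -\frac1C m^{\beta-1}+C$ and $|g|\leq C(m^{\beta-1}+V)^{1-1/C}\leq \delta m^{\beta-1}+C_\delta V$ by Young's inequality, the term $\delta m^{\beta-1}$ is absorbed into $-\frac1C m^{\beta-1}$.
\item For \eqref{eq:assH.DpH.upper.simple}: combine \eqref{onH0.growth} with \eqref{onDpg-upper-growth}, using $1\leq V^{(\alpha-1)/\alpha}$.
\item For \eqref{eq:assH.lower.simple}: use $H_0\geq\frac1C|p|^\alpha-C$ and $f\leq C(m^{\beta-1}+1)$. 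The term $|g|$ is bounded via $|p|^{\alpha(\beta-1)/\beta(1-1/C)}\leq\delta|p|^\alpha+C_\delta$, which absorbs it into $\frac1C|p|^\alpha$.
\end{itemize}

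\textbf{Deviation bounds and Lagrangian lower bound.} It suffices to prove Assumption~\ref{onH.high-order-pm.has-no-tx}; Remark~\ref{rmk:last-deviation-implies-the-earlier-two} then yields Assumptions~\ref{onH.high-order-p.has-no-m}, \ref{onH.remedy-bound} and \ref{onH.highest-order-p.has-no-txm}. In the triple difference
\[H(t,x,p,m)-H(\bar t,\bar x,p,0)-H(\tilde t,\tilde x,0,m),\]
the $H_0(p)$ and $f(m)$ terms cancel, leaving $-H_0(0)+f(0)$ plus three $g$-terms. Each $g$-term is bounded by \eqref{ong-upper-growth}, and the constant is absorbed using $V\geq1$. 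A sum of three powers $(\cdot)^{1-1/C}$ is bounded by $C$ times the power of the sum, which gives \eqref{eq:H.triple-deviation}. Finally, $H(t,x,0,0)=H_0(0)-f(0)+g(t,x,0,0)$ is essentially bounded above by \eqref{gLagrange-bound}, which is Assumption~\ref{onH.Lagrangianlowerbound}.

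\textbf{Main obstacle.} The only delicate point is the Young-inequality absorption in the power-growth bounds. It works because the exponents in \eqref{ong-upper-growth} are strictly below $\alpha$ and $\beta-1$, and because the $V$-terms appear only to the power $1-1/C\leq1$ with $V\geq1$.
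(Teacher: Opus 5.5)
Your proposal is correct and follows the paper's proof essentially step by step. Both arguments use additivity of \eqref{hmon} across $H_0$, $-f$, $g$ and Young's inequality to absorb the $g$-terms into the power-growth bounds. Both reduce the deviation assumptions to \eqref{eq:H.triple-deviation} via Remark~\ref{rmk:last-deviation-implies-the-earlier-two}, using the cancellation of $H_0(p)$ and $f(m)$, and both obtain the Lagrangian bound from \eqref{gLagrange-bound}. The only addition is your explicit first-order convexity decomposition of the $H_0$-contribution, which the paper leaves implicit in the phrase ``by convexity in $p$''.
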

\begin{proof} We verify the assumptions in order.

 \medskip\noindent\textbf{Assumption~\ref{onH.strict-monotone}:} The monotonicity condition~\eqref{hmon} is additive in the Hamiltonian, hence we may verify it for $H_0$, $-f$, and $g$ separately. The term $H_0(p)$ satisfies~\eqref{hmon} by convexity in $p$, the term $-f(m)$ satisfies~\eqref{hmon} because $f$ is non-decreasing, and the term $g$ satisfies~\eqref{hmon} by \eqref{gmon}. Finally, strictness in the case $m_1\neq m_2$ follows because $f$ is strictly increasing.

 \medskip\noindent\textbf{Assumption~\ref{onH.powergrowth}:} First, we use the lower bound in \eqref{onf.growth} and \eqref{ong-upper-growth} to get
 \[\begin{aligned}
     H(t,x,0,m) & = H_0(0) - f(m) + g(t,x,0,m) \\
     & \leq -\frac{1}{C}m^{\beta-1} + C + C(m^{\beta-1})^{1-1/C} + CV(t,x).
 \end{aligned}\]
 Now we absorb the term $(m^{\beta-1})^{1-1/C}$ into the first term using Young's inequality, thus conclude \eqref{eq:assH.upper.simple} by redefining $C$.
 
 Secondly, we use the gradient bound in \eqref{onH0.growth} and \eqref{onDpg-upper-growth} to get
 \[\begin{aligned}
     |D_pH(t,x,p,m)| & \leq |D_pH_0(p)| + |D_pg(t,x,p,m)| \\
     & \leq C|p|^{\alpha-1} + C + C\bigl(|p|^{\alpha-1} + m^{\frac{(\alpha-1)}{\alpha}(\beta-1)}+V(t,x)^{\frac{\alpha-1}{\alpha}}\bigr),
 \end{aligned}\]
    from which we immediately conclude \eqref{eq:assH.DpH.upper.simple} by redefining $C$.

Finally, we use the lower bound in \eqref{onH0.growth}, the upper bound in \eqref{onf.growth}, and \eqref{ong-upper-growth} to get
\[\begin{aligned}
     H(t,x,p,m) & = H_0(p) - f(m) + g(t,x,p,m) \\
     & \geq \frac{1}{C}|p|^{\alpha}-C - Cm^{\beta-1} - C - C\bigl(|p|^{\alpha\frac{(\beta-1)}{\beta}} + m^{\beta-1}+V(t,x)\bigr), 
 \end{aligned}\]
 where the last two terms follow from Young's inequality applied to \eqref{ong-upper-growth}. Finally,  
 we absorb the $|p|^{\alpha\frac{(\beta-1)}{\beta}}$ term into the first term using again Young's inequality, thus conclude \eqref{eq:assH.lower.simple} by redefining $C$.

  \medskip\noindent\textbf{Assumptions~\ref{onH.high-order-p.has-no-m}, \ref{onH.remedy-bound}, \ref{onH.highest-order-p.has-no-txm}, \ref{onH.high-order-pm.has-no-tx}:} 
In view of Remark~\ref{rmk:last-deviation-implies-the-earlier-two}, it is enough to obtain \eqref{eq:H.triple-deviation}. To this end, we use \eqref{ong-upper-growth} to get
\[\begin{aligned}
     & |H(t,x,p,m) - H(\bar{t},\bar{x},p,0) - H(\tilde{t},\tilde{x},0,m)| \\
     & \qquad = |-H_0(0)+f(0)+g(t,x,p,m)-g(\bar{t},\bar{x},p,0) - g(\tilde{t},\tilde{x},0,m)| \\
     & \qquad \leq |g(t,x,p,m)| + |g(\bar{t},\bar{x},p,0)| + |g(\tilde{t},\tilde{x},0,m)| + C \\
     & \qquad \leq C\bigl(|p|^{\alpha\frac{(\beta-1)}{\beta}} + m^{\beta-1} + V(t,x)\bigr)^{1-1/C} \\
     & \qquad\qquad + C\bigl(|p|^{\alpha\frac{(\beta-1)}{\beta}} + V(\bar{t},\bar{x})\bigr)^{1-1/C} + C\bigl(m^{\beta-1} + V(\tilde{t},\tilde{x})\bigr)^{1-1/C} + C, \\
     & \qquad \leq C\bigl(|p|^{\alpha\frac{(\beta-1)}{\beta}}+m^{\beta-1} + V(t,x)+ V(\bar{t},\bar{x}) + V(\tilde{t},\tilde{x})\bigr)^{1-1/C},
 \end{aligned}\]
 yielding the desired result.
 
 \medskip\noindent\textbf{Assumption~\ref{onH.Lagrangianlowerbound}:} It immediately follows from \eqref{gLagrange-bound}.
\end{proof}

In view of Lemma~\ref{lem:H0fg-implies}, we now provide examples of $g\colon Q\times\RR^d\times\RR^+_0\to \RR$ that satisfy \eqref{gmon}, \eqref{ong-upper-growth}, \eqref{onDpg-upper-growth}, and \eqref{gLagrange-bound}, 
because standard examples for $H_0$ and $f$ are immediate.
Note that all four of these conditions are additive. Hence, the examples can also be additively combined.

\begin{example}[Separable Hamiltonian]\label{exm:sep}
    Choose $\sigma$ such that 
    \[0 < \sigma < \beta - 1\] 
    and choose $w_1\in L^\infty(Q;\RR^+_0)$ and $w_2\in L^{\beta'+\epsilon}(Q;\RR^+_0)$ for some $\epsilon > 0$. Then
    \[g(t,x,p,m) = -w_1(t,x)m^{\sigma} - w_2(t,x)\]
satisfies \eqref{gmon}, \eqref{ong-upper-growth}, \eqref{onDpg-upper-growth}, and \eqref{gLagrange-bound}. Indeed, \eqref{gmon} follows from the monotonicity of $m\mapsto m^\sigma$, \eqref{onDpg-upper-growth} is immediate since $D_pg=0$, and \eqref{gLagrange-bound} follows from $g(t,x,0,0)=-w_2(t,x)\leq 0$. 
For \eqref{ong-upper-growth}, take
\[
V(t,x)=(w_2(t,x)+1)^{(\beta'+\epsilon)/\beta'},
\]
so that $V\geq 1$ and $V\in L^{\beta'}(Q)$, the latter because $w_2\in L^{\beta'+\epsilon}(Q)$. 
Writing \( |g|=w_1m^\sigma+w_2 \), choose \(C\) sufficiently large so that
\(1-\frac1C\geq \frac{\sigma}{\beta-1}\) and
\(1-\frac1C\geq \frac{\beta'}{\beta'+\epsilon}\). Then
\[
w_1m^\sigma
\leq C(m^{\beta-1}+V)^{1-1/C},
\qquad
w_2\leq V^{1-1/C},
\]
where we used \(w_1\in L^\infty(Q;\RR^+_0)\), \(V\ge1\), and
\(\sigma<\beta-1\). Hence \eqref{ong-upper-growth} follows.
\end{example}

\begin{example}[Non-separable Hamiltonian]\label{exm:nonsep}
Assume that
\[\frac{1}{\alpha}+\frac{1}{\beta}<1. 
\]
Choose $\sigma$ and $\tau$ such that 
\[1 < \sigma < \alpha\frac{(\beta-1)}{\beta}, \qquad\qquad 0<\tau\leq \frac{4(\sigma-1)}{\sigma}.\]
Moreover, let
\[\rho := \frac{\alpha\beta}{\alpha(\beta-1)-\sigma\beta}\]
and choose $w\in L^{\rho+\epsilon}(Q;\RR^+_0)$ for some $\epsilon > 0$. Then, as we check below, 
\[g(t,x,p,m) = w(t,x)\frac{|p|^\sigma}{(1+m)^\tau}\]
satisfies \eqref{gmon}, \eqref{ong-upper-growth}, \eqref{onDpg-upper-growth}, and \eqref{gLagrange-bound}.
We note that \eqref{gLagrange-bound} is immediate from $g(t,x,0,0)=0$.
Thus, it remains to 
check \eqref{gmon},  \eqref{ong-upper-growth} and \eqref{onDpg-upper-growth}. 

\medskip\noindent\textbf{Proof of \eqref{gmon}:} We want to prove
\begin{equation*}
   \begin{aligned}
        & w(t,x)\biggl(- \frac{|p_1|^\sigma}{(1+m_1)^\tau} + \frac{|p_2|^\sigma}{(1+m_2)^\tau}\biggr) (m_1 - m_2)\\ &\quad +
        w(t,x)\sigma\biggl( m_1\frac{|p_1|^{\sigma-2}p_1}{(1+m_1)^\tau} - m_2\frac{|p_2|^{\sigma-2}p_2}{(1+m_2)^\tau}\biggr) \cdot (p_1-p_2) \geq 0,
   \end{aligned}
     \end{equation*}
for a.e.~$(t,x)\in Q$ and all $(m_1, p_1),\, (m_2, p_2) \in \RR^+_0\times \RR^d$. 
Since \(w(t,x)\ge0\), it suffices to prove the corresponding inequality for the factor independent of \(w\). Define \(A\colon \RR^+_0\times \RR^d \to \RR^{d+1}\) by
\[
A(m,p) =
\begin{cases}
\displaystyle
\frac{|p|^{\sigma-1}}{(1+m)^\tau}\biggl(-|p|,\, \sigma \frac{m}{|p|}p \biggr), & p\neq0,\\[2mm]
0, & p=0.
\end{cases}
\]
We want to prove
\begin{equation}\label{gmon-verify}
    \bigl\langle A(m_1,p_1) - A(m_2,p_2),\ (m_1,p_1) - (m_2,p_2) \bigr\rangle \geq 0
\end{equation}
for all $(m_1, p_1),\, (m_2, p_2) \in \RR^+_0\times \RR^d$, with respect to the standard bilinear pairing of $\RR^{d+1}$ with itself. For $p_1 = p_2 = 0$, \eqref{gmon-verify} is trivial. Otherwise, we use the Fundamental Theorem of Calculus to express
\begin{equation}\label{gmon-ftc}
    \begin{aligned}
    & \left\langle A(m_1,p_1) - A(m_2,p_2),\ (m_1,p_1) - (m_2,p_2) \right\rangle \\
    & \quad = \int_0^1 (m_1-m_2,p_1-p_2)^T DA\bigl(\lambda m_1 + (1-\lambda)m_2, \lambda p_1 + (1-\lambda)p_2\bigr) (m_1-m_2,p_1-p_2)\,d\lambda.
\end{aligned}
\end{equation}
Indeed, the map $A\colon \RR^+_0\times \RR^d \to \RR^{d+1}$ is continuous everywhere and it is $C^1$ away from the ray $p = 0$, hence \eqref{gmon-ftc} holds if $\lambda p_1 + (1-\lambda) p_2 \neq 0$ for all $0\leq \lambda \leq 1$. Moreover, it is easy to see that \eqref{gmon-ftc} holds even if $\lambda p_1 + (1-\lambda) p_2 = 0$ for exactly one value of $\lambda$, because then the integrand is still an integrable function even as it may diverge around the point of discontinuity. Consequently, it remains to show that the symmetrized Jacobian of $A$ is positive semi-definite, that is
\begin{equation}\label{sym-jac-positive}
    \frac{1}{2}\bigl(DA(m,p) + DA^T(m,p)\bigr) \geq 0
\end{equation}
for all $(m,p)$ with $p\neq 0$.

For \eqref{sym-jac-positive}, we directly compute
\[\frac{1}{2}\bigl(DA(m,p) + DA^T(m,p)\bigr) = \frac{|p|^{\sigma-2}}{(1+m)^{\tau+1}}\begin{pmatrix}
    \tau|p|^2 & -\frac{\sigma\tau}{2} mp^T \\[2mm]
    -\frac{\sigma\tau}{2} mp & \sigma m(1+m)\bigl(I + (\sigma-2)\frac{p\otimes p}{|p|^2}\bigr)
\end{pmatrix},\]
hence we want to show 
\[\begin{pmatrix}
    \tau|p|^2 & -\frac{\sigma\tau}{2} mp^T \\[2mm]
    -\frac{\sigma\tau}{2} mp & \sigma m(1+m)\bigl(I + (\sigma-2)\frac{p\otimes p}{|p|^2}\bigr)
\end{pmatrix} \geq 0.\]
The top left term is positive, hence we need to check the positive semi-definiteness of its Schur complement:
\[\begin{aligned}
    & \sigma m(1+m)\biggl(I + (\sigma-2)\frac{p\otimes p}{|p|^2}\biggr) - \bigl(-\frac{\sigma\tau}{2} mp\bigr)\biggl(\frac{1}{\tau|p|^2}\biggr)\bigl(-\frac{\sigma\tau}{2} mp^T\bigr) \\
    & = \sigma m(1+m)\biggl(I + (\sigma-2)\frac{p\otimes p}{|p|^2}\biggr) - \frac{\sigma^2\tau}{4}m^2\frac{p\otimes p}{|p|^2} \\
    & = \sigma m\biggl((1+m)I + \bigl((\sigma-2)(1+m)-\frac{\sigma\tau}{4}m\bigr)\frac{p\otimes p}{|p|^2}\biggr).
\end{aligned}\]
If $m=0$, the Schur complement is identically zero. If $m>0$, we cancel the factor of $\sigma m$ and see that the $d$-dimensional matrix inside the parenthesis has eigenvalues $(1+m) > 0$ in the directions orthogonal to $p$ and
\begin{equation}\label{radial-eigenvalue}
    (\sigma-1)(1+m) - \frac{\sigma\tau}{4}m = \sigma - 1 + \Bigl(\sigma - 1 - \frac{\sigma\tau}{4}\Bigr)m > 0
\end{equation}
in the direction of $p$, hence the proof is complete. Note that the inequality in \eqref{radial-eigenvalue} follows from the choice of $\sigma$ and $\tau$.

\medskip\noindent\textbf{Proof of \eqref{ong-upper-growth}:} We first observe
\[\begin{aligned}
    |g(t,x,p,m)| \leq w(t,x)|p|^\sigma & \leq w(t,x)^{\frac{\rho+(1/2)\epsilon}{\beta'}} + |p|^{\sigma\frac{\rho+(1/2)\epsilon}{\rho+(1/2)\epsilon - \beta'}} \\
    & \leq C\bigl((w(t,x)+1)^{\frac{\rho+\epsilon}{\beta'}} + |p|^{\frac{\sigma\rho}{\rho - \beta'}}\bigr)^{1-1/C}.
\end{aligned}
\]
Then, we compute
\[\frac{\sigma\rho}{\rho - \beta'} = \alpha\frac{(\beta-1)}{\beta}\]
and take $V(t,x) := (w(t,x)+1)^{\frac{\rho+\epsilon}{\beta'}} \in L^{\beta'}(Q)$, with $V\ge1$. This proves \eqref{ong-upper-growth}.

\medskip\noindent\textbf{Proof of 
\eqref{onDpg-upper-growth}:} Using \((1+m)^{-\tau}\leq 1\), we have
\[
|D_pg(t,x,p,m)|
\leq Cw(t,x)|p|^{\sigma-1}.
\]
Since
\[
\sigma<\alpha\frac{\beta-1}{\beta}<\alpha,
\]
Young's inequality with exponents
\[
\frac{\alpha-1}{\sigma-1}
\qquad\text{and}\qquad
\frac{\alpha-1}{\alpha-\sigma}
\]
gives
\[
w(t,x)|p|^{\sigma-1}
\leq C|p|^{\alpha-1}
   + Cw(t,x)^{\frac{\alpha-1}{\alpha-\sigma}}.
\]
We now compare the last exponent with the potential \(V\) chosen above. Since
\[
\frac{\alpha\beta'}{\alpha-\sigma}
=
\frac{\alpha\beta}{(\beta-1)(\alpha-\sigma)}
<
\frac{\alpha\beta}{\alpha(\beta-1)-\sigma\beta}
=\rho,
\]
we have
\[
\frac{\alpha-1}{\alpha-\sigma}
<
\frac{\alpha-1}{\alpha}\frac{\rho+\epsilon}{\beta'}.
\]
Thus, because \(V=(w+1)^{(\rho+\epsilon)/\beta'}\) and \(V\ge1\),
\[
w(t,x)^{\frac{\alpha-1}{\alpha-\sigma}}
\leq C(w(t,x)+1)^{\frac{\alpha-1}{\alpha}\frac{\rho+\epsilon}{\beta'}}
= C V(t,x)^{\frac{\alpha-1}{\alpha}}.
\]
Consequently,
\[
|D_pg(t,x,p,m)|
\leq C\left(|p|^{\alpha-1}+V(t,x)^{\frac{\alpha-1}{\alpha}}\right),
\]
which is stronger than \eqref{onDpg-upper-growth}.

\end{example}

\subsection{Monotone Operator Theory}\label{sec:monotone}

Because first-order MFG systems lack the regularizing diffusion that provides classical compactness, standard techniques and classical PDE theory fall short. Instead, we cast our problem into the framework of monotone operators. This theory handles non-separable Hamiltonians natively. Consequently, the proofs of existence of solutions that we present in the subsequent sections rely on an abstract existence theorem for monotone operators on reflexive Banach spaces, Theorem~\ref{thm:monotone.abstract-weak} below. Before stating this theorem, we recall the notions of monotonicity, coercivity, and hemicontinuity used in our analysis. These properties are standard in the theory of variational inequalities; see for example \cite{KiSt00}. A prototypical example of an operator satisfying these properties is the $p$-Laplacian, which we will directly employ as our core regularization tool in Section~\ref{sec:proof-weak-exists}.

\begin{definition}\label{def:prelim}
  Let \(X\) be a reflexive Banach space, and let \(X'\) denote its dual. Fix an element \(z_0\) in a vector space containing \(X\), and let \(\mathcal{K}\subset z_0 + X\) be a nonempty convex set. Let \(A\colon \mathcal{K} \to X' \) be an operator. We say that:
  \begin{enumerate}[label=(\alph*)]
\item $A$ is monotone if for all $v_1, v_2\in \mathcal{K}$, we have \[\langle A[v_1] - A[v_2], v_1 - v_2 \rangle_{X', X} \geq 0.\]
  \item\label{defitem:coercive} $A$ is coercive if there exists $\bar v \in \mathcal{K}$ such that
\[\lim_{\substack{v\in \mathcal{K} \\ \lVert v - \bar v\rVert_X \to \infty}} \frac{\langle A[v] - A[\bar v], v - \bar v \rangle_{X', X}}{\lVert v - \bar v \rVert_X} = +\infty.\]
   \item\label{defitem:hemicont} $A$ is hemicontinuous if for all $u, v\in \mathcal{K}$ and $w\in X$, the real-valued map
   \[\lambda\mapsto \langle A[(1-\lambda)u + \lambda v], w \rangle_{X', X}\]
   is continuous in the interval $[0,1]$.
  \end{enumerate}
\end{definition}

With these notions in place, we introduce the weak and strong formulations of the
variational inequality associated with $(A,\mathcal{K})$.

\begin{definition}\label{def:abstract-vi}
  In the setting of Definition~\ref{def:prelim}, an element $u\in\overline{\mathcal{K}}$ is
  a solution of the \emph{weak variational inequality} for $(A,\mathcal{K})$ if
  \begin{equation}\label{eq:weakIneqA}
      \langle A[v],\, v-u\rangle_{X',X}\geq 0 \qquad\text{for all } v\in\mathcal{K},
  \end{equation}
  and an element $u\in\mathcal{K}$ is a solution of the \emph{strong variational inequality}
  for $(A,\mathcal{K})$ if
  \begin{equation}\label{eq:strongVI.IneqA}
      \langle A[u],\, v-u\rangle_{X',X}\geq 0 \qquad\text{for all } v\in\mathcal{K}.
  \end{equation}
  Here \(\overline{\mathcal{K}}\) denotes the closure of \(\mathcal{K}\) in \(z_0 + X\).
\end{definition}

\begin{remark}\label{rmk:hemicontinuous-upgrade}
By monotonicity, every strong solution in Definition~\ref{def:abstract-vi} is a weak solution. For hemicontinuous $A$ and weak solutions in $\mathcal{K}$, we also have the converse. Indeed, using \eqref{eq:weakIneqA} with $v$ replaced by $(1-\lambda)u+\lambda w$ for any $w \in \mathcal{K}$ and $0<\lambda\leq 1$, subsequently dividing the resulting inequality by $\lambda$, and letting $\lambda \to 0$, we obtain $\langle A[u], w-u\rangle_{X',X} \geq 0$. Relabeling $w$ as $v$, we conclude that $u$ satisfies 
  the strong variational inequality \eqref{eq:strongVI.IneqA} of Definition~\ref{def:abstract-vi}.
\end{remark}

The cornerstone of our analysis is the following existence theorem. It guarantees solutions to weak variational inequalities under minimal assumptions by extending the classical Debrunner--Flor theorem to unbounded domains.
\begin{theorem}[{cf.~\cite[Theorem~2.16]{ferreiraSolvingMeanFieldGames2025}}]
\label{thm:monotone.abstract-weak}
Let \(X\) be a reflexive Banach space, and let \(X'\) denote its dual. Fix an element \(z_0\) in a vector space containing \(X\), and let \(\mathcal{K} \subset z_0 + X\) be a nonempty convex subset. Assume that the operator \(A\colon \mathcal{K} \to X' \) is monotone and coercive. Then, there exists a solution $u\in\overline{\mathcal{K}}$ of the weak variational
inequality, namely
\begin{equation*}
\begin{aligned}\forall\, v\in \mathcal{K}, \enspace \langle A[v], v-u\rangle_{X',X} \geq 0,
\end{aligned}
\end{equation*}
where \(\overline{\mathcal{K}}\) denotes the closure of \(\mathcal{K}\) in \(z_0 + X\). Assume further that $A$ is hemicontinuous and $\mathcal{K}$ is closed. Then, there exists a solution $u\in\mathcal{K}$ of the strong variational
inequality, namely
\begin{equation*}
\begin{aligned}
\forall\, v\in \mathcal{K}, \enspace \langle A[u], v-u\rangle_{X',X} \geq 0.
\end{aligned}
\end{equation*}
\end{theorem}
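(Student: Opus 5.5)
The plan is the classical Debrunner--Flor/KKM argument in two stages. First we solve the weak variational inequality on finite-dimensional convex hulls inside $\mathcal{K}$. Then we use coercivity to get a radius bound that does not depend on the hull. Finally we pass to all of $\mathcal{K}$ by weak compactness in the reflexive space $X$ and the finite intersection property. Hemicontinuity is only used at the very end, through Remark~\ref{rmk:hemicontinuous-upgrade}.

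\textbf{Step 1 (finite-dimensional solutions).} Fix finitely many $v_1,\dots,v_n\in\mathcal{K}$ and let $F:=\operatorname{conv}\{\bar v,v_1,\dots,v_n\}\subset\mathcal{K}$, where $\bar v$ is the point from the coercivity definition. Then $F$ is a compact convex subset of a finite-dimensional affine space. For $v\in F$ set $S_F(v):=\{u\in F:\langle A[v],v-u\rangle\geq 0\}$; each $S_F(v)$ is closed and convex, since $u\mapsto\langle A[v],v-u\rangle$ is affine and continuous. We check the KKM property by contradiction. Suppose $u=\sum_i\lambda_i w_i$ is a convex combination of points $w_i\in F$ with $u\notin S_F(w_i)$ for every $i$. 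Using $\sum_j\lambda_j=1$ and monotonicity of $A$,
\[
0>\sum_i\lambda_i\langle A[w_i],w_i-u\rangle=\sum_{i,j}\lambda_i\lambda_j\langle A[w_i],w_i-w_j\rangle=\tfrac12\sum_{i,j}\lambda_i\lambda_j\langle A[w_i]-A[w_j],w_i-w_j\rangle\geq0 ,
\]
a contradiction. The KKM lemma in finite dimensions (equivalently, Brouwer) then gives $u_F\in F$ with $\langle A[v],v-u_F\rangle\geq0$ for all $v\in F$.

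\textbf{Step 2 (uniform bound, the main obstacle).} We need $\lVert u_F-\bar v\rVert_X\leq R_0$ with $R_0$ independent of $F$. By coercivity, choose $R_0$ such that
\[
\langle A[w]-A[\bar v],w-\bar v\rangle>\lVert A[\bar v]\rVert_{X'}\lVert w-\bar v\rVert
\qquad\text{for all } w\in\mathcal{K} \text{ with } \lVert w-\bar v\rVert\geq R_0 .
\]
For such $w$ this gives $\langle A[w],w-\bar v\rangle>0$. Without hemicontinuity we cannot test the inequality at $u_F$ itself, so we test along the segment instead.

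Let $w_\lambda:=(1-\lambda)u_F+\lambda\bar v\in F$. The weak inequality at $w_\lambda$ reads $\lambda\langle A[w_\lambda],\bar v-u_F\rangle\geq 0$. Since $w_\lambda-\bar v=(1-\lambda)(u_F-\bar v)$, this yields $\langle A[w_\lambda],w_\lambda-\bar v\rangle\leq 0$ for $\lambda\in(0,1)$.

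Now suppose $\lVert u_F-\bar v\rVert>R_0$. Choose $\lambda\in(0,1)$ with $\lVert w_\lambda-\bar v\rVert=R_0$; coercivity then gives $\langle A[w_\lambda],w_\lambda-\bar v\rangle>0$, contradicting the previous inequality. Hence $\lVert u_F-\bar v\rVert\leq R_0$.

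\textbf{Step 3 (passage to $\mathcal{K}$).} For each $v\in\mathcal{K}$ define
\[
T(v):=\{u\in\overline{\mathcal{K}}:\lVert u-\bar v\rVert\leq R_0,\ \langle A[v],v-u\rangle\geq 0\}.
\]
The set $\overline{\mathcal{K}}$ is closed and convex, hence weakly closed. The ball is weakly compact because $X$ is reflexive, and the half-space condition is weakly closed. So each $T(v)$ is weakly compact. By Steps 1--2, any finite family $T(v_1),\dots,T(v_n)$ contains the common point $u_F$, so the family has the finite intersection property. Therefore $\bigcap_{v\in\mathcal{K}}T(v)\neq\emptyset$, and any point $u$ in this intersection solves the weak variational inequality.

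If in addition $\mathcal{K}$ is closed, then $u\in\mathcal{K}$. Hemicontinuity of $A$ then upgrades $u$ to a solution of the strong variational inequality by Remark~\ref{rmk:hemicontinuous-upgrade}.
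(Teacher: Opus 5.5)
Your argument is correct. The paper does not prove the first statement itself. It translates by $z_0$ (setting $\tilde v=v-z_0$, $\tilde{\mathcal K}=\mathcal K-z_0\subset X$, $\tilde A[\tilde v]=A[\tilde v+z_0]$) to reduce to the linear-space result \cite[Theorem~2.16]{ferreiraSolvingMeanFieldGames2025}, which it describes as an extension of the Debrunner--Flor theorem to unbounded domains. It then obtains the strong statement from Remark~\ref{rmk:hemicontinuous-upgrade}, exactly as you do.

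You instead give a self-contained Debrunner--Flor/KKM proof that works directly on the affine set, so no translation is needed. One precision: in Step~1 you need the intersection over all $v\in F$, not just the vertices, because Step~2 tests at $w_\lambda$. That requires Ky Fan's form of the KKM lemma, i.e.\ the finite KKM lemma plus compactness of $F$. You use this implicitly, and it holds since each $S_F(v)$ is a closed subset of the compact set $F$.

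The paper's route is shorter but leaves the core of the argument to the reference. Yours makes explicit two points the citation hides. First, the uniform bound $\lVert u_F-\bar v\rVert\le R_0$ follows from the weak inequality alone, by testing along the segment toward $\bar v$; this is exactly why hemicontinuity is not needed for the first statement. Second, reflexivity enters only through the weak compactness of the ball in Step~3.
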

\begin{remark}
    The affine case of the preceding theorem reduces to the linear setting of \cite[Theorem~2.16]{ferreiraSolvingMeanFieldGames2025} by the substitution \(\widetilde{v} = v - z_0\), \(\widetilde{\mathcal{K}} = \mathcal{K} - z_0 \subset X\), \(\widetilde{A}[\widetilde{v}] = A[\widetilde{v} + z_0]\). The second statement in the theorem follows from the first statement and Remark~\ref{rmk:hemicontinuous-upgrade}.
\end{remark}
  
  For comparison of the above notions to the literature, recall that a mapping $A\colon \mathcal{K} \to X'$ is  \emph{continuous on finite-dimensional subspaces} if, for every finite-dimensional subspace $\mathcal{M} \subset X$, the restriction $A|_{\mathcal{K} \cap (z_0+\mathcal{M})}$ is continuous from $\mathcal{K} \cap (z_0+\mathcal{M})$ into $X'$ endowed with the weak topology. Clearly, this property, and even the weaker property of continuity on one-dimensional subspaces, implies hemicontinuity. In particular, the existence of solutions to the strong variational inequality established in \cite[Thm.~1.7]{KiSt00}, under the assumption of continuity on finite-dimensional subspaces, follows as a consequence of Theorem~\ref{thm:monotone.abstract-weak}. Moreover, for monotone mappings, hemicontinuity is known to imply continuity on finite-dimensional subspaces so that the two properties are, in fact, equivalent; see, e.g., \cite{MR163198}.

\subsection{Measures and Functions of Bounded Variation}\label{sec:meas-bv}

This section collects some technical results of analysis that we use in our proofs of the main theorems in the subsequent sections. These results would be conventionally formulated for an open set $U$ in some Euclidean space $\RR^m$, but they are valid for any manifold with sufficient structure, in particular the flat torus $\Tt^d$ and the space-time cylinder $Q = (0,T)\times\Tt^d$, which is the setting relevant to our applications. Thus, throughout this
section, we use \emph{reference domain} as a shorthand for either an open subset of a Euclidean space or an open subset of $\Tt^d$ or $Q$.

The first two results we provide concern certain special approximations in a.e.~convergence; we are not aware of references for these exact formulations, thus we give the full proofs. The first of these is about the approximation of Borel measurable functions by continuous functions with prescribed values on a given compact set.

\begin{lemma}\label{lem:dense.restr-cont}
  Let $B_1, B_2, \ldots, B_k\subset U$ be Borel sets in a reference domain $U$, let $\rho_i$ be a finite, non-negative Borel measure on $B_i$, and let $f$ be a non-negative Borel measurable function on $B := B_1\,\cup \ldots \cup\,B_k$. Given a compact set $K \subset U \setminus B$ and a non-negative continuous function $g$ on $K$, there exists a sequence $\{f_n\}$ of non-negative continuous functions on $U$ such that
  \begin{itemize}
     \item $f_n = g$ on $K$,
     \item $f_n$ converges to $f$ on $B_i$ a.e.~with respect to $\rho_i$.
  \end{itemize}
  Moreover, if $f$ is bounded, we may take the sequence $\{f_n\}$ to be uniformly bounded.
\end{lemma}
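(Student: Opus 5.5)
The plan is to combine Lusin's theorem with the Tietze extension theorem, applied to a single finite measure that dominates all the $\rho_i$. First I would extend each $\rho_i$ by zero to a finite Borel measure on $U$ and set $\rho := \rho_1 + \cdots + \rho_k$. A set that is $\rho$-null is then $\rho_i$-null for every $i$, so it suffices to build $f_n$ converging to $f$ $\rho$-a.e.\ on $B$. Since $K\cap B=\emptyset$, we have $\rho(K)=0$, so the values on $K$ do not matter for the convergence. Extending $f$ by $0$ outside $B$ gives a non-negative Borel function on $U$.

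Next I would reduce to the bounded case by truncation. Put $f^{(j)} := \min(f,j)$. If for each $j$ I can find continuous non-negative functions converging $\rho$-a.e.\ to $f^{(j)}$ with the prescribed values on $K$, then a diagonal argument finishes the proof. Concretely, a.e.\ convergence on a finite measure space is equivalent to convergence in measure along a subsequence. So I would choose $f_n$ from the $n$-th approximating family with
\[
\rho\bigl(\{|f_n - f^{(n)}| > 2^{-n}\}\bigr) < 2^{-n},
\]
and Borel--Cantelli gives $f_n \to f$ $\rho$-a.e. (This uses that $f^{(n)}\to f$ pointwise, since $f$ is finite-valued.) When $f$ is bounded, the truncation is unnecessary and every approximant will be bounded by $\max(\sup f, \sup_K g)$. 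This gives the uniform boundedness claim.

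For the bounded case $0\le f\le M$, I would argue as follows.
\begin{enumerate}
\item Since $\rho$ is a finite Borel measure on a metrizable, $\sigma$-compact space, it is Radon. Lusin's theorem then gives, for each $n$, a compact set $F_n\subset U\setminus K$ with $\rho(U\setminus F_n)<2^{-n}$ and $f|_{F_n}$ continuous. I can take $F_n$ inside $U\setminus K$ because $\rho(K)=0$ and inner regularity holds on the open set $U\setminus K$.
\item The sets $F_n$ and $K$ are disjoint compacts, so the function equal to $f$ on $F_n$ and to $\min(g,M')$ on $K$ is continuous on $F_n\cup K$, where $M' := \max(M,\sup_K g)$. (I would in fact keep $g$ itself on $K$ and just use the bound $M'$.)
\item Tietze's extension theorem on the metrizable, hence normal, space $U$ extends this function to a continuous $f_n$ on $U$. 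Composing with $s\mapsto \min(\max(s,0),M')$ keeps $f_n$ non-negative and bounded by $M'$, and does not change its values on $F_n\cup K$ because those values already lie in $[0,M']$.
\item Finally, $f_n = f$ outside a set of measure $<2^{-n}$, so by Borel--Cantelli $f_n\to f$ $\rho$-a.e.
\end{enumerate}

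I expect the only delicate point to be justifying Lusin's theorem and Tietze extension on the reference domains, in particular on subsets of $\Tt^d$ and $Q$. These are locally compact, separable metric spaces, so finite Borel measures on them are regular and the standard proofs apply verbatim. A smaller point is keeping $f_n=g$ exactly on $K$ while truncating; this works because the truncation is the identity on $[0,M']$ and $M'\ge \sup_K g$.
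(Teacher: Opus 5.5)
Your proof is correct, but it takes a different route from the paper's.

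\textbf{The paper's route.} It writes $f_n=\tilde f_n+g_n$ and builds the two pieces separately.
\begin{itemize}
\item The term $g_n$ is a Tietze extension of $g$ that vanishes outside the $1/n$-neighbourhood of $K$. Hence $g_n\to0$ off $K$, and so $\rho$-a.e.
\item The term $\tilde f_n$ vanishes on $K$ and is built by hand. It is first constructed for indicators $\chi_A$, using outer regularity, inner regularity on $V\setminus K$, and Urysohn's lemma. It is then extended to simple functions by linearity and to general $f$ by monotone limits, all in the sense of convergence in $\rho$-measure. Finally an a.e.-convergent subsequence is extracted.
\end{itemize}

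\textbf{Your route.} You invoke Lusin's theorem to obtain compact sets $F_n\subset U\setminus K$ of nearly full $\rho$-measure on which $f$ is continuous. A single Tietze extension from $F_n\cup K$ then handles the boundary values and the approximation at once. This gives exact agreement $f_n=f$ on $F_n$, so Borel--Cantelli yields a.e.\ convergence directly. You never pass through convergence in measure or extract a subsequence, and the uniform bound comes for free from the clamp $s\mapsto\min(\max(s,0),M')$.

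\textbf{Trade-off.} You use Lusin's theorem as a black box, whereas the paper's argument needs only regularity and Urysohn's lemma, which are essentially the ingredients of Lusin's proof anyway. Both arguments rely on $\rho(K)=0$ in the same way: you use it to place $F_n$ inside $U\setminus K$, and the paper uses it so that $g_n\to0$ a.e.

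\textbf{A simplification.} Your truncation-and-diagonal step for unbounded $f$ is unnecessary. Lusin's theorem applies to any finite-valued Borel $f$, and Tietze's theorem extends unbounded continuous real functions. So you can directly extend the function equal to $f$ on $F_n$ and to $g$ on $K$, and then compose with $s\mapsto s^+$. The clamp at $M'$ is needed only for the uniform-boundedness addendum.
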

\begin{proof}
  Define the unified non-negative finite Radon measure $\rho$ on $U$ by \[\rho(A) = \sum_{i=1}^k \rho_i(A\cap B_i) \quad\text{for any Borel set } A\subset U,\]
  and extend $f$ to a non-negative Borel measurable function on all of $U$ by setting $f = 0$ outside $B$. Since $K \subset U \setminus B$, we clearly have $\rho(K) = 0$. We will construct the desired sequence as $f_n := \Tilde{f}_n + g_n$, where $\{\Tilde{f}_n\}$ and $\{g_n\}$ are sequences of non-negative continuous functions satisfying
  \begin{align*}
     \Tilde{f}_n &= 0 \text{ on } K \qquad \text{and} \qquad \Tilde{f}_n \to f \text{ a.e.~with respect to } \rho, \\
     g_n &= g \text{ on } K \qquad \text{and} \qquad g_n \to 0 \text{ a.e.~with respect to } \rho.
  \end{align*}

  \medskip\noindent\textbf{Construction of $\{g_n\}$:}
  For each $n \geq 1$, define the open neighborhood
  \[U_n := \{x \in U \colon \operatorname{dist}(x, K) < 1/n\}.\]
  Because $K$ and $U \setminus U_n$ are disjoint closed sets, we can define a continuous function on their union that equals $g$ on $K$ and $0$ on $U \setminus U_n$. By the Tietze Extension Theorem, this can be extended to a non-negative continuous function $g_n$ on $U$. For any $x \notin K$, we have $x \notin U_n$ for all sufficiently large $n$, meaning $g_n(x) \to 0$ everywhere on $U \setminus K$. Since $\rho(K)=0$, it immediately follows that $g_n \to 0$ a.e.~with respect to $\rho$.

\medskip\noindent\textbf{Construction of $\{\Tilde{f}_n\}$:}
We first establish global approximation in $\rho$-measure. Suppose initially that $f = \chi_A$ for a Borel set $A \subset U$. Given $\epsi > 0$, outer regularity provides an open neighborhood $V \supset A$ such that $\rho(V \setminus A) < \epsi/2$. We then isolate our construction from the set $K$ by defining the open set $W := V \setminus K$. Inner regularity guarantees a compact core $\Tilde{K} \subset W$ such that $\rho(W \setminus \Tilde{K}) < \epsi/2$, which implies $\rho(V \setminus \Tilde{K}) < \epsi/2$ since $\rho(K) = 0$.
  
By Urysohn's Lemma, there exists a continuous function $\Tilde{f}\colon U \to [0,1]$ such that $\Tilde{f} = 1$ on $\Tilde{K}$ and $\supp \Tilde{f} \subset W = V \setminus K$. In particular, this ensures $\Tilde{f} = 0$ on $K$. Furthermore, we observe that
\[
    \{x\in U\colon |f(x)-\Tilde{f}(x)| > \epsi\} \subset \{x\in U\colon \chi_A(x) \neq \Tilde{f}(x)\} \subset (V\setminus A) \cup (V\setminus \Tilde{K}).
\]
The sum of the $\rho$-measures of these two sets is strictly less than $\epsi$, which proves that $\Tilde{f}$ approximates $\chi_A$ globally in $\rho$-measure.

By linearity, this global approximation in $\rho$-measure extends from indicator functions to non-negative simple functions. Since any non-negative Borel measurable function is the pointwise limit of an increasing sequence of simple functions, the approximation in $\rho$-measure extends to $f$ as well. Finally, because global convergence in measure guarantees the existence of a subsequence that converges almost everywhere, we can directly extract the required sequence $\{\Tilde{f}_n\}$ that vanishes on $K$ and converges to $f$ a.e.~with respect to $\rho$.

  \medskip\noindent\textbf{Uniform Boundedness:}
  If the original function $f$ is bounded, the generated sequence $\{f_n\}$ can simply be replaced by $\min(f_n,\, \max_{K} g + \sup f + 1)$. This truncation ensures uniform boundedness while preserving continuity, the exact boundary values on $K$, and the a.e.\ convergence to $f$.
\end{proof}

The second approximation result concerns certain Carath\'eodory-type compositions and space-time convolutions. To formulate this result, we fix non-negative, compactly supported smooth functions $\overline{\zeta}$ and $\hat{\zeta}$ on $\RR^d$ and $\RR^+$, respectively, such that
\begin{equation*}
    \int_{\RR^d} \overline{\zeta} \,dx = 1 \qquad\text{and}\qquad \int_0^\infty \hat{\zeta}\,dt = 1.
\end{equation*}
For definiteness, we take $\supp\overline{\zeta}$ to be inside the unit ball around the origin and $\supp\hat{\zeta}$ to be inside the interval $(0,1)$. Then, for small $\xi, \tau > 0$, we define the mollification kernel
\begin{equation}\label{kernel-mollification}
    K_{\xi, \tau}(t',x'; t,x) := \frac{1}{\tau} \hat{\zeta}\left(\frac{t'-t}{\tau}\right)\cdot \frac{1}{\xi^d} \overline{\zeta}\left(\frac{x'-x}{\xi}\right),
\end{equation}
for $0 \leq t < t'$ and $x, x' \in \Tt^d$. Considering $K_{\xi, \tau}(t',x'; t,x)$ as a function of $(t',x')$ for a fixed $(t,x)$, its support is contained in the cylinder
\begin{equation}\label{eq:cylinders}
    C_{\xi,\tau}(t,x) := [t, t+\tau)\times B_\xi(x).
\end{equation}
Finally, we note that
\begin{equation}\label{K-int-C-vol:formula}
    \int_t^\infty\int_{\Tt^d} K_{\xi, \tau}(t',x';t,x)\,dx'dt' = 1 \qquad\text{and}\qquad |C_{\xi,\tau}(t,x)| = \tau\xi^d|B_1(0)|.
\end{equation}
Before stating the approximation result in Lemma~\ref{lem:mollification-convergence-with-adapted.Lebesgue} below, we give an auxiliary estimate to be subsequently used in its proof.

\begin{lemma}[Weak Type $(1,1)$ Estimate for Adapted Maximal Function]\label{lem:HL-max-analogue-weak-1-1}
    Recall $C_{\xi,\tau}(t,x)$ defined in \eqref{eq:cylinders}.
     Let $\{\xi_n\}$ and $\{\tau_n\}$ be decreasing, positive sequences converging to $0$. For any $v\in L^1((0,T+\tau_1)\times \Tt^d; \RR^+_0)$, let $M[v]$ denote the non-negative, extended real-valued, measurable function on $Q$ defined by
    \begin{equation*}
        M[v](t,x) := \sup_n \frac{1}{|C_{{\xi_n}, {\tau_n}}(t,x)|}  \int_{C_{{\xi_n}, {\tau_n}}(t,x)} v(t',x') \,dx'dt'.
    \end{equation*}
    Then the following weak type $(1,1)$ estimate holds:
    \begin{equation}\label{HL-max-analogue-weak-1-1}
        \bigl|\bigl\{(t,x)\in Q\colon M[v](t,x) > \delta \bigr\}\bigr| \leq 3^{d+1}\cdot \frac{\|v\|_{L^1}}{\delta} \qquad \text{for any }\delta>0.
    \end{equation}
\end{lemma}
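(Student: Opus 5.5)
We would prove the estimate by a Vitali-type covering argument adapted to the anisotropic, one-sided cylinders $C_{\xi,\tau}(t,x)=[t,t+\tau)\times B_\xi(x)$. Since the sequences $\{\xi_n\}$, $\{\tau_n\}$ are both decreasing, the family of admissible cylinders is totally ordered by size: for indices $n\le k$ we have $\xi_k\le\xi_n$ and $\tau_k\le\tau_n$. This nesting replaces the usual doubling structure of Euclidean balls, and it is what makes the covering argument possible.

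First, let $E_\delta:=\{(t,x)\in Q\colon M[v](t,x)>\delta\}$ and fix a compact set $K\subset E_\delta$; by inner regularity it suffices to bound $|K|$. For each $(t,x)\in E_\delta$ there is an index $n(t,x)$ with
\[
\int_{C_{\xi_n,\tau_n}(t,x)} v \,dx'dt' > \delta\,|C_{\xi_n,\tau_n}(t,x)|, \qquad n=n(t,x).
\]
The left-hand side of this inequality is upper semicontinuous neither way, because the cylinder is half-open in time. To get an open cover, we would slightly enlarge instead. Alternatively, we can note that $(t,x)\mapsto\int_{C_{\xi_n,\tau_n}(t,x)}v$ is continuous, since $v\in L^1$ and the cylinder moves continuously in measure. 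Hence the set where the strict inequality holds for a fixed $n$ is open. This gives an open cover of $K$ by sets $O_n$, and we extract a finite subcover. We then have finitely many centers $(t_i,x_i)$ with indices $n_i$, and $K$ is covered by finitely many such sets. Handling the openness here is the technical point. I expect it to be the main obstacle, since the one-sided time interval means $(t,x)\in C_{\xi,\tau}(t,x)$ only at its bottom face. For this reason we must cover $K$ not by the cylinders themselves but by enlarged cylinders.

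Next, we run the greedy Vitali selection. Among the finitely many chosen points, pick one with the largest cylinder (smallest index), discard all others whose cylinder meets it, and repeat. This yields disjoint cylinders $C_j=C_{\xi_{n_j},\tau_{n_j}}(t_j,x_j)$. Every discarded cylinder $C$ meets some selected $C_j$ of no smaller size. By the nesting of $\xi$ and $\tau$, $C$ is then contained in the enlargement $\widehat C_j:=(t_j-\tau_{n_j},t_j+2\tau_{n_j})\times B_{3\xi_{n_j}}(x_j)$, whose measure is $3\cdot 3^d|C_j|$. The points of $K$ are covered because each point $(t,x)$ lies in its own closed cylinder $\overline{C_{\xi_n,\tau_n}(t,x)}$. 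This closure issue is exactly why the enlargement should be taken open and slightly larger, with the loss removed by a limiting argument. It follows that
\[
|K|\le \sum_j|\widehat C_j| \le 3^{d+1}\sum_j |C_j| < \frac{3^{d+1}}{\delta}\sum_j\int_{C_j}v \le \frac{3^{d+1}}{\delta}\|v\|_{L^1((0,T+\tau_1)\times\Tt^d)},
\]
using disjointness in the last step. The domain $(0,T+\tau_1)$ accounts for cylinders poking above $T$.

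Finally, measurability of $M[v]$ follows because it is a countable supremum of continuous functions. Taking the supremum over compact $K\subset E_\delta$ yields \eqref{HL-max-analogue-weak-1-1}. On the torus we should also assume $\xi_1$ is small compared with the injectivity radius, so that balls behave as in $\RR^d$. Otherwise the constant only improves, since overlaps on the torus can only reduce measure.
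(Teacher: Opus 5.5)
Your combinatorial core coincides with the paper's proof. It uses greedy selection ordered by the scale index, which works because $\xi_n$ and $\tau_n$ decrease together. It uses the fact that a cylinder meeting a selected $C_j$ of no smaller size lies in $(t_j-\tau_{n_j},t_j+2\tau_{n_j})\times B_{3\xi_{n_j}}(x_j)$, of measure at most $3^{d+1}|C_j|$. And it sums over the disjoint selected cylinders.

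\textbf{The gap.} The compactness front end does not work as written. A finite subcover of $K$ by the open sets $O_n$ only tells you that finitely many scale indices suffice. It does not give finitely many centers $(t_i,x_i)$, because each $O_n$ contains a continuum of centers. So the next step, ``among the finitely many chosen points, pick one with the largest cylinder'', has no finite family to act on.

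The obstacle you diagnose is also mostly illusory. $C_{\xi,\tau}(t,x)=[t,t+\tau)\times B_\xi(x)$ is closed at the bottom, so $(t,x)$ belongs to its own cylinder, not merely to its closure. The half-openness only means the cylinders are not neighborhoods of their base points. That matters only because you chose to extract a finite subcover.

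\textbf{Repairing your route.} If you keep it, cover each point of $K$ by an open set such as $(t-\varepsilon\tau_n,t+\tau_n)\times B_{\xi_n}(x)$, with enlargement proportional to $\tau_n$. Run the selection on the underlying cylinders of a finite subcover. This gives the constant $(3+\varepsilon)3^d$ instead of $3^{d+1}$, and you then let $\varepsilon\to0$. Your sketch only gestures at this, and your displayed chain with the exact dilations $\widehat C_j$ is not what that argument yields.

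\textbf{The paper's route.} The paper avoids compactness altogether. Every cylinder centered in $Q$ lies in $(0,T+\tau_1)\times\Tt^d$. So at a fixed scale $n$, any pairwise disjoint family has at most $(T+\tau_1)|\Tt^d|/|C_{\xi_n,\tau_n}|$ members. One therefore takes the full, uncountable collection of cylinders with average exceeding $\delta$. Scale by scale, one chooses a maximal disjoint subfamily and discards everything meeting a selected cylinder. The result is a countable disjoint family whose dilations cover $\{M[v]>\delta\}$, because each point lies in its own cylinder.

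Your $O_n$ observation could be salvaged in the same spirit: with only finitely many scales, this per-scale selection over all centers in $K$ is automatically finite. At that point, though, the compact exhaustion is superfluous.
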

\begin{proof}
    Let $\mathcal{C}$ be the collection of all cylinders $C_{{\xi_n}, {\tau_n}}(t,x)$ such that 
    \[\frac{1}{|C_{{\xi_n}, {\tau_n}}(t,x)|}  \int_{C_{{\xi_n}, {\tau_n}}(t,x)} v(t',x') \,dx'dt' > \delta.\]
    Clearly we have
    \[\bigl\{(t,x)\in Q\colon M[v](t,x) > \delta \bigr\} \subset \bigcup_{C\in \mathcal{C}} C.\]
    Hence, it suffices to show that the measure of the latter union is bounded by $3^{d+1}\bigl(\|v\|_{L^1}/\delta\bigr)$.

    We follow the standard Vitali argument and 
    construct a subcollection $\tilde{\mathcal{C}} \subset \mathcal{C}$ of pairwise disjoint cylinders via a greedy extraction algorithm. 
    Because the sequences $\{\xi_n\}$ and $\{\tau_n\}$ are decreasing, the cylinders in $\mathcal{C}$ take sizes from a discrete, ordered set indexed by $n$.
    We proceed iteratively: for $n=1, 2, \dots$, we select a maximal pairwise disjoint subcollection of cylinders of size index $n$ from those remaining in $\mathcal{C}$, add them to $\tilde{\mathcal{C}}$, 
    and discard from the remaining collection any unselected cylinder, of any size index, that intersects a newly added cylinder.
    (Note that each selected subcollection is finite because the total volume of $(0,T+\tau_1)\times\Tt^d$ is bounded, making this maximal selection straightforward.)
    
    This procedure guarantees that $\tilde{\mathcal{C}}$ is a pairwise disjoint collection. Furthermore, it ensures a size-dominated covering property: if a cylinder $C \in \mathcal{C}$ is not in $\tilde{\mathcal{C}}$, it was discarded because it intersected some cylinder $\tilde{C} := C_{{\xi_m},{\tau_m}}(s,y) \in \tilde{\mathcal{C}}$ that was processed at the same or an earlier step. Thus, $\tilde{C}$ is at least as large as $C$. Since $C\cap \tilde{C} \neq \emptyset$ and $|C| \leq |\tilde{C}|$, the spatial projection of $C$ is contained in $B_{3\xi_m}(y)$ and its temporal projection is contained in $(s-\tau_m, s+2\tau_m)$. 
Thus we have $C\subset (s-\tau_m, s+2\tau_m)\times B_{3\xi_m}(y)$, and by the doubling property of balls on the flat torus, the measure of the latter cylinder is at most $3^{d+1}|\tilde{C}|$.
    
    Because every cylinder in $\mathcal{C}$ is contained within the $3$-times dilated version of some cylinder in $\tilde{\mathcal{C}}$, we conclude
    \[\biggl|\bigcup_{C\in \mathcal{C}} C\biggr| \leq \sum_{\tilde{C}\in\tilde{\mathcal{C}}} 3^{d+1}|\tilde{C}| \leq 3^{d+1} \sum_{\tilde{C}\in\tilde{\mathcal{C}}} \frac{1}{\delta}\int_{\tilde{C}} v(t',x')\,dx'dt' \leq 3^{d+1}\cdot \frac{\|v\|_{L^1}}{\delta},\]
    where the last inequality holds because $\tilde{\mathcal{C}}$ is a disjoint collection.
\end{proof}

We now use this weak-type estimate to obtain the corresponding one-sided Lebesgue differentiation result for the space-time mollifiers used below.

\begin{lemma}\label{lem:mollification-convergence-with-adapted.Lebesgue}
    Let $\sigma \geq 1$, let $\theta\in L^\sigma((0,\infty)\times\Tt^d; \RR^d)$, and let $F\colon Q\times\RR^d\to \RR$ be a measurable function such that
\begin{equation}\label{adapted.lebesgue-caratheodory}
        \text{the map}\quad s\mapsto \frac{F(t,x,s)}{|s|^\sigma+1} \quad \text{is continuous and bounded on } \RR^d,
    \end{equation}
     for a.e.~$(t,x)\in Q$. 
     Consider decreasing sequences $\xi_n\downarrow 0$ and $\tau_n\downarrow 0$ as in Lemma \ref{lem:HL-max-analogue-weak-1-1}. Then,
     \begin{equation}\label{half-convolution-lebesgue-point-approximation}
         \lim_{n\to\infty} \int_t^\infty\int_{\Tt^d} F(t,x,\theta(t',x'))K_{{\xi_n}, {\tau_n}}(t',x';t,x)\,dx'dt' = F(t,x,\theta(t,x))
     \end{equation}
    for a.e.~$(t,x)\in Q$.
\end{lemma}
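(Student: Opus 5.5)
We will run the standard Lebesgue differentiation argument: first prove the result for continuous data, then pass to the general case with the maximal inequality of Lemma~\ref{lem:HL-max-analogue-weak-1-1}. The main twist is that the integrand $F(t,x,\theta(t',x'))$ depends on the base point $(t,x)$ through the Carath\'eodory function $F$. We handle this by freezing $(t,x)$ and controlling $F(t,x,\cdot)$ uniformly through its growth. Set $G(t,x,s) := F(t,x,s)/(|s|^\sigma+1)$. By \eqref{adapted.lebesgue-caratheodory}, for a.e.~$(t,x)$ the map $G(t,x,\cdot)$ is continuous and bounded, say by $M(t,x)<\infty$. For a fixed $(t,x)$ we then write
\[
F(t,x,\theta(t',x')) - F(t,x,\theta(t,x)) = \bigl(G(t,x,\theta')-G(t,x,\theta)\bigr)(|\theta'|^\sigma+1) + G(t,x,\theta)\bigl(|\theta'|^\sigma - |\theta|^\sigma\bigr),
\]
with $\theta'=\theta(t',x')$ and $\theta=\theta(t,x)$. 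We integrate this identity against $K_{\xi_n,\tau_n}(t',x';t,x)$, which is nonnegative, has unit mass, and is bounded by $C/|C_{\xi_n,\tau_n}(t,x)|$ on its cylinder by \eqref{K-int-C-vol:formula}.

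\textbf{Step 1: the second term.} The function $|\theta|^\sigma$ lies in $L^1$, so it suffices to prove the one-sided Lebesgue differentiation statement
\[
\frac{1}{|C_n|}\int_{C_n(t,x)} |v(t',x')-v(t,x)|\,dx'dt' \to 0 \quad\text{a.e.}
\]
for every $v\in L^1$. We prove this in the usual way. Approximate $v$ in $L^1$ by a continuous $w$; the statement is trivial for $w$. The error $v-w$ is then controlled by $M[|v-w|]+|v-w|$, and Lemma~\ref{lem:HL-max-analogue-weak-1-1} together with Chebyshev shows that the exceptional set has measure at most $C\|v-w\|_{L^1}/\delta$, which can be made arbitrarily small. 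We apply this with $v=|\theta|^\sigma$, and also with $v=\theta$ componentwise and $v=|\theta|^\sigma$ as needed in Step 2.

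\textbf{Step 2: the first term, which is the main obstacle.} Here the continuity of $G(t,x,\cdot)$ is not uniform, and $\theta'$ may be large. Fix a Lebesgue point $(t,x)$ of both $\theta$ and $|\theta|^\sigma$ (in the one-sided sense of Step 1) at which $M(t,x)<\infty$, and fix $\eta>0$.
\begin{itemize}
\item Choose $R>|\theta(t,x)|+1$ large and $\rho\in(0,1)$ so that $|G(t,x,s)-G(t,x,\theta(t,x))|<\eta$ whenever $|s-\theta(t,x)|<\rho$. This uses continuity at the single point $\theta(t,x)$ only.
\item Split the cylinder into the sets $\{|\theta'-\theta|<\rho\}$ and $\{|\theta'-\theta|\ge\rho\}$.
\item On the first set, the contribution is at most $\eta\cdot\text{avg}(|\theta'|^\sigma+1)$, which tends to $\eta(|\theta|^\sigma+1)$.
\item On the second set, the contribution is at most $2M(t,x)$ times the average of $(|\theta'|^\sigma+1)\chi_{\{|\theta'-\theta|\ge\rho\}}$.
\end{itemize}
To see that this last average tends to $0$, bound
\[
|\theta'|^\sigma \le 2^{\sigma-1}\bigl(|\theta'-\theta|^\sigma+|\theta|^\sigma\bigr) \quad\text{and}\quad \chi_{\{|\theta'-\theta|\ge\rho\}}\le \rho^{-\sigma}|\theta'-\theta|^\sigma.
\]
It therefore suffices to show that the average of $|\theta'-\theta(t,x)|^\sigma$ tends to $0$ at a.e.~point, i.e.\ $L^\sigma$-Lebesgue points. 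These follow from Step 1 applied to $v_q=|\theta-q|^\sigma$ for $q$ ranging over a countable dense subset of $\RR^d$, combined with the elementary inequality $\bigl||a-q|^\sigma-|a-b|^\sigma\bigr|\le C(|q-b|)(|a|^{\sigma-1}+1)+\dots$, or more simply the triangle inequality in $L^\sigma$ of the cylinder average.

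Finally, let $\eta\to0$. The exceptional null sets form a countable union, which gives \eqref{half-convolution-lebesgue-point-approximation} a.e. The one delicate point is that $F$ is evaluated at the base point and not at $(t',x')$; the argument above never requires measurability of the map $(t,x)\mapsto M(t,x)$ beyond its pointwise finiteness, so this causes no difficulty.
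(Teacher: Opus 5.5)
Your proposal is correct and follows essentially the paper's route. You obtain one-sided Lebesgue points from the weak type $(1,1)$ bound of Lemma~\ref{lem:HL-max-analogue-weak-1-1} combined with continuous approximation, then run a growth-controlled continuity argument at the frozen base point, and finish with the kernel bound $K_{\xi_n,\tau_n}\leq C/|C_{\xi_n,\tau_n}(t,x)|$. The differences are only cosmetic: you derive $L^\sigma$-Lebesgue points from $L^1$ differentiation of $|\theta-q|^\sigma$ over a countable dense set of $q$, whereas the paper applies the maximal function directly to $|\theta-f|^\sigma$; and your near/far splitting spells out the step the paper calls a ``standard truncation argument'' (the auxiliary $R$ you introduce is never used).
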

\begin{proof}
    We first prove that a.e.~$(t,x)\in Q$ is an $L^\sigma$-Lebesgue point of $\theta$ with respect to the family of cylinders $C_{{\xi_n}, {\tau_n}}(t,x)$. More precisely, we have
    \begin{equation}\label{xi-tau-n.adapted.Lebesgue}
        \lim_{n\to\infty} \frac{1}{|C_{{\xi_n}, {\tau_n}}(t,x)|} \int_{C_{{\xi_n}, {\tau_n}}(t,x)} |\theta(t',x')-\theta(t,x)|^\sigma \,dx'dt' = 0
    \end{equation}
    for a.e.~$(t,x)\in Q$. 
The argument follows the steps of the classical proof of the Lebesgue differentiation theorem and relies on the variant of the Hardy--Littlewood maximal operator from Lemma~\ref{lem:HL-max-analogue-weak-1-1}.
Accordingly, to prove \eqref{xi-tau-n.adapted.Lebesgue}, it is enough to show
    \begin{equation}\label{lim-neq-zero-means-limsup-geq-delta}
        \Bigl|\Bigl\{(t,x)\in Q\colon \limsup  \frac{1}{|C_{{\xi_n}, {\tau_n}}(t,x)|} \int_{C_{{\xi_n}, {\tau_n}}(t,x)} |\theta(t',x')-\theta(t,x)|^\sigma \,dx'dt' > 2^\sigma\delta \Bigr\}\Bigr| = 0
    \end{equation}
    for any $\delta > 0$. For any continuous $f\colon (0,T+\tau_1)\times\Tt^d \to \RR^d$, we have 
    \[\lim_{n\to\infty} \frac{1}{|C_{{\xi_n}, {\tau_n}}(t,x)|} \int_{C_{{\xi_n}, {\tau_n}}(t,x)} |f(t',x')-f(t,x)|^\sigma \,dx'dt' = 0.\]
    By Minkowski's inequality, outside the union
    \[\bigl\{(t,x)\in Q\colon |\theta-f|^\sigma > \delta \bigr\} \cup \bigl\{(t,x)\in Q\colon M[|\theta-f|^\sigma](t,x) > \delta \bigr\},\]
    the limsup in \eqref{lim-neq-zero-means-limsup-geq-delta} is at most $2^\sigma\delta$. Therefore, the set in \eqref{lim-neq-zero-means-limsup-geq-delta} is contained in this union.
    Using \eqref{HL-max-analogue-weak-1-1}, we bound the measure of this union by
    \[\frac{3^{d+1}+1}{\delta}\int_0^{T+\tau_1}\int_{\Tt^d} |\theta -f|^\sigma \,dxdt.\]
The latter quantity can be made arbitrarily small by choosing continuous $f$ to approximate $\theta$ in $L^\sigma((0,T+\tau_1)\times\Tt^d)$.
     Hence, we obtain \eqref{lim-neq-zero-means-limsup-geq-delta}.
    
   We finish the proof by showing that any $(t,x)\in Q$ which satisfies \eqref{adapted.lebesgue-caratheodory} and \eqref{xi-tau-n.adapted.Lebesgue} also satisfies \eqref{half-convolution-lebesgue-point-approximation}.
   Indeed, by \eqref{adapted.lebesgue-caratheodory}, the map \(s\mapsto F(t,x,s)\) is continuous and has at most \(\sigma\)-growth. Hence, by a standard truncation argument combined with \eqref{xi-tau-n.adapted.Lebesgue}, we obtain
   \begin{equation}\label{lebesgue-point-at-composition}
       \lim_{n\to\infty} \frac{1}{|C_{{\xi_n}, {\tau_n}}(t,x)|} \int_{C_{{\xi_n}, {\tau_n}}(t,x)} |F(t,x,\theta(t',x'))-F(t,x,\theta(t,x))| \,dx'dt' = 0.
   \end{equation}
   On the other hand, using \eqref{K-int-C-vol:formula}, we get
   \[\begin{aligned}
       & \left|\int_t^\infty\int_{\Tt^d} F(t,x,\theta(t',x'))K_{{\xi_n}, {\tau_n}}(t',x';t,x)\,dx'dt' - F(t,x,\theta(t,x))\right| \\
       & \quad = \left|\int_t^\infty\int_{\Tt^d} \bigl(F(t,x,\theta(t',x')) - F(t,x,\theta(t,x))\bigr)K_{{\xi_n}, {\tau_n}}(t',x';t,x)\,dx'dt'\right| \\
       & \qquad \leq \frac{1}{\tau_n\xi_n^d} \|\overline{\zeta}\|_{L^\infty}\|\hat{\zeta}\|_{L^\infty} \int_{C_{{\xi_n}, {\tau_n}}(t,x)} \bigl|F(t,x,\theta(t',x')) - F(t,x,\theta(t,x))\bigr|\,dx'dt' \\
       & \qquad\quad =  \frac{\|\overline{\zeta}\|_{L^\infty}\|\hat{\zeta}\|_{L^\infty}|B_1(0)|}{|C_{{\xi_n}, {\tau_n}}(t,x)|} \int_{C_{{\xi_n}, {\tau_n}}(t,x)} |F(t,x,\theta(t',x'))-F(t,x,\theta(t,x))| \,dx'dt'.
   \end{aligned}\]
    Using \eqref{lebesgue-point-at-composition} in the last estimate, we conclude \eqref{half-convolution-lebesgue-point-approximation}.
\end{proof}

Next, we recall a result on the least upper bound property of the
lattice of measurable functions. Our statement of this classic result
is adapted from the proof of \cite[Theorem~4.7.1]{Bog07}.

\begin{lemma}\label{lub-lemma}
     Let $\mathcal{F}$ be a non-empty family of Lebesgue measurable functions on a reference domain $U$. Then there exists a sequence $f_1, f_2, \ldots \in \mathcal{F}$ and a Lebesgue measurable function $f^*$ such that
    \begin{itemize}
        \item $\max(f_1, f_2, \ldots, f_n)$ converges a.e.~to $f^*$,
        \item $f\leq f^*$ a.e.~for all $f\in \mathcal{F}$.
    \end{itemize}
\end{lemma}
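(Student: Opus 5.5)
The plan is to reduce the statement to the case of uniformly bounded functions with values in $(0,1)$, and then maximize a bounded integral functional over finite maxima drawn from $\mathcal{F}$. Fix a strictly increasing homeomorphism $\phi\colon[-\infty,\infty]\to[0,1]$, for instance $\phi(s)=\tfrac12+\tfrac1\pi\arctan s$ extended by $\phi(\pm\infty)\in\{0,1\}$. Fix also a finite measure $\nu$ on $U$ that is equivalent to Lebesgue measure, say $d\nu=w\,dx$ with $w>0$ integrable; this handles unbounded reference domains, while on $\Tt^d$ or $Q$ we may simply take $w\equiv1$. Since $\phi$ is strictly monotone, it commutes with finite maxima and preserves a.e.\ inequalities in both directions. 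Hence it suffices to prove the statement for the family $\{\phi\circ f\colon f\in\mathcal{F}\}$ and then set $f^*:=\phi^{-1}\circ g^*$, allowing extended real values. If finite values are needed, the statement tacitly permits $f^*=+\infty$ on a null-irrelevant set.

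Let $\mathcal{G}$ be the collection of all finite maxima $\max(\phi\circ f_1,\dots,\phi\circ f_n)$ with $f_i\in\mathcal{F}$, and put
\[
S:=\sup_{g\in\mathcal{G}}\int_U g\,d\nu\le\nu(U)<\infty .
\]
Choose $g_k\in\mathcal{G}$ with $\int g_k\,d\nu\to S$. Each $g_k$ is a maximum of finitely many $\phi\circ f$. Enumerate all the $f$'s appearing in $g_1,g_2,\dots$ as a single sequence $f_1,f_2,\dots$. The partial maxima $h_n:=\max_{i\le n}\phi\circ f_i$ are non-decreasing in $n$, lie in $\mathcal{G}$, and eventually dominate each $g_k$. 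They therefore converge pointwise to a measurable $g^*$, and by monotone convergence $\int g^*\,d\nu=S$.

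The remaining claim is $f\le f^*$ a.e.\ for every $f\in\mathcal{F}$. Suppose instead that $\phi\circ f>g^*$ on a set $E$ of positive Lebesgue measure, hence $\nu(E)>0$. Then
\[
\int\max(h_n,\phi\circ f)\,d\nu\to\int\max(g^*,\phi\circ f)\,d\nu=S+\int_E(\phi\circ f-g^*)\,d\nu>S,
\]
again by monotone convergence. This is a contradiction, since each $\max(h_n,\phi\circ f)$ belongs to $\mathcal{G}$. Finally, $\max(f_1,\dots,f_n)=\phi^{-1}(h_n)\to\phi^{-1}(g^*)=f^*$ pointwise, because $\phi^{-1}$ is continuous on $[0,1]$ into the extended reals.

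The only delicate point is the bookkeeping needed to make a single sequence realize the supremum: the near-optimal elements are finite maxima, and they must be merged into one monotone sequence of partial maxima. This is handled by the enumeration above. The bounded transformation $\phi$, together with the finite equivalent measure $\nu$, guarantees $S<\infty$, and this finiteness is what makes the contradiction argument work.
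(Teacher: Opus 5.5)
Your proof is correct and is essentially the argument the paper relies on. The paper gives no proof of its own; it adapts the statement from the proof of Theorem~4.7.1 in \cite{Bog07}, which is the same standard essential-supremum argument: a bounded monotone transform, a finite equivalent measure, maximizing the integral over finite maxima of members of $\mathcal{F}$, and merging a maximizing sequence into one increasing sequence.

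One small correction: drop the remark about a ``null-irrelevant set''. In general $f^*$ must be allowed to equal $+\infty$ on a set of positive measure. For example, if $\mathcal{F}$ consists of the constant functions $n\in\Nn$, then necessarily $f^*\equiv+\infty$. In the paper's application, the a.e.\ finiteness of $u^*$ is obtained separately, from the $BV$ bounds in Lemma~\ref{strong-ae-stable}.
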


Finally, we provide a variant of the chain rule for functions of bounded
variation, adapted from \cite[Section 3]{AFP2000} and suited to the present
setting. Recall that, for \(u\in BV(U)\), the approximate limit \(u^*(x)\) is
defined at points \(x\in U\) for which there exists \(a\in\RR\) such that,
for every \(\epsi>0\),
\[
\lim_{r\downarrow0}
\frac{\left|\{y\in B_r(x): |u(y)-a|>\epsi\}\right|}
{|B_r(x)|}=0.
\]
A point \(x\in U\) is called a jump point of \(u\) if there exist distinct values
\(u^\sharp(x),u^\flat(x)\in\RR\) and a unit vector \(\nu_u(x)\) such that
\(u\) has approximate limits \(u^\sharp(x)\) and \(u^\flat(x)\) on the two
half-balls
\[
B_r^+(x,\nu_u(x)):=\{y\in B_r(x):(y-x)\cdot\nu_u(x)>0\}
\]
and
\[
B_r^-(x,\nu_u(x)):=\{y\in B_r(x):(y-x)\cdot\nu_u(x)<0\},
\]
respectively, as \(r\downarrow0\).

Departing from the standing convention of the paper, in this section we write $Du$
 for the full space-time distributional derivative of $u\in BV(U)$, viewed as a Radon measure, rather than for the spatial derivative.

\begin{lemma}[A Chain Rule for BV]\label{lem:bv-chain}
Let \(u\in BV(U)\), where \(U\) is a reference domain, and let
\(F\colon\RR\to\RR\) be globally Lipschitz with \(F(0)=0\). Define a bounded Borel
function \(\overline{F'u}\colon U\to\RR\) by
\[
\overline{F'u}(x)=
\begin{cases}
F'(u^*(x)), & \text{whenever this is well-defined},\\[1mm]
\dfrac{F(u^\sharp(x))-F(u^\flat(x))}
      {u^\sharp(x)-u^\flat(x)}, & \text{if }x\text{ is a jump point},\\[2mm]
\text{an arbitrary value}, & \text{otherwise}.
\end{cases}
\]
The first two cases determine \(\overline{F'u}\) up to a
\(|Du|\)-negligible set. Hence the measure
\(\overline{F'u}\,Du\) does not depend on the arbitrary values assigned
on the remaining set.
Then,
\[
F(u)\in BV(U)
\qquad\text{and}\qquad
DF(u)=\overline{F'u}\,Du .
\]
\end{lemma}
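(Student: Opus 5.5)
The plan is to reduce to the classical Vol'pert chain rule for $C^1$ functions by mollifying $F$, and then pass to the limit using the structure of $Du$. First note that $F(u)\in BV(U)$ with $|DF(u)|\leq \mathrm{Lip}(F)\,|Du|$. To see this, approximate $u$ strictly by smooth $u_j$ (Anzellotti--Giaquinta). Then $F(u_j)\to F(u)$ in $L^1$ and $\int|\nabla F(u_j)|\leq \mathrm{Lip}(F)\int|\nabla u_j|$, and lower semicontinuity of the total variation concludes. The condition $F(0)=0$ and Lipschitz continuity give $|F(u)|\leq \mathrm{Lip}(F)|u|$, so $F(u)\in L^1$.

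Next, let $F_k:=F*\rho_k$ with a standard one-dimensional mollifier, corrected by a constant so that $F_k(0)=0$. Then $F_k\in C^1$, $\|F_k'\|_\infty\leq \mathrm{Lip}(F)$ and $F_k\to F$ locally uniformly. Moreover $F_k'\to F'$ on the complement of a Lebesgue-null set $N\subset\RR$: we may take $N$ to contain the non-differentiability points of $F$ and the non-Lebesgue points of $F'$. We use the Vol'pert chain rule \cite[Theorem~3.96]{AFP2000}:
\[
DF_k(u)=F_k'(u^*)\,D^d u+\bigl(F_k(u^\sharp)-F_k(u^\flat)\bigr)\nu_u\,\mathcal{H}^{n-1}\llcorner J_u .
\]
Here $D^d u=D^a u+D^c u$ is the diffuse part, and $D^d u$ is concentrated on the set where $u^*$ exists. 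On $J_u$ we have $D^ju=(u^\sharp-u^\flat)\nu_u\mathcal{H}^{n-1}\llcorner J_u$, so the jump term equals the difference quotient of $F_k$ times $D^j u$. Altogether $DF_k(u)=\overline{F_k'u}\,Du$.

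Since $F_k(u)\to F(u)$ in $L^1_{\mathrm{loc}}$, we get $DF_k(u)\to DF(u)$ in the sense of distributions. On the right-hand side the densities $\overline{F_k'u}$ are bounded by $\mathrm{Lip}(F)$. If they converge $|Du|$-a.e. to $\overline{F'u}$, dominated convergence gives $\overline{F_k'u}\,Du\to\overline{F'u}\,Du$, and this yields the identity. On $J_u$ the convergence is immediate from the uniform convergence $F_k\to F$ and $u^\sharp\neq u^\flat$. On the diffuse part we need $F_k'(u^*)\to F'(u^*)$ for $|D^d u|$-a.e. point, and this holds once
\[
|D^d u|\bigl((u^*)^{-1}(N)\bigr)=0\qquad\text{for every Lebesgue-null } N\subset\RR .
\]
This null-set property is the main obstacle.

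I plan to prove it with the coarea formula. Write $|D^d u|(B)=\int_\RR \mathcal{H}^{n-1}(B\cap\partial^*\{u>s\}\setminus J_u)\,ds$, which follows from the coarea formula after removing the countably $\mathcal{H}^{n-1}$-rectifiable jump set. For $\mathcal{H}^{n-1}$-a.e. $x\in\partial^*\{u>s\}\setminus J_u$ one has $u^*(x)=s$. Hence for $B=(u^*)^{-1}(N)$ the integrand vanishes for every $s\notin N$, and the integral is zero since $N$ is null. This is exactly \cite[Proposition~3.92]{AFP2000}, which I would invoke or reproduce. It also shows that the first two cases of $\overline{F'u}$ determine it $|Du|$-a.e., as claimed in the statement. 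The result transfers verbatim to the torus and to $Q$, since all arguments are local.
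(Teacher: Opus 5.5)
Your proposal is correct and follows essentially the route behind the result the paper cites. The paper gives no proof of its own; it simply adapts the scalar Lipschitz chain rule of \cite[Section~3]{AFP2000}, which is obtained as you do: from the $C^1$ Vol'pert chain rule by mollifying $F$, together with the fact that $|D^d u|$ does not charge $(u^*)^{-1}(N)$ for Lebesgue-null $N$. Your coarea-based proof of that null-set property, and your rewriting of the jump term as the difference quotient times $D^j u$, are both sound.
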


We use Lemma~\ref{lem:bv-chain} only through the following corollary. Notice
that the conclusion does not imply that \(h\) can be chosen with values in
\(\{0,1\}\), because the set \(\{0<h<1\}\) may fail to be null with respect
to \(|Du_1|\) or \(|Du_2|\).

\begin{corollary}\label{cor:bv-chain}
Let \(u_1,u_2\in BV(U)\), where \(U\) is a reference domain, and let
\(\tilde u:=\max(u_1,u_2)\). Then there exists a Borel measurable function
\(h\colon U\to[0,1]\) such that
\[
D\tilde u
=
h\,Du_1+(1-h)\,Du_2,
\]
and the set \(\{0<h<1\}\) is Lebesgue null.
\end{corollary}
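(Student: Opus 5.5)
The plan is to apply Lemma~\ref{lem:bv-chain} twice, using the identity $\max(u_1,u_2)=u_2+(u_1-u_2)^+$. This reduces the corollary to the chain rule for $w:=u_1-u_2\in BV(U)$ with the globally Lipschitz function $F(s)=s^+$, which satisfies $F(0)=0$. Lemma~\ref{lem:bv-chain} gives $F(w)\in BV(U)$ and $DF(w)=\overline{F'w}\,Dw$. Hence
\[
D\tilde u = Du_2 + \overline{F'w}\,(Du_1-Du_2) = h\,Du_1+(1-h)\,Du_2
\qquad\text{with } h:=\overline{F'w}.
\]

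The next step is to check the range of $h$. Where $w^*(x)$ exists and $F'(w^*(x))$ is well-defined, we have $h\in\{0,1\}$. At jump points, $h$ is the difference quotient of the $1$-Lipschitz, non-decreasing function $s\mapsto s^+$, so $h\in[0,1]$. On the remaining set the value is arbitrary, and we set $h:=0$ there. Borel measurability of $h$ follows from Lemma~\ref{lem:bv-chain}, which asserts that $\overline{F'u}$ is Borel (after redefining on a Borel null set if necessary).

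It remains to show that $\{0<h<1\}$ is Lebesgue null. This set is contained in the union of two sets. The first is the jump set $J_w$ of $w$. The second is the set of points where $w^*$ fails to exist, or where $w^*(x)=0$ and $F'(0)$ is undefined, since that is exactly where we assigned arbitrary values. For a BV function the jump set is countably $\mathcal H^{n-1}$-rectifiable, hence Lebesgue null. By the Lebesgue differentiation theorem, approximate limits exist Lebesgue-a.e., so the set where $w^*$ does not exist is also Lebesgue null.

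The only delicate point is the set $\{w^*=0\}$, which may have positive Lebesgue measure. There we chose the value of $h$ by convention, and the statement "$F'(w^*)$ is well-defined" fails. However, on this set we may take $h\in\{0,1\}$, say $h=0$, because $Dw=0$ a.e.\ on level sets (for the absolutely continuous part, by the standard a.e.\ vanishing of the gradient on level sets). More precisely, Lemma~\ref{lem:bv-chain} says the measure $\overline{F'w}\,Dw$ is independent of the values assigned on the undetermined set. Since we only ever assign values in $\{0,1\}$ off $J_w$, the set $\{0<h<1\}$ is contained in $J_w$ together with a Lebesgue-null set. This is the main obstacle: making sure the arbitrary choice made on $\{w^*=0\}$ is compatible with the claimed formula. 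Lemma~\ref{lem:bv-chain} settles it by precisely this independence statement.
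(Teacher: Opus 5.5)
Your proof is correct and is essentially the paper's argument: the paper writes $\tilde u=\tfrac12\bigl(u_1+u_2+|u_1-u_2|\bigr)$ and applies Lemma~\ref{lem:bv-chain} once with $F(s)=|s|$, choosing the undetermined values in $\{-1,1\}$ and setting $h=(1+\overline{F'v})/2$. You instead use the equivalent decomposition $\tilde u=u_2+(u_1-u_2)^+$ with $F(s)=s^+$, and in both versions the point is that $h\in\{0,1\}$ off the Lebesgue-null jump set. Two small remarks: you apply the lemma only once, not twice as your opening sentence says; and your aside that $Dw$ vanishes on level sets is unnecessary, since the lemma's statement that $\overline{F'w}\,Dw$ does not depend on the arbitrary values already justifies the choice $h=0$ there.
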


\begin{proof}
We have
\[
\tilde u=\frac{u_1+u_2+|u_1-u_2|}{2}
\]
and apply Lemma~\ref{lem:bv-chain} with \(F(s)=|s|\) to
\(v:=u_1-u_2\). On the set where the value of \(\overline{F'v}\) is
arbitrary, we choose it to belong to \(\{-1,1\}\). Then
\(\overline{F'v}\) takes values in \([-1,1]\), and
\[
D\tilde u
=
\frac{1+\overline{F'v}}{2}\,Du_1
+
\frac{1-\overline{F'v}}{2}\,Du_2 .
\]
Thus the first conclusion follows with
\[
h:=\frac{1+\overline{F'v}}{2}.
\]
Moreover, outside the jump set of \(v\), up to a Lebesgue null set, either
\(v^*\neq0\), in which case \(\overline{F'v}=\operatorname{sgn}(v^*)\in
\{-1,1\}\), or \(\overline{F'v}\) has been chosen in \(\{-1,1\}\). Hence
\(\{0<h<1\}\) is contained, up to a Lebesgue null set, in the jump set of
\(v\). Since the jump set is Lebesgue null, \(\{0<h<1\}\) is Lebesgue null.
\end{proof}

\section{Proof of Theorem~\ref{weak.exists}}\label{sec:proof-weak-exists}

In this section, we prove Theorem~\ref{weak.exists} by first considering suitable regularizations and establishing the existence of solutions via the abstract theorem, then proving uniform bounds on the regularized solutions, and finally passing to the limit by Minty's method.

\subsection{Finding Regularized Solutions}

Let $\gamma := \alpha\beta'$ and define the space
\[
W^{1,\gamma}_{T}(Q) := \{\nu\in W^{1,\gamma}(Q) \colon \nu(T,x) = 0\},
\]
where the identity $\nu(T,x) = 0$ is understood in the trace sense. 

 In the notation of Theorem~\ref{thm:monotone.abstract-weak}, set $X := L^{\beta}(Q)\times W^{1,\gamma}_{T}(Q)$ and $z_0 := (0, u_T)$, where $u_T(x)$ is viewed as a function of $(t,x)$ independent of $t$. Consider the convex, closed set $\mathcal{K}\subset z_0 + X$ given by
\begin{equation*}
  \mathcal{K} := \{(\mu, \upsilon) \in z_0 + X \colon \mu \geq 0\} =  L^{\beta}(Q;\RR^+_0)\times (u_T+W^{1,\gamma}_{T}(Q)),
\end{equation*}
and define an operator $A\colon \mathcal{K} \to X'$ as follows: for all $(\mu,\upsilon)\in\mathcal{K}$ and $(\eta,\nu)\in X$, let
\begin{equation}\label{defA}
\begin{aligned}
\bigl\langle A[\mu,\upsilon],(\eta,\nu)\bigr\rangle & = \int_Q\bigl(\upsilon_t -H(t,x,D\upsilon,\mu)\bigr)\,\eta\,dxdt\\
&\quad +\int_Q\bigl(-\mu\nu_t + \mu D_pH(t,x,D\upsilon,\mu)\cdot D\nu \bigr)\,dxdt-\int_{\Tt^d}m_0(x)\nu(0,x)\,dx.
\end{aligned}
\end{equation}

\begin{proposition}
 Consider the setting of Problem~\ref{prob.mfg}. Suppose Assumptions~\ref{onH.monotone} and~\ref{onH.powergrowth} hold. Then the operator $A$ defined in~\eqref{defA} is well-defined, monotone, and hemicontinuous.
\end{proposition}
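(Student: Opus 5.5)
We check the three properties in turn; all of them reduce to pointwise estimates from Assumption~\ref{onH.powergrowth} combined with H\"older's inequality in the exponents $\beta$, $\beta'$, $\gamma=\alpha\beta'$.

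\textbf{Well-definedness.} Fix $(\mu,\upsilon)\in\mathcal{K}$, so $\mu\in L^\beta$ and $D\upsilon,\upsilon_t\in L^\gamma$ (recall $u_T\in C^1$). The first integral in \eqref{defA} pairs with $\eta\in L^\beta$, so we need $\upsilon_t-H(t,x,D\upsilon,\mu)\in L^{\beta'}$.
\begin{itemize}
\item For $\upsilon_t$: since $\gamma=\alpha\beta'\geq\beta'$ and $Q$ is bounded, $\upsilon_t\in L^{\beta'}$.
\item For $H$: \eqref{eq:assH.lower.simple} and \eqref{res:H.upper.simple} give $|H|\leq C(|D\upsilon|^\alpha+\mu^{\beta-1}+V)$. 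Here $|D\upsilon|^\alpha\in L^{\beta'}$ because $\alpha\beta'=\gamma$, $\mu^{\beta-1}\in L^{\beta'}$ because $(\beta-1)\beta'=\beta$, and $V\in L^{\beta'}$.
\end{itemize}
The second integral pairs with $\nu_t,D\nu\in L^\gamma$, so we need $\mu$ and $\mu D_pH$ in $L^{\gamma'}$.
\begin{itemize}
\item Since $\gamma'\leq\beta$, $\mu\in L^{\gamma'}$.
\item By \eqref{eq:assH.DpH.upper.simple}, $|\mu D_pH|\leq C\mu(|D\upsilon|^{\alpha-1}+\mu^{(\alpha-1)(\beta-1)/\alpha}+V^{(\alpha-1)/\alpha})$. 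Each factor multiplying $\mu$ lies in $L^{\gamma/(\alpha-1)}$: for $\mu^{(\alpha-1)(\beta-1)/\alpha}$ this is because $(\beta-1)\gamma/\alpha=\beta$, and for $V^{(\alpha-1)/\alpha}$ because $\gamma/\alpha=\beta'$.
\item The exponents combine correctly: $\frac1\beta+\frac{\alpha-1}{\gamma}=\frac1\beta+\frac{\alpha-1}{\alpha\beta'}=1-\frac1\gamma=\frac1{\gamma'}$, so H\"older's inequality places $\mu D_pH$ in $L^{\gamma'}$.
\end{itemize}
The boundary term is bounded by the trace estimate $W^{1,\gamma}(Q)\to L^1(\{0\}\times\Tt^d)$ together with $m_0\in C(\Tt^d)$. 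Measurability of $H(t,x,D\upsilon,\mu)$ follows from the Carath\'eodory structure of $H$. Altogether $A[\mu,\upsilon]\in X'$.

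\textbf{Monotonicity.} Take $(\mu_i,\upsilon_i)\in\mathcal{K}$ for $i=1,2$, and test with $(\eta,\nu)=(\mu_1-\mu_2,\upsilon_1-\upsilon_2)\in X$. Then $\nu(T,\cdot)=0$, and the boundary terms $\int m_0\nu(0)$ cancel in the difference. The time-derivative terms combine to
\[
\int_Q\bigl((\upsilon_{1}-\upsilon_2)_t(\mu_1-\mu_2)-(\mu_1-\mu_2)(\upsilon_1-\upsilon_2)_t\bigr)\,dxdt=0,
\]
and this cancellation is exact, with no integration by parts in $t$ needed. This is why $\mu$ only needs to lie in $L^\beta$. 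What remains is the integral of the left-hand side of \eqref{hmon} with $(p_i,m_i)=(D\upsilon_i,\mu_i)$, which is pointwise nonnegative by Assumption~\ref{onH.monotone}.

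\textbf{Hemicontinuity.} Fix $(\mu,\upsilon),(\mu',\upsilon')\in\mathcal K$ and $(\eta,\nu)\in X$, and set $\mu_\lambda=(1-\lambda)\mu+\lambda\mu'$, $\upsilon_\lambda$ likewise. Each integrand in $\langle A[\mu_\lambda,\upsilon_\lambda],(\eta,\nu)\rangle$ depends continuously on $\lambda$ for a.e.\ $(t,x)$, by continuity of $(p,m)\mapsto H$ and $(p,m)\mapsto D_pH$. For $\lambda\in[0,1]$ we have $\mu_\lambda\leq\mu+\mu'$ and $|D\upsilon_\lambda|\leq|D\upsilon|+|D\upsilon'|$, so the growth bounds used above, with these majorants substituted, give a $\lambda$-independent $L^1$ dominating function of the form $C(|D\upsilon|+|D\upsilon'|)^\alpha|\eta|+\dots$. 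Dominated convergence then yields continuity in $\lambda$. The terms linear in $\lambda$ (the $\upsilon_t\eta$, $\mu\nu_t$ and boundary terms) are trivially continuous.

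The only step requiring real care is the exponent bookkeeping for $\mu D_pH\in L^{\gamma'}$; the rest is routine.
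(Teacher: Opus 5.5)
Your proposal is correct and follows the paper's proof essentially step by step. Well-definedness uses the same growth bounds and H\"older exponent bookkeeping; you place $\mu D_pH$ in $L^{\gamma'}$ instead of using the paper's three-factor H\"older inequality, which is equivalent. Monotonicity uses the same exact cancellation of the time-derivative and $m_0$ boundary terms, and hemicontinuity uses the same dominated-convergence argument with the majorants $\mu+\mu'$ and $|D\upsilon|+|D\upsilon'|$.
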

\begin{proof}
We first establish that the operator is well-defined. Let $(\mu,\upsilon)\in\mathcal{K}$ and $(\eta,\nu)\in L^{\beta}(Q)\times W^{1,\gamma}_{T}(Q)$. We verify that each integral in~\eqref{defA} is finite. Since $\upsilon\in W^{1,\gamma}(Q)$ with $\gamma = \alpha\beta' > \beta'$, we have $\upsilon_t\in L^{\gamma}(Q)\subset L^{\beta'}(Q)$ (as the domain $Q$ has finite measure) and $D\upsilon\in L^{\gamma}(Q)$. By~\eqref{res:H.upper.simple} and~\eqref{eq:assH.lower.simple},
\[|H(t,x,D\upsilon,\mu)| \leq C(|D\upsilon|^{\alpha} + \mu^{\beta-1} + V(t,x)).\]
Since $|D\upsilon|^{\alpha}\in L^{\gamma/\alpha}(Q) = L^{\beta'}(Q)$, $\mu^{\beta-1}\in L^{\beta/(\beta-1)}(Q) = L^{\beta'}(Q)$, and $V\in L^{\beta'}(Q)$ 
by Assumption \ref{onH.powergrowth}, we conclude that $\upsilon_t - H(t,x,D\upsilon,\mu)\in L^{\beta'}(Q)$. Thus, the first integral in~\eqref{defA} is finite by H\"older's inequality.

For the second integral, $\mu\in L^{\beta}(Q)$ and $\nu_t\in L^{\gamma}(Q)\subset L^{\beta'}(Q)$, so $\mu\nu_t\in L^1(Q)$. Moreover, by~\eqref{eq:assH.DpH.upper.simple},
\[|\mu\, D_pH(t,x,D\upsilon,\mu)\cdot D\nu| \leq C\mu\bigl(|D\upsilon|^{\alpha-1} + \mu^{\frac{(\alpha-1)(\beta-1)}{\alpha}} + V^{\frac{\alpha-1}{\alpha}}\bigr)|D\nu|.\]
Each of the three resulting terms is integrable by H\"older's inequality with exponents summing to one; for instance, for the first term, $\frac{1}{\beta}+\frac{\alpha-1}{\gamma}+\frac{1}{\gamma} = \frac{1}{\beta}+\frac{1}{\beta'} = 1$.
Finally, the boundary integral $\int_{\Tt^d}m_0(x)\nu(0,x)\,dx$ is finite since $m_0\in C(\Tt^d)$ is bounded and $\nu\in W^{1,\gamma}(Q)$ has a well-defined trace at $t=0$.

The estimates above are linear in \((\eta,\nu)\) and bounded by
\(C_{\mu,\upsilon}\|(\eta,\nu)\|_X\), hence \(A[\mu,\upsilon]\in X'\).

Now, we address the monotonicity.
Let $(\mu_1,\upsilon_1),\,(\mu_2,\upsilon_2)\in\mathcal{K}$. Since both $\upsilon_1(T,x)=\upsilon_2(T,x)=u_T(x)$, we have $\upsilon_1-\upsilon_2\in W^{1,\gamma}_{T}(Q)$. Therefore,  $(\mu_1-\mu_2,\,\upsilon_1-\upsilon_2)\in L^{\beta}\times W^{1,\gamma}_{T}(Q)$. A direct computation using~\eqref{defA} shows that the terms involving $(\upsilon_1-\upsilon_2)_t\,(\mu_1-\mu_2)$ from the first and second lines cancel, and the boundary terms involving $m_0$ also cancel, leaving
\begin{equation*}
    \begin{aligned}
        &\bigl\langle A[\mu_1,\upsilon_1] - A[\mu_2,\upsilon_2],\,(\mu_1-\mu_2,\,\upsilon_1-\upsilon_2)\bigr\rangle\\
&\quad = \int_Q\Big[\bigl(- H(t,x,D\upsilon_1,\mu_1) + H(t,x,D\upsilon_2,\mu_2)\bigr)(\mu_1-\mu_2)\\
&\qquad\qquad + \bigl(\mu_1\, D_pH(t,x,D\upsilon_1,\mu_1) - \mu_2\, D_pH(t,x,D\upsilon_2,\mu_2)\bigr)\cdot(D\upsilon_1-D\upsilon_2)\Big]\,dxdt\\
&\quad\geq 0,
    \end{aligned}
\end{equation*}
where the last inequality follows from the monotonicity condition~\eqref{hmon} applied pointwise a.e.\ with $p_i=D\upsilon_i$ and $m_i = \mu_i$.

Finally, we prove hemicontinuity. Let $(\mu,\upsilon),\,(\bar\mu,\bar\upsilon)\in\mathcal{K}$ and $(\eta,\nu)\in L^{\beta}\times W^{1,\gamma}_{T}(Q)$. For $\lambda\in[0,1]$, set $(\mu_\lambda,\upsilon_\lambda) := (1-\lambda)(\bar\mu,\bar\upsilon) + \lambda(\mu,\upsilon)$. Since the maps $(p,m)\mapsto H(t,x,p,m)$ and $(p,m)\mapsto D_pH(t,x,p,m)$ are continuous by the formulation of Problem~\ref{prob.mfg}, each integrand in $\langle A[\mu_\lambda,\upsilon_\lambda],(\eta,\nu)\rangle$ is continuous in $\lambda$ for a.e.~$(t,x)$. Moreover, the growth bounds~\eqref{res:H.upper.simple} and~\eqref{eq:assH.DpH.upper.simple}, applied with $|D\upsilon_\lambda|\leq |D\bar\upsilon|+|D\upsilon|$ and $\mu_\lambda \leq \bar\mu+\mu$, provide a fixed integrable majorant. The conclusion follows from Lebesgue's dominated convergence theorem.
\end{proof}

Now, for $\epsi\geq 0$, define a regularized operator $A_\epsi\colon \mathcal{K} \to X'$ by adding to $A$ a space-time $\gamma$-Laplacian term in $\upsilon$:
\begin{equation}\label{defAepsi}
\begin{aligned}
\bigl\langle A_\epsi[\mu,\upsilon],(\eta,\nu)\bigr\rangle & = \bigl\langle A[\mu,\upsilon],(\eta,\nu)\bigr\rangle \\
&\quad + \epsi\int_Q \bigl(|D\upsilon|^{\gamma-2}D\upsilon\cdot D\nu + |\upsilon_t|^{\gamma-2}\upsilon_t\nu_t\bigr)\, dxdt.
\end{aligned}
\end{equation}
In all uniform estimates below, we take \(0<\epsi\leq 1\). This is sufficient for the passage to the limit \(\epsi\downarrow0\).

\begin{proposition}\label{prop:regularized-solutions}
Consider the setting of Problem~\ref{prob.mfg}. Suppose Assumptions~\ref{onH.monotone} and~\ref{onH.powergrowth} hold.
  For $\epsi>0$, the operator $A_\epsi$ defined in~\eqref{defAepsi} is well-defined, monotone, coercive, and hemicontinuous. Hence, there exists $(m_\epsi, u_\epsi)\in\mathcal{K}$ such that
  \[\bigl\langle A_\epsi[m_\epsi, u_\epsi], (\mu-m_\epsi, \upsilon-u_\epsi) \bigr\rangle \geq 0\]
  for all $(\mu, \upsilon)\in \mathcal{K}$.
\end{proposition}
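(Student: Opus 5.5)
We verify the four properties for \(A_\epsi\) and then invoke the strong part of Theorem~\ref{thm:monotone.abstract-weak}, using that \(\mathcal{K}\) is closed and convex in \(z_0+X\).

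\textbf{Well-definedness, monotonicity, hemicontinuity.} By the previous proposition these hold for \(A\). The added term is the \(\gamma\)-Laplacian form on \((D\upsilon,\upsilon_t)\). Since \(|D\upsilon|^{\gamma-1}\in L^{\gamma'}\) and \(D\nu\in L^\gamma\), it is a bounded linear functional on \(X\). It is monotone because \(z\mapsto|z|^{\gamma-2}z\) is the gradient of the convex function \(|z|^\gamma/\gamma\). It is hemicontinuous by dominated convergence, using the majorant \((|D\upsilon|+|D\bar\upsilon|)^{\gamma-1}|D\nu|\) and its time analogue. The sum of monotone, hemicontinuous operators is again monotone and hemicontinuous.

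\textbf{Coercivity (main step).} Fix \(\bar v=(\bar\mu,\bar\upsilon):=(1,u_T)\in\mathcal{K}\). For \(v=(\mu,\upsilon)\in\mathcal{K}\), the cancellations from the monotonicity computation give
\[
\langle A_\epsi[v]-A_\epsi[\bar v],v-\bar v\rangle = I + \epsi J,
\]
where \(I\) is the integrated left side of \eqref{hmon} with \((p_1,m_1)=(D\upsilon,\mu)\), \((p_2,m_2)=(Du_T,1)\), and \(J=\int_Q(|D\upsilon|^{\gamma-2}D\upsilon-|Du_T|^{\gamma-2}Du_T)\cdot(D\upsilon-Du_T)+|\upsilon_t|^\gamma\). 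Two lower bounds are needed.

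First, \(\epsi J\geq c\epsi\|(\upsilon-u_T)_t,D(\upsilon-u_T)\|_{L^\gamma}^\gamma - C_\epsi(1+\|\upsilon-u_T\|_{W^{1,\gamma}})\). Here \(u_T\in C^1\) and Young's inequality control the cross terms. Since \(\upsilon-u_T\) vanishes at \(t=T\), a Poincaré inequality in time shows that the gradient seminorm is equivalent to the full \(W^{1,\gamma}_T\) norm.

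Second, \(I\) must dominate \(\|\mu\|_{L^\beta}^\beta\) up to terms that are small relative to the other contributions. Expand \(I\) and use \eqref{dphdotp-minus-h.lower.bound} on the terms \(\mu(D_pH\cdot D\upsilon-H)\). The remaining terms are:
\begin{itemize}
\item \(H(t,x,Du_T,1)\,\mu\), bounded by \(C\mu V\);
\item \(-\mu D_pH(\cdot,D\upsilon,\mu)\cdot Du_T\), bounded via \eqref{eq:assH.DpH.upper.simple} and Young by \(\tfrac{1}{2C}\mu(|D\upsilon|^\alpha+\mu^{\beta-1})+C\mu V\);
\item \(H(t,x,D\upsilon,\mu)\), which enters with coefficient \(+\bar\mu=1\) and is bounded below using \eqref{eq:assH.lower.simple} by \(-C(\mu^{\beta-1}+V)\);
\item \(-D_pH(\cdot,Du_T,1)\cdot(D\upsilon-Du_T)\), which is linear in \(D\upsilon\) with an \(L^{\infty}\)-type coefficient up to \(V^{(\alpha-1)/\alpha}\).
\end{itemize}
Altogether, \(I\geq c\|\mu\|_{\beta}^\beta-C\|\mu\|_{\beta}\|V\|_{\beta'}-C\|\mu\|_{\beta}^{\beta-1}-C\|D\upsilon\|_{L^\gamma}-C\). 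The \(\int\mu V\) term is harmless by Hölder with \(V\in L^{\beta'}\). The linear term in \(D\upsilon\) is absorbed by the \(\epsi\)-term, which is superlinear of degree \(\gamma>1\).

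Summing the two bounds gives \(\langle\cdot\rangle\geq c(\|\mu\|_\beta^\beta+\epsi\|\upsilon-u_T\|^\gamma)-C_\epsi(1+\|v-\bar v\|_X)\). Divided by \(\|v-\bar v\|_X\), this tends to \(+\infty\) because \(\beta,\gamma>1\).

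The obstacle I expect is the sign bookkeeping in \(I\): the terms with \(\mu\) and \(D\upsilon\) mixed must be absorbed by \eqref{dphdotp-minus-h.lower.bound} rather than treated separately. I would handle this by choosing the reference \(\bar\mu\) small (say \(\bar\mu\equiv1\) or \(\bar\mu\equiv0\), whichever gives the cleaner expansion) and, if needed, taking \(\bar\mu=0\) so that \(D_pH\) terms at the reference disappear.
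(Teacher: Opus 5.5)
Your proof is correct and follows the paper's argument. The $\gamma$-Laplacian term is monotone and hemicontinuous (as the G\^ateaux derivative of a convex functional), coercivity comes from \eqref{dphdotp-minus-h.lower.bound} plus the $\epsi$-term, and the strong part of Theorem~\ref{thm:monotone.abstract-weak} finishes the argument.

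The only difference is your reference point $(1,u_T)$. Your expansion of $I$ with this choice is valid: the extra linear term has coefficient bounded by $CV^{(\alpha-1)/\alpha}\in L^{\gamma'}(Q)$, so the $\epsi$-term absorbs it. The paper instead uses your fallback $(0,u_T)$. That choice eliminates all $\bar\mu$-weighted cross terms and gives the lower bound $\frac{1}{C}\int_Q\mu^\beta\,dxdt-C$ for the non-regularized part directly.
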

\begin{proof}
By the preceding proposition, $A$ is well-defined and monotone on $\mathcal{K}$. The additional regularization term in~\eqref{defAepsi} involves
$|D\upsilon|^{\gamma-2}D\upsilon\cdot D\nu$ and $|\upsilon_t|^{\gamma-2}\upsilon_t\,\nu_t$, each of which is integrable by H\"older's inequality with conjugate exponents $\gamma'$ and $\gamma$ for the terms involving $\upsilon$ and $\nu$, respectively. Furthermore, this regularization term is the G\^ateaux derivative of the convex functional $\upsilon\mapsto \frac{1}{\gamma}\int_Q(|D\upsilon|^{\gamma}+|\upsilon_t|^{\gamma})\,dxdt$. 
Therefore, the regularization term defines a monotone operator in the \(\upsilon\)-component.
Hence, $A_\epsi$ is well-defined and monotone as the sum of well-defined and monotone operators.

To establish coercivity, let $(\bar\mu,\bar\upsilon) := (0,u_T)\in\mathcal{K}$, where we view $u_T(x)$ as a function of $(t,x)$ independent of $t$; note that $\bar\upsilon\in W^{1,\gamma}(Q)$ since $u_T\in C^1(\Tt^d)$. For any $(\mu,\upsilon)\in\mathcal{K}$, we have $\upsilon-u_T\in W^{1,\gamma}_{T}(Q)$. 
A direct computation, using~\eqref{defA}--\eqref{defAepsi} to show that the terms involving $\upsilon_t\,\mu$ from the first and second lines of~\eqref{defA} cancel, and the boundary terms involving $m_0$ also cancel, yields
\begin{equation}\label{eq:coercive-test.Aepsi}
\begin{aligned}
  &\bigl\langle A_\epsi[\mu,\upsilon] - A_\epsi[0,u_T],\,(\mu,\,\upsilon-u_T)\bigr\rangle\\
  &\quad = \int_Q\mu\bigl[D_pH(t,x,D\upsilon,\mu)\cdot D\upsilon - H(t,x,D\upsilon,\mu) + H(t,x,Du_T,0)\\
  &\qquad\qquad\qquad - D_pH(t,x,D\upsilon,\mu)\cdot Du_T\bigr]\,dxdt\\
  &\qquad + \epsi\int_Q\bigl[(|D\upsilon|^{\gamma-2}D\upsilon - |Du_T|^{\gamma-2}Du_T)\cdot(D\upsilon-Du_T) + |\upsilon_t|^{\gamma}\bigr]\,dxdt.
\end{aligned}
\end{equation}

We estimate the first integral on the right-hand side of~\eqref{eq:coercive-test.Aepsi} using~\eqref{dphdotp-minus-h.lower.bound} for the Lagrangian lower bound, \eqref{eq:assH.lower.simple} for $H(t,x,Du_T,0)$, and~\eqref{eq:assH.DpH.upper.simple} for the $D_pH\cdot Du_T$ term (noting that $\|Du_T\|_{L^\infty}\leq C$ since $u_T \in C^1(\Tt^d)$). Combining these estimates with Young's inequality to absorb the lower-order terms into $\frac{1}{C}(\mu|D\upsilon|^\alpha+\mu^\beta)$, and using~\eqref{eq:assH.potential}, we obtain
\begin{equation}\label{eq:coercive.Aepsi}
\begin{aligned}
  &\bigl\langle A_\epsi[\mu,\upsilon] - A_\epsi[0,u_T],\,(\mu,\,\upsilon-u_T)\bigr\rangle\\
  &\quad \geq \frac{1}{C}\int_Q\mu^\beta\,dxdt \\
  &\qquad + \epsi\int_Q\bigl[(|D\upsilon|^{\gamma-2}D\upsilon - |Du_T|^{\gamma-2}Du_T)\cdot(D\upsilon-Du_T) + |\upsilon_t|^{\gamma}\bigr]\,dxdt - C.
\end{aligned}
\end{equation}

For the $\epsi$-regularization term, since $|Du_T|\leq C$, Young's inequality gives
\[(|D\upsilon|^{\gamma-2}D\upsilon - |Du_T|^{\gamma-2}Du_T)\cdot(D\upsilon-Du_T) \geq |D\upsilon|^{\gamma} - C|D\upsilon|^{\gamma-1} - C|D\upsilon| \geq \frac{1}{C}|D\upsilon|^{\gamma} - C.\]
Substituting into~\eqref{eq:coercive.Aepsi} and noting that $\upsilon-u_T\in W^{1,\gamma}_{T}(Q)$ (so $\|(\mu,\upsilon-u_T)\|_X$ is controlled by $\|\mu\|_{L^\beta}$, $\|D\upsilon\|_{L^\gamma}$, and $\|\upsilon_t\|_{L^\gamma}$ up to a constant depending on $u_T$ and the Poincar\'e inequality), we conclude that
\[\lim_{\substack{\Vert (\mu,\upsilon-u_T)\Vert_X \to+\infty \\ (\mu,\upsilon) \in \mathcal{K}}}\frac{\bigl\langle A_\epsi[\mu,\upsilon] - A_\epsi[0, u_T] ,\, (\mu,\,\upsilon-u_T) \bigr\rangle}{\Vert (\mu,\upsilon-u_T)\Vert_X} = +\infty,\]
which shows that $A_\epsi$ is coercive.

The hemicontinuity of $A$ was shown in the preceding proposition. The additional term $\epsi\int_Q(|D\upsilon_\lambda|^{\gamma-2}D\upsilon_\lambda\cdot D\nu + |(\upsilon_\lambda)_t|^{\gamma-2}(\upsilon_\lambda)_t\,\nu_t)\,dxdt$ is continuous in $\lambda$ by the continuity of the maps $p\mapsto |p|^{\gamma-2}p$ and $a\mapsto |a|^{\gamma-2}a$, and Lebesgue's dominated convergence theorem. Hence $A_\epsi$ is hemicontinuous.

Since $\mathcal{K}$ is convex and closed, and $A_\epsi$ is monotone, coercive, and hemicontinuous, Theorem~\ref{thm:monotone.abstract-weak} yields $(m_\epsi, u_\epsi)\in\mathcal{K}$ satisfying the strong variational inequality.
\end{proof}

\begin{lemma}\label{lem:strong-epsi}
Consider the setting of Problem~\ref{prob.mfg}. Suppose Assumptions \ref{onH.monotone} and \ref{onH.powergrowth} hold. Let $\epsi>0$ and let $(m_\epsi, u_\epsi)\in \mathcal{K}$ be given by Proposition~\ref{prop:regularized-solutions}. Then $(m_\epsi, u_\epsi)$ solves a regularization of Problem~\ref{prob.mfg} in the following sense:
  \begin{enumerate}
  \item The Hamilton--Jacobi equation holds in the sense that
\begin{equation}\label{hjb-strong-epsi}
\begin{alignedat}{2}
& -(u_\epsi)_t + H(t,x,Du_\epsi,m_\epsi) \leq 0 \qquad && \text{a.e.~in } Q = (0,T)\times\Tt^d, \\
 & -(u_\epsi)_t + H(t,x,Du_\epsi,m_\epsi) = 0 \qquad && \text{a.e.~in the set } \{m_\epsi>0\};
\end{alignedat}
  \end{equation}\smallskip

  \item The regularized transport equation with the initial condition holds in the sense that
  \begin{equation}\label{transport-strong-epsi}
     \begin{aligned}
       & \int_Q m_\epsi\bigl(-\varphi_t + D_pH(t,x,Du_\epsi,m_\epsi)\cdot D\varphi\bigr) \,dxdt \\ & \qquad + \epsi\int_Q \bigl(|Du_\epsi|^{\gamma-2}Du_\epsi\cdot D\varphi + |(u_\epsi)_t|^{\gamma-2}(u_\epsi)_t\varphi_t\bigr) \,dxdt \\
       & \qquad \qquad = \int_{\Tt^d} m_0(x)\varphi(0,x)\,dx
     \end{aligned}
  \end{equation}
  for all $\varphi\in W^{1,\gamma}_{T}(Q)$.
  \end{enumerate}
\end{lemma}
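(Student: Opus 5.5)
The plan is to exploit the fact that $\mathcal{K}$ is a product of a cone in the $\mu$-variable and an affine space in the $\upsilon$-variable. This lets us decouple the strong variational inequality of Proposition~\ref{prop:regularized-solutions} by testing separately in each component.

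\textbf{Transport equation.} First take $\mu = m_\epsi$ and $\upsilon = u_\epsi \pm \varphi$ with $\varphi\in W^{1,\gamma}_{T}(Q)$. This is admissible, since $u_\epsi\pm\varphi\in u_T+W^{1,\gamma}_T(Q)$. The inequality then reads $\pm\langle A_\epsi[m_\epsi,u_\epsi],(0,\varphi)\rangle\geq 0$, hence
\[
\langle A_\epsi[m_\epsi,u_\epsi],(0,\varphi)\rangle = 0 .
\]
Unfolding \eqref{defA}--\eqref{defAepsi} with $\eta=0$ gives exactly \eqref{transport-strong-epsi}. No integrability issues arise, because each term was already shown to be finite in the well-definedness part of the preceding propositions.

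\textbf{Hamilton--Jacobi relations.} Next take $\upsilon=u_\epsi$ and $\mu = m_\epsi + \eta$ with $\eta\in L^\beta(Q)$ such that $m_\epsi+\eta\geq 0$. The inequality becomes
\[
\int_Q \bigl((u_\epsi)_t - H(t,x,Du_\epsi,m_\epsi)\bigr)\eta\,dxdt \geq 0,
\]
where the function $g:=(u_\epsi)_t - H(t,x,Du_\epsi,m_\epsi)$ lies in $L^{\beta'}(Q)$.

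Choosing $\eta=\chi_E$ for any measurable $E\subset Q$ (bounded, hence in $L^\beta$) gives $\int_E g\geq 0$ for all $E$. Therefore $g\geq0$ a.e., which is the first line of \eqref{hjb-strong-epsi}.

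For the equality on $\{m_\epsi>0\}$, take $\eta=-\chi_{E}\min(m_\epsi,1)$ for $E\subset\{m_\epsi>0\}$. This keeps $\mu=m_\epsi+\eta\geq0$ and gives $\int_E g\min(m_\epsi,1)\leq 0$. Combined with $g\geq0$, this forces $g\min(m_\epsi,1)=0$ a.e. on $\{m_\epsi>0\}$, so $g=0$ a.e. there.

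\textbf{Main point of care.} The only step that needs attention is the sign constraint $\mu\geq0$. It is handled by using a one-sided perturbation on the whole domain and a two-sided perturbation (truncated by $m_\epsi$) only where $m_\epsi>0$. We must also check that the chosen $\eta$ lies in $L^\beta$, which holds because it is bounded. Everything else follows by linearity of $A_\epsi[m_\epsi,u_\epsi]$ as an element of $X'$.
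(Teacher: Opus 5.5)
Your proposal is correct and follows the paper's proof: you decouple the strong variational inequality by testing with $(m_\epsi, u_\epsi\pm\varphi)$ to obtain the transport equation, and by perturbing only the $\mu$-component to obtain the Hamilton--Jacobi relations. The only cosmetic difference is in the equality on $\{m_\epsi>0\}$. The paper uses the two-sided multiplicative perturbation $\mu=m_\epsi(1\pm\phi)$ with $\|\phi\|_{L^\infty}\le 1$ to get $m_\epsi g=0$ directly, whereas you use the one-sided downward perturbation $-\chi_E\min(m_\epsi,1)$ together with the already established sign $g\ge 0$. Both routes are valid.
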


\begin{proof}
  By Proposition~\ref{prop:regularized-solutions}, $(m_\epsi, u_\epsi)\in\mathcal{K}$ satisfies $\langle A_\epsi[m_\epsi, u_\epsi], (\mu-m_\epsi, \upsilon-u_\epsi) \rangle \geq 0$ for all $(\mu, \upsilon)\in \mathcal{K}$. We derive each claim by choosing appropriate variations in $(\mu, \upsilon)$.

  \medskip\noindent\textbf{Hamilton--Jacobi equation.}
  Taking $(\mu, \upsilon) = (m_\epsi + \phi,\, u_\epsi)$ for an arbitrary non-negative $\phi \in L^\beta(Q)$, the terms involving $\upsilon - u_\epsi$ vanish in~\eqref{defAepsi}, leaving
  \[
     \int_Q \bigl((u_\epsi)_t - H(t,x,Du_\epsi,m_\epsi)\bigr)\,\phi \,dxdt \geq 0.
  \]
  Since $\phi \geq 0$ is arbitrary, this yields $(u_\epsi)_t - H(t,x,Du_\epsi,m_\epsi) \geq 0$ a.e., establishing the first line of~\eqref{hjb-strong-epsi}. Next, taking $(\mu, \upsilon) = \bigl(m_\epsi(1 \pm \phi),\, u_\epsi\bigr)$ for an arbitrary $\phi \in L^\infty(Q)$ satisfying $\|\phi\|_{L^\infty} \leq 1$, the constraint $\mu \geq 0$ is satisfied, and we obtain
  \[
     \pm\int_Q \bigl((u_\epsi)_t - H(t,x,Du_\epsi,m_\epsi)\bigr)\,m_\epsi\,\phi \,dxdt \geq 0.
  \]
  Since $\phi$ is arbitrary, this forces $m_\epsi\bigl((u_\epsi)_t - H(t,x,Du_\epsi,m_\epsi)\bigr) = 0$ a.e., which establishes the second line of~\eqref{hjb-strong-epsi}.

  \medskip\noindent\textbf{Regularized transport equation.}
  Taking $(\mu, \upsilon) = (m_\epsi,\, u_\epsi \pm \varphi)$ for an arbitrary $\varphi \in W^{1,\gamma}_{T}(Q)$, we note that $(u_\epsi \pm \varphi)(T,x) = u_T(x)$ since $\varphi(T,x) = 0$, so $(\mu, \upsilon) \in \mathcal{K}$. Since $\mu - m_\epsi = 0$, the first integral in~\eqref{defA} vanishes, and~\eqref{defAepsi} gives
  \begin{align*}
     & \pm\biggl[\int_Q m_\epsi \bigl(-\varphi_t + D_pH(t,x,Du_\epsi,m_\epsi)\cdot D\varphi\bigr) \,dxdt \\
     & \qquad + \epsi\!\int_Q \bigl(|Du_\epsi|^{\gamma-2}Du_\epsi\cdot D\varphi + |(u_\epsi)_t|^{\gamma-2}(u_\epsi)_t\varphi_t\bigr) \,dxdt \\
     & \qquad\qquad - \int_{\Tt^d} m_0(x)\varphi(0,x)\,dx\biggr] \geq 0.
  \end{align*}
  The $\pm$ condition forces the bracketed expression to vanish, which after rearrangement is precisely~\eqref{transport-strong-epsi}.
\end{proof}

\subsection{Uniform Estimates on Regularized Solutions}\label{sec:estimates}
In this section, we derive uniform-in-$\epsi$ estimates on the regularized solutions $(m_\epsi, u_\epsi)$ provided by Proposition~\ref{prop:regularized-solutions}. The first proposition below contains the estimates that do not rely on Assumptions~\ref{onH.Lagrangianlowerbound} or~\ref{onm0.lowerbound}.

\begin{proposition}\label{prop:uinitial.and.m.bound}
Consider the setting of Problem~\ref{prob.mfg}.
Suppose Assumptions~\ref{onH.monotone} and~\ref{onH.powergrowth} hold.
Let $\epsi>0$ and let $(m_\epsi, u_\epsi)\in \mathcal{K}$ be given by Proposition~\ref{prop:regularized-solutions}.
Then we have
  \begin{align}
     & \int_{\Tt^d} m_0(x)u_\epsi^{-}(0,x)\,dx \leq C, \label{u.initial.m0} \\
     & \int_Q m_\epsi|Du_\epsi|^\alpha\,dxdt \leq C, \label{mDu-basic} \\
     & \int_Q m_\epsi^{\beta}\,dxdt \leq C, \label{m.bound} \\
     & \epsi\int_Q \bigl(|Du_\epsi|^{\gamma} + |(u_\epsi)_t|^{\gamma}\bigr) \,dxdt \leq C, \label{epsi-dependent-bound} \\
     & \int_Q \bigl(((u_\epsi)_t)^{-}\bigr)^{\beta'}\,dxdt \leq C, \label{ut-minus-bound} \\
     & \sup_{s\in [0,T]} \int_{\Tt^d} u_\epsi^{+}(s,x)^{\beta'}\,dx \leq C, \label{u.upper.beta'-uniform-in-time}
  \end{align}
  for a positive constant $C$ independent of~$\epsi$. Here, $u_\epsi(s,\cdot)$ refers to the trace of $u_\epsi$ on the hypersurface $\{s\}\times \Tt^d$.
\end{proposition}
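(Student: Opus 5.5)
The approach is to test the strong variational inequality of Proposition~\ref{prop:regularized-solutions} with the reference pair $(\mu,\upsilon)=(0,u_T)\in\mathcal{K}$, which yields an energy-type inequality. This is then closed with a one-sided bound on $u_\epsi^+$ obtained from the Hamilton--Jacobi subsolution property in Lemma~\ref{lem:strong-epsi}.

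\textbf{Step 1 (energy inequality).} With this choice the variational inequality reads $\langle A_\epsi[m_\epsi,u_\epsi],(m_\epsi,u_\epsi-u_T)\rangle\leq 0$. Subtracting $\langle A_\epsi[0,u_T],(m_\epsi,u_\epsi-u_T)\rangle$ from both sides gives the left-hand side of \eqref{eq:coercive-test.Aepsi} on the left. On the right we get the explicit quantity
\[
\int_Q H(t,x,Du_T,0)\,m_\epsi\,dxdt+\int_{\Tt^d}m_0\bigl(u_\epsi(0,x)-u_T(x)\bigr)\,dx-\epsi\int_Q|Du_T|^{\gamma-2}Du_T\cdot(Du_\epsi-Du_T)\,dxdt .
\]
I then bound the left-hand side from below as in the coercivity proof. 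Unlike there, I keep the full Lagrangian term from \eqref{dphdotp-minus-h.lower.bound}, which gives $\frac1C\int_Q m_\epsi(|Du_\epsi|^\alpha+m_\epsi^{\beta-1})$. The cross term $m_\epsi D_pH\cdot Du_T$ is absorbed via \eqref{eq:assH.DpH.upper.simple} and Young's inequality. On the right, $H(t,x,Du_T,0)\leq CV$ by \eqref{res:H.upper.simple}, and $\int_Q Vm_\epsi\leq\delta\int_Q m_\epsi^\beta+C_\delta$. The $\epsi$ cross term is absorbed into $\frac{\epsi}{C}\int_Q|Du_\epsi|^\gamma$ using $\epsi\leq1$. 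The result is
\[
\int_Q m_\epsi|Du_\epsi|^\alpha+\int_Q m_\epsi^\beta+\epsi\int_Q\bigl(|Du_\epsi|^\gamma+|(u_\epsi)_t|^\gamma\bigr)+\int_{\Tt^d}m_0u_\epsi^-(0,x)\,dx\leq C+\int_{\Tt^d}m_0u_\epsi^+(0,x)\,dx,
\]
where $u_\epsi=u_\epsi^+-u_\epsi^-$ has been split.

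\textbf{Step 2 (upper bound on $u_\epsi$ in terms of $m_\epsi$).} From the first line of \eqref{hjb-strong-epsi} and \eqref{eq:assH.lower.simple}, we get $(u_\epsi)_t\geq H(t,x,Du_\epsi,m_\epsi)\geq -C(m_\epsi^{\beta-1}+V)$ a.e. Hence $((u_\epsi)_t)^-\leq C(m_\epsi^{\beta-1}+V)$, so that $\|((u_\epsi)_t)^-\|_{L^{\beta'}}\leq C(1+\|m_\epsi\|_{L^\beta}^{\beta-1})$. Since $u_\epsi\in W^{1,\gamma}$, it is absolutely continuous on a.e.\ line $\{x\}\times[0,T]$ with $u_\epsi(T,x)=u_T(x)$. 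Integrating backward gives
\[
u_\epsi(s,x)\leq u_T(x)+\int_s^T((u_\epsi)_t)^-(t,x)\,dt
\qquad\text{for every } s\in[0,T].
\]
Taking positive parts, applying Hölder in $t$ and integrating in $x$ yields $\sup_s\int_{\Tt^d}u_\epsi^+(s,\cdot)^{\beta'}\leq C(1+\|m_\epsi\|_{L^\beta}^{\beta})$. Applied with $s=0$ and $m_0\in L^\infty$, it also gives $\int_{\Tt^d}m_0u_\epsi^+(0,\cdot)\leq C(1+\|m_\epsi\|_{L^\beta}^{\beta-1})$. The trace identity can be justified by density of smooth functions in $W^{1,\gamma}$.

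\textbf{Step 3 (closing).} Inserting the Step 2 bound into Step 1, the term $\|m_\epsi\|_{L^\beta}^{\beta-1}$ is absorbed into $\frac1C\|m_\epsi\|_{L^\beta}^\beta$ by Young's inequality, since $\beta-1<\beta$. This gives \eqref{m.bound} first. Then \eqref{mDu-basic}, \eqref{epsi-dependent-bound} and \eqref{u.initial.m0} follow from the same inequality. Finally, returning to Step 2 with \eqref{m.bound} now available gives \eqref{ut-minus-bound} and \eqref{u.upper.beta'-uniform-in-time}.

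I expect the main obstacle to be Step 1: the careful bookkeeping of the lower bound on the left-hand side of \eqref{eq:coercive-test.Aepsi}, so that both $m_\epsi|Du_\epsi|^\alpha$ and $m_\epsi^\beta$ survive with positive coefficients after absorbing the cross terms. The structure that makes the circular dependence between $m_\epsi$ and $u_\epsi^+(0,\cdot)$ closable is the strict gap between the exponents $\beta-1$ and $\beta$.
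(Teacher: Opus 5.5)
Your proposal is correct and is essentially the paper's argument. Testing the inequality with $(0,u_T)$ reproduces, as an inequality, the energy identity \eqref{basic-equality}, which the paper obtains instead from the Hamilton--Jacobi equality on $\{m_\epsi>0\}$ together with the transport equation tested by $u_\epsi-u_T$; the $m_\epsi H(t,x,Du_T,0)$ and $\epsi|Du_T|^{\gamma-2}Du_T$ terms you carry actually cancel exactly, and your Step~2 is the paper's Step~2. The only cosmetic difference is where the loop is closed: you close it in $\|m_\epsi\|_{L^\beta}$ via $\int_Q m_\epsi^\beta\leq C+C\|m_\epsi\|_{L^\beta}^{\beta-1}$, while the paper closes it in $\int_{\Tt^d}u_\epsi^+(0,x)\,dx$ via $\int_{\Tt^d}u_\epsi^+(0,x)^{\beta'}dx\leq C\int_{\Tt^d}u_\epsi^+(0,x)\,dx+C$, which is the same exponent-gap mechanism.
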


\begin{proof} The proof proceeds in three steps: we first derive an energy-type inequality from testing the transport equation against the value function and combining with the Hamilton--Jacobi equation, then derive a pointwise-in-time estimate from the Hamilton--Jacobi inequality alone, and finally combine the two.

  \medskip\noindent\textbf{Step 1:} From~\eqref{hjb-strong-epsi}, we get
  \begin{equation}\label{hjb-times-m}
     -m_\epsi(u_\epsi)_t + m_\epsi H(t,x,Du_\epsi,m_\epsi) = 0 \qquad\text{a.e. in } Q,
  \end{equation}
  while from~\eqref{transport-strong-epsi} with $\varphi(t,x) = u_\epsi(t,x)-u_T(x)$, we get
  \begin{equation}\label{transport-against-u}
     \begin{aligned}
       & \int_Q m_\epsi\bigl(-(u_\epsi)_t + D_pH(t,x,Du_\epsi,m_\epsi)\cdot Du_\epsi - D_pH(t,x,Du_\epsi,m_\epsi)\cdot Du_T\bigr) \,dxdt \\ & \qquad + \epsi\int_Q \bigl(|Du_\epsi|^{\gamma}-|Du_\epsi|^{\gamma-2}Du_\epsi\cdot Du_T + |(u_\epsi)_t|^{\gamma}\bigr) \,dxdt \\
       & \qquad \qquad = \int_{\Tt^d} m_0(x)u_\epsi(0,x)\,dx - \int_{\Tt^d} m_0(x)u_T(x)\,dx.
     \end{aligned}
  \end{equation}
  Plugging~\eqref{hjb-times-m} into~\eqref{transport-against-u} and rearranging, we get
  \begin{equation}\label{basic-equality}
     \begin{aligned}
       & \int_Q m_\epsi\bigl(D_pH(t,x,Du_\epsi,m_\epsi)\cdot Du_\epsi-H(t,x,Du_\epsi,m_\epsi)\bigr)\,dxdt \\
       & \qquad - \int_Q m_\epsi D_pH(t,x,Du_\epsi,m_\epsi)\cdot Du_T \,dxdt \\
       & \qquad + \epsi\int_Q \bigl(|Du_\epsi|^{\gamma}-|Du_\epsi|^{\gamma-2}Du_\epsi\cdot Du_T + |(u_\epsi)_t|^{\gamma}\bigr)\,dxdt \\
       & \qquad \qquad = \int_{\Tt^d} m_0(x)u_\epsi(0,x)\,dx - \int_{\Tt^d} m_0(x)u_T(x)\,dx.
     \end{aligned}
  \end{equation}
  Using~\eqref{dphdotp-minus-h.lower.bound} and~\eqref{eq:assH.DpH.upper.simple} in~\eqref{basic-equality}, we obtain
  \begin{equation}\label{first-estimate-crude}
     \begin{aligned}
       \int_{\Tt^d} m_0(x)u_\epsi(0,x)\,dx & \geq \frac{1}{C}\int_Q m_\epsi(|Du_\epsi|^\alpha+m_\epsi^{\beta-1})\,dxdt \\
       & \quad - C\int_Q m_\epsi V(t,x)\,dxdt \\
       & \quad - C\int_Q m_\epsi\bigl(|Du_\epsi|^{\alpha-1}+m_\epsi^{\frac{(\alpha-1)}{\alpha}(\beta-1)} + V(t,x)^{\frac{\alpha-1}{\alpha}}\bigr)|Du_T| \,dxdt \\
       & \quad + \epsi\int_Q \bigl(|Du_\epsi|^{\gamma}-|Du_\epsi|^{\gamma-1}|Du_T| + |(u_\epsi)_t|^{\gamma}\bigr) \,dxdt \\
       & \qquad + \int_{\Tt^d} m_0(x)u_T(x)\,dx.
     \end{aligned}
  \end{equation}  
Now, we apply Young's inequality to handle the terms on the second and the third lines of~\eqref{first-estimate-crude}.
 For the potential term in the second line, Young's inequality with conjugate exponents $\beta$ and $\beta'$ gives $m_\epsi V(t,x) \leq \delta m_\epsi^\beta + C_\delta V(t,x)^{\beta'}$. On the other hand, regarding the third line, we begin by observing that 
$|Du_T|$ is uniformly bounded. Accordingly, 
 the first term of the third line is bounded by $\delta m_\epsi|Du_\epsi|^\alpha + C_\delta m_\epsi \leq \delta (m_\epsi|Du_\epsi|^\alpha + m_\epsi^\beta) + C_\delta$. Next, the second term involves $m_\epsi^{1 + \frac{(\alpha-1)(\beta-1)}{\alpha}}$ where the exponent satisfies $1 + \frac{(\alpha-1)(\beta-1)}{\alpha} = \beta - \frac{\beta-1}{\alpha} < \beta$, thus Young's inequality yields $m_\epsi^{\beta - \frac{\beta-1}{\alpha}} \leq \delta m_\epsi^\beta + C_\delta$. Finally, the third term is bounded by $\delta m_\epsi V(t,x) + C_\delta m_\epsi \leq \delta m_\epsi^\beta + C_\delta V(t,x)^{\beta'} + C_\delta$ for any $\delta > 0$, and $\int_Q V(t,x)^{\beta'}\leq C$, by Assumption \ref{onH.powergrowth}.
  
  Choosing $\delta$ sufficiently small allows us to absorb all $\delta m_\epsi^\beta$ and $\delta m_\epsi|Du_\epsi|^\alpha$ terms into the corresponding positive terms on the first line. Similarly, we absorb the second term of the fourth line into its first term. Bounding the remaining integrable terms and the initial/terminal data by a constant $C$, we obtain
  \[\begin{aligned}
     &\int_{\Tt^d} m_0(x)u_\epsi(0,x)\,dx \\
     &\qquad \geq \frac{1}{C}\left(\int_Q (m_\epsi|Du_\epsi|^\alpha + m_\epsi^{\beta} )\,dxdt + \epsi\int_Q (|Du_\epsi|^{\gamma}+|(u_\epsi)_t|^\gamma)\,dxdt \right) - C,
  \end{aligned} \]
  which we rewrite as
  \begin{equation}\label{first-estimate}
  \begin{aligned}
     & \int_{\Tt^d} m_0(x)u_\epsi^{-}(0,x)\,dx  + \frac{1}{C}\Biggl(\int_Q (m_\epsi|Du_\epsi|^\alpha + m_\epsi^{\beta} )\,dxdt + \epsi\int_Q (|Du_\epsi|^{\gamma}+|(u_\epsi)_t|^\gamma)\,dxdt \Biggr) \\
     & \qquad \leq \int_{\Tt^d} m_0(x)u_\epsi^{+}(0,x)\,dx + C \\
     & \qquad\qquad \leq C\int_{\Tt^d} u_\epsi^{+}(0,x)\,dx + C.
  \end{aligned}      
  \end{equation}

  \medskip\noindent\textbf{Step 2:} Using~\eqref{eq:assH.lower.simple} in~\eqref{hjb-strong-epsi}, we get
  \begin{equation}\label{hjb-estimated-wo-Du}
    -(u_\epsi)_t\leq C(m_\epsi^{\beta-1}+V(t,x)) \qquad\text{a.e. in } Q.  
  \end{equation}
    We use \eqref{hjb-estimated-wo-Du} in two ways. 
    
    First, we take the positive part of the left-hand side, raise to the power of $\beta'$, integrate over $Q$, and use \eqref{eq:assH.potential} to get
    \begin{equation}\label{ut-minus-bound.in-terms-of-m}
        \int_Q \bigl(((u_\epsi)_t)^{-}\bigr)^{\beta'}\,dxdt \leq C\int_Q m_\epsi^\beta\,dxdt + C.
    \end{equation}
  
  Second, for any $s\in [0,T]$, we integrate both sides of \eqref{hjb-estimated-wo-Du} from $s$ to $T$ for a.e.~$x\in\Tt^d$ to get
  \begin{equation}\label{pre-sesqui-estimate.for-remark-reference}
      u_\epsi(s,x)-u_T(x)\leq C\int_s^T (m_\epsi^{\beta-1}+V(t,x))\,dt \qquad\text{a.e. on } \Tt^d.
  \end{equation}
    We move the second term of the left-hand side to the right, take the positive part and expand the bounds of integration to $[0,T]$, and take supremum over $s\in [0,T]$ to obtain
  \[\sup_{s\in [0,T]} u_\epsi^+(s,x) \leq |u_T(x)| + C\int_0^T (m_\epsi^{\beta-1}+V(t,x))\,dt  \qquad\text{a.e. on } \Tt^d.\]
  We raise both sides to the power of $\beta'$, while absorbing the data into the constant $C$, and get
  \begin{equation}\label{sesqui-estimate}
    \begin{aligned}
        \sup_{s\in [0,T]} u_\epsi^+(s,x)^{\beta'} & \leq C + C\biggl(\int_0^T (m_\epsi^{\beta-1}+V(t,x))\,dt\biggr)^{\beta'} \\
        & \leq C + C\int_0^T (m_\epsi^{\beta}+V(t,x)^{\beta'})\,dt \qquad\qquad \text{for a.e. } x\in\Tt^d.
    \end{aligned}
  \end{equation}
   We integrate \eqref{sesqui-estimate} over $\Tt^d$ and use \eqref{eq:assH.potential} to conclude
  \begin{equation}\label{second-estimate}
    \sup_{s\in [0,T]} \int_{\Tt^d} u_\epsi^{+}(s,x)^{\beta'}\,dx \leq
    \int_{\Tt^d} \sup_{s\in [0,T]} u_\epsi^+(s,x)^{\beta'}\,dx \leq
    C\int_Q m_\epsi^{\beta}\,dxdt + C.
  \end{equation}

  \medskip\noindent\textbf{Step 3:}
    Taking \eqref{second-estimate} for $s= 0$ and combining with~\eqref{first-estimate}, we get
  \[\int_{\Tt^d} u_\epsi^{+}(0,x)^{\beta'}\,dx \leq C\int_{\Tt^d} u_\epsi^{+}(0,x)\,dx + C,\]
  which yields, by Young's inequality, that the right-hand side of \eqref{first-estimate} is bounded by $C$. Hence, we conclude~\eqref{u.initial.m0},~\eqref{mDu-basic},~\eqref{m.bound}, and~\eqref{epsi-dependent-bound} at once. Then, using \eqref{m.bound} in \eqref{ut-minus-bound.in-terms-of-m} and \eqref{second-estimate}, we conclude \eqref{ut-minus-bound} and \eqref{u.upper.beta'-uniform-in-time}.
\end{proof}

The preceding proposition estimates the positive part of $u_\epsi$ in an effective manner by \eqref{u.upper.beta'-uniform-in-time}, but provides only \eqref{u.initial.m0} for the negative part. Therefore, we use either Assumption~\ref{onH.Lagrangianlowerbound} to estimate $u_\epsi^-$ in a different way or use Assumption~\ref{onm0.lowerbound} to upgrade \eqref{u.initial.m0} to a stronger estimate.

\begin{proposition}
\label{prop:uepsi-lower-bound-A}
Consider the setting of Problem~\ref{prob.mfg}. Suppose Assumptions~\ref{onH.monotone}, \ref{onH.powergrowth}, and~\ref{onH.Lagrangianlowerbound} hold. Let $\epsi>0$ and let $(m_\epsi, u_\epsi)\in \mathcal{K}$ be given by Proposition~\ref{prop:regularized-solutions}.
Then, we have
	\begin{equation}\label{eq:lower-bound-w}
		\sup_{s\in [0,T]} \esssup_{x\in\Tt^d} u_\epsi^-(s,x) \leq C,
	\end{equation}
    for a positive constant $C$ independent of $\epsi$. Here, $u_\epsi(s,\cdot)$ refers to the trace of $u_\epsi$ on the hypersurface $\{s\}\times \Tt^d$.
\end{proposition}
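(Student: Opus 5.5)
The plan is a comparison argument against an explicit affine-in-time lower barrier. The key input is that Assumption~\ref{onH.Lagrangianlowerbound} together with Remark~\ref{rmk:cxty} gives a uniform bound on the Lagrangian-type quantity. Indeed, by convexity of $H(t,x,\cdot,m)$ and monotonicity in $m$,
\[
H(t,x,p,m)-D_pH(t,x,p,m)\cdot p\leq H(t,x,0,m)\leq H(t,x,0,0)\leq C_0 \quad\text{a.e.},
\]
for a constant $C_0$ given by the essential supremum in Assumption~\ref{onH.Lagrangianlowerbound}. Set
\[
w(t,x):=\min_{\Tt^d}u_T-K(T-t),
\]
with $K>C_0$ to be fixed independently of $\epsi$. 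Then $w\in u_T+W^{1,\gamma}_T(Q)$ fails only in its terminal value, but $w(T,\cdot)\leq u_T=u_\epsi(T,\cdot)$. Hence $\varphi:=(w-u_\epsi)^+$ belongs to $W^{1,\gamma}_T(Q)$ and is an admissible test function in~\eqref{transport-strong-epsi}. The goal is to show $\varphi=0$ a.e. This gives $u_\epsi\geq w\geq \min u_T-KT$ a.e. in $Q$, and hence in the trace sense on every slice $\{s\}\times\Tt^d$, which is~\eqref{eq:lower-bound-w}.

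Testing~\eqref{transport-strong-epsi} with $\varphi$: on the set $E:=\{w>u_\epsi\}$ we have $\varphi_t=K-(u_\epsi)_t$ and $D\varphi=-Du_\epsi$, and the right-hand side $\int m_0\varphi(0,\cdot)$ is $\geq0$. In the transport part we use the equality in~\eqref{hjb-strong-epsi} on $\{m_\epsi>0\}$ to replace $(u_\epsi)_t$ by $H(t,x,Du_\epsi,m_\epsi)$. The integrand becomes
\[
m_\epsi\bigl(-K+H(t,x,Du_\epsi,m_\epsi)-D_pH(t,x,Du_\epsi,m_\epsi)\cdot Du_\epsi\bigr)\leq -(K-C_0)\,m_\epsi .
\]
The spatial $\gamma$-Laplacian contribution is $-\epsi|Du_\epsi|^\gamma\chi_E\leq0$. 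Thus everything except the temporal regularization has a favorable sign. We would then conclude that $m_\epsi=0$ a.e. on $E$, and on $\{m_\epsi=0\}$ the inequality in~\eqref{hjb-strong-epsi} together with $H\geq H(t,x,0,0)$-type bounds feeds back into $E$.

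The main obstacle is the temporal regularization term $\epsi|(u_\epsi)_t|^{\gamma-2}(u_\epsi)_t\bigl(K-(u_\epsi)_t\bigr)\chi_E$. It is nonpositive where $(u_\epsi)_t\leq0$ or $(u_\epsi)_t\geq K$, but possibly positive where $0<(u_\epsi)_t<K$. There it is only bounded by $\epsi K^\gamma|E|$, which does not by itself force $\varphi=0$. My first attempt is to replace $\varphi$ by $\psi(\varphi)$ for a suitable nondecreasing Lipschitz $\psi$ with $\psi(0)=0$, or to use a barrier with $w_t=K$ chosen large. On $E$ one has $u_\epsi<w$ while $u_\epsi(T)\geq w(T)$. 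So $u_\epsi-w$ must increase in time toward $T$ along each fiber, i.e. $(u_\epsi)_t>K$ on a substantial part of $E$ fiberwise. Integrating in $t$ fiber by fiber should control the bad region by the good one, because $\int(K-(u_\epsi)_t)\,dt$ over each connected component of $E\cap(\{x\}\times(0,T))$ equals $-\varphi$ at its left endpoint, which is $\leq0$.

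If this fiberwise argument is too delicate, the fallback is to pass the bad term to the left and obtain a bound on $\int_E m_\epsi$ and on $\int m_0\varphi(0,\cdot)$ of order $\epsi$. One would then combine it with the $\epsi$-independent estimates of Proposition~\ref{prop:uinitial.and.m.bound}, namely~\eqref{ut-minus-bound}, and the inequality $-(u_\epsi)_t\leq C(m_\epsi^{\beta-1}+V)$. This fallback would only give an integral rather than pointwise lower bound, so the fiberwise sign argument is the route I would pursue first.
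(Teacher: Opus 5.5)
Your test function $(w-u_\epsi)^+$ is, up to the constant, the paper's $\varphi^-$ with $\varphi=u_\epsi+c(T-t)+\|u_T\|_{L^\infty(\Tt^d)}$. You also correctly isolate the temporal regularization term as the obstacle. However, the proposal neither removes that obstacle nor has a valid way to finish.

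\textbf{The missing comparison.} What handles the temporal term is a pointwise comparison with the value at $K$. Since $g(s):=|s|^{\gamma-2}s$ is increasing, $g(s)(K-s)\leq g(K)(K-s)$ for every $s\in\RR$: for $s<K$ because $g(s)\leq g(K)$, and for $s>K$ because $g(s)\geq g(K)$. Taking $K>\max(C_0,0)$, this gives
\[
\epsi\int_E g\bigl((u_\epsi)_t\bigr)\bigl(K-(u_\epsi)_t\bigr)\,dxdt\leq \epsi K^{\gamma-1}\int_Q\varphi_t\,dxdt=-\epsi K^{\gamma-1}\int_{\Tt^d}\varphi(0,x)\,dx\leq 0,
\]
which uses only $\varphi(T,\cdot)=0$. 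Your fiberwise remark that $\int(K-(u_\epsi)_t)\,dt\leq0$ on each component is just $\int_Q\varphi_t\leq0$. By itself it says nothing about the $g$-weighted integral, and once the comparison is made no decomposition into components is needed.

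\textbf{The closing step fails.} After all terms are signed you obtain $m_\epsi=0$ and $Du_\epsi=0$ a.e.\ on $E$, together with $\int m_0\varphi(0,\cdot)\,dx=0$. On $\{m_\epsi=0\}$, however, the regularized Hamilton--Jacobi relation is only $(u_\epsi)_t\geq H(t,x,Du_\epsi,0)$. That is a lower bound on $(u_\epsi)_t$, whereas propagating $u_\epsi\geq w$ backward from $t=T$ needs the upper bound $(u_\epsi)_t\leq K$ on $E$. Assumption~\ref{onH.Lagrangianlowerbound} controls $H(t,x,0,0)$ only from above, so it cannot supply this. Indeed, a profile $u_\epsi=w-\psi(t)$ on $E$, with $\psi\geq0$, $\psi(0)=\psi(T)=0$ and $\psi'\leq K-C_0$, satisfies all of those relations.

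\textbf{What actually closes the argument.} The conclusion comes from the regularization, through the equality case. The chain of inequalities forces
\[
\int_E\bigl(g(K)-g((u_\epsi)_t)\bigr)\bigl(K-(u_\epsi)_t\bigr)\,dxdt=0,
\]
with nonnegative integrand. Strict monotonicity of $g$ then gives $(u_\epsi)_t=K$, i.e.\ $\varphi_t=0$, a.e.\ on $E$, hence a.e.\ in $Q$. Together with $\varphi(T,\cdot)=0$ this yields $\varphi\equiv0$.

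This is how the paper concludes. It also reads off the vanishing of the initial trace from the boundary term $\epsi c^{\gamma-1}\int\varphi^-(0,x)\,dx$, and the vanishing of the spatial gradient on the bad set from the term $\epsi|Du_\epsi|^\gamma$. The argument genuinely uses $\epsi>0$, even though the resulting bound $cT+\|u_T\|_{L^\infty}$ is independent of $\epsi$.
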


\begin{proof}
    We first note a consequence of \eqref{transport-strong-epsi}. Given $\varphi\in W^{1,\gamma}(Q)$ satisfying $\varphi(T,x)\geq 0$ in the sense of trace, we have
    \begin{equation}\label{transport-epsi-on.one.side}
        \begin{aligned}
       & \int_{\{\varphi < 0\}} m_\epsi\bigl(\varphi_t - D_pH(t,x,Du_\epsi,m_\epsi)\cdot D\varphi\bigr) \,dxdt - \epsi\int_{\{\varphi < 0\}} |Du_\epsi|^{\gamma-2}Du_\epsi\cdot D\varphi \,dxdt \\
       & \qquad = \epsi\int_Q \bigl(-|(u_\epsi)_t|^{\gamma-2}(u_\epsi)_t\bigr)(\varphi^-)_t \,dxdt + \int_{\Tt^d} m_0(x)\varphi^-(0,x)\,dx.
     \end{aligned}
    \end{equation}
    This follows from \eqref{transport-strong-epsi} with $\varphi(t,x)$ replaced by $\varphi^-(t,x)$, after a rearrangement, because 
    \[((\varphi^-)_t,\, D\varphi^-) = \begin{cases}
        (-\varphi_t,\, -D\varphi) \qquad & \text{a.e.~in the set } \{\varphi < 0\}, \\
        (0,\, 0) & \text{a.e.~in the set } \{\varphi \geq 0\}.
    \end{cases}\]
    
    Next, by Assumption~\ref{onH.Lagrangianlowerbound}, we take a constant $c > 0$ satisfying
    \begin{equation}\label{lagrangian-lower-bound-taken.as.c}
        H(t,x,0,0)\leq c \qquad \text{for a.e. } (t,x)\in Q,
    \end{equation}
    and use \eqref{transport-epsi-on.one.side} with $\varphi(t,x) := u_\epsi(t,x) + c(T-t) + \|u_T\|_{L^\infty(\Tt^d)}$. Thus we have
    \begin{equation}\label{transport-epsi-one.side-applied.at.u}
        \begin{aligned}
       & \int_{\{\varphi < 0\}} \Bigl(m_\epsi\bigl(-c+ (u_\epsi)_t - D_pH(t,x,Du_\epsi,m_\epsi)\cdot Du_\epsi\bigr) -\epsi|Du_\epsi|^\gamma\Bigr)\,dxdt \\
       & \qquad = \epsi\int_Q \bigl(-|(u_\epsi)_t|^{\gamma-2}(u_\epsi)_t\bigr)(\varphi^-)_t \,dxdt + \int_{\Tt^d} m_0(x)\varphi^-(0,x)\,dx.
     \end{aligned}
    \end{equation}
    Moreover, we have
    \begin{equation}\label{remove-u-epsi-by-increasing-function-trick-blue}
        |(u_\epsi)_t|^{\gamma-2}(u_\epsi)_t(\varphi^-)_t \leq c^{\gamma-1}(\varphi^-)_t \qquad \text{a.e. in } Q,
    \end{equation}
    because $(\varphi^-)_t \in \{0,\, c-(u_\epsi)_t\}$ a.e.~and \eqref{remove-u-epsi-by-increasing-function-trick-blue} holds trivially in the case $(\varphi^-)_t = 0$ while it reduces to $(|s|^{\gamma-2}s - c^{\gamma-1})(s - c) \geq 0$ with $s := (u_\epsi)_t$ in the case $(\varphi^-)_t = c-(u_\epsi)_t$. Combining \eqref{transport-epsi-one.side-applied.at.u} and \eqref{remove-u-epsi-by-increasing-function-trick-blue}, we get
    \begin{equation}\label{transport-one-side-applied-and-cleaned}
        \begin{aligned}
       & \int_{\{\varphi < 0\}} \Bigl(m_\epsi\bigl(-c+ (u_\epsi)_t - D_pH(t,x,Du_\epsi,m_\epsi)\cdot Du_\epsi\bigr) -\epsi|Du_\epsi|^\gamma\Bigr)\,dxdt \\
       & \qquad \geq -\epsi c^{\gamma-1}\int_Q (\varphi^-)_t \,dxdt + \int_{\Tt^d} m_0(x)\varphi^-(0,x)\,dx = \int_{\Tt^d} (\epsi c^{\gamma-1} + m_0(x))\varphi^-(0,x)\,dx.
     \end{aligned}
    \end{equation}
    Now we estimate the left-hand side using \eqref{hjb-strong-epsi}, the convexity of $H(t,x,\cdot,m)$ as in Remark~\ref{rmk:cxty}, the monotonicity of $H(t,x,p,\cdot)$ as in Remark~\ref{rmk:cxty}, and \eqref{lagrangian-lower-bound-taken.as.c} as follows:
     \begin{equation}\label{integrand-on-the-negative-part-of-test-function-also-negative-blue}
         \begin{aligned}
        & m_\epsi\bigl(-c + (u_\epsi)_t - D_pH(t,x,Du_\epsi,m_\epsi)\cdot Du_\epsi\bigr) -\epsi|Du_\epsi|^\gamma \\
        & \quad = m_\epsi\bigl(-c + H(t,x,Du_\epsi,m_\epsi) - D_pH(t,x,Du_\epsi,m_\epsi)\cdot Du_\epsi\bigr) -\epsi|Du_\epsi|^\gamma \\
        & \qquad \leq m_\epsi(-c + H(t,x,0,m_\epsi)) \leq m_\epsi(-c + H(t,x,0,0)) \leq 0 \qquad \text{a.e. in } Q.
    \end{aligned}
     \end{equation}  
        Consequently, if \(L\) and \(B\) denote the left- and right-hand sides of
    \eqref{transport-one-side-applied-and-cleaned}, respectively, then
    \(L\leq0\) by \eqref{integrand-on-the-negative-part-of-test-function-also-negative-blue},
    while \(B\geq0\). Since \eqref{transport-one-side-applied-and-cleaned}
    gives \(L\geq B\), we have \(L=B=0\). Let \(R\) be the right-hand side of the original identity
    \eqref{transport-epsi-one.side-applied.at.u} and note that \eqref{transport-one-side-applied-and-cleaned} was established from the inequality $R\geq B$. Accordingly, we obtain \(R=L=0\), and
    \[
        R-B
        =
        \epsi\int_Q
        \bigl(c^{\gamma-1}
        - |(u_\epsi)_t|^{\gamma-2}(u_\epsi)_t\bigr)
        (\varphi^-)_t\,dxdt
        =0.
    \]
    The integrand above is non-negative by
    \eqref{remove-u-epsi-by-increasing-function-trick-blue}; therefore
    \eqref{remove-u-epsi-by-increasing-function-trick-blue} holds as equality
    a.e.~in \(Q\). Also, since \(L=0\) and its integrand is non-positive, that
    integrand vanishes a.e.~on \(\{\varphi<0\}\). Thus equality holds in the
    first inequality of
    \eqref{integrand-on-the-negative-part-of-test-function-also-negative-blue}
    a.e.~on \(\{\varphi<0\}\).

    The vanishing of the right-hand side of \eqref{transport-one-side-applied-and-cleaned} implies $\varphi^-(0,x) = 0$ for a.e.~$x\in\Tt^d$. Equality in \eqref{remove-u-epsi-by-increasing-function-trick-blue} implies $(\varphi^-)_t = 0$ a.e.~in $Q$ because $(|s|^{\gamma-2}s - c^{\gamma-1})(s - c) > 0$ for $s \neq c$. Finally, equality in the first inequality of \eqref{integrand-on-the-negative-part-of-test-function-also-negative-blue} implies $D\varphi = Du_\epsi = 0$ a.e.~in the set $\{\varphi < 0\}$, which yields $D\varphi^- = 0$ a.e.~in $Q$.
   
    Collecting everything, we have
    \[\varphi^-(0,\cdot) = 0, \qquad ((\varphi^-)_t,\, D\varphi^-) = (0,0) \qquad \text{a.e.}\]
    Therefore, $\varphi^-=0$ in $W^{1,\gamma}(Q)$, and hence its trace on
    every slice $\{s\}\times\Tt^d$ vanishes. Thus
    \[
        0\leq \varphi(s,x)
        =
        u_\epsi(s,x)+c(T-s)+\|u_T\|_{L^\infty(\Tt^d)}
        \qquad\text{for a.e. }x\in\Tt^d
    \]
    for every $s\in[0,T]$. Rearranging, we obtain
    \[
        u_\epsi^-(s,x)
        \leq cT+\|u_T\|_{L^\infty(\Tt^d)}
        \qquad\text{for a.e. }x\in\Tt^d,
    \]
    for all $s\in[0,T]$, concluding \eqref{eq:lower-bound-w}.
\end{proof}

\begin{remark}[Optimal control interpretation of the upper and the lower bounds]
The estimates derived in Propositions~\ref{prop:uinitial.and.m.bound} and~\ref{prop:uepsi-lower-bound-A} have a natural interpretation via the optimal control problem underlying the MFG. The value function formally satisfies
\[
 u(s,x) = \inf_{\gamma} \left\{ \int_s^T L(t,\gamma(t),\dot\gamma(t),
 m(t,\gamma(t)))\,dt + u_T(\gamma(T)) \right\},
\]
where $L(t,x,v,m) = \sup_{p} \{-p\cdot v - H(t,x,p,m)\}$ is the Lagrangian.

An upper bound can be obtained by evaluating the cost of a specific suboptimal trajectory. The simplest choice is the stationary strategy $\gamma(t)\equiv x$, which gives a running cost $L(t,x,0,m)=\sup_{p} \{ - H(t,x,p,m)\}$. By the basic power growth assumption~\eqref{eq:assH.lower.simple}
we obtain $L(t,x,0,m)\leq C(m^{\beta-1}+V(t,x))$. This is exactly the optimal control meaning of Step~2 in Proposition~\ref{prop:uinitial.and.m.bound}: integrating the Hamilton--Jacobi inequality in time at a fixed $x$ corresponds to evaluating the cost of the stationary strategy, which leads directly to~\eqref{pre-sesqui-estimate.for-remark-reference}. However, for local couplings satisfying Assumption~\ref{onH.powergrowth}---such as $H=\tfrac1\alpha|p|^\alpha - m^{\beta-1}$---the density $m$ can spike, driving $H(t,x,0,m)$ to $-\infty$ and the stationary running cost to $+\infty$. As a result, this stationary strategy does not yield a uniform pointwise constant upper bound. Controlling $u_\epsi$ from above therefore requires combining this stationary bound with the coupled integral estimates developed in Step~1, which exploit the duality between the Hamilton--Jacobi and transport equations to successfully control the resulting $m_\epsi^\beta$ integrals.

For the lower bound in Proposition~\ref{prop:uepsi-lower-bound-A}, evaluating the Lagrangian at $p=0$ gives $L(t,x,v,m) \geq -H(t,x,0,m)$. Since $H(t,x,0,\cdot)$ is non-increasing (Remark~\ref{rmk:cxty}), the uniform condition $H(t,x,0,0)\leq c$ yields $L \geq -c$ globally for all velocities~$v$. Thus, the total cost of \emph{any} trajectory is bounded from below by $-c(T-s)+\min_{\Tt^d} u_T$, which explains the absolute floor established in~\eqref{eq:lower-bound-w}.
\end{remark}

\begin{proposition}\label{prop:uepsi-lower-bound-B}
  Consider the setting of Problem~\ref{prob.mfg}. Suppose Assumptions~\ref{onH.monotone},~\ref{onH.powergrowth}, and \ref{onm0.lowerbound} hold. Let $\epsi>0$ and let $(m_\epsi, u_\epsi)\in \mathcal{K}$ be given by Proposition~\ref{prop:regularized-solutions}. Then we have
  \begin{equation} \label{u-lower-L1-uniform-in-time}
      \sup_{s\in [0,T]} \int_{\Tt^d} u_\epsi^-(s,x)\,dx \leq C, 
  \end{equation}
  for a positive constant $C$ independent of~$\epsi$. Here, $u_\epsi(s,\cdot)$ refers to the trace of $u_\epsi$ on the hypersurface $\{s\}\times \Tt^d$.
\end{proposition}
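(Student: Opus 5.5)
Under Assumption~\ref{onm0.lowerbound}, compactness of $\Tt^d$ gives $\underline{m}:=\min_{\Tt^d} m_0>0$. Combined with \eqref{u.initial.m0} of Proposition~\ref{prop:uinitial.and.m.bound}, this immediately yields
\[
\int_{\Tt^d} u_\epsi^-(0,x)\,dx\leq \underline{m}^{-1}\int_{\Tt^d} m_0\,u_\epsi^-(0,x)\,dx\leq C .
\]
So the bound holds at $s=0$. The plan is to transport it forward to every time slice $s\in[0,T]$, using the one-sided control \eqref{ut-minus-bound} on the negative part of $(u_\epsi)_t$.

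Since $u_\epsi\in W^{1,\gamma}(Q)$, for a.e.~$x$ the map $t\mapsto u_\epsi(t,x)$ is absolutely continuous, with traces agreeing with these pointwise values for every $s$. This is the standard ACL property, made rigorous by approximating with smooth functions and passing to traces. For a.e.~$x$ and all $s$,
\[
u_\epsi(s,x)=u_\epsi(0,x)+\int_0^s (u_\epsi)_t(t,x)\,dt\geq u_\epsi(0,x)-\int_0^T ((u_\epsi)_t)^-(t,x)\,dt .
\]
Taking negative parts gives
\[
u_\epsi^-(s,x)\leq u_\epsi^-(0,x)+\int_0^T ((u_\epsi)_t)^-\,dt .
\]
We then integrate over $\Tt^d$, estimate the last term by H\"older's inequality using $\|((u_\epsi)_t)^-\|_{L^1(Q)}\leq |Q|^{1/\beta}\|((u_\epsi)_t)^-\|_{L^{\beta'}(Q)}\leq C$ from \eqref{ut-minus-bound}, and take the supremum over $s$. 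This gives \eqref{u-lower-L1-uniform-in-time}.

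The only delicate point is justifying the slice-wise fundamental theorem of calculus for the traces, uniformly in $s$. We can avoid it entirely by proving the inequality in integrated form. For smooth approximants $w_k\to u_\epsi$ in $W^{1,\gamma}(Q)$, the identity
\[
\int_{\Tt^d} w_k(s,\cdot)\,dx-\int_{\Tt^d} w_k(0,\cdot)\,dx=\int_0^s\!\!\int_{\Tt^d}(w_k)_t
\]
is not directly enough, because we need negative parts pointwise. Instead we use the pointwise inequality for smooth $w_k$ and pass to the limit. Traces on each slice converge in $L^1(\Tt^d)$, since the trace operator is continuous on $W^{1,1}$ uniformly in $s$. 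Moreover $((w_k)_t)^-\to((u_\epsi)_t)^-$ in $L^1(Q)$. Hence the integrated inequality
\[
\int_{\Tt^d} u_\epsi^-(s,\cdot)\leq\int_{\Tt^d} u_\epsi^-(0,\cdot)+\|((u_\epsi)_t)^-\|_{L^1(Q)}
\]
holds for every $s$, which completes the argument.
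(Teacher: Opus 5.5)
Your proof is correct and follows essentially the paper's route: you get the $s=0$ bound from \eqref{u.initial.m0} together with $\min m_0>0$, then integrate in time from $0$ to $s$, take negative parts, and integrate over $\Tt^d$. The only differences are cosmetic. The paper integrates the pointwise bound \eqref{hjb-estimated-wo-Du} and then uses \eqref{m.bound} and \eqref{eq:assH.potential}, whereas you use the integrated consequence \eqref{ut-minus-bound}. Your smooth-approximation argument for the slice traces also spells out a step the paper leaves implicit.
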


\begin{proof}
     Because $m_0\in C(\Tt^d)$ and $\Tt^d$ is compact, Assumption~\ref{onm0.lowerbound} implies $\inf_{x\in\Tt^d} m_0(x) > 0$. Thus,  \eqref{u.initial.m0} implies
     \begin{equation}\label{u-lower-L1-uniform-in-time:initial-time}
         \int_{\Tt^d} u_\epsi^-(0,x)\,dx \leq C;
     \end{equation}
     that is, the $s = 0$ case of \eqref{u-lower-L1-uniform-in-time}. Next, we integrate both sides of \eqref{hjb-estimated-wo-Du} from $0$ to $s$ for a.e.~$x\in\Tt^d$ to get
     \[u_\epsi(0,x)-u_\epsi(s,x)\leq C\int_0^s (m_\epsi^{\beta-1}+V(t,x))\,dt \qquad\text{a.e. on } \Tt^d.\]
     Then we move the first term of the left-hand side to the right, take the positive part, and expand the bounds of integration to $[0,T]$ to obtain
     \[u_\epsi^-(s,x) \leq u_\epsi^-(0,x) + C\int_0^T (m_\epsi^{\beta-1}+V(t,x))\,dt  \qquad\text{a.e. on } \Tt^d.\]
     Now we integrate both sides over $\Tt^d$ and conclude by \eqref{u-lower-L1-uniform-in-time:initial-time}, \eqref{m.bound}, and \eqref{eq:assH.potential}.
\end{proof}

We note that \eqref{eq:lower-bound-w} implies \eqref{u-lower-L1-uniform-in-time} and express this in a corollary.

\begin{corollary}\label{prop:uinitial.lower}
    Consider the setting of Problem~\ref{prob.mfg}. Suppose Assumptions~\ref{onH.monotone},~\ref{onH.powergrowth} and one of \ref{onH.Lagrangianlowerbound}/\ref{onm0.lowerbound} hold. Let $\epsi>0$ and let $(m_\epsi, u_\epsi)\in \mathcal{K}$ be given by Proposition~\ref{prop:regularized-solutions}.
	 Then \eqref{u-lower-L1-uniform-in-time} holds for a positive constant $C$ independent of~$\epsi$.
\end{corollary}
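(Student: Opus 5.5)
The corollary follows by a case split on which of the two alternative assumptions holds. In both cases the constant depends only on the data, not on $\epsi$.

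\emph{Case of Assumption~\ref{onm0.lowerbound}.} The bound \eqref{u-lower-L1-uniform-in-time} is exactly the conclusion of Proposition~\ref{prop:uepsi-lower-bound-B}, whose hypotheses are Assumptions~\ref{onH.monotone}, \ref{onH.powergrowth} and \ref{onm0.lowerbound}. Nothing further is needed.

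\emph{Case of Assumption~\ref{onH.Lagrangianlowerbound}.} We invoke Proposition~\ref{prop:uepsi-lower-bound-A}, which gives the uniform pointwise bound \eqref{eq:lower-bound-w}. For every $s\in[0,T]$ it yields $u_\epsi^-(s,x)\leq C$ for a.e.\ $x\in\Tt^d$, with the trace understood on the slice $\{s\}\times\Tt^d$. Integrating over $\Tt^d$, which has unit (in any case finite) measure, gives
\[
\int_{\Tt^d}u_\epsi^-(s,x)\,dx\leq C|\Tt^d|.
\]
Taking the supremum over $s\in[0,T]$ yields \eqref{u-lower-L1-uniform-in-time} with a constant independent of $\epsi$.

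There is no real obstacle. The only points to check are that both propositions use the same notion of trace on slices, and that the resulting constants are independent of $\epsi$; both are already guaranteed by the statements of the propositions.
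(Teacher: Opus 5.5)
Your proof is correct and is the same as the paper's: under Assumption~\ref{onm0.lowerbound} the bound is exactly Proposition~\ref{prop:uepsi-lower-bound-B}. Under Assumption~\ref{onH.Lagrangianlowerbound}, integrating the pointwise bound \eqref{eq:lower-bound-w} of Proposition~\ref{prop:uepsi-lower-bound-A} over the finite-measure torus gives \eqref{u-lower-L1-uniform-in-time} with an $\epsi$-independent constant.
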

\begin{proof}
If Assumption~\ref{onH.Lagrangianlowerbound} holds, then
\eqref{eq:lower-bound-w} immediately implies \eqref{u-lower-L1-uniform-in-time}
because \(\Tt^d\) has finite measure. If Assumption~\ref{onm0.lowerbound}
holds, the conclusion is precisely Proposition~\ref{prop:uepsi-lower-bound-B}.
\end{proof}

Using these initial bounds, we now obtain uniform $L^\alpha(Q)$ estimates on $Du_\epsi$ and control on the positive part of $(u_\epsi)_t$.

\begin{proposition}\label{prop:Du.bound}
Consider the setting of Problem~\ref{prob.mfg}. Suppose Assumptions~\ref{onH.monotone},~\ref{onH.powergrowth} and one of \ref{onH.Lagrangianlowerbound}/\ref{onm0.lowerbound} hold. Let $\epsi>0$ and let $(m_\epsi, u_\epsi)\in \mathcal{K}$ be given by Proposition~\ref{prop:regularized-solutions}.
  Then we have
  \begin{align}
     & \int_Q ((u_\epsi)_t)^{+}\,dxdt \leq C, \label{ut-plus-bound} \\
     & \int_Q |Du_\epsi|^\alpha\,dxdt \leq C, \label{Du.bound}
  \end{align}
  for a positive constant $C$ independent of~$\epsi$.
\end{proposition}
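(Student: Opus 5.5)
Both bounds come from the time-integration identity for $u_\epsi$ together with the Hamilton--Jacobi inequality~\eqref{hjb-strong-epsi}; everything else is already controlled by Proposition~\ref{prop:uinitial.and.m.bound} and Corollary~\ref{prop:uinitial.lower}.

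\textbf{Bound on $((u_\epsi)_t)^+$.} Since $u_\epsi\in W^{1,\gamma}(Q)$ with $u_\epsi(T,\cdot)=u_T$, integrating in time for a.e.\ $x$ and then over $\Tt^d$ gives
\[
\int_Q (u_\epsi)_t\,dxdt=\int_{\Tt^d}u_T\,dx-\int_{\Tt^d}u_\epsi(0,x)\,dx .
\]
The right-hand side is at most $C+\int_{\Tt^d}u_\epsi^-(0,x)\,dx\leq C$ by \eqref{u-lower-L1-uniform-in-time} at $s=0$. Writing $(u_\epsi)_t^+=(u_\epsi)_t+(u_\epsi)_t^-$, the negative part is controlled in $L^{\beta'}$, hence in $L^1$, by \eqref{ut-minus-bound}. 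This yields \eqref{ut-plus-bound}.

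\textbf{Bound on $Du_\epsi$.} By the first line of \eqref{hjb-strong-epsi} and the lower bound \eqref{eq:assH.lower.simple},
\[
\frac1C|Du_\epsi|^\alpha\leq H(t,x,Du_\epsi,m_\epsi)+C(m_\epsi^{\beta-1}+V)\leq (u_\epsi)_t^+ + C(m_\epsi^{\beta-1}+V)\quad\text{a.e. in }Q .
\]
Integrating over $Q$, we use \eqref{ut-plus-bound} for the first term, \eqref{m.bound} together with Hölder's inequality for $\int_Q m_\epsi^{\beta-1}$, and \eqref{eq:assH.potential} for $\int_Q V$. This gives \eqref{Du.bound}.

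\textbf{Main point of care.} The only delicate step is justifying the time-integration identity at the level of traces. This is standard for $W^{1,\gamma}$ functions via Fubini and absolute continuity on a.e.\ line. The lower bound at $t=0$ is exactly where Assumption~\ref{onH.Lagrangianlowerbound} or Assumption~\ref{onm0.lowerbound} enters, through Corollary~\ref{prop:uinitial.lower}; without it only the weighted bound \eqref{u.initial.m0} would be available.
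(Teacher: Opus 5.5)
Your proposal is correct and follows essentially the same argument as the paper. The time-integration identity $\int_Q (u_\epsi)_t\,dxdt=\int_{\Tt^d}u_T\,dx-\int_{\Tt^d}u_\epsi(0,x)\,dx$, combined with \eqref{ut-minus-bound} and the $s=0$ case of \eqref{u-lower-L1-uniform-in-time}, gives \eqref{ut-plus-bound}; the Hamilton--Jacobi inequality with \eqref{eq:assH.lower.simple} then gives the pointwise bound on $|Du_\epsi|^\alpha$, which integrates to \eqref{Du.bound}.
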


\begin{proof}
    First we observe
  \[\begin{aligned}
      \int_Q ((u_\epsi)_t)^{+} \,dxdt & = \int_Q ((u_\epsi)_t)^{-}\,dxdt + \int_{\Tt^d} u_T(x)\,dx - \int_{\Tt^d} u_\epsi(0,x)\,dx, \\
      & \leq \int_Q ((u_\epsi)_t)^{-}\,dxdt + \int_{\Tt^d} u_\epsi^-(0,x)\,dx + C.
  \end{aligned}\]
  Hence, we conclude~\eqref{ut-plus-bound} by~\eqref{ut-minus-bound}~and the $s=0$ case of \eqref{u-lower-L1-uniform-in-time}, which holds by Corollary~\ref{prop:uinitial.lower}. On the other hand, \eqref{hjb-strong-epsi} and~\eqref{eq:assH.lower.simple} imply
  \begin{equation}\label{Du-alpha-upper-bound:hjb+Hlower}
      \frac{1}{C}|Du_\epsi|^\alpha \leq C(m_\epsi^{\beta-1}+V(t,x)) + (u_\epsi)_t \qquad\text{a.e.~in } Q,
  \end{equation}
  hence
  \begin{equation*}
     |Du_\epsi|^\alpha \leq C\bigl(m_\epsi^{\beta-1} + V(t,x) + ((u_\epsi)_t)^{+}\bigr) \qquad\text{a.e.~in } Q.  
  \end{equation*}
  We integrate the last expression over $Q$ and conclude~\eqref{Du.bound} by~\eqref{m.bound},~\eqref{eq:assH.potential}, and~\eqref{ut-plus-bound}.
\end{proof}

Now we provide an estimate with cross terms, which essentially states that~\eqref{mDu-basic} holds even when $m$ and $u$ come from different values of the regularization parameter and even when $u$ is shifted in the space variable and shifted backward in the time variable. For convenience of notation regarding this variable shifting, we introduce, for $s\in [0,T]$ and $y\in\Tt^d$, the functions $u_\epsi^{[s,y]}\in W^{1,\gamma}(Q)$, $m_\epsi^{[s,y]}\in L^{\beta}((0,T-s)\times\Tt^d)$, and $u_T^{[y]}\in C^1(\Tt^d)$ as follows:
\[\begin{aligned}
  u_\epsi^{[s,y]}(t,x) & = \begin{cases}
     u_\epsi(t+s,x+y),\enskip &\text{for } t<T-s,\\
     u_T(x+y), &\text{for } t>T-s,
  \end{cases} \\ 
  m_\epsi^{[s,y]}(t,x) & = m_\epsi(t+s,x+y), \\
  u_T^{[y]}(x) & = u_T(x+y).
\end{aligned} 
\]

\begin{proposition}\label{cross-estimate}
Consider the setting of Problem~\ref{prob.mfg}. Suppose Assumptions~\ref{onH.monotone},~\ref{onH.powergrowth} and one of \ref{onH.Lagrangianlowerbound}/\ref{onm0.lowerbound} hold. Let $\epsi\geq \epsi' > 0$, let $(m_{\epsi'}, u_{\epsi'})\in\mathcal{K}$ and $(m_\epsi, u_\epsi)\in\mathcal{K}$ be given by Proposition~\ref{prop:regularized-solutions}, and let $(s,y)\in \bar{Q} = [0,T]\times\Tt^d$. Then we have
\begin{equation}\label{mDu-advanced}
  \int_0^{T-s}\int_{\Tt^d} m_{\epsi'} |Du_\epsi^{[s,y]}|^\alpha\,dxdt \leq C,
\end{equation}
for a positive constant $C$ independent of $\epsi$, $\epsi'$, and $(s,y)$.
\end{proposition}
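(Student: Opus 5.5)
The plan is to test the regularized transport equation \eqref{transport-strong-epsi} for the pair $(m_{\epsi'},u_{\epsi'})$ against the shifted value function of the other pair. We then combine the result with the shifted Hamilton--Jacobi inequality for $u_\epsi$, using the lower growth bound \eqref{eq:assH.lower.simple}. Write $v:=u_\epsi^{[s,y]}$ and $\varphi:=v-u_T$.

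\textbf{Admissibility of $\varphi$.} Since $u_\epsi\in u_T+W^{1,\gamma}_T(Q)$, the trace of $u_\epsi$ at $t=T$ equals $u_T$. Hence $v$ has no jump across $\{t=T-s\}$, so $v\in W^{1,\gamma}(Q)$ with $v(T,\cdot)=u_T^{[y]}$. On the other hand $\varphi(T,\cdot)=u_T^{[y]}-u_T$ is not zero, so I would use $\varphi:=v-u_T^{[y]}$ instead, or equivalently absorb the difference $u_T^{[y]}-u_T$, which is $C^1$ and independent of $t$. With this choice $\varphi\in W^{1,\gamma}_T(Q)$ is an admissible test function. Moreover
\[
\|v_t\|_{L^\gamma}^\gamma+\|Dv\|_{L^\gamma}^\gamma\le \|(u_\epsi)_t\|_{L^\gamma}^\gamma+\|Du_\epsi\|_{L^\gamma}^\gamma+C,
\]
because translation preserves norms and the tail piece is $u_T^{[y]}$.

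\textbf{Pointwise bound and splitting.} On $(0,T-s)\times\Tt^d$, the first line of \eqref{hjb-strong-epsi} at the shifted point, combined with \eqref{eq:assH.lower.simple}, gives
\[
\tfrac1C|Dv|^\alpha\le v_t+C\bigl((m_\epsi^{[s,y]})^{\beta-1}+V(t+s,x+y)\bigr).
\]
We multiply by $m_{\epsi'}$ and integrate. The terms $m_{\epsi'}(m_\epsi^{[s,y]})^{\beta-1}$ and $m_{\epsi'}V(\cdot+(s,y))$ are bounded by Hölder's inequality, using \eqref{m.bound} for both $\epsi$ and $\epsi'$ together with \eqref{eq:assH.potential}. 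On $(T-s,T)$ we have $v_t=0$ and $|Dv|=|Du_T|\le C$, so that piece is trivially bounded. It therefore remains to bound $\int_Q m_{\epsi'}v_t$ from above.

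\textbf{Bounding $\int_Q m_{\epsi'}v_t$.} By \eqref{transport-strong-epsi},
\[
\int_Q m_{\epsi'}v_t=\int_Q m_{\epsi'}D_pH(t,x,Du_{\epsi'},m_{\epsi'})\cdot D\varphi+\epsi'\int_Q\bigl(|Du_{\epsi'}|^{\gamma-2}Du_{\epsi'}\cdot D\varphi+|(u_{\epsi'})_t|^{\gamma-2}(u_{\epsi'})_t\varphi_t\bigr)-\int_{\Tt^d}m_0\varphi(0,\cdot),
\]
up to the harmless $u_T^{[y]}$ correction. I would treat the three terms as follows.
\begin{enumerate}
\item For the drift term, Young's inequality with exponents $\alpha,\alpha'$ and \eqref{eq:assH.DpH.upper.simple} give
\[
m_{\epsi'}|D_pH|\,|Dv|\le\delta\, m_{\epsi'}|Dv|^\alpha+C_\delta\, m_{\epsi'}\bigl(|Du_{\epsi'}|^\alpha+m_{\epsi'}^{\beta-1}+V\bigr).
\]
The $\delta$-term is absorbed into the left-hand side. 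The rest is bounded by \eqref{mDu-basic}, \eqref{m.bound} and \eqref{eq:assH.potential}. The $Du_T$ part of $D\varphi$ is handled in the same way.
\item For the regularization terms, Young's inequality gives a bound by $\epsi'\int(|Du_{\epsi'}|^\gamma+|(u_{\epsi'})_t|^\gamma)+\epsi'\int(|Dv|^\gamma+|v_t|^\gamma)$. Here the hypothesis $\epsi'\le\epsi$ is used: together with the norm comparison above and \eqref{epsi-dependent-bound} applied at $\epsi$ and at $\epsi'$, this is bounded by $C$.
\item For the boundary term,
\[
-\int_{\Tt^d} m_0\varphi(0,\cdot)\le C+\|m_0\|_\infty\int_{\Tt^d}u_\epsi^-(s,\cdot)\,dx\le C,
\]
by Corollary~\ref{prop:uinitial.lower}. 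This is exactly where the estimate uniform in $s$, \eqref{u-lower-L1-uniform-in-time}, and hence Assumption~\ref{onH.Lagrangianlowerbound} or~\ref{onm0.lowerbound}, is needed.
\end{enumerate}

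\textbf{Expected obstacle.} The main delicate point is bookkeeping rather than an estimate. One must verify that $\varphi$ lies in $W^{1,\gamma}_T(Q)$ despite the gluing at $t=T-s$ and the spatial shift of the terminal data. One must also check that the $\epsi'$-regularization cross terms are controlled by the $\epsi$-bound for $v$, which relies on $\epsi'\le\epsi$. All constants come from Proposition~\ref{prop:uinitial.and.m.bound} and Corollary~\ref{prop:uinitial.lower}, so they are independent of $\epsi$, $\epsi'$ and $(s,y)$.
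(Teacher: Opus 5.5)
Your proposal is correct and follows essentially the same route as the paper's proof. Both arguments test \eqref{transport-strong-epsi} for $(m_{\epsi'},u_{\epsi'})$ with $\varphi=u_\epsi^{[s,y]}-u_T^{[y]}$ and combine it with the shifted Hamilton--Jacobi inequality for $u_\epsi$ and the lower bound \eqref{eq:assH.lower.simple}; they absorb the drift term via Young's inequality, control the $\epsi'$-terms by \eqref{epsi-dependent-bound} using $\epsi'\leq\epsi$ (and $\epsi\leq 1$), and bound the boundary term by the uniform-in-time estimate of Corollary~\ref{prop:uinitial.lower}. Isolating $\int m_{\epsi'}v_t$ before combining, as you do, is only a reordering of the paper's steps.
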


\begin{proof}
Evaluating~\eqref{hjb-strong-epsi} for $(m_\epsi,u_\epsi)$ at the shifted coordinates $(t+s,x+y)$ and multiplying by $m_{\epsi'}$, we obtain
 \begin{equation}\label{hjb-times-m.epsi-delta.shifted}
-m_{\epsi'}(u_\epsi)_t^{[s,y]} + m_{\epsi'}H(t+s,x+y,Du_\epsi^{[s,y]},m_\epsi^{[s,y]}) \leq 0 \qquad\text{a.e.~in } (0,T-s)\times\Tt^d.
 \end{equation}
On the other hand, using $\varphi(t,x) = u_\epsi^{[s,y]}(t,x) -u_T^{[y]}(x)$ as a test function in~\eqref{transport-strong-epsi} for $(m_{\epsi'},u_{\epsi'})$, we obtain
  \begin{equation}\label{transport-against-u.epsi-delta-shifted}
     \begin{aligned}
       & \int_0^{T-s}\int_{\Tt^d} m_{\epsi'}\bigl(-(u_\epsi)_t^{[s,y]} + D_pH(t,x,Du_{\epsi'},m_{\epsi'})\cdot (Du_\epsi^{[s,y]}-Du_T^{[y]})\bigr) \,dxdt \\ & \qquad + {\epsi'}\int_0^{T-s}\int_{\Tt^d} \bigl(|Du_{\epsi'}|^{\gamma-2}Du_{\epsi'}\cdot (Du_\epsi^{[s,y]}- Du_T^{[y]}) + |(u_{\epsi'})_t|^{\gamma-2}(u_{\epsi'})_t (u_\epsi)_t^{[s,y]}\bigr) \,dxdt \\
       & \qquad \qquad = \int_{\Tt^d} m_0(x)u_\epsi^{[s,y]}(0,x)\,dx - \int_{\Tt^d} m_0(x)u_T^{[y]}(x)\,dx.
     \end{aligned}
  \end{equation}
 Note that we changed the upper bound of the time integrals from $T$ to $T-s$ because $\varphi = 0$ on $(T-s,T)\times\Tt^d$.
 Plugging~\eqref{hjb-times-m.epsi-delta.shifted} into~\eqref{transport-against-u.epsi-delta-shifted} and rearranging, we obtain
 \begin{equation}\label{basic-equality.epsi-delta-shifted}
 \begin{aligned}
 & \int_0^{T-s}\int_{\Tt^d} m_{\epsi'} H(t+s,x+y,Du_\epsi^{[s,y]},m_\epsi^{[s,y]})\,dxdt \\
 & \qquad \leq \int_0^{T-s}\int_{\Tt^d} m_{\epsi'} D_pH(t,x,Du_{\epsi'},m_{\epsi'})\cdot (Du_\epsi^{[s,y]}-Du_T^{[y]}) \,dxdt \\
& \qquad\quad + {\epsi'}\int_0^{T-s}\int_{\Tt^d} \bigl(|Du_{\epsi'}|^{\gamma-2}Du_{\epsi'} \cdot (Du_\epsi^{[s,y]}-Du_T^{[y]}) + |(u_{\epsi'})_t|^{\gamma-2}(u_{\epsi'})_t (u_\epsi)_t^{[s,y]}\bigr) \,dxdt 
\\ & \qquad\qquad + \int_{\Tt^d} m_0(x)(u_\epsi^{[s,y]})^-(0,x)\,dx + \int_{\Tt^d} m_0(x)u_T^{[y]}(x)\,dx.
 \end{aligned}
\end{equation}
We bound the left-hand side of \eqref{basic-equality.epsi-delta-shifted} with~\eqref{eq:assH.lower.simple}, the first term on the right-hand side with~\eqref{eq:assH.DpH.upper.simple}, and the last line of the right-hand side by using Corollary~\ref{prop:uinitial.lower} and absorbing the data into the constant $C$. Hence, we obtain
\begin{equation}\label{crude-estimate.epsi-delta-shifted}
\begin{aligned}
& \frac{1}{C}\int_0^{T-s}\int_{\Tt^d} m_{\epsi'} |Du_\epsi^{[s,y]}|^\alpha\,dxdt \\
& \qquad \leq C\int_0^{T-s}\int_{\Tt^d} m_{\epsi'} V(t+s,x+y)\,dxdt + C\int_0^{T-s}\int_{\Tt^d} m_{\epsi'} (m_\epsi^{[s,y]})^{\beta-1} \,dxdt \\
 & \qquad\quad + C\int_0^{T-s}\int_{\Tt^d} m_{\epsi'} \bigl(|Du_{\epsi'}|^{\alpha-1}+m_{\epsi'}^{\frac{(\alpha-1)}{\alpha}(\beta-1)}+V(t,x)^{\frac{\alpha-1}{\alpha}}\bigr)(|Du_\epsi^{[s,y]}|+|Du_T^{[y]}|) \,dxdt \\
& \qquad\quad + {\epsi'}\int_0^{T-s}\int_{\Tt^d} \bigl(|Du_{\epsi'}|^{\gamma-1}(|Du_\epsi^{[s,y]}|+|Du_T^{[y]}|) + |(u_{\epsi'})_t|^{\gamma-1}|(u_\epsi)_t^{[s,y]}|\bigr) \,dxdt + C.
\end{aligned}
\end{equation}
Now,~\eqref{eq:assH.potential} and \eqref{m.bound} allow us to bound the first line of the right-hand side of~\eqref{crude-estimate.epsi-delta-shifted}, i.e.~the terms involving $V$ and $(m_\epsi^{[s,y]})^{\beta-1}$, by a constant~$C$. Moreover, we use Young's inequality to absorb the terms involving $|Du_\epsi^{[s,y]}|$ multiplied by $C m_{\epsi'}$ into the left-hand side of~\eqref{crude-estimate.epsi-delta-shifted}. Applying Young's inequality again to the remaining terms, we obtain
\begin{equation}\label{estimate.epsi-delta-shifted}
     \begin{aligned}
       \int_0^{T-s}\int_{\Tt^d} m_{\epsi'} |Du_\epsi^{[s,y]}|^\alpha\,dxdt & \leq C\int_0^{T-s}\int_{\Tt^d} m_{\epsi'} \bigl(|Du_{\epsi'}|^\alpha+m_{\epsi'}^{{\beta-1}}+V(t,x)+|Du_T^{[y]}|^\alpha\bigr)\,dxdt \\
       & + {\epsi'}\int_0^{T-s}\int_{\Tt^d} (|Du_{\epsi'}|^{\gamma} + |(u_{\epsi'})_t|^{\gamma} + |Du_T^{[y]}|^\gamma) \,dxdt \\
       & + {\epsi'}\int_0^{T-s}\int_{\Tt^d} (|Du_\epsi^{[s,y]}|^{\gamma} + |(u_\epsi)_t^{[s,y]}|^{\gamma}) \,dxdt.
     \end{aligned}
   \end{equation}
We finish the proof by bounding the first integral on the right-hand side of~\eqref{estimate.epsi-delta-shifted} using~\eqref{mDu-basic} and~\eqref{m.bound}, and bounding the $\epsi'$-integrals using~\eqref{epsi-dependent-bound}. Note that we can use~\eqref{epsi-dependent-bound} on both $\epsi'$-integrals since $\epsi'\leq \epsi$.
\end{proof}

\subsection{Additional Estimates}

In this section, we give two additional estimates which are not used in the proof of Theorem~\ref{weak.exists}, but rather may provide additional regularity on the solutions $(m,u)$ obtained as weak limits of $(m_\epsi, u_\epsi)$. We include them because they are direct consequences of the preceding bounds and may be useful for future regularity or compactness refinements.

\begin{proposition}
Consider the setting of Problem~\ref{prob.mfg}. Suppose Assumptions~\ref{onH.monotone} and \ref{onH.powergrowth} hold. Let $\epsi>0$ and let $(m_\epsi, u_\epsi)\in \mathcal{K}$ be given by Proposition~\ref{prop:regularized-solutions}. Then we have
\begin{equation}\label{uplus-grad-bound-v2}
    \int_Q (u_\epsi^+)^{\beta'-1}|Du_\epsi|^\alpha \,dxdt \leq C,
\end{equation}
for a positive constant $C$ independent of $\epsi$.
\end{proposition}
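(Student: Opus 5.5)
Our plan is to multiply the pointwise Hamilton--Jacobi inequality by a power of $u_\epsi^+$ and integrate in time. From \eqref{Du-alpha-upper-bound:hjb+Hlower}, valid a.e.~in $Q$,
\[
\frac{1}{C}|Du_\epsi|^\alpha \leq C(m_\epsi^{\beta-1}+V) + (u_\epsi)_t .
\]
We multiply both sides by the non-negative weight $(u_\epsi^+)^{\beta'-1}$ and integrate over $Q$. This gives
\[
\frac1C\int_Q (u_\epsi^+)^{\beta'-1}|Du_\epsi|^\alpha
\leq C\int_Q (u_\epsi^+)^{\beta'-1}(m_\epsi^{\beta-1}+V) + \int_Q (u_\epsi^+)^{\beta'-1}(u_\epsi)_t .
\]

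\textbf{The time-derivative term.} Since $u_\epsi\in W^{1,\gamma}(Q)$, the chain rule for Sobolev functions applied to the Lipschitz-on-bounded-sets primitive $G(s):=\frac{1}{\beta'}(s^+)^{\beta'}$ gives $(u_\epsi^+)^{\beta'-1}(u_\epsi)_t = \partial_t G(u_\epsi)$ a.e. If $\beta'<2$, then $G'$ is only Hölder at $0$. In that case we justify the identity by truncating $u_\epsi$ at a large level, or by approximating $G$ with $C^1$ convex functions and passing to the limit by monotone convergence. Integrating in time on each spatial fiber, using the trace $u_\epsi(T,\cdot)=u_T$, we obtain
\[
\int_Q (u_\epsi^+)^{\beta'-1}(u_\epsi)_t\,dxdt = \frac1{\beta'}\int_{\Tt^d}(u_T^+)^{\beta'}\,dx - \frac1{\beta'}\int_{\Tt^d}(u_\epsi^+(0,x))^{\beta'}\,dx \leq C .
\]
Note that the bad sign of the initial-time term is harmless because that term is dropped. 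This is why no lower bound on $u_\epsi$ (Assumption~\ref{onH.Lagrangianlowerbound} or \ref{onm0.lowerbound}) is needed.

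\textbf{The remaining terms.} We estimate them by Hölder's inequality with exponents $\beta$ and $\beta'$, noting $(\beta'-1)\beta = \beta'$:
\[
\int_Q (u_\epsi^+)^{\beta'-1}(m_\epsi^{\beta-1}+V)\leq \Bigl(\int_Q (u_\epsi^+)^{\beta'}\Bigr)^{1/\beta}\Bigl(\int_Q (m_\epsi^{\beta-1}+V)^{\beta'}\Bigr)^{1/\beta'}.
\]
The first factor is bounded by integrating \eqref{u.upper.beta'-uniform-in-time} over $s\in[0,T]$. The second factor is bounded by \eqref{m.bound} and \eqref{eq:assH.potential}. Both estimates hold under Assumptions~\ref{onH.monotone} and \ref{onH.powergrowth} alone, via Proposition~\ref{prop:uinitial.and.m.bound}.

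Combining the two bounds yields \eqref{uplus-grad-bound-v2}. The only delicate point is the rigorous chain rule and fiberwise integration for $G(u_\epsi)$ with $G$ non-smooth when $\beta'<2$. We handle it by the regularization $G_\delta(s)=\frac1{\beta'}\bigl((s^++\delta)^{\beta'}-\delta^{\beta'}\bigr)$, which is $C^1$ with locally bounded derivative on the range relevant after truncation, and then let $\delta\downarrow0$.
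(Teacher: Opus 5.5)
Your proposal is correct and follows the paper's argument. You multiply \eqref{Du-alpha-upper-bound:hjb+Hlower} by a power of $u_\epsi^+$, integrate the time derivative exactly, drop the nonpositive initial-trace term, and close with \eqref{m.bound}, \eqref{eq:assH.potential}, and \eqref{u.upper.beta'-uniform-in-time}. One refinement: the real technical issue is the growth of $(s^+)^{\beta'-1}$ as $s\to\infty$, because $u_\epsi$ need not be bounded, not its behavior at $0$. Your $G$ is already $C^1$ since $\beta'>1$, and the proposed $G_\delta$ actually has a kink at $0$. The paper handles the growth by the truncated weight $\psi_k=\min\{(s^+)^{\beta'-1},k\}$ followed by monotone convergence as $k\to\infty$, which is equivalent to your truncation option.
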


\begin{proof}
    For fixed \(k>0\), define
\[
\psi_k(s) :=\min\{(s^+)^{\beta'-1},k\},
\qquad
\Psi_k(s) :=\int_0^s\psi_k(r)\,dr.
\]
for all $s\in\RR$. Then $\psi_k$ is bounded, hence $\Psi_k$ is globally Lipschitz so that the Sobolev chain rule
\[
\bigl(\Psi_k(u_\epsi)\bigr)_t = \psi_k(u_\epsi)(u_\epsi)_t
\]
holds. Moreover, note that
\[0\leq \psi_k(s)\leq (s^+)^{\beta'-1}, \qquad 0\leq \Psi_k(s) \leq \frac{1}{\beta'}(s^+)^{\beta'}.\]

Now we have
\[\frac{1}{C}|Du_\epsi|^\alpha \leq C(m_\epsi^{\beta-1}+V(t,x)) + (u_\epsi)_t \qquad\text{a.e.~in } Q,\]
as in \eqref{Du-alpha-upper-bound:hjb+Hlower}. Multiplying this by \(\psi_k(u_\epsi)\geq 0\) and integrating over $Q$, while using the Sobolev chain rule, we get
\[
\begin{aligned}
& \frac{1}{C}\int_Q|Du_\epsi|^\alpha\psi_k(u_\epsi)\,dxdt \\
&\qquad\leq
C\int_Q
(m_\epsi^{\beta-1}+V(t,x))\psi_k(u_\epsi)\,dxdt + \int_{\Tt^d}\Psi_k(u_T(x))\,dx - \int_{\Tt^d}\Psi_k(u_\epsi(0,x))\,dx \\
&\qquad\leq
C\int_Q
(m_\epsi^{\beta-1}+V(t,x))(u_\epsi^+)^{\beta'-1}\,dxdt + C\int_{\Tt^d} |u_T(x)|^{\beta'}\,dx.
\end{aligned}
\]
The right-hand side is bounded by \eqref{m.bound}, \eqref{eq:assH.potential}, and \eqref{u.upper.beta'-uniform-in-time}, hence we find
\[
\int_Q|Du_\epsi|^\alpha\psi_k(u_\epsi)\,dxdt
\leq C,
\]
with \(C\) independent of \(k\) and \(\epsi\). Letting \(k\to\infty\) and
using the Monotone Convergence Theorem yields
\eqref{uplus-grad-bound-v2}.
\end{proof}

\begin{proposition}
    Consider the setting of Problem~\ref{prob.mfg}. Suppose Assumptions~\ref{onH.monotone},~\ref{onH.powergrowth} and one of \ref{onH.Lagrangianlowerbound}/\ref{onm0.lowerbound} hold. Let $\epsi>0$ and let $(m_\epsi, u_\epsi)\in \mathcal{K}$ be given by Proposition~\ref{prop:regularized-solutions}. Let
\[\alpha^* = \begin{cases}
        \frac{d\alpha}{d-\alpha},\quad & \text{if } \alpha < d, \\
        \infty & \text{if } \alpha > d,
    \end{cases}\]
    with the convention that $\alpha^*$ is any finite exponent in the borderline case $\alpha = d$.
    Then we have
  \begin{equation} \label{uminus-integral-bound-v2}
      \int_0^T \bigl( \|u_\epsi(s,\cdot)\|_{L^{\alpha^*}(\Tt^d)} \bigr)^\alpha\,ds \leq C, 
  \end{equation}
  for a positive constant $C$ independent of~$\epsi$. Moreover, let $q = \min(\alpha^*, \beta')$. Then, for any $\tau \in (0,T)$, we have
  \begin{equation}\label{uminus-forward-bound}
     \sup_{s\in [\tau,T]} \int_{\Tt^d} (u_\epsi^-(s,x))^q\,dx \leq C(\tau^{-\frac{q}{\alpha}}+1),
  \end{equation}
  for a positive constant $C$ independent of~$\epsi$ and $\tau$. Here, $u_\epsi(s,\cdot)$ refers to the trace of $u_\epsi$ on the hypersurface $\{s\}\times \Tt^d$.
\end{proposition}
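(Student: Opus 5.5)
Both estimates will come from a Sobolev--Poincaré argument on each time slice, combined with the bounds already established. For a.e.\ $s\in(0,T)$ the slice $u_\epsi(s,\cdot)$ lies in $W^{1,\alpha}(\Tt^d)$. The Sobolev--Poincaré inequality on the torus then gives
\[
\|u_\epsi(s,\cdot)\|_{L^{\alpha^*}(\Tt^d)}\leq C\Bigl(\|Du_\epsi(s,\cdot)\|_{L^{\alpha}(\Tt^d)}+\Bigl|\int_{\Tt^d}u_\epsi(s,x)\,dx\Bigr|\Bigr),
\]
with any finite exponent allowed when $\alpha=d$, and $L^\infty$ (via Morrey) when $\alpha>d$.

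For \eqref{uminus-integral-bound-v2}, it suffices to bound the slice averages uniformly in $s$. We write $|\int u_\epsi(s,\cdot)|\leq \int u_\epsi^+(s,\cdot)+\int u_\epsi^-(s,\cdot)$. The first term is bounded uniformly in $s$ by \eqref{u.upper.beta'-uniform-in-time} and Hölder's inequality, since $\Tt^d$ has finite measure. The second is bounded uniformly in $s$ by \eqref{u-lower-L1-uniform-in-time} from Corollary~\ref{prop:uinitial.lower}. We then raise the slice inequality to the power $\alpha$ and integrate over $s\in(0,T)$. Using \eqref{Du.bound} from Proposition~\ref{prop:Du.bound} for $\int_0^T\|Du_\epsi(s)\|_{L^\alpha}^\alpha\,ds$, this yields \eqref{uminus-integral-bound-v2}.

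For \eqref{uminus-forward-bound} the idea is a pigeonhole choice of a good earlier slice, followed by forward propagation of the lower bound. Fix $s\in[\tau,T]$. By \eqref{uminus-integral-bound-v2} and Chebyshev's inequality, the set of $r\in(0,\tau)$ with $\|u_\epsi(r,\cdot)\|_{L^{\alpha^*}}^\alpha\leq 2C/\tau$ has positive measure, so we can pick such an $r<s$. Since $q\leq\alpha^*$, this gives $\|u^-_\epsi(r,\cdot)\|_{L^q}^q\leq C\tau^{-q/\alpha}$. Next we integrate \eqref{hjb-estimated-wo-Du} from $r$ to $s$, exactly as in the proof of Proposition~\ref{prop:uepsi-lower-bound-B}, which gives
\[
u_\epsi^-(s,x)\leq u_\epsi^-(r,x)+C\int_0^T\bigl(m_\epsi^{\beta-1}+V\bigr)\,dt \quad\text{for a.e.\ } x.
\]
We raise this to the power $q$ and integrate in $x$. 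The last term contributes at most $C\bigl(\int_Q(m_\epsi^\beta+V^{\beta'})\bigr)+C$, by Jensen's/Hölder's inequality in $t$ together with $q\leq\beta'$, \eqref{m.bound} and \eqref{eq:assH.potential}. Hence $\int u_\epsi^-(s)^q\leq C\tau^{-q/\alpha}+C$, uniformly in $s\in[\tau,T]$.

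The main obstacle is justifying the slicewise manipulations. These require the trace identity $u_\epsi(s,x)-u_\epsi(r,x)=\int_r^s(u_\epsi)_t\,dt$ for a.e.\ $x$, and the a.e.-in-$s$ Sobolev regularity of slices; both hold since $u_\epsi\in W^{1,\gamma}(Q)$ (absolutely continuous on a.e.\ line, with continuity of traces in $s$). The uniform-in-$s$ statement for every $s\in[\tau,T]$ follows because the chosen $r$ depends on $s$ only through the constraint $r<\tau\leq s$.
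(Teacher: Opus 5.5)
Your proposal is correct and follows essentially the same route as the paper: a slice-wise Poincar\'e--Sobolev inequality with the spatial averages controlled by \eqref{u.upper.beta'-uniform-in-time} and \eqref{u-lower-L1-uniform-in-time}, followed by integrating \eqref{hjb-estimated-wo-Du} from a good earlier slice $r\in(0,\tau)$ up to $s$. The differences are cosmetic: you pick $r$ via Chebyshev where the paper takes the infimum over $r\in(0,\tau)$, and you raise to the power $q$ and integrate where the paper takes $L^q$ norms and uses Minkowski.
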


\begin{proof}
    By \eqref{u.upper.beta'-uniform-in-time} and \eqref{u-lower-L1-uniform-in-time}, which holds due to Corollary~\ref{prop:uinitial.lower}, the spatial average $\bar{u}_\epsi(s) = \frac{1}{|\Tt^d|}\int_{\Tt^d} u_\epsi(s,x)\,dx$ is bounded uniformly for all $s \in [0,T]$. Thus, the Poincar\'e--Sobolev inequality yields
  \[ \|u_\epsi(s,\cdot)\|_{L^{\alpha^*}(\Tt^d)} \leq \|u_\epsi(s,\cdot) - \bar{u}_\epsi(s)\|_{L^{\alpha^*}(\Tt^d)} + \|\bar{u}_\epsi(s)\|_{L^{\alpha^*}(\Tt^d)} \leq C \|Du_\epsi(s,\cdot)\|_{L^\alpha(\Tt^d)} + C, \]
  for a.e.~time slice $s \in (0,T)$. We raise both sides to the power of $\alpha$, integrate over $s \in [0,T]$, and use \eqref{Du.bound} to deduce \eqref{uminus-integral-bound-v2}. 
  
  Now we note a particular consequence of \eqref{uminus-integral-bound-v2}. Let $\tau \in (0,T)$. Then
  \begin{equation}\label{alpha-star-space-minimize-over-time}
      \inf_{r\in (0,\tau)} \|u_\epsi(r,\cdot)\|_{L^{\alpha^*}(\Tt^d)} \leq \tau^{-\frac{1}{\alpha}}\left(\int_0^\tau \bigl( \|u_\epsi(r,\cdot)\|_{L^{\alpha^*}(\Tt^d)} \bigr)^\alpha\,dr\right)^{\frac{1}{\alpha}} \leq C\tau^{-\frac{1}{\alpha}}.
  \end{equation}

  Next, for any $s \in [\tau,T]$ and $r \in (0,\tau)$, we integrate both sides of \eqref{hjb-estimated-wo-Du} from $r$ to $s$ for a.e.~$x\in\Tt^d$ to get
     \[u_\epsi(r,x)-u_\epsi(s,x)\leq C\int_r^s (m_\epsi^{\beta-1}+V(t,x))\,dt \qquad\text{a.e. on } \Tt^d.\]
     Then we move the first term of the left-hand side to the right, take the positive part, and expand the bounds of integration to $[0,T]$ to obtain
     \[u_\epsi^-(s,x) \leq u_\epsi^-(r,x) + C\int_0^T (m_\epsi^{\beta-1}+V(t,x))\,dt  \qquad\text{a.e. on } \Tt^d.\]
    The integral term on the right-hand side is independent of $s$ and $r$ and it is uniformly bounded in $L^{\beta'}(\Tt^d)$, hence also in $L^q(\Tt^d)$ since $q\leq \beta'$. Thus, taking the $L^q(\Tt^d)$ norm of both sides, we find
  \[ \|u_\epsi^-(s,\cdot)\|_{L^q(\Tt^d)} \leq \|u_\epsi(r,\cdot)\|_{L^q(\Tt^d)} + C \leq C\|u_\epsi(r,\cdot)\|_{L^{\alpha^*}(\Tt^d)} + C, \]
  where we also used $q\leq \alpha^*$. Since the left-hand side is independent of $r$, we can take the infimum over $r\in (0,\tau)$ on the right-hand side and use \eqref{alpha-star-space-minimize-over-time} to get
  \[\|u_\epsi^-(s,\cdot)\|_{L^q(\Tt^d)} \leq C(\tau^{-\frac{1}{\alpha}} + 1).\]  
  Now we conclude \eqref{uminus-forward-bound} by raising both sides to the power of $q$.
\end{proof}

\subsection{Passing Regularized Solutions to the Limit}\label{sec:minty}

Here we show that the family $(m_\epsi,u_\epsi)$ provided by Proposition~\ref{prop:regularized-solutions} converges, up to a subsequence, to a pair $(m,u)$ that is a solution to Problem~\ref{prob.vi} in the sense of Definition~\ref{def.weak.sol}. Throughout the proof, $\mathcal{M}(\Tt^d)$ and $\mathcal{M}(Q)$ denote the spaces of Radon measures on $\Tt^d$ and $Q= (0,T)\times\Tt^d$, respectively.

\begin{proof}[Proof of Theorem~\ref{weak.exists}]
Let $(m_\epsi,u_\epsi)\in\mathcal{K}$ be given by Proposition~\ref{prop:regularized-solutions}. By~\eqref{m.bound}, the family $\{m_\epsi\}$ is uniformly bounded in $L^\beta(Q)$, while by \eqref{ut-minus-bound}, \eqref{ut-plus-bound}, \eqref{Du.bound}, the terminal condition $u_\epsi(T,\cdot)=u_T$, and the Poincar\'e--Wirtinger trace inequality for $BV(Q)$, the family $\{u_\epsi\}$ is uniformly bounded in $BV(Q)$. Therefore, after extracting a subsequence,
\[
m_\epsi\rightharpoonup m \quad\text{in }L^\beta(Q), 
\qquad
u_\epsi\stackrel{*}{\rightharpoonup}u \quad\text{in }BV(Q).
\]
Moreover, again up to a subsequence,
\[
Du_\epsi\rightharpoonup Du \quad\text{in }L^\alpha(Q;\RR^d),
\qquad
((u_\epsi)_t)^-\rightharpoonup g \quad\text{in }L^{\beta'}(Q)
\]
for some $g\geq 0$. Then
\[((u_\epsi)_t)^+ = (u_\epsi)_t + ((u_\epsi)_t)^- \stackrel{*}{\rightharpoonup} du_t + g\,dxdt \quad\text{in } \mathcal{M}(Q).\]
 Therefore,  $du_t+g\,dxdt$ is nonnegative, thus $(u_t)^-\leq g$, implying $(u_t)^-\in L^{\beta'}(Q)$. Consequently, $(m,u)$ satisfies~\eqref{integrability.further.m}, \eqref{integrability.further.du}, and~\eqref{integrability.further.ut}. It remains to prove that $(m,u)$ satisfies~\eqref{weak-sol} and~\eqref{integrability.further.mdu}. Under Assumption~\ref{onH.high-order-p.has-no-m}, we also prove~\eqref{integrability.further.H}.

Before that, we record the possible loss of trace at the endpoints. By the $s=0$ cases of~\eqref{u.upper.beta'-uniform-in-time} and~\eqref{u-lower-L1-uniform-in-time}, the traces $u_\epsi(0,\cdot)$ are uniformly bounded in $\mathcal{M}(\Tt^d)$. Hence, after passing to a further subsequence,
\[
u_\epsi(0,\cdot)\stackrel{*}{\rightharpoonup}u_0
\quad\text{in }\mathcal{M}(\Tt^d),
\]
where the positive part $u_0^+$ is absolutely continuous with density in $L^{\beta'}(\Tt^d)$. However, $u_0$ need not coincide with the trace $u(0,\cdot)$. Similarly, we do not necessarily have $u(T,\cdot)=u_T$, even though $u_\epsi(T,\cdot)=u_T$ for every $\epsi>0$.

\medskip\noindent\textbf{Proof of \eqref{weak-sol}:} Combining Proposition~\ref{prop:regularized-solutions} with the monotonicity of $A_\epsi$, we find that for all $(\mu,\upsilon)\in \mathcal{K}$, i.e.~for all $\mu\in L^{\beta}(Q;\RR^+_0)$ and all $\upsilon\in W^{1,\gamma}(Q)$ satisfying $\upsilon(T,x) = u_T(x)$, we have
\begin{equation}\label{variational-ineq-epsi}
  \begin{aligned}
  0 & \leq \bigl\langle A_\epsi[m_\epsi, u_\epsi], (\mu-m_\epsi, \upsilon-u_\epsi) \bigr\rangle \leq \bigl\langle A_\epsi[\mu, \upsilon], (\mu-m_\epsi, \upsilon-u_\epsi) \bigr\rangle \\
  & = \int_Q(\mu-m_\epsi)\bigl(\upsilon_t - H(t,x,D\upsilon,\mu)\bigr)\,dxdt \\
  & +\int_Q\bigl(-\mu (\upsilon_t-(u_\epsi)_t) + \mu D_pH(t,x,D\upsilon,\mu)\cdot (D\upsilon-Du_\epsi) \bigr)\,dxdt\\
  &\qquad -\int_{\Tt^d}m_0(x)(\upsilon(0,x)-u_\epsi(0,x))\,dx \\
  & \qquad + \epsi\int_Q \bigl(|D\upsilon|^{\gamma} - |D\upsilon|^{\gamma-2}D\upsilon\cdot Du_\epsi + |\upsilon_t|^{\gamma} - |\upsilon_t|^{\gamma-2}\upsilon_t(u_\epsi)_t\bigr)\, dxdt.
\end{aligned}
\end{equation}
For $\mu\in C(\bar{Q};\RR^+_0)$ and $\upsilon\in C^1(\bar{Q})$ satisfying $\upsilon(T,x)=u_T(x)$, we pass to the limit $\epsi\to 0$ in \eqref{variational-ineq-epsi} as follows. The first integral passes to the limit by the weak convergence $m_\epsi\rightharpoonup m$ in $L^\beta(Q)$, since $\upsilon_t - H(t,x,D\upsilon,\mu) \in L^{\beta'}(Q)$ by \eqref{eq:assH.lower.simple} and \eqref{res:H.upper.simple}. The second term in the second integral passes to the limit by the weak convergence $Du_\epsi\rightharpoonup Du$ in $L^\alpha(Q;\RR^d)$, because $\mu D_pH(t,x,D\upsilon,\mu)\in L^{\alpha'\beta'}(Q;\RR^d) \subset L^{\alpha'}(Q;\RR^d)$ by~\eqref{eq:assH.DpH.upper.simple}. The third integral passes to the limit by $u_\epsi(0,\cdot)\stackrel{*}{\rightharpoonup}u_0$ since $m_0\in C(\Tt^d)$. Finally, the $\epsi$-regularization terms vanish by~\eqref{ut-minus-bound}, \eqref{ut-plus-bound}, and \eqref{Du.bound}. It remains only to identify the limit of the first term in the second integral, which involves $(u_\epsi)_t$. 

To identify this limit, we first assume $\mu\in C^1(\bar{Q};\RR^+_0)$. Then, the trace Green formula gives
\begin{equation}\label{green-limit-argument}
    \begin{aligned}
\int_Q \mu (u_\epsi)_t\,dxdt
&= -\int_Q u_\epsi\mu_t\,dxdt + \int_{\Tt^d}\mu(T,x)u_T(x)\,dx
-\int_{\Tt^d}\mu(0,x)u_\epsi(0,x)\,dx
 \\
&\to -\int_Q u\mu_t\,dxdt + \int_{\Tt^d}\mu(T,x)u_T(x)\,dx
-\int_{\Tt^d}\mu(0,x)\,du_0
 \\
& \qquad = \int_Q \mu\,du_t
+\int_{\Tt^d}\mu(T,x)(u_T(x)-u(T,x))\,dx \\
& \qquad\qquad +\int_{\Tt^d}\mu(0,x)(u(0,x)\,dx -du_0).
\end{aligned}
\end{equation}
Even though $\mu_t$ appears in the derivation in \eqref{green-limit-argument}, it does not appear in the extreme terms. Accordingly, the result is valid for all $ \mu\in C(\bar{Q};\RR^+_0) $ by an approximation argument.

Consequently, we pass to the limit in \eqref{variational-ineq-epsi} to get
\begin{equation*}
  \begin{aligned}
  0 \leq & \int_Q(\mu-m)\bigl(\upsilon_t - H(t,x,D\upsilon,\mu)\bigr)\,dxdt \\
  & + \int_Q\bigl(-\mu \upsilon_t + \mu D_pH(t,x,D\upsilon,\mu)\cdot (D\upsilon-Du) \bigr)\,dxdt\\
  &\qquad +\int_Q\mu\,du_t + \int_{\Tt^d} \mu(T,x)(u_T(x)-u(T,x))\,dx \\
  & \qquad + \int_{\Tt^d} (\mu(0,x)-m_0(x))(u(0,x)\,dx-du_0) -\int_{\Tt^d}m_0(x)(\upsilon(0,x)-u(0,x))\,dx 
\end{aligned}
\end{equation*}
for all $(\mu, \upsilon) \in C(\bar{Q};\RR^+_0) \times C^1(\bar{Q})$ satisfying $\upsilon(T,x) = u_T(x)$. Therefore,~\eqref{weak-sol} holds for all $(\mu, \upsilon) \in C(\bar{Q};\RR^+_0) \times C^1(\bar{Q})$ satisfying $\mu(0,x) = m_0(x)$ and $\upsilon(T,x) = u_T(x)$.

\medskip\noindent\textbf{Proof of \eqref{integrability.further.mdu}:} Fix $(s,y)\in \bar{Q} = [0,T]\times\Tt^d$. For fixed $\epsi>0$, we let $\epsi'\to0$ in~\eqref{mDu-advanced}. Since $|Du_\epsi^{[s,y]}|^\alpha\in L^{\beta'}((0,T-s)\times\Tt^d)$, the weak convergence $m_{\epsi'}\rightharpoonup m$ in $L^\beta(Q)$ gives
\[
\int_0^{T-s}\int_{\Tt^d}m|Du_\epsi^{[s,y]}|^\alpha\,dxdt\leq C.
\]
Now let $\epsi\to 0$. For each positive integer $n$, set $m_n:=\min\{m,n\}$. Since $Du_\epsi^{[s,y]}\rightharpoonup Du^{[s,y]}$ in $L^\alpha((0,T-s)\times\Tt^d;\RR^d)$, weak lower semicontinuity yields
\[
\int_0^{T-s}\int_{\Tt^d}m_n|Du^{[s,y]}|^\alpha\,dxdt
\leq
\liminf_{\epsi\to0}\int_0^{T-s}\int_{\Tt^d}m_n|Du_\epsi^{[s,y]}|^\alpha\,dxdt
\leq C.
\]
Letting $n\to\infty$ and using monotone convergence gives
\[
\int_0^{T-s}\int_{\Tt^d}m|Du^{[s,y]}|^\alpha\,dxdt\leq C.
\]
After the change of variables $(t,x)\mapsto(t+s,x+y)$, this is exactly~\eqref{integrability.further.mdu}; the constant is independent of $(s,y)$.

\medskip\noindent\textbf{Proof of \eqref{integrability.further.H}:} 
We have
\[\begin{aligned}
 & h_\epsi:=-(u_\epsi)_t+H(t,x,Du,0) + D_pH(t,x,Du,0)\cdot (Du_\epsi-Du) \\
 & \qquad \leq -(u_\epsi)_t+H(t,x,Du_\epsi,0) \\
& \qquad\qquad \leq H(t,x,Du_\epsi,0) - H(t,x,Du_\epsi,m_\epsi) \\
& \qquad\qquad\qquad \leq C(|Du_\epsi|^{\alpha\frac{(\beta-1)}{\beta}} + m_\epsi^{{\beta-1}}+V(t,x)) \qquad \text{a.e.~in } Q,
\end{aligned}
\]
where the first inequality uses the convexity of $H(t,x,\cdot,0)$ from Remark~\ref{rmk:cxty}, the second uses the regularized Hamilton--Jacobi equation~\eqref{hjb-strong-epsi}, and the third uses the $H$-deviation bound \eqref{eq:high-order-p.has-no-m} from Assumption~\ref{onH.high-order-p.has-no-m}.

Thus, \eqref{Du.bound} and~\eqref{m.bound} imply $\|h_\epsi^+\|_{L^{\beta'}(Q)}\leq C$. Hence, we have $h_\epsi^+\rightharpoonup g$ in $L^{\beta'}(Q)$ for some $g\geq 0$, after passing to a subsequence.
On the other hand,
\[h_\epsi\stackrel{*}{\rightharpoonup} h:=-u_t+H(t,x,Du,0) \quad\text{in }\mathcal{M}(Q),\]
because $Du_\epsi\rightharpoonup Du$ in $L^\alpha(Q;\RR^d)$ and $D_pH(t,x,Du,0) \in L^{\alpha'}(Q;\RR^d)$ by \eqref{eq:assH.DpH.upper.simple}. Now, $h_\epsi\leq h_\epsi^+$. Therefore, $dh\leq g\,dxdt$ as a measure. Thus, $h^+\leq g$, implying $h^+\in L^{\beta'}(Q)$. This is exactly~\eqref{integrability.further.H}.
\end{proof}

\section{Proof of Theorem~\ref{weak.is.strong}}\label{sec:proof-weak-is-strong}

In this section, we present the proof of Theorem~\ref{weak.is.strong}, demonstrating that the VI-solutions obtained via Theorem~\ref{weak.exists} are genuine MFG-solutions. Our strategy is to systematically enlarge the class of admissible test functions in the variational inequality~\eqref{weak-sol}. In Lemma~\ref{lem:mu-improved}, we extend the class of test densities $\mu$ to bounded Borel measurable functions, which allows us to extract the necessary boundary conditions and the non-negativity of the singular part of the time derivative. Subsequently, in Lemma~\ref{lem:upsilon-improved}, we enlarge the class of test value functions $\upsilon$ to functions of bounded variation. Finally, in Lemmas~\ref{lem:vi-gives-hjb} through \ref{lem:transport-tested-upsilon.minus.u}, we further enlarge the class of test densities to $L^\beta(Q)$. This three-step bootstrapping process ultimately enables us to substitute the true population density $m$ and the value function $u$ into the variational framework as test functions, rigorously recovering both the Hamilton--Jacobi and transport equations. We remark that throughout this section every integral that appears is finite; while this is not always immediate, it follows in each case from the bounds established in the preceding steps of the argument.

\subsection{Enlarging Test Densities}

We begin our bootstrapping procedure by expanding the admissible test densities $\mu$ from continuous functions to bounded Borel measurable functions. This enlargement is a critical first step, as it allows us to evaluate the variational inequality using indicator functions, thereby localizing the inequality to temporal boundaries and the concentration set of the singular measure $u_t^{\text{s}}$. Doing so isolates the non-negativity of the singular part of the time derivative and the terminal inequality, establishing essential constraints for accommodating $BV$ test functions in the subsequent steps.

\begin{lemma}\label{lem:mu-improved}
Consider the setting of Problem~\ref{prob.mfg}. Suppose Assumptions \ref{onH.monotone} and \ref{onH.powergrowth} hold. Let $(m,u)$ be a solution to Problem~\ref{prob.vi} in the sense of Definition~\ref{def.weak.sol}, such that $m\in L^{\beta}(Q;\RR^+_0)$ and $Du\in L^\alpha(Q; \RR^d)$. Then,
\begin{enumerate}
  \item We have
  \begin{equation}\label{u.jumps.forward}
     0\leq u_t^{\text{s}} \qquad\text{and}\qquad u(T,x)\leq u_T(x);
  \end{equation}
\smallskip
  
  \item The inequality
\begin{equation}\label{weak-sol-mu-improved-Linfty}
\begin{aligned}
0 \leq & \int_Q(\mu-m)\bigl(\upsilon_t - H(t,x,D\upsilon,\mu)\bigr)\,dxdt\\
&+\int_Q\bigl(\mu (u_t^{\text{ac}}-\upsilon_t) + \mu D_pH(t,x,D\upsilon,\mu)\cdot (D\upsilon-Du) \bigr)\,dxdt\\
&\qquad -\int_{\Tt^d}m_0(x)(\upsilon(0,x)-u(0,x))\,dx
\end{aligned}
\end{equation}
holds for all $(\mu, \upsilon) \in L^{\infty}(Q; \RR^+_0)\times C^1(\bar{Q})$ satisfying $\upsilon(T,x) = u_T(x)$.
\end{enumerate}
\end{lemma}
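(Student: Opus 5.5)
The plan is to extend \eqref{weak-sol} from continuous test densities to bounded Borel ones by approximation, and then to localize with indicator-type densities. We fix $\upsilon\in C^1(\bar Q)$ with $\upsilon(T,\cdot)=u_T$. The right-hand side of \eqref{weak-sol} is then a functional of $\mu$ made of the following pieces:
\begin{itemize}
\item a Lebesgue integral over $Q$ of $(\mu-m)(\upsilon_t-H(t,x,D\upsilon,\mu))+\mu D_pH(t,x,D\upsilon,\mu)\cdot(D\upsilon-Du)-\mu\upsilon_t$;
\item the integral $\int_Q\mu\,du_t=\int_Q\mu u_t^{\text{ac}}+\int_Q\mu\,du_t^{\text{s}}$;
\item the terminal term $\int_{\Tt^d}\mu(T,x)(u_T-u(T,x))\,dx$;
\item the fixed initial term $-\int m_0(\upsilon(0)-u(0))$.
\end{itemize}
For a Borel $\mu$ we interpret $\mu$ on $\{T\}\times\Tt^d$ as an independent datum, say a bounded Borel $\mu_T$ on $\Tt^d$. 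The goal is to show that
\[
0\leq \Phi(\mu)+\int_Q\mu\,du_t^{\text{s}}+\int_{\Tt^d}\mu_T(u_T-u(T,x))\,dx
\]
holds for all bounded Borel $\mu\geq0$ on $Q$ and $\mu_T\geq0$ on $\Tt^d$. Here $\Phi$ collects the absolutely continuous terms and the initial term.

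\textbf{Approximation step.} We apply Lemma~\ref{lem:dense.restr-cont} on the reference domain $(-1,T+1)\times\Tt^d$, or on $\bar Q$ viewed inside it, with the following choices:
\begin{itemize}
\item $B_1=Q$, carrying the measure $\rho_1=(1+m^\beta+|Du|^\alpha+V^{\beta'}+|u_t^{\text{ac}}|)\,dxdt+|u_t^{\text{s}}|$;
\item $B_2=\{T\}\times\Tt^d$, carrying the measure $\rho_2=(1+|u_T-u(T,\cdot)|)\,dx$;
\item the compact set $K=\{0\}\times\Tt^d$, with boundary datum $g=m_0$.
\end{itemize}
This yields continuous, uniformly bounded $\mu_n\geq0$ with $\mu_n(0,\cdot)=m_0$, with $\mu_n\to\mu$ a.e.\ with respect to $\rho_1$, and with $\mu_n(T,\cdot)\to\mu_T$ a.e.\ with respect to $\rho_2$. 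We then pass to the limit in \eqref{weak-sol} term by term.
\begin{itemize}
\item The $H$ and $D_pH$ terms converge by continuity of $H$ and $D_pH$ in $m$ and dominated convergence. With $|\mu_n|\leq M$, the growth bounds \eqref{res:H.upper.simple}, \eqref{eq:assH.lower.simple} and \eqref{eq:assH.DpH.upper.simple} give the majorants $(M+m)(|\upsilon_t|+C|D\upsilon|^\alpha+CM^{\beta-1}+CV)\in L^1$ and $M\,C(|D\upsilon|^{\alpha-1}+M^{\cdot}+V^{(\alpha-1)/\alpha})(|D\upsilon|+|Du|)\in L^1$, using $Du\in L^\alpha$ and $V\in L^{\beta'}$.
\item The terms $\int\mu_n\,du_t$ and the terminal term converge by bounded convergence with respect to the finite measures $|u_t|$ and $|u_T-u(T,\cdot)|\,dx$.
\end{itemize}
This proves the extended inequality.

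\textbf{Localization step.} To get $u_t^{\text{s}}\geq0$, we take $\mu_T=0$ and $\mu=k\chi_S$, where $S$ is a Borel set with $|S|=0$ carrying $(u_t^{\text{s}})^-$. Then $\Phi(k\chi_S)=\Phi(0)$ is independent of $k$, and the inequality reads $0\leq\Phi(0)-k(u_t^{\text{s}})^-(Q)$. Letting $k\to\infty$ forces $(u_t^{\text{s}})^-=0$. Similarly, taking $\mu=0$ and $\mu_T=k\chi_{\{u(T)>u_T\}}$ and letting $k\to\infty$ gives $u(T,\cdot)\leq u_T$. Both nonnegativity facts in \eqref{u.jumps.forward} follow.

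For \eqref{weak-sol-mu-improved-Linfty}, we take an arbitrary $\mu\in L^\infty(Q;\RR^+_0)$ and choose its Borel representative to vanish on a Lebesgue-null set $S$ carrying $u_t^{\text{s}}$, and we set $\mu_T=0$. The singular and terminal terms then drop out exactly, leaving precisely \eqref{weak-sol-mu-improved-Linfty}. The choice of representative does not affect the Lebesgue integrals.

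The main obstacle is the approximation step. The continuous approximants must converge simultaneously with respect to Lebesgue measure, the singular part of $u_t$, and the terminal slice, while keeping $\mu_n(0,\cdot)=m_0$ exactly and staying uniformly bounded. Lemma~\ref{lem:dense.restr-cont} is tailored to exactly this. A further subtlety is that the weights must be finite on $Q$, which is why $\rho_1$ is built from the integrable quantities appearing in the dominating functions; the resulting a.e.\ convergence is then with respect to Lebesgue measure, as dominated convergence requires. The remaining integrability checks are routine consequences of Assumption~\ref{onH.powergrowth}.
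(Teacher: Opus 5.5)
Your proposal is correct and follows essentially the same route as the paper. You approximate a bounded Borel density, with an independent terminal slice, by continuous $\mu_n$ satisfying $\mu_n(0,\cdot)=m_0$ via Lemma~\ref{lem:dense.restr-cont}, pass to the limit by dominated convergence, and then localize on a Lebesgue-null set carrying $u_t^{\text{s}}$ and on $\{T\}\times\Tt^d$. Your weighted measure $\rho_1$ is harmless but unnecessary, since plain Lebesgue measure together with $|u_t|$ already suffices.

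One small fix is needed in the localization step. To obtain $0\leq\Phi(0)-k(u_t^{\text{s}})^-(Q)$, the set $S$ must also be $(u_t^{\text{s}})^+$-null. You can arrange this by intersecting a null set carrying $u_t^{\text{s}}$ with the negative set of a Hahn decomposition. Alternatively, as the paper does, test with an arbitrary bounded nonnegative $f_1$ on $S$ and conclude $\int_S f_1\,du_t^{\text{s}}\geq0$.
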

\begin{proof}
Consider an arbitrary bounded, Borel measurable function $\Tilde{\mu}\colon (0,T]\times\Tt^d \to \RR^+_0$. We apply Lemma~\ref{lem:dense.restr-cont} for some space-time cylinder $U$ containing $\bar{Q}$, e.g.~$U = (-\delta, T+\delta)\times \Tt^d$. We take the Borel sets $B_1 := \{T\}\times \Tt^d\subset U$ and $B_2 = B_3 := Q\subset U$ with $\rho_1$ as the $d$-dimensional Lebesgue measure, $\rho_2$ as the $(d+1)$-dimensional Lebesgue measure, and $\rho_3$ as the total variation measure $|u_t|$. Then we take $K = \{0\}\times \Tt^d$ with the continuous boundary condition $g(0,x) = m_0(x)$. Then, Lemma~\ref{lem:dense.restr-cont} implies that there exists a uniformly bounded sequence of $\mu_n \in C(\bar{Q}; \RR^+_0)$ satisfying $\mu_n(0,x) = m_0(x)$ and converging to $\Tilde{\mu}$ as follows:
\begin{itemize}
  \item $\mu_n(T,\cdot)\to\Tilde{\mu}(T,\cdot)$ a.e.~in~$\Tt^d$ with respect to the $d$-dimensional Lebesgue measure,
  \item $\mu_n \to \Tilde{\mu}$ a.e.~in~$Q$ with respect to the $(d+1)$-dimensional Lebesgue measure,
  \item $\mu_n \to \Tilde{\mu}$ a.e.~in~$Q$ with respect to the Radon measure $|u_t|$.
\end{itemize}
Because $(m,u)$ is a 
VI-solution to Problem~\ref{prob.vi} in the sense of Definition~\ref{def.weak.sol}, we have
\begin{equation}\label{weak-sol-mu.n.seq}
\begin{aligned}
0 \leq & \int_Q(\mu_n-m)\bigl(\upsilon_t - H(t,x,D\upsilon,\mu_n)\bigr)\,dxdt\\
&+\int_Q\bigl(-\mu_n \upsilon_t + \mu_n\,D_pH(t,x,D\upsilon,\mu_n)\cdot (D\upsilon-Du) \bigr)\,dxdt + \int_Q \mu_n \,du_t\\
&\qquad -\int_{\Tt^d}m_0(x)(\upsilon(0,x)-u(0,x))\,dx + \int_{\Tt^d} \mu_n(T,x)(u_T(x)-u(T,x))\,dx,
\end{aligned}
\end{equation}
for every $\upsilon\in C^1(\bar{Q})$ satisfying $\upsilon(T,x) = u_T(x)$. We pass to the limit in~\eqref{weak-sol-mu.n.seq} using the Dominated Convergence Theorem to obtain
\begin{equation}\label{weak-sol-mu-borel}
\begin{aligned}
0 \leq & \int_Q(\Tilde{\mu}-m)\bigl(\upsilon_t - H(t,x,D\upsilon,\Tilde{\mu})\bigr)\,dxdt\\
& + \int_Q\bigl(\Tilde{\mu} (u_t^{\text{ac}}-\upsilon_t) + \Tilde{\mu}\,D_pH(t,x,D\upsilon,\Tilde{\mu})\cdot (D\upsilon-Du) \bigr)\,dxdt\\
&\qquad -\int_{\Tt^d}m_0(x)(\upsilon(0,x)-u(0,x))\,dx \\
&\qquad\qquad +\int_Q \Tilde{\mu} \,du_t^{\text{s}} + \int_{\Tt^d} \Tilde{\mu}(T,x)(u_T(x)-u(T,x))\,dx, 
\end{aligned}
\end{equation}
where we substituted the Lebesgue decomposition $du_t = u_t^{\text{ac}}\,dxdt + du_t^{\text{s}}$.

Note that~\eqref{weak-sol-mu-borel} holds for all bounded, Borel measurable $\Tilde{\mu}\colon (0,T]\times\Tt^d \to \RR^+_0$ and $\upsilon\in C^1(\bar{Q})$ satisfying $\upsilon(T,x) = u_T(x)$. Let $S \subset Q$ be a Borel set of zero Lebesgue measure on which
$u_t^{\text{s}}$ is concentrated. The three sets
\[
Q\setminus S, \qquad S, \qquad \{T\}\times\Tt^d
\]
are pairwise disjoint and partition the domain $(0,T]\times\Tt^d$ on
which $\Tilde{\mu}$ is defined. Consider arbitrary bounded, Borel measurable functions $f_1\colon S\to \RR^+_0$ and $f_2\colon\Tt^d\to \RR^+_0$. Then, for a given $\mu\in L^\infty(Q;\RR^+_0)$, choose any bounded Borel representative and construct $\tilde{\mu}$ such that
\[\tilde{\mu} = \mu \quad\text{a.e.~on } Q\setminus S, \qquad \tilde{\mu} = f_1 \quad\text{on } S, \qquad \tilde{\mu}(T,\cdot) = f_2 \quad\text{on } \Tt^d.\]
Hence, applying \eqref{weak-sol-mu-borel} with this $\tilde{\mu}$ and rearranging, we get
\begin{equation}\label{weak-sol-mu-borel-ac}
\begin{aligned}
& \int_Q(\mu-m)\bigl(\upsilon_t - H(t,x,D\upsilon,\mu)\bigr)\,dxdt\\
& \qquad +\int_Q\bigl(\mu (u_t^{\text{ac}}-\upsilon_t) + \mu\,D_pH(t,x,D\upsilon,\mu)\cdot (D\upsilon-Du) \bigr)\,dxdt\\
&\qquad\qquad -\int_{\Tt^d}m_0(x)(\upsilon(0,x)-u(0,x))\,dx \\
& \qquad\qquad\qquad \geq - \left(\int_S f_1 \,du_t^{\text{s}} + \int_{\Tt^d} f_2(x)(u_T(x)-u(T,x))\,dx\right)
\end{aligned}
\end{equation}
for any $(\mu, \upsilon) \in L^{\infty}(Q; \RR^+_0)\times C^1(\bar{Q})$ satisfying $\upsilon(T,x) = u_T(x)$ and any bounded, Borel measurable $f_1\colon S\to \RR^+_0$ and $f_2\colon\Tt^d\to \RR^+_0$.

Taking $f_1 = f_2 = 0$ in \eqref{weak-sol-mu-borel-ac}, we conclude \eqref{weak-sol-mu-improved-Linfty}. On the other hand, fixing $\mu$, $\upsilon$, and $f_2$ while multiplying $f_1$ by an arbitrarily large positive constant, we find $u_t^{\text{s}}\geq 0$. Similarly, fixing $\mu$, $\upsilon$, and $f_1$ while multiplying $f_2$ by an arbitrarily large positive constant, we find $u(T,x)\leq u_T(x)$.
\end{proof}

\subsection{Enlarging Test Value Functions}

Having secured the required boundary conditions and the sign of the singular measure $u_t^{\text{s}}$ in the previous subsection, we now turn our attention to the test value functions. To eventually test the variational inequality against the value function $u$ itself, we must enlarge the space of admissible test functions $\upsilon$ from $C^1(\bar{Q})$ to $BV(Q)$. Because functions in $BV(Q)$ lack forward-in-time continuity, standard symmetric mollification would disrupt the initial and terminal boundary traces. To safely navigate this, we introduce an asymmetric space-time mollification scheme, verifying its convergence properties in two auxiliary propositions before passing to the limit in the variational inequality.

\begin{lemma}\label{lem:upsilon-improved}
 Consider the setting of Problem~\ref{prob.mfg}. Suppose Assumptions \ref{onH.monotone}, \ref{onH.powergrowth} and \ref{onH.highest-order-p.has-no-txm} hold. Let $(m,u)$ be a solution to Problem~\ref{prob.vi} in the sense of Definition~\ref{def.weak.sol}, such that $m\in L^{\beta}(Q;\RR^+_0)$ and $Du\in L^\alpha(Q;\RR^d)$. Then,
\begin{equation}\label{weak-sol-upsilon-improved}
\begin{aligned}
0 \leq & \int_Q(\mu-m)\bigl(\upsilon_t^{\text{ac}} - H(t,x,D\upsilon,\mu)\bigr)\,dxdt\\
&+\int_Q\bigl(\mu (u_t^{\text{ac}}-\upsilon_t^{\text{ac}}) + \mu D_pH(t,x,D\upsilon,\mu)\cdot (D\upsilon-Du) \bigr)\,dxdt\\
&\qquad -\int_{\Tt^d}m_0(x)(\upsilon(0,x)-u(0,x))\,dx
\end{aligned}
\end{equation}
for all $(\mu, \upsilon) \in L^{\infty}(Q; \RR^+_0)\times BV(Q)$ satisfying
\begin{align} 
  & D\upsilon\in L^\alpha(Q;\RR^d), \label{Dupsilon.Lalpha} \\
  & 0\leq \upsilon_t^{\text{s}} \quad \text{and}\quad \upsilon(T,x) \leq u_T(x), \label{upsilon.jumps.forward}\\
  & \bigl(-\upsilon_t^{\text{ac}}+H(t,x,D\upsilon + \ell,0)\bigr)^+ \in L^{\beta'}(Q) \enskip\text{for some}\enskip \ell\in L^\infty(Q; \RR^d), \label{upsilon-HJB-upper} \\
  & \sup_{(s,y)\in \bar{Q}} \left(\int_0^{T-s}\int_{\Tt^d} m(t,x)|D\upsilon(t+s,x+y)|^\alpha\,dxdt\right) < \infty. \label{mDupsilon-shift-L1}
\end{align}
\end{lemma}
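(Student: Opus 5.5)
We start from \eqref{weak-sol-mu-improved-Linfty} of Lemma~\ref{lem:mu-improved}, which holds for $\mu\in L^\infty(Q;\RR^+_0)$ and $\upsilon\in C^1(\bar Q)$ with $\upsilon(T,\cdot)=u_T$. Given a $BV$ test function $\upsilon$ satisfying \eqref{Dupsilon.Lalpha}--\eqref{mDupsilon-shift-L1}, we approximate it by smooth functions using the one-sided (forward-in-time) mollifier \eqref{kernel-mollification}. Precisely, we extend $\upsilon$ beyond $T$ by $u_T(x)$ (a time-independent extension) and set
\[
\upsilon_n(t,x):=\int_t^\infty\int_{\Tt^d}\upsilon(t',x')K_{\xi_n,\tau_n}(t',x';t,x)\,dx'dt',
\]
then correct the terminal value by adding a small term, e.g.\ $\upsilon_n+ (u_T-\upsilon_n(T,\cdot))$ smoothed, so that the result is admissible. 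The extension across $t=T$ has a nonnegative jump $u_T-\upsilon(T,\cdot)\ge0$ by \eqref{upsilon.jumps.forward}, so the extended function still has $\upsilon_t^{\text{s}}\ge0$. Since the kernel only looks forward in time, the trace at $t=0$ is preserved in the limit: $\upsilon_n(0,\cdot)\to\upsilon(0,\cdot)$ in $L^1(\Tt^d)$, because the interior trace is the $L^1$-limit of forward averages. We also get $D\upsilon_n\to D\upsilon$ a.e.\ and in $L^\alpha$ (extracting a subsequence), and $(\upsilon_n)_t=(\upsilon_t^{\text{ac}})*K+(\upsilon_t^{\text{s}})*K\ge(\upsilon_t^{\text{ac}})*K$.

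Next we plug $\upsilon_n$ into \eqref{weak-sol-mu-improved-Linfty}. The $\upsilon_t$-terms appear combined as $(\mu-m)\upsilon_t-\mu\upsilon_t=-m\,(\upsilon_n)_t$, so the test function's time derivative enters only with the favourable sign $-m(\upsilon_n)_t\le -m\,(\upsilon_t^{\text{ac}})*K$. This is where $\upsilon_t^{\text{s}}\ge0$ is used to discard the singular part, since $m\ge0$. The remaining issues are the limits of three quantities:
\[
\int_Q m\bigl(-(\upsilon^{\text{ac}}_t)*K+H(t,x,D\upsilon_n,\mu)\bigr),\qquad -\int_Q\mu H(t,x,D\upsilon_n,\mu),\qquad \int_Q\mu D_pH(t,x,D\upsilon_n,\mu)\cdot(D\upsilon_n-Du).
\]
The terms multiplied by the bounded $\mu$ pass to the limit by Vitali: the growth bounds \eqref{res:H.upper.simple} and \eqref{eq:assH.DpH.upper.simple} give $L^1$-equiintegrability from $D\upsilon_n\to D\upsilon$ in $L^\alpha$. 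For the $m$-term we only need a $\limsup$ inequality, in the form of Proposition~\ref{lem:hjb-limsup}:
\[
\limsup_n\int_Q m\bigl(-(\upsilon_n)_t+H(t,x,D\upsilon_n,0)\bigr)\le\int_Q m\bigl(-\upsilon_t^{\text{ac}}+H(t,x,D\upsilon,0)\bigr),
\]
after which we restore $H(\cdot,\mu)$ using the deviation bound.

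To obtain the $\limsup$, we rewrite the integrand pointwise as a $K$-average:
\[
-(\upsilon^{\text{ac}}_t)*K+H(t,x,D\upsilon_n,0)=\int K\,\bigl[-\upsilon_t^{\text{ac}}(t',x')+H(t',x',D\upsilon(t',x')+\ell,0)\bigr]+E_n,
\]
where $E_n$ collects two differences. The first is $H(t',x',D\upsilon+\ell,0)-H(t,x,D\upsilon(t',x'),0)$, controlled by Lemma~\ref{lem:deviation-bound}. The second is $H(t,x,D\upsilon(t',x'),0)$ averaged versus $H(t,x,D\upsilon_n,0)$, handled by convexity via Jensen: $H(t,x,D\upsilon_n,0)\le \int K\, H(t,x,D\upsilon(t',x'),0)$. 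The bracket's positive part lies in $L^{\beta'}$ by \eqref{upsilon-HJB-upper}. After Fubini, integrating it against $m$ becomes $\int m*\check K\,[\cdots]$, and $m*\check K\to m$ in $L^\beta$. The error $E_n$, integrated against $m$, is bounded using \eqref{eq:devH-quadruple} by terms like $(R(0)+|\ell|)(|D\upsilon(t',x')|^\alpha+\dots)^{1-1/C}$.

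This is the main obstacle. The error must be uniformly integrable against $m(t,x)K(t',x';t,x)$, i.e.\ against shifts of $m$ paired with $|D\upsilon|^\alpha$. That is exactly what the shift bound \eqref{mDupsilon-shift-L1} supplies, uniformly in $(s,y)$, and the exponent $1-1/C<1$ yields equiintegrability. Combining this with the pointwise a.e.\ convergence from Lemma~\ref{lem:mollification-convergence-with-adapted.Lebesgue}, applied with $F(t,x,s)=H(t,x,s+\ell,0)$ and $\sigma=\alpha$, and with Vitali, the error terms vanish in the limit. The $H(\cdot,\mu)$ versus $H(\cdot,0)$ and $\ell$ adjustments are absorbed similarly, using \eqref{eq:devH-at-fixed.p-bound} with $\mu$ bounded.

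We handle the $m_0(\upsilon_n(0)-u(0))$ term by trace convergence. The terminal correction is harmless: we replace $\upsilon$ by $\upsilon-\eta_n$ with $\eta_n$ supported in $[T-\tau_n,T]$, and its derivative enters only through $-m(\eta_n)_t$, whose sign we arrange to be favourable or negligible by the $L^1$ convergence of the traces. Letting $n\to\infty$ then yields \eqref{weak-sol-upsilon-improved}.
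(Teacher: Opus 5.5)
Your proposal is essentially the paper's proof: forward-in-time mollification of the extension by $u_T$, discarding the nonnegative singular part (including the terminal jump) against $m\geq0$, Jensen in $p$, and a split into an $\ell$-shifted main term whose positive part lies in $L^{\beta'}$ plus a commutator term that converges a.e.\ by Lemma~\ref{lem:mollification-convergence-with-adapted.Lebesgue} and is bounded in $L^q(Q,m)$ via Lemma~\ref{lem:deviation-bound} and \eqref{mDupsilon-shift-L1}, followed by Vitali (your detour through $H(\cdot,0)$ before restoring $\mu$ is harmless, and your $L^1$ trace convergence indeed holds without the coupling \eqref{xi-tau-choice}, by Fubini and $|\upsilon_t|((0,\tau_n)\times\Tt^d)\to0$). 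Two corrections are needed when you write it out: the commutator term does not vanish but converges to $\int_Q m\bigl(H(t,x,D\upsilon,0)-H(t,x,D\upsilon+\ell,0)\bigr)\,dxdt$, which is exactly what removes $\ell$ from the main bracket (whose negative part needs Fatou), and the terminal correction should be the time-independent $u_T(x)-\int_{\Tt^d}u_T(x')\,\xi_n^{-d}\,\overline{\zeta}\bigl((x'-x)/\xi_n\bigr)\,dx'$ of your first paragraph, as in the paper, since a correction supported in $[T-\tau_n,T]$ has zero spatial mean (so no favourable sign can be arranged) and contributes $\int_Q m\,|(\eta_n)_t|=O(\xi_n\tau_n^{-1/\beta})$, which trace convergence does not control.
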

\begin{remark}
    The spatial gradient perturbation $\ell$ in \eqref{upsilon-HJB-upper} acts as a slack variable. It ensures that additively perturbed test functions of the form $\upsilon = u \pm \varphi$ satisfy the integrability bound by choosing $\ell = \mp D\varphi$, which is a crucial flexibility for extracting the transport equation later.
\end{remark}
\begin{remark}
\label{rmk:m-upsilon-t-integrable}
As a consequence of Lemma~\ref{lem:upsilon-improved}, every admissible
\(\upsilon\) in that lemma satisfies \(m|\upsilon_t^{\mathrm{ac}}|\in L^1(Q)\). Indeed, let \(\ell\in
L^\infty(Q;\RR^d)\) be as in \eqref{upsilon-HJB-upper} and set
\[
A:=-\upsilon_t^{\mathrm{ac}}+H(t,x,D\upsilon+\ell,0).
\]
Then \(A^+\in L^{\beta'}(Q)\). Moreover, by \eqref{eq:assH.lower.simple},
\[
H(t,x,D\upsilon+\ell,0)^-\leq C V(t,x).
\]
Hence
\[
(-\upsilon_t^{\mathrm{ac}})^+
\leq A^+ + H(t,x,D\upsilon+\ell,0)^-
\]
and therefore \(m(-\upsilon_t^{\mathrm{ac}})^+\in L^1(Q)\), by
\eqref{integrability.further.m}, \eqref{eq:assH.potential}, and Hölder's
inequality.

It remains to control \(m(\upsilon_t^{\mathrm{ac}})^+\). Taking
\(\mu=0\) in \eqref{weak-sol-upsilon-improved}, we obtain
\[
0\le
\int_Q m\bigl(-\upsilon_t^{\mathrm{ac}}+H(t,x,D\upsilon,0)\bigr)\,dxdt
-\int_{\Tt^d}m_0(x)(\upsilon(0,x)-u(0,x))\,dx.
\]
The boundary term is finite, and
\[
m|H(t,x,D\upsilon,0)|
\leq C m(|D\upsilon|^\alpha+V(t,x))\in L^1(Q)
\]
by \eqref{mDupsilon-shift-L1}, \eqref{integrability.further.m}, and
\eqref{eq:assH.potential}. Since we already know
\(m(-\upsilon_t^{\mathrm{ac}})^+\in L^1(Q)\), the preceding inequality forces
\(m(\upsilon_t^{\mathrm{ac}})^+\in L^1(Q)\). Thus
\(m|\upsilon_t^{\mathrm{ac}}|\in L^1(Q)\).
\end{remark}

To prove Lemma~\ref{lem:upsilon-improved}, we approximate the test function $\upsilon$ with smooth functions. The lack of forward time regularity requires an asymmetric mollification scheme. We define this sequence and isolate its key convergence properties into two auxiliary propositions before completing the main proof.

Let $\upsilon \in BV(Q)$ satisfy the conditions of Lemma~\ref{lem:upsilon-improved}. We extend \(\upsilon\) to
\((0,\infty)\times\Tt^d\) by setting 
\begin{equation}\label{eq:upsilon-extension}
\upsilon^*(t,x):=
\begin{cases}
\upsilon(t,x), & 0<t<T,\\
u_T(x), & t>T,
\end{cases}
\end{equation}
for $x\in \Tt^d$. This extension belongs to
\(L^1_{\mathrm{loc}}((0,\infty)\times\Tt^d)\). Recalling the mollification kernel
\(K_{\xi,\tau}(t',x';t,x)\) defined in \eqref{kernel-mollification}, we define
\(\upsilon_{\xi,\tau}\in C^1(\bar Q)\) by
\begin{equation}\label{eq:upsilon-mollified}
    \upsilon_{\xi, \tau}(t,x) = \int_t^\infty \int_{\Tt^d} \upsilon^*(t',x') K_{\xi, \tau}(t',x'; t,x) \,dx'dt'.
\end{equation}
Next, we take a sequence $\xi_n \downarrow 0$. Because the total variation of the Radon measure $\upsilon_t$ on the open slice $(0, \tau) \times \Tt^d$ vanishes as $\tau \to 0^+$, we can choose another sequence $\tau_n \downarrow 0$ such that
\begin{equation}\label{xi-tau-choice}
\int_0^{\tau_n}\int_{\Tt^d}d|\upsilon_t| < (\xi_n)^{d+1},
\end{equation}
for each $n$. Then, for such a sequence, we set
\begin{equation}\label{upsilon_n.defined}
\upsilon_n(t,x) := \upsilon_{\xi_n,\tau_n}(t,x) + u_T(x) -\int_{\Tt^d} u_T(x')\cdot \frac{1}{(\xi_n)^d} \overline{\zeta}\left(\frac{x'-x}{\xi_n}\right) \,dx'.
\end{equation}
It is clear that $\upsilon_n\in C^1(\bar{Q})$, and~\eqref{eq:upsilon-mollified} ensures the exact terminal condition $\upsilon_n(T,x) = u_T(x)$.

\begin{proposition}[Approximation of Initial Trace and Spatial Derivative]\label{lem:trace-approx}
Under the assumptions of Lemma~\ref{lem:upsilon-improved}, let $\upsilon_n$ be constructed as in~\eqref{upsilon_n.defined}. Then, as $n\to \infty$,
\begin{align}
  D\upsilon_n \to D\upsilon \quad & \text{in } L^\alpha(Q;\RR^d), \label{Dupsilon.limit-aux} \\
  \upsilon_n(0,\cdot) \to \upsilon(0,\cdot) \quad &\text{in } L^1(\Tt^d). \label{upsilon-trace-limit-aux}
\end{align}
\end{proposition}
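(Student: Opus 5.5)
The plan is to read \eqref{eq:upsilon-mollified} as a forward-in-time mollification and to handle the spatial gradient and the initial trace separately.

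\textbf{Spatial derivative.} Since the kernel depends on $(t'-t,x'-x)$ only, differentiation in $x$ commutes with the mollification. Hence $D\upsilon_{\xi,\tau}$ is the same one-sided mollification applied to the spatial derivative of $\upsilon^*$. That derivative equals $D\upsilon\in L^\alpha(Q)$ for $t<T$ and $Du_T$ for $t>T$. Crucially, the spatial derivative of $\upsilon^*$ has no singular part across $\{t=T\}$: a jump in $t$ contributes only to the time derivative. So it is an $L^\alpha$ function $\theta$ on $(0,\infty)\times\Tt^d$, and $D\upsilon_n$ equals
\[
K_n*\theta + Du_T - \overline{\zeta}_{\xi_n}*Du_T .
\]
The last two terms cancel in the limit, since $Du_T$ is continuous. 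For the first term I would argue as for standard mollifiers. One has $\|K_n*\theta\|_{L^\alpha(Q)}\le\|\theta\|_{L^\alpha((0,T+\tau_1)\times\Tt^d)}$ by Young's inequality, since the kernel has unit mass. For continuous $\theta$ the convergence is uniform. A density argument then gives $K_n*\theta\to\theta$ in $L^\alpha(Q)$, which is \eqref{Dupsilon.limit-aux}. Alternatively, Lemma~\ref{lem:mollification-convergence-with-adapted.Lebesgue} gives a.e.\ convergence, and the strong convergence follows via the maximal bound.

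\textbf{Initial trace.} Fix $x$ and write
\[
\upsilon_{\xi,\tau}(0,x)=\int_0^\tau\int \upsilon^*(t',x')\,\tau^{-1}\hat\zeta(t'/\tau)\,\overline{\zeta}_\xi(x'-x)\,dx'dt'.
\]
I would compare $\upsilon^*(t',x')$ with the trace $\upsilon(0,x')$. For a BV function, the standard trace estimate on a thin slab gives, for a.e.\ $t'\in(0,\tau)$,
\[
\int_{\Tt^d}|\upsilon(t',x')-\upsilon(0,x')|\,dx'\le |\upsilon_t|\bigl((0,\tau)\times\Tt^d\bigr).
\]
Averaging in $t'$ with the weight $\tau^{-1}\hat\zeta$ and integrating in $x$ against the spatial kernel (using Fubini and the unit mass of $\overline{\zeta}_\xi$) yields
\[
\|\upsilon_{\xi_n,\tau_n}(0,\cdot)-\overline{\zeta}_{\xi_n}*\upsilon(0,\cdot)\|_{L^1(\Tt^d)}\le C\,|\upsilon_t|\bigl((0,\tau_n)\times\Tt^d\bigr).
\]
By \eqref{xi-tau-choice} the right-hand side is at most $C\xi_n^{d+1}\to0$. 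Here only the smallness of $|\upsilon_t|$ on $(0,\tau_n)$ is really used. Since $\overline{\zeta}_{\xi_n}*\upsilon(0,\cdot)\to\upsilon(0,\cdot)$ in $L^1$ because the trace lies in $L^1$, and the $u_T$ correction terms in \eqref{upsilon_n.defined} tend to $0$ uniformly, \eqref{upsilon-trace-limit-aux} follows.

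\textbf{Main obstacle.} I expect the trace step to be the delicate one. It requires justifying the slab estimate comparing interior slices with the $t=0$ trace via the BV total variation, which I would obtain by approximating $\upsilon$ with smooth functions converging strictly in $BV$. The choice \eqref{xi-tau-choice} is exactly what makes that error vanish.
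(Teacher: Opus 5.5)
Your proposal is correct. The spatial-derivative step coincides with the paper's: differentiate under the one-sided mollifier, note that the $u_T$ correction vanishes uniformly, and use approximate-identity convergence in $L^\alpha$. The trace step takes a different route.

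\textbf{The paper's route.} It fixes $x$ and writes the time kernel as the $t'$-derivative of its primitive $\int_{t'/\tau}^\infty\hat\zeta(s)\,ds$, then integrates by parts. This shows that $\upsilon_{\xi,\tau}(0,x)$ equals the spatial mollification of $\upsilon(0,\cdot)$ plus a measure term bounded pointwise by $\xi^{-d}\|\overline{\zeta}\|_{L^\infty}\,|\upsilon_t|\bigl((0,\tau)\times\Tt^d\bigr)$. The factor $\xi^{-d}$ is exactly why $\tau_n$ is tied to $\xi_n$ through \eqref{xi-tau-choice}. The payoff is a uniform-in-$x$ bound obtained from a single Gauss--Green identity.

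\textbf{Your route.} You compare interior slices with the trace in $L^1(\Tt^d)$ and integrate out $x$ using the unit mass of $\overline{\zeta}_\xi$. No $\xi^{-d}$ appears, and the error is just $|\upsilon_t|\bigl((0,\tau_n)\times\Tt^d\bigr)$, which tends to zero for any $\tau_n\downarrow0$. Your version therefore shows that the coupling \eqref{xi-tau-choice} is unnecessary for this proposition.

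\textbf{The cost.} You need the slab estimate. Its cleanest proof is not strict approximation but the same BV Gauss--Green formula on $(0,t')\times\Tt^d$, tested with $\phi(x)$ for $\phi\in C^1(\Tt^d)$, $|\phi|\le1$, followed by a supremum over $\phi$. For a.e.\ $t'$ the inner trace at $t'$ equals the slice, so this gives the estimate directly. If you use smooth strictly converging approximants instead, note that strict convergence controls only $|D\upsilon|$. Recovering the time component alone needs Reshetnyak's continuity theorem, although the cruder bound by $|D\upsilon|\bigl((0,\tau)\times\Tt^d\bigr)$ already suffices here, since it also tends to zero as $\tau\to0$.
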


\begin{proof}
  To establish \eqref{Dupsilon.limit-aux}, note that~\eqref{upsilon_n.defined} and~\eqref{eq:upsilon-mollified} imply
  \begin{equation}\label{Dupsilon_n.formula}
     \begin{aligned}
     D\upsilon_n(t,x) & = \int_t^\infty\int_{\Tt^d} D\upsilon^*(t',x')K_{{\xi_n}, {\tau_n}}(t',x'; t,x) \,dx'dt' \\
     & \qquad + Du_T(x) - \int_{\Tt^d}Du_T(x')\cdot \frac{1}{(\xi_n)^d} \overline{\zeta}\left(\frac{x'-x}{\xi_n}\right) \,dx'.
     \end{aligned}
  \end{equation}
The first integral on the right-hand side is a space-time mollification of $D\upsilon^*$, which converges to $D\upsilon$ in $L^\alpha(Q;\RR^d)$ since $D\upsilon\in L^\alpha(Q;\RR^d)$; the remaining terms $$Du_T(x)-\int_{\Tt^d}Du_T(x')\frac{1}{(\xi_n)^d}\overline{\zeta}\left(\frac{x'-x}{\xi_n}\right)\,dx'$$ converge to zero uniformly because $u_T\in C^1(\Tt^d)$. Hence $D\upsilon_n\to D\upsilon$ in $L^\alpha(Q;\RR^d)$.

  To prove \eqref{upsilon-trace-limit-aux}, consider the smooth vector field $\psi\colon [0,\infty)\times \RR^d \to \RR\times \RR^{d}$ defined by
  \[\psi(t,x) = -\mathbf{\hat{e}_t}\cdot\left(\int_{t}^\infty \hat{\zeta}(s)\,ds\right) \cdot \overline{\zeta}(x),\]
where $\mathbf{\hat{e}_t} := (1, \mathbf{0}) \in \RR\times\RR^d$ is the standard unit vector pointing in the time direction. Given $x\in\Tt^d$ and $\xi,\tau>0$, we have
  \[\div_{(t',x')}\left(\frac{1}{\xi^d}\psi\Bigl(\frac{t'}{\tau},\frac{x'-x}{\xi}\Bigr)\right) = \frac{1}{\tau} \hat{\zeta}\left(\frac{t'}{\tau}\right)\cdot \frac{1}{\xi^d} \overline{\zeta}\left(\frac{x'-x}{\xi}\right) = K_{{\xi}, {\tau}}(t',x'; 0,x).\]
  Therefore, by~\eqref{eq:upsilon-mollified} and integration by parts in $(t',x')$, we obtain
  \begin{equation}\label{upsilon-mollified-at-t.equals.zero}
     \begin{aligned}
     \upsilon_{\xi,\tau}(0,x) & = \int_0^\infty \int_{\Tt^d} \upsilon^*(t',x')\cdot \div_{(t',x')}\left(\frac{1}{\xi^d}\psi\Bigl(\frac{t'}{\tau},\frac{x'-x}{\xi}\Bigr)\right) \,dx'dt' \\
     & = \int_0^\infty \int_{\Tt^d} \left(\int_{t'/\tau}^\infty \hat{\zeta}(s)\,ds\right) \cdot \frac{1}{\xi^d}\overline{\zeta}\left(\frac{x'-x}{\xi}\right)\,d(\upsilon^*(t',x'))_{t'} \\
     & + \int_{\Tt^d} 
     \upsilon(0,x')\cdot\frac{1}{\xi^d}\overline{\zeta}\left(\frac{x'-x}{\xi}\right)\,dx'.
  \end{aligned}
  \end{equation}
  
  Now, we estimate the second line of~\eqref{upsilon-mollified-at-t.equals.zero}, using the fact that
  \[\int_{t'/\tau}^\infty \hat{\zeta}(s)\,ds \leq 1 \qquad\text{and}\qquad \int_{t'/\tau}^\infty \hat{\zeta}(s)\,ds = 0 \text{ for } t'>\tau.\]
  This gives, for $\tau<T$, that
  \begin{equation*}
     \begin{aligned}
     & \left|\upsilon_{\xi,\tau}(0,x)-\int_{\Tt^d} 
     \upsilon(0,x')\cdot\frac{1}{\xi^d}\overline{\zeta}\left(\frac{x'-x}{\xi}\right)\,dx'\right|\\
     & \qquad\qquad \leq \int_0^\tau \int_{\Tt^d} \frac{1}{\xi^d}\overline{\zeta}\left(\frac{x'-x}{\xi}\right)\, d\left|(\upsilon(t',x'))_{t'}\right| \leq \frac{1}{\xi^d}\lVert\overline{\zeta}\rVert_{L^\infty} \int_0^\tau \int_{\Tt^d}\, d|\upsilon_t|.
  \end{aligned}
  \end{equation*}
  Hence~\eqref{xi-tau-choice} implies
  \begin{equation*}
      \left|\upsilon_{\xi_n,\tau_n}(0,x)-\int_{\Tt^d} \upsilon(0,x')\cdot\frac{1}{(\xi_n)^d}\overline{\zeta}\left(\frac{x'-x}{\xi_n}\right)\,dx'\right|\leq \xi_n\lVert\overline{\zeta}\rVert_{L^\infty},
  \end{equation*}
  which converges to zero uniformly over~$\Tt^d$. Moreover, we observe that
  \[\upsilon_n(0,x)-\upsilon_{\xi_n,\tau_n}(0,x) = u_T(x) -\int_{\Tt^d} u_T(x')\cdot \frac{1}{(\xi_n)^d} \overline{\zeta}\left(\frac{x'-x}{\xi_n}\right) \,dx'\]
  converges to zero uniformly, while
  \begin{equation*}
    \upsilon(0,x)-\int_{\Tt^d} \upsilon(0,x')\cdot\frac{1}{(\xi_n)^d}\overline{\zeta}\left(\frac{x'-x}{\xi_n}\right)\,dx'  
  \end{equation*}
  converges to zero in $L^1(\Tt^d)$. Consequently, $\upsilon_n(0,\cdot) \to \upsilon(0,\cdot)$ in $L^1(\Tt^d)$.
\end{proof}

With the convergence of the spatial gradients and initial traces established, we now turn to the Hamilton--Jacobi term. A direct limit passage is obstructed because the product of the density $m \in L^\beta(Q)$ with the singular measure $\upsilon_t^{\text{s}}$ is \emph{a priori} ill-defined. The following proposition circumvents this difficulty by exploiting the convexity of the Hamiltonian and the non-negativity of the temporal jumps ($\upsilon_t^{\text{s}} \geq 0$) to establish an asymptotic upper bound. This allows us to safely drop the singular mass and control the limit entirely in terms of the absolutely continuous density $\upsilon_t^{\text{ac}}$.

\begin{proposition}[Asymptotic Upper Bound for the Hamiltonian]\label{lem:hjb-limsup} Under the assumptions of Lemma~\ref{lem:upsilon-improved}, let $\upsilon_n$ be as in~\eqref{upsilon_n.defined}. Then
\begin{equation}\label{m-times-hjb.upsilon-limit}
\limsup_{n\to\infty} \int_Q m\bigl(-(\upsilon_n)_t + H(t,x,D\upsilon_n,\mu)\bigr)\,dxdt \leq \int_Q m\bigl(-\upsilon_t^{\text{ac}} + H(t,x,D\upsilon,\mu)\bigr)\,dxdt,
\end{equation}
for any $\mu \in L^{\infty}(Q; \RR^+_0)$.
\end{proposition}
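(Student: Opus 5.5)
The plan is to compare the integrand pointwise with a one-sided mollification of the absolutely continuous data, discarding the singular part through its sign. After that, the $\limsup$ inequality follows from a Fatou argument against a uniformly integrable majorant.

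\medskip\noindent\textbf{Step 1 (drop the singular part).} Differentiating \eqref{eq:upsilon-mollified} in $t$, $(\upsilon_{\xi,\tau})_t(t,x)$ equals the kernel integral of the measure $d(\upsilon^*)_{t'}$ on $(t,\infty)\times\Tt^d$. By \eqref{eq:upsilon-extension}, this measure consists of three pieces: $\upsilon_t^{\text{ac}}$ on $Q$, the singular part $\upsilon_t^{\text{s}}$, and the jump $(u_T-\upsilon(T,\cdot))\,\mathcal{H}^d\llcorner\{T\}\times\Tt^d$. The last two pieces are nonnegative by \eqref{upsilon.jumps.forward}. The correction term in \eqref{upsilon_n.defined} is independent of $t$. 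Hence
\[
-(\upsilon_n)_t(t,x)\leq -\int_t^\infty\!\!\int_{\Tt^d}\upsilon_t^{\text{ac},*}(t',x')K_{\xi_n,\tau_n}(t',x';t,x)\,dx'dt',
\]
where $\upsilon_t^{\text{ac},*}$ is extended by $0$ for $t'>T$.

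For the spatial gradient, I write $D\upsilon_n=\int K\,D\upsilon^*+e_n$, with $e_n\to0$ uniformly, as in \eqref{Dupsilon_n.formula}. Jensen's inequality for the convex map $H(t,x,\cdot,\mu(t,x))$ from Remark~\ref{rmk:cxty} handles the kernel average. The perturbation $e_n$ is absorbed with \eqref{devH.DpH.FTC+estimate}, at the cost of a term $C|e_n|(\cdots)^{1-1/\alpha}$. This yields $-(\upsilon_n)_t+H(t,x,D\upsilon_n,\mu)\leq \tilde f_n+r_n$, where
\[
\tilde f_n(t,x):=\int K_{\xi_n,\tau_n}(t',x';t,x)\bigl(-\upsilon_t^{\text{ac},*}(t',x')+H(t,x,D\upsilon^*(t',x'),\mu(t,x))\bigr)\,dx'dt',
\]
and $r_n$ is the error term, for which $\int_Q m\,r_n\to0$.

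\medskip\noindent\textbf{Step 2 (a.e. convergence).} I apply Lemma~\ref{lem:mollification-convergence-with-adapted.Lebesgue} to $F(t,x,s)=H(t,x,s,\mu(t,x))$ with $\sigma=\alpha$ and $\theta=D\upsilon^*$. The growth condition \eqref{adapted.lebesgue-caratheodory} holds because $\mu$ is bounded, so $H$ has $\alpha$-growth in $s$. The $\upsilon_t^{\text{ac}}$ part converges at adapted Lebesgue points, which is the same lemma with $\sigma=1$. Therefore $\tilde f_n\to f:=-\upsilon_t^{\text{ac}}+H(t,x,D\upsilon,\mu)$ a.e. in $Q$.

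\medskip\noindent\textbf{Step 3 (majorant, the main obstacle).} The difficulty is that $m\tilde f_n$ need not be dominated. I would build $G_n\geq\tilde f_n$ such that $mG_n$ converges in $L^1$. Inside the kernel integral I use \eqref{upsilon-HJB-upper}:
\[
-\upsilon_t^{\text{ac}}(t',x')\leq A^+(t',x')-H(t',x',D\upsilon+\ell,0)(t',x'),
\qquad A:=-\upsilon_t^{\text{ac}}+H(t,x,D\upsilon+\ell,0).
\]
I then bound the difference $H(t,x,D\upsilon^*(t',x'),\mu(t,x))-H(t',x',(D\upsilon+\ell)(t',x'),0)$ by Lemma~\ref{lem:deviation-bound}. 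This gives
\[
G_n=\int K\Bigl(A^+ + C\bigl(R(\|\mu\|_\infty)+\|\ell\|_\infty\bigr)\bigl(|D\upsilon^*|^\alpha+|\ell|^\alpha+V(t,x)+V(t',x')\bigr)^{1-1/C}\Bigr)\,dx'dt'.
\]
The piece $\int K A^+$ converges to $A^+$ in $L^{\beta'}$, and $m\in L^\beta$, so $m\int KA^+\to mA^+$ in $L^1$. For the remaining piece, Fubini in the shift variables writes $\int_Q m(t,x)\int K|D\upsilon^*(t',x')|^\alpha$ as a kernel average over shifts $(s,y)$ of the quantities in \eqref{mDupsilon-shift-L1}. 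It is therefore bounded uniformly in $n$, and the $u_T$-extension contributes only a bounded term. Since the exponent $1-1/C<1$, this uniform $L^1$ bound on the $\alpha$-power gives equi-integrability of $m\bigl(\int K|D\upsilon^*|^\alpha\bigr)^{1-1/C}$-type terms, via Jensen and de la Vallée-Poussin. The $V$ terms are handled in the same way using $m\in L^\beta$ and $V\in L^{\beta'}$. Together with a.e. convergence of $G_n$ to some $G$, which again follows from Lemma~\ref{lem:mollification-convergence-with-adapted.Lebesgue} with $\theta=(D\upsilon,\ell)$, Vitali's theorem gives $\int_Q mG_n\to\int_Q mG$.

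\medskip\noindent\textbf{Step 4 (conclusion).} Since $m(G_n-\tilde f_n)\geq0$ and this converges a.e. to $m(G-f)$, Fatou's lemma gives $\liminf\int m(G_n-\tilde f_n)\geq\int m(G-f)$. Combined with $\int mG_n\to\int mG$, this yields $\limsup\int m\tilde f_n\leq\int mf$. Adding $\int m\,r_n\to0$ completes the proof of \eqref{m-times-hjb.upsilon-limit}.

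I expect Step 3 to be the delicate part, namely obtaining equi-integrability of $m$ times the mollified $|D\upsilon|^\alpha$ from the shift bound \eqref{mDupsilon-shift-L1}. This is exactly where the power $1-1/C$ in Assumption~\ref{onH.highest-order-p.has-no-txm} is needed.
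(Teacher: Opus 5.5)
Your proposal is correct and is essentially the paper's proof: the singular part and the terminal jump are discarded by sign and convexity handles $D\upsilon_n$. The remaining error is controlled by Lemma~\ref{lem:deviation-bound}, Jensen with $q=c/(c-1)$, the Fubini rewriting over shifts $(s,y)$, and \eqref{mDupsilon-shift-L1}, followed by Vitali. The differences are only organizational. First, the paper puts the $u_T$-correction inside the Jensen step via $D\upsilon^*(t',x')+Du_T(x)-Du_T(x')$, so there is no $r_n$; its choice $\theta=D\upsilon-Du_T$ extended by $0$ also lies in $L^\alpha((0,\infty)\times\Tt^d)$, whereas your $\theta=D\upsilon^*$ must be truncated after $T+\tau_1$. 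Second, the paper splits into $h$ and $h^{\text{c}}$ instead of using a single majorant $G_n$ with a generalized Fatou lemma.
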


\begin{proof}
Recalling \eqref{eq:upsilon-extension},
we observe that
  \[(\upsilon^*)_{t}^{\text{s}} = \begin{cases}
     \upsilon_t^{\text{s}} \qquad & \text{on } Q = (0,T)\times\Tt^d, \\
     (u_T(x)-\upsilon(T,x))\,dx \qquad & \text{on } \{T\}\times\Tt^d, \\
     0 & \text{on } (T,\infty)\times\Tt^d.
  \end{cases}\]
  where $dx$ in the second line denotes the $d$-dimensional Lebesgue measure. Hence, $(\upsilon^*)_{t}^{\text{s}}\geq 0$ by~\eqref{upsilon.jumps.forward}. Therefore, \eqref{upsilon_n.defined} and~\eqref{eq:upsilon-mollified} imply
  \begin{equation}\label{upsilon_t.singular.removed}
     -(\upsilon_n)_t \leq \int_t^\infty \int_{\Tt^d} \bigl(-\upsilon^*(t',x')\bigr)_{t'}^{\text{ac}}\cdot K_{{\xi_n}, {\tau_n}}(t',x'; t,x) \,dx'dt'.
  \end{equation}
  On the other hand, since $H(t,x,\cdot,\mu(t,x))$ is convex as in Remark~\ref{rmk:cxty}, \eqref{Dupsilon_n.formula} implies
  \begin{equation}\label{hjb-upsilon_n.cxty.upper-bound}
  \begin{aligned}
     & H(t,x,D\upsilon_n,\mu) \\
     & \quad \leq \int_t^\infty \int_{\Tt^d}  H\bigl(t,x,D\upsilon^*(t',x')+Du_T(x)-Du_T(x'),\mu(t,x)\bigr) K_{{\xi_n}, {\tau_n}}(t',x'; t,x) \,dx'dt',
  \end{aligned} 
  \end{equation}
  for a.e.~$(t,x)\in Q$. Now, let $\ell$ be as in \eqref{upsilon-HJB-upper}, and
  let $h\colon (0,\infty)\times\Tt^d \to \RR$ be 
  \begin{equation*}
     h = \begin{cases}
       -\upsilon_t^{\text{ac}} + H(t,x,D\upsilon+\ell,\mu), \qquad & \text{on } Q = (0,T)\times\Tt^d, \\
       0 & \text{on } (T,\infty)\times\Tt^d, 
     \end{cases}
  \end{equation*}
and 
\begin{equation*}
       h^{\text{c}}(t',x';t,x) = \begin{cases}
       \begin{aligned}
   & \Bigl(H(t,x,D\upsilon(t',x')+Du_T(x)-Du_T(x'),\mu(t,x))\\
   & \qquad\qquad - H(t',x',D\upsilon(t',x')+\ell(t',x'),\mu(t',x'))\Bigr)
       \end{aligned} \qquad & \text{for } t' < T, \\
       H(t,x,Du_T(x),\mu(t,x)) & \text{for } t' > T.
     \end{cases}
  \end{equation*}
Adding~\eqref{upsilon_t.singular.removed} and~\eqref{hjb-upsilon_n.cxty.upper-bound}, then adding and subtracting appropriate quantities, we obtain
\begin{equation}\label{upsilon_n.hjb.bounded.by.two-terms}
 \begin{aligned}
 & -(\upsilon_n)_t + H(t,x,D\upsilon_n,\mu) \leq \\
 & \qquad \int_t^\infty \int_{\Tt^d} (h(t',x')+h^{\text{c}}(t',x';t,x))\cdot K_{{\xi_n}, {\tau_n}}(t',x'; t,x) \,dx'dt' \qquad\text{a.e.~in }Q.
\end{aligned}
\end{equation}
 
  In view of~\eqref{upsilon_n.hjb.bounded.by.two-terms}, our purpose is to establish the following limits:
\begin{align}
     & \limsup_{n\to\infty} \int_Q m(t,x) \left(\int_t^\infty \int_{\Tt^d} h(t',x') K_{{\xi_n}, {\tau_n}}(t',x'; t,x) \,dx'dt'\right)\,dxdt \nonumber \\
     & \qquad \leq \int_Q m(t,x)(-\upsilon_t^{\text{ac}} + H(t,x,D\upsilon+\ell,\mu))\,dxdt, \label{mh-limsup} 
\end{align}
and
\begin{align}     
     & \lim_{n\to\infty} \int_Q m(t,x) \left(\int_t^\infty \int_{\Tt^d} h^{\text{c}}(t',x';t,x) K_{{\xi_n}, {\tau_n}}(t',x'; t,x) \,dx'dt'\right)\,dxdt \nonumber \\
     & \qquad = \int_Q m(t,x)(H(t,x,D\upsilon,\mu)-H(t,x,D\upsilon+\ell,\mu))\,dxdt. \label{mhc-lim}
  \end{align}

\medskip
 \noindent\textbf{Proof of \eqref{mh-limsup}:} We note that $h^+\in L^{\beta'}((0,\infty)\times\Tt^d)$ by~\eqref{upsilon-HJB-upper} because $H(t,x,p,\cdot)$ is non-increasing as in Remark~\ref{rmk:cxty}. Hence,
  \begin{equation*}
\int_t^\infty \int_{\Tt^d} h^+(t',x') K_{{\xi_n}, {\tau_n}}(t',x'; t,x) \,dx'dt' \to h^+(t,x) \qquad\text{in } L^{\beta'}(Q),
  \end{equation*}
  which yields
  \begin{equation}\label{hplus-limit}
  \begin{aligned}
& \lim_{n\to\infty} \int_Q m(t,x) \left(\int_t^\infty \int_{\Tt^d} h^+(t',x') K_{{\xi_n}, {\tau_n}}(t',x'; t,x) \,dx'dt'\right)\,dxdt \\
& \qquad = \int_Q m(t,x)h^+(t,x)\,dxdt, 
  \end{aligned}
  \end{equation}
  since $m\in L^\beta(Q)$.
  On the other hand, because $h^-\in L^1(Q)$,
  \begin{equation*}
\int_t^\infty \int_{\Tt^d} h^-(t',x') K_{{\xi_n}, {\tau_n}}(t',x'; t,x) \,dx'dt' \to h^-(t,x) \qquad\text{a.e.~in } Q,
  \end{equation*}
  which yields
  \begin{equation}\label{hminus-limit}
  \begin{aligned}
  & \liminf_{n\to\infty} \int_Q m(t,x) \left(\int_t^\infty \int_{\Tt^d} h^-(t',x') K_{{\xi_n}, {\tau_n}}(t',x'; t,x) \,dx'dt'\right)\,dxdt \\
& \qquad \geq \int_Q m(t,x)h^-(t,x)\,dxdt,
  \end{aligned}
  \end{equation}
  by Fatou's lemma. Subtracting~\eqref{hminus-limit} from~\eqref{hplus-limit}, we obtain~\eqref{mh-limsup}.

\medskip
\noindent\textbf{Proof of \eqref{mhc-lim}:}
Consider the measure $\mathbf{m} := m\,dxdt$ over $Q$; note that $\mathbf{m}(Q) < \infty$ because $m\in L^\beta(Q)$ and $|Q|<\infty$. Let $L^q(Q,m)$ denote the corresponding $L^q$-space. Next, let
\begin{equation*}
 I_n := \int_t^\infty \int_{\Tt^d} h^{\text{c}}(t',x';t,x) K_{{\xi_n}, {\tau_n}}(t',x'; t,x) \,dx'dt'.
 \end{equation*}
It is enough to prove that
\[I_n \to H(t,x,D\upsilon,\mu)-H(t,x,D\upsilon+\ell,\mu) \qquad\text{in } L^1(Q, m).\]

We first claim that
\begin{equation}\label{I.n-converges-ae}
 I_n \to H(t,x,D\upsilon,\mu)-H(t,x,D\upsilon+\ell,\mu) \qquad \text{a.e.~in } Q.
 \end{equation}
Indeed, define
\begin{equation*}
    \theta = \begin{cases}
        D\upsilon - Du_T, \qquad & \text{on } Q, \\
        0, & \text{on } (T,\infty)\times\Tt^d,
    \end{cases}
\end{equation*}
and $F(t,x,s) = H(t,x,s+Du_T(x), \mu(t,x))$. 
Taking into account that 
\[
h^{\mathrm c}(t',x';t,x)
=
\begin{cases}
F(t,x,\theta(t',x'))
-
H\bigl(t',x',D\upsilon(t',x')+\ell(t',x'),\mu(t',x')\bigr),
& t'<T,\\[1mm]
F(t,x,\theta(t',x')), & t'>T,
\end{cases}
\]
we have
\begin{equation*}
    \begin{aligned}
        & I_n = -\int_t^T \int_{\Tt^d} H(t',x',D\upsilon(t',x') + \ell(t',x'), \mu(t',x')) K_{{\xi_n}, {\tau_n}}(t',x'; t,x) \,dx'dt' \\
        & \qquad\qquad + \int_t^\infty\int_{\Tt^d} F(t,x, \theta(t',x')) K_{{\xi_n}, {\tau_n}}(t',x'; t,x) \,dx'dt'.
    \end{aligned}
\end{equation*}
Now, the first integral is the mollification of the function
\[(t,x)\mapsto -H(t,x,D\upsilon(t,x)+\ell(t,x),\mu(t,x)),\]
extended by $0$ for $t>T$, which belongs to $L^1(Q)$ by~\eqref{eq:assH.lower.simple}, \eqref{res:H.upper.simple}, \eqref{Dupsilon.Lalpha}, and $|\ell|, \mu\in L^\infty(Q)$. 
Hence, by the same adapted Lebesgue differentiation argument applied to this \(L^1\)-function, it converges to $-H(t,x,D\upsilon(t,x)+\ell(t,x),\mu(t,x))$ a.e.~in $Q$.
On the other hand, the second integral converges to $F(t,x,\theta(t,x)) = H(t,x,D\upsilon(t,x),\mu(t,x))$ a.e.~in $Q$ by Lemma~\ref{lem:mollification-convergence-with-adapted.Lebesgue} applied with $\sigma = \alpha$, because $\theta$ and $F$ satisfy the hypotheses by \eqref{Dupsilon.Lalpha}, \eqref{eq:assH.lower.simple}, and \eqref{res:H.upper.simple}, while $(\xi_n, \tau_n)$ satisfies the hypothesis by construction.

On the other hand, we observe that
\begin{equation}\label{hc-leftover-converges-L1m}
 \tilde{I}_n := \int_T^\infty \int_{\Tt^d} h^{\text{c}}(t',x';t,x) K_{{\xi_n}, {\tau_n}}(t',x'; t,x) \,dx'dt' \to 0 \qquad\text{in } L^1(Q,m).
 \end{equation}
Indeed, since $K_{{\xi_n}, {\tau_n}}(t',x';t,x) = 0$ for $t' > t+\tau_n$, the last integral is nonzero only when \(t\in(T-\tau_n,T)\). Thus
\[|\tilde{I}_n| \leq \chi_{\{T-\tau_n<t<T\}}|H(t,x,Du_T(x),\mu(t,x))| \qquad \text{a.e.~in } Q.\]
Hence
\[\int_Q m|\tilde{I}_n| \,dxdt \leq \int_{T-\tau_n}^T\int_{\Tt^d} m|H(t,x,Du_T(x),\mu(t,x))| \,dxdt,\]
which converges to zero because
\[
m|H(t,x,Du_T(x),\mu(t,x))|\leq Cm(1+V)\in L^1(Q).
\]

In view of \eqref{hc-leftover-converges-L1m}, it remains to prove $I_n - \tilde{I}_n$ converges to $H(t,x,D\upsilon,\mu)-H(t,x,D\upsilon+\ell,\mu)$ in $L^1(Q,m)$. 
But, by \eqref{I.n-converges-ae} and the fact that $\tilde I_n\to0$ a.e.~in $Q$, we know that $I_n - \tilde{I}_n$ converges to $H(t,x,D\upsilon,\mu)-H(t,x,D\upsilon+\ell,\mu)$ a.e.~in $Q$, hence $\mathbf{m}$-a.e.~in $Q$.
To lift this $\mathbf{m}$-a.e.~convergence to convergence in $L^1(Q,m)$, we show that the sequence is bounded in $L^q(Q,m)$, that is
\begin{equation}\label{mollification-Lqm-bound-pre.calculation}
    \int_Q m(t,x)|I_n - \tilde{I}_n|^q\,dxdt = \int_Q m(t,x) \left|\int_t^T \int_{\Tt^d} h^{\text{c}}(t',x';t,x) K_{{\xi_n}, {\tau_n}}(t',x'; t,x) \,dx'dt'\right|^q \,dxdt
\end{equation}
is bounded uniformly in $n$ for some $q>1$. Indeed, by H\"older's inequality, this bound gives that
$\int_A|I_n - \tilde{I}_n|\,d\mathbf{m}\leq C\,\mathbf{m}(A)^{1/q'}$ for every measurable $A\subset Q$, uniformly in $n$; hence $\{I_n - \tilde{I}_n\}$ is uniformly integrable. As $\mathbf{m}$ is finite, the Vitali convergence theorem upgrades the $\mathbf{m}$-a.e.\ convergence to convergence in $L^1(Q,m)$. Consequently, $I_n$ converges in $L^1(Q,m)$ and~\eqref{mhc-lim} follows.

Before specifying $q$, we estimate the quantity in \eqref{mollification-Lqm-bound-pre.calculation} as follows:
\begin{equation}\label{mollification-Lqm-bound-calculation}
 \begin{aligned}
 & \int_Q m(t,x) \left|\int_t^T \int_{\Tt^d} h^{\text{c}}(t',x';t,x) K_{{\xi_n}, {\tau_n}}(t',x'; t,x) \,dx'dt'\right|^q \,dxdt \\
 & \ \leq \int_Q m(t,x) \left(\int_t^T \int_{\Tt^d} |h^{\text{c}}(t',x';t,x)|^q \cdot K_{{\xi_n}, {\tau_n}}(t',x'; t,x) \,dx'dt'\right)\,dxdt \\
 & \ = \int_Q \frac{1}{\tau_n} \hat{\zeta}\left(\frac{s}{\tau_n}\right)\cdot \frac{1}{(\xi_n)^d} \overline{\zeta}\left(\frac{y}{\xi_n}\right)\left(\int_0^{T-s} \int_{\Tt^d} m(t,x)|h^{\text{c}}(t+s,x+y;t,x)|^q \,dxdt\right) \,dyds \\
 & \ \leq \sup_{(s,y)\in\bar{Q}} \int_0^{T-s}\int_{\Tt^d} m(t,x)\biggl|H(t,x,D\upsilon(t+s,x+y)+Du_T(x)-Du_T(x+y),\mu(t,x))\\
 & \quad - H(t+s,x+y,D\upsilon(t+s,x+y)+\ell(t+s,x+y),\mu(t+s,x+y))\biggr|^q \,dxdt.
 \end{aligned}
\end{equation}

Since the last expression does not depend on $n$, it remains to find some $q>1$ which makes it finite. For this, we use 
Assumption \ref{onH.highest-order-p.has-no-txm} via 
\eqref{eq:devH-quadruple} in Lemma \ref{lem:deviation-bound} and choose some $c > 1$ satisfying
\begin{equation*}
    \begin{aligned}
        & |H(t,x,p_1,m_1) - H(t',x',p_2,m_2)| \\
        & \qquad \leq c \bigl( R(m_1+m_2)+|p_1-p_2|\bigr)\bigl(|p_1|^\alpha + |p_2|^\alpha + V(t,x) +V(t',x')\bigr)^{1-1/c}.
    \end{aligned}
    \end{equation*}
    Then, we set $q:= c/(c-1)$. Accordingly,
    \begin{equation}\label{integrand-estimated-devH-bound}
     \begin{aligned}
       & \biggl|H(t,x,D\upsilon(t+s,x+y)+Du_T(x)-Du_T(x+y),\mu(t,x))\\
       & \qquad - H(t+s,x+y,D\upsilon(t+s,x+y)+\ell(t+s,x+y),\mu(t+s,x+y))\biggr|^q \\
       & \leq C\bigl(R(\mu(t,x)+\mu(t+s,x+y))+|Du_T(x)|+|Du_T(x+y)|+|\ell(t+s,x+y)|\bigr)^q\\
       &\qquad \cdot\Bigl(|D\upsilon(t+s,x+y)|^\alpha+V(t,x)+V(t+s,x+y)\\
       &\qquad\qquad +|Du_T(x)|^\alpha+|Du_T(x+y)|^\alpha+|\ell(t+s,x+y)|^\alpha\Bigr) \\
       & \leq C(|D\upsilon(t+s,x+y)|^\alpha+V(t,x)+V(t+s,x+y)),
     \end{aligned}
  \end{equation}
  where, in the last step, we absorbed the finite quantities $\Vert \mu\Vert_{L^\infty(Q)}$ and $\Vert \ell\Vert_{L^\infty(Q)}$, as well as the data, into the constant $C$. 
  Then we insert the bound~\eqref{integrand-estimated-devH-bound} into the last expression in~\eqref{mollification-Lqm-bound-calculation} and see that it is finite because of~\eqref{mDupsilon-shift-L1}, $m\in L^\beta(Q)$, and $V\in L^{\beta'}(Q)$. Thus, we finish the proof. 
\end{proof}

\begin{proof}[Proof of Lemma~\ref{lem:upsilon-improved}]
  Let $\upsilon_n\in C^1(\bar{Q})$ be the sequence defined in~\eqref{upsilon_n.defined}. Because $\upsilon_n(T,x) = u_T(x)$, \eqref{weak-sol-mu-improved-Linfty} holds for $\upsilon_n$, yielding:
  \begin{equation}\label{weak-sol.upsilon_n}
  \begin{aligned}
  0 \leq & \int_Q(\mu-m)\bigl((\upsilon_n)_t - H(t,x,D\upsilon_n,\mu)\bigr)\,dxdt\\
  & + \int_Q\bigl(\mu (u_t^{\text{ac}}-(\upsilon_n)_t) + \mu D_pH(t,x,D\upsilon_n,\mu)\cdot (D\upsilon_n-Du) \bigr)\,dxdt\\
  & \qquad -\int_{\Tt^d}m_0(x)(\upsilon_n(0,x)-u(0,x))\,dx \\
  = & \int_Q m\bigl(-(\upsilon_n)_t + H(t,x,D\upsilon_n,\mu)\bigr)\,dxdt\\
  &+\int_Q\mu\bigl( u_t^{\text{ac}}- H(t,x,D\upsilon_n,\mu) + D_pH(t,x,D\upsilon_n,\mu)\cdot (D\upsilon_n-Du) \bigr)\,dxdt\\
  &\qquad -\int_{\Tt^d}m_0(x)(\upsilon_n(0,x)-u(0,x))\,dx,
  \end{aligned}
  \end{equation}
  for all $\mu\in L^{\infty}(Q; \RR^+_0)$. We now pass to the limit $n\to\infty$ in~\eqref{weak-sol.upsilon_n}.

  First, by~\eqref{Dupsilon.limit-aux}, we can extract a subsequence such that $D\upsilon_n \to D\upsilon$ a.e.~and $|D\upsilon_n|\leq g \in L^\alpha(Q)$. The bounds \eqref{eq:assH.DpH.upper.simple} and \eqref{res:H.upper.simple} provide integrable majorants. Thus, the Dominated Convergence Theorem yields
  \begin{equation*}
  \begin{aligned}
  & \lim_{n\to\infty} \int_Q\mu\bigl( u_t^{\text{ac}}- H(t,x,D\upsilon_n,\mu) + D_pH(t,x,D\upsilon_n,\mu)\cdot (D\upsilon_n-Du) \bigr)\,dxdt \\
  & \qquad = \int_Q\mu\bigl( u_t^{\text{ac}}- H(t,x,D\upsilon,\mu) + D_pH(t,x,D\upsilon,\mu)\cdot (D\upsilon-Du) \bigr)\,dxdt \\
  & \qquad = \int_Q\mu\bigl(\upsilon_t^{\text{ac}} - H(t,x,D\upsilon,\mu)\bigr)\,dxdt \\
  & \qquad\quad + \int_Q\bigl(\mu (u_t^{\text{ac}}-\upsilon_t^{\text{ac}}) + \mu D_pH(t,x,D\upsilon,\mu)\cdot (D\upsilon-Du) \bigr)\,dxdt.
  \end{aligned}
  \end{equation*}

  Second, by~\eqref{upsilon-trace-limit-aux}, we have
  \begin{equation*}
  \lim_{n\to\infty} \int_{\Tt^d}m_0(x)(\upsilon_n(0,x)-u(0,x))\,dx = \int_{\Tt^d}m_0(x)(\upsilon(0,x)-u(0,x))\,dx.
  \end{equation*}

  Finally, \eqref{m-times-hjb.upsilon-limit} of Proposition~\ref{lem:hjb-limsup} provides the upper bound for the remaining term weighted by $m$:
  \begin{equation*}
  \limsup_{n\to\infty} \int_Q m\bigl(-(\upsilon_n)_t + H(t,x,D\upsilon_n,\mu)\bigr)\,dxdt \leq \int_Q m\bigl(-\upsilon_t^{\text{ac}} + H(t,x,D\upsilon,\mu)\bigr)\,dxdt.
  \end{equation*}

  Taking the limsup of both sides of~\eqref{weak-sol.upsilon_n} and substituting these three limits directly produces~\eqref{weak-sol-upsilon-improved}, completing the proof.
\end{proof}

\subsection{Further Enlarging Test Densities}

In the final step of the proof of Theorem~\ref{weak.is.strong}, we build upon the $BV(Q)$ test value framework established in the previous subsection to complete the decoupling of the system. Because the actual population density $m$ lies in $L^\beta(Q)$ rather than $L^\infty(Q)$, we must further enlarge the class of test densities to $L^\beta(Q)$. By employing a truncation argument and the reverse Fatou lemma, we extend the variational inequality to these unbounded densities. This allows us to use carefully tailored variations around the true solution components to rigorously extract the Hamilton--Jacobi equation almost everywhere, and subsequently recover the transport equation via convex variations of the test value functions.

\begin{lemma}\label{lem:vi-gives-hjb}
    Consider the setting of Problem~\ref{prob.mfg}. Suppose Assumptions \ref{onH.monotone}, \ref{onH.powergrowth} and \ref{onH.highest-order-p.has-no-txm} hold. Let $(m,u)$ be a solution to Problem~\ref{prob.vi} in the sense of Definition~\ref{def.weak.sol}, which satisfies~\eqref{integrability.further.m}, \eqref{integrability.further.du}, \eqref{integrability.further.ut}, \eqref{integrability.further.mdu}, and~\eqref{integrability.further.H}. Then $(m,u)$ satisfies \eqref{def:hjb-strong}.
\end{lemma}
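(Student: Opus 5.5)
We would apply Lemma~\ref{lem:upsilon-improved} with the test value function $\upsilon := u$, and then run a Minty-type argument in the density variable, after first extending the class of test densities from $L^\infty(Q;\RR^+_0)$ to perturbations of $m$. The singular part is already settled: $u_t^{\text{s}}\geq 0$ and $u(T,\cdot)\leq u_T$ hold by \eqref{u.jumps.forward} of Lemma~\ref{lem:mu-improved}. By the equivalent formulation recorded in Definition~\ref{def.strong.sol}, it therefore remains to prove $-u_t^{\text{ac}}+H(t,x,Du,m)\leq 0$ a.e.~in $Q$, with equality a.e.~on $\{m>0\}$.

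\textbf{Step 1: $\upsilon=u$ is admissible.} We check the hypotheses of Lemma~\ref{lem:upsilon-improved} for $\upsilon=u$.
\begin{itemize}
\item \eqref{Dupsilon.Lalpha} is \eqref{integrability.further.du}.
\item \eqref{upsilon.jumps.forward} is \eqref{u.jumps.forward}.
\item \eqref{mDupsilon-shift-L1} is \eqref{integrability.further.mdu}.
\item For \eqref{upsilon-HJB-upper} we take $\ell=0$. Since $u_t^{\text{s}}\geq0$, the measure $-u_t+H(t,x,Du,0)$ has nonpositive singular part. Hence its positive part is the positive part of its absolutely continuous density $-u_t^{\text{ac}}+H(t,x,Du,0)$, and \eqref{integrability.further.H} gives exactly the required $L^{\beta'}$ bound.
\end{itemize}
With $\upsilon=u$, the second line of \eqref{weak-sol-upsilon-improved} and the boundary term vanish identically, leaving
\[
0\leq \int_Q(\mu-m)\bigl(u_t^{\text{ac}}-H(t,x,Du,\mu)\bigr)\,dxdt\qquad\text{for all }\mu\in L^\infty(Q;\RR^+_0).
\]
By Remark~\ref{rmk:m-upsilon-t-integrable}, $m|u_t^{\text{ac}}|\in L^1(Q)$. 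Moreover, $m|H(t,x,Du,\mu)|\leq Cm(|Du|^\alpha+\mu^{\beta-1}+V)$ is integrable by \eqref{integrability.further.mdu} (with $(s,y)=(0,0)$), \eqref{integrability.further.m}, and \eqref{eq:assH.potential}. So every term above is finite.

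\textbf{Step 2: truncation and Minty.} Let $m_k:=\min(m,k)$. We take $\mu=m_k+\lambda\psi$ with $\psi\in L^\infty(Q)$ and $\lambda>0$ small. Two choices of $\psi$ keep $\mu\geq0$: either $\psi\geq0$, or $\psi=-\chi_{\{m>\delta\}}\phi$ with $0\leq\phi\leq 1$ and $\lambda<\delta$. The inequality from Step 1 splits as
\[
0\leq\int_Q(m_k-m)\bigl(u_t^{\text{ac}}-H(t,x,Du,\mu)\bigr)\,dxdt+\lambda\int_Q\psi\bigl(u_t^{\text{ac}}-H(t,x,Du,\mu)\bigr)\,dxdt.
\]
The first integral is supported on $\{m>k\}$, where $|m_k-m|\leq m$. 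Its integrand is therefore dominated by
\[
Cm\bigl(|u_t^{\text{ac}}|+|Du|^\alpha+m^{\beta-1}+(\lambda\|\psi\|_\infty)^{\beta-1}+V\bigr)\chi_{\{m>k\}},
\]
which tends to $0$ in $L^1$ as $k\to\infty$, for fixed $\lambda$. We let $k\to\infty$ first, using monotonicity of $H$ in $m$ (Remark~\ref{rmk:cxty}) together with dominated convergence to pass from $H(t,x,Du,m_k+\lambda\psi)$ to $H(t,x,Du,m+\lambda\psi)$ in the $\psi$-term. This yields
\[
0\leq\int_Q\psi\bigl(u_t^{\text{ac}}-H(t,x,Du,m+\lambda\psi)\bigr)\,dxdt.
\]
On $\{\psi\neq0\}$ the integrand needs a majorant that is merely integrable, not $m$-weighted. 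This comes from $u_t^{\text{ac}}\in L^1(Q)$ and $|H(t,x,Du,m+\lambda\psi)|\leq C(|Du|^\alpha+m^{\beta-1}+1+V)\in L^1(Q)$. Letting $\lambda\downarrow0$ by continuity of $H$ in $m$ and dominated convergence then gives $\int_Q\psi\bigl(u_t^{\text{ac}}-H(t,x,Du,m)\bigr)\,dxdt\geq0$.

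\textbf{Step 3: conclusion.} Taking arbitrary $\psi\geq0$ yields $-u_t^{\text{ac}}+H(t,x,Du,m)\leq0$ a.e. Taking $\psi=-\chi_{\{m>\delta\}}\phi$ yields the reverse inequality a.e.~on $\{m>\delta\}$. Letting $\delta\downarrow0$ gives equality a.e.~on $\{m>0\}$, which is \eqref{def:hjb-strong}.

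The main obstacle is Step 2: justifying the limit $k\to\infty$ under only the weighted integrability of $u_t^{\text{ac}}$ and $|Du|^\alpha$ against $m$. I expect the dominations displayed in Step 2 to close it, possibly with a reverse Fatou argument where a one-sided bound is all that is available.
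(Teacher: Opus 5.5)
Your argument is correct, but it reaches the Hamilton--Jacobi conditions by a different route than the paper. Both proofs start the same way, by testing \eqref{weak-sol-upsilon-improved} with $\upsilon=u$.

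\textbf{The paper's route.} It first extends the resulting inequality $0\leq\int_Q(\mu-m)(u_t^{\text{ac}}-H(t,x,Du,\mu))\,dxdt$ to all $\mu\in L^\beta(Q;\RR^+_0)$ with $\esssup(\mu-m)<\infty$. It does this by truncating with $\min(\mu,n)$ and applying the reverse Fatou lemma. This needs only a one-sided majorant, built from $(\mu-m)^+\in L^\infty$, $(u_t)^-\in L^{\beta'}$ and $m|Du|^\alpha\in L^1$, so it never uses $m(u_t^{\text{ac}})^+\in L^1$. It then inserts a single pointwise-constructed density $\mu$: the infimum of those $\mu_0\geq0$ at which $u_t^{\text{ac}}-H(t,x,Du,\mu_0)$ reaches half its value at $\mu_0=m$, capped at $m+1$. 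For this $\mu$, the product $(\mu-m)(u_t^{\text{ac}}-H(t,x,Du,\mu))$ is nonpositive pointwise. Comparison with the integral inequality forces it to vanish a.e. A case analysis then gives both the inequality everywhere and the equality on $\{m>0\}$ at once, with no limit in a perturbation parameter.

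\textbf{Your route.} You run the classical Minty argument: perturbations $m_k+\lambda\psi$ of the truncated density, $k\to\infty$ by dominated convergence, then $\lambda\downarrow0$ by continuity of $H$ in $m$. You use two separate families of $\psi$ and a final $\delta\downarrow0$.

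\textbf{Comparison.} Your approach is more routine and avoids the measurable infimum and the case analysis. The price is that it needs two-sided domination, in particular $m|u_t^{\text{ac}}|\in L^1(Q)$. You correctly import this from Remark~\ref{rmk:m-upsilon-t-integrable}, which is legitimate since $u$ is admissible in Lemma~\ref{lem:upsilon-improved}. The paper's extension to unbounded test densities is the same truncation device it reuses in Lemma~\ref{full-test-functions}.

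\textbf{Two remarks on your Step 2.} The dominations you display there close the argument as written, so the reverse Fatou fallback you mention is not needed. Also, the appeal to monotonicity of $H$ in $m$ there is superfluous, since the growth bounds \eqref{eq:assH.lower.simple} and \eqref{res:H.upper.simple} alone supply the majorants.
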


\begin{proof}
    We first observe that $\upsilon := u$ is a valid test function in~\eqref{weak-sol-upsilon-improved}. Indeed, \eqref{integrability.further.du} implies~\eqref{Dupsilon.Lalpha}, \eqref{u.jumps.forward} implies~\eqref{upsilon.jumps.forward}, \eqref{integrability.further.H} implies~\eqref{upsilon-HJB-upper} with $\ell = 0$, and \eqref{integrability.further.mdu} implies~\eqref{mDupsilon-shift-L1}. Therefore, 
    \begin{equation}\label{weak-sol-upsilon-equals-u}
    0 \leq \int_Q(\mu-m)(u_t^{\text{ac}} - H(t,x,Du,\mu))\,dxdt
    \end{equation}
for all $\mu\in L^{\infty}(Q; \RR^+_0)$.
The first step is to establish that 
\eqref{weak-sol-upsilon-equals-u} holds for all $\mu\in L^{\beta}(Q; \RR^+_0)$ satisfying $\esssup(\mu-m)<\infty$. 
Then, we use this result with a suitable $\mu$ to conclude~\eqref{def:hjb-strong}.

\medskip

Consider a truncation $\mu_n := \min(\mu,n)$ of $\mu$. Then, \eqref{weak-sol-upsilon-equals-u} implies
\begin{equation*}
    0 \leq \int_Q(\mu_n-m)(u_t^{\text{ac}} - H(t,x,Du,\mu_n))\,dxdt.
    \end{equation*}
    We wish to pass to the limit in this expression. To this end, we observe
\[\begin{aligned}
& (\mu_n-m)\bigl(u_t^{\text{ac}} - H(t,x,Du,\mu_n)\bigr)\\
& \quad \leq  (\mu_n-m)^+\bigl(u_t^{\text{ac}} - H(t,x,Du,\mu_n)\bigr)^+ + (\mu_n-m)^-\bigl(u_t^{\text{ac}} - H(t,x,Du,\mu_n)\bigr)^- \\
 & \quad \leq (\mu-m)^+((u_t^{\text{ac}})^+ + C\mu^{\beta-1}+CV(t,x)) + m((u_t)^- +C|Du|^\alpha +CV(t,x)) \quad\text{a.e.~in } Q.
\end{aligned}
\]
The last function is in $L^1(Q)$ because $(\mu-m)^+\in L^\infty(Q)$ by hypothesis, $u\in BV(Q)$, $\mu\in L^\beta(Q)$, $m\in L^\beta(Q)$, $V\in L^{\beta'}(Q)$, $(u_t)^-\in L^{\beta'}(Q)$ by~\eqref{integrability.further.ut}, and $m|Du|^\alpha\in L^1(Q)$ by~\eqref{integrability.further.mdu}. Therefore, the reverse Fatou lemma applies to give
   \begin{equation*}
      \begin{aligned}
        & \limsup_{n\to\infty} \int_Q(\mu_n-m)(u_t^{\text{ac}} - H(t,x,Du,\mu_n))\,dxdt \\
        & \qquad\qquad \leq \int_Q(\mu-m)(u_t^{\text{ac}} - H(t,x,Du,\mu))\,dxdt.
      \end{aligned}
   \end{equation*}
 Since the left-hand side integrals are nonnegative for every $n$, this proves that \eqref{weak-sol-upsilon-equals-u} holds for all $\mu\in L^\beta(Q;\RR^+_0)$ satisfying $\esssup(\mu-m)<\infty$.

  \medskip
Now we address the Hamilton--Jacobi equation~\eqref{def:hjb-strong}. Let $\mu$ be defined as
  \begin{equation*}
     \begin{aligned}
      \mu(t,x) = \min\biggl(m(t,x)+1,\, \inf\Bigl\{\mu_0\in\RR^+_0\colon & u_t^{\text{ac}}(t,x)-H(t,x,Du(t,x),\mu_0)\geq \\
     & \qquad \frac{u_t^{\text{ac}}(t,x)-H(t,x,Du(t,x),m(t,x))}{2} \Bigr\}\biggr),  
     \end{aligned}
  \end{equation*}
  for a.e.~$(t,x)\in Q$. 
  The set in the infimum is nonempty by~\eqref{res:H.upper.simple}, and $\mu$ is Lebesgue measurable. Moreover, $0\leq\mu\leq m+1$, so $\mu\in L^\beta(Q;\RR^+_0)$ and $\esssup(\mu-m)\leq1$; hence \eqref{weak-sol-upsilon-equals-u} holds.
 Note that $H(t,x,p,\cdot)$ is continuous and non-increasing on $\RR^+_0$. Hence, for a.e.~$(t,x)\in Q$, one of the following alternatives must hold:
  \begin{enumerate}[label=(\roman*)]
     \item $\mu(t,x) = 0$ and
     \begin{equation}\label{mu-at-zero}
      u_t^{\text{ac}}(t,x)-H(t,x,Du(t,x),\mu(t,x)) \geq \frac{u_t^{\text{ac}}(t,x)-H(t,x,Du(t,x),m(t,x))}{2}, 
     \end{equation}
     \item $0<\mu(t,x)<m(t,x)+1$ and
     \begin{equation}\label{mu-midrange}
      u_t^{\text{ac}}(t,x)-H(t,x,Du(t,x),\mu(t,x)) = \frac{u_t^{\text{ac}}(t,x)-H(t,x,Du(t,x),m(t,x))}{2},
     \end{equation}
     \item $\mu(t,x) = m(t,x)+1$ and
     \begin{equation*}
      u_t^{\text{ac}}(t,x)-H(t,x,Du(t,x),\mu(t,x)) \leq \frac{u_t^{\text{ac}}(t,x)-H(t,x,Du(t,x),m(t,x))}{2}. 
     \end{equation*}
  \end{enumerate}
   Therefore,
   \begin{enumerate}[label=(\alph*)]
      \item $\mu(t,x) > m(t,x)$ implies the first alternative cannot hold. Hence,
      \begin{equation}\label{mu-above}
       \begin{aligned}
       & u_t^{\text{ac}}(t,x)-H(t,x,Du(t,x),m(t,x)) \leq \\
       & \qquad u_t^{\text{ac}}(t,x)-H(t,x,Du(t,x),\mu(t,x)) \leq \\
       &\qquad\qquad \frac{u_t^{\text{ac}}(t,x)-H(t,x,Du(t,x),m(t,x))}{2}. 
      \end{aligned} 
      \end{equation}
      \item $\mu(t,x) \leq m(t,x)$ implies the last alternative cannot hold. Hence, 
      \begin{equation}\label{mu-under}
       \begin{aligned}
       & u_t^{\text{ac}}(t,x)-H(t,x,Du(t,x),m(t,x)) \geq \\
       & \qquad u_t^{\text{ac}}(t,x)-H(t,x,Du(t,x),\mu(t,x)) \geq \\
       &\qquad\qquad \frac{u_t^{\text{ac}}(t,x)-H(t,x,Du(t,x),m(t,x))}{2}. 
      \end{aligned} 
      \end{equation}
   \end{enumerate}
  In either case, we have
      \[(\mu-m)\bigl(u_t^{\text{ac}} -H(t,x,Du,\mu)\bigr)\leq 0.\]
      Comparing this with~\eqref{weak-sol-upsilon-equals-u}, we obtain
\begin{equation}\label{hjb-at-test}
(\mu-m)\bigl(u_t^{\text{ac}} -H(t,x,Du,\mu)\bigr) = 0 \qquad \text{a.e.~in } Q.
\end{equation}
  Now,
  \begin{enumerate}[label=(\arabic*)]
     \item If $u_t^{\text{ac}}(t,x) -H(t,x,Du(t,x),\mu(t,x)) = 0$, either \eqref{mu-above} or \eqref{mu-under} holds and yields
     \[u_t^{\text{ac}}(t,x) -H(t,x,Du(t,x),m(t,x)) = 0.\]
     \item If $u_t^{\text{ac}}(t,x) -H(t,x,Du(t,x),\mu(t,x)) \neq 0$, \eqref{hjb-at-test} gives $\mu(t,x) = m(t,x)$. Then,~\eqref{mu-midrange} does not hold, thus~\eqref{mu-at-zero} holds and yields
     \[m(t,x) = 0 \qquad \text{and}\qquad u_t^{\text{ac}}(t,x)-H(t,x,Du(t,x),m(t,x))\geq 0.\]
  \end{enumerate}
  The conclusion of these two cases, together with $u_t^{\text{s}}\geq 0$ from~\eqref{u.jumps.forward}, is~\eqref{def:hjb-strong}.
\end{proof}

\begin{lemma}\label{full-test-functions}
  Consider the setting of Problem~\ref{prob.mfg}. Suppose Assumptions \ref{onH.monotone}, \ref{onH.powergrowth} and \ref{onH.highest-order-p.has-no-txm} hold. Let $(m,u)$ be a solution to Problem~\ref{prob.vi} in the sense of Definition~\ref{def.weak.sol}, which satisfies~\eqref{integrability.further.m}, \eqref{integrability.further.du}, \eqref{integrability.further.ut}, \eqref{integrability.further.mdu}, and~\eqref{integrability.further.H}. Then, \eqref{weak-sol-upsilon-improved} holds
  for all $(\mu,\upsilon)\in L^\beta(Q; \RR^+_0)\times BV(Q)$ satisfying $\esssup (\mu-m) < \infty$ and satisfying \eqref{Dupsilon.Lalpha}, \eqref{upsilon.jumps.forward}, \eqref{upsilon-HJB-upper}, and \eqref{mDupsilon-shift-L1}.
\end{lemma}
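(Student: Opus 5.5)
The plan is to mimic the truncation argument in the first half of the proof of Lemma~\ref{lem:vi-gives-hjb}, now with a general admissible $\upsilon$ in place of $u$. Fix $(\mu,\upsilon)$ as in the statement and set $\mu_n:=\min(\mu,n)\in L^\infty(Q;\RR^+_0)$. By Lemma~\ref{lem:upsilon-improved}, inequality \eqref{weak-sol-upsilon-improved} holds for $(\mu_n,\upsilon)$. I will regroup its integrand, exactly as in the second form of \eqref{weak-sol.upsilon_n}, as
\[
m\bigl(-\upsilon_t^{\text{ac}}+H(t,x,D\upsilon,\mu_n)\bigr)
+\mu_n\bigl(u_t^{\text{ac}}-H(t,x,D\upsilon,\mu_n)+D_pH(t,x,D\upsilon,\mu_n)\cdot(D\upsilon-Du)\bigr).
\]
The boundary term does not depend on $n$. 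The goal is then to take $\limsup_{n\to\infty}$ and apply the reverse Fatou lemma. This requires an $n$-independent integrable majorant for the positive part of this integrand, together with a.e. convergence, which follows from continuity of $H$ and $D_pH$ in $m$.

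\textbf{First term.} Since $H$ is non-increasing in $m$ (Remark~\ref{rmk:cxty}), we have $H(t,x,D\upsilon,\mu_n)\leq H(t,x,D\upsilon,0)\leq C(|D\upsilon|^\alpha+V)$. Hence
\[
m\bigl(-\upsilon_t^{\text{ac}}+H(t,x,D\upsilon,\mu_n)\bigr)^+\leq m|\upsilon_t^{\text{ac}}|+Cm(|D\upsilon|^\alpha+V).
\]
The right-hand side is integrable by Remark~\ref{rmk:m-upsilon-t-integrable}, by \eqref{mDupsilon-shift-L1} at $(s,y)=(0,0)$, and by $m\in L^\beta$, $V\in L^{\beta'}$.

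\textbf{Second term.} Here I will use convexity of $H$ in $p$:
\[
-H(t,x,D\upsilon,\mu_n)+D_pH(t,x,D\upsilon,\mu_n)\cdot(D\upsilon-Du)\leq -H(t,x,Du,\mu_n).
\]
So the second term is bounded above by $\mu_n\bigl(u_t^{\text{ac}}-H(t,x,Du,\mu_n)\bigr)$. I will split $\mu_n=m+(\mu_n-m)$, noting that $(\mu_n-m)^+\leq(\mu-m)^+\in L^\infty$ and $\mu_n\leq\mu$:

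\emph{The $m$-part.} Lemma~\ref{lem:vi-gives-hjb} gives $-u_t^{\text{ac}}+H(t,x,Du,m)\leq 0$. For $\mu_n\geq m$, monotonicity in $m$ then gives $u_t^{\text{ac}}-H(t,x,Du,\mu_n)\geq 0$. Using \eqref{eq:assH.lower.simple} and \eqref{res:H.upper.simple}, the positive part of $m\bigl(u_t^{\text{ac}}-H(t,x,Du,\mu_n)\bigr)$ is dominated by
\[
m\bigl((u_t^{\text{ac}})^++C\mu^{\beta-1}+CV\bigr).
\]
This requires $m(u_t^{\text{ac}})^+\in L^1$, which I will obtain from \eqref{def:hjb-strong}: on $\{m>0\}$ we have $u_t^{\text{ac}}=H(t,x,Du,m)\leq C(|Du|^\alpha+V)$, and $m|Du|^\alpha\in L^1$ by \eqref{integrability.further.mdu}.

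\emph{The $(\mu_n-m)$-part.} This is handled exactly as in the proof of Lemma~\ref{lem:vi-gives-hjb}: its positive part is bounded by
\[
(\mu-m)^+\bigl((u_t^{\text{ac}})^++C\mu^{\beta-1}+CV\bigr)+m\bigl((u_t)^-+C|Du|^\alpha+CV\bigr).
\]

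The main obstacle is that convexity only gives an upper bound on the integrand, so the integrand itself is not sandwiched. The reverse Fatou lemma still applies once the positive parts are dominated and the integrand converges a.e. to its value at $\mu$. It yields $0\leq\limsup(\ldots)\leq$ the right-hand side of \eqref{weak-sol-upsilon-improved} evaluated at $(\mu,\upsilon)$, with that integral well defined in $(-\infty,+\infty]$.

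One more check is needed: the a.e. limit of the $\mu_n$-weighted term must be the original expression at $\mu$, not its convex upper bound. I will therefore apply reverse Fatou to the original integrand, using the convexity bound only to build the majorant of its positive part. Finally, regrouping back gives \eqref{weak-sol-upsilon-improved}; the regrouping is legitimate because $m\upsilon_t^{\text{ac}}$ and $\mu_n$-terms are finite.
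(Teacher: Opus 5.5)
The truncation $\mu_n=\min(\mu,n)$ and your bound on the $m$-weighted term are fine. The bound on the $\mu_n$-weighted term, however, rests on a convexity inequality that holds in the opposite direction. Convexity of $H(t,x,\cdot,\mu_n)$, with the tangent taken at $D\upsilon$, gives
\[
-H(t,x,D\upsilon,\mu_n)+D_pH(t,x,D\upsilon,\mu_n)\cdot(D\upsilon-Du)\;\geq\;-H(t,x,Du,\mu_n),
\]
not $\leq$. For example, take $H(p)=|p|^2/2$ and $Du=0$: the left side is $|D\upsilon|^2/2$ and the right side is $0$. An upper bound of the form you want holds only when $D_pH$ is evaluated at $Du$, which is not the expression in \eqref{weak-sol-upsilon-improved}.

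So $\mu_n\bigl(u_t^{\text{ac}}-H(t,x,Du,\mu_n)\bigr)$ is a lower bound, not an upper bound, for the second term. Your split into the $m$-part and the $(\mu_n-m)$-part therefore bounds the wrong quantity, and the reverse Fatou lemma is left without a majorant. A sign that something is off: your majorant for this term never involves $|D\upsilon|^\alpha$. Yet the term contains $\mu_n D_pH(t,x,D\upsilon,\mu_n)\cdot D\upsilon$, which is of size $\mu_n|D\upsilon|^\alpha$. This is exactly why the lemma needs both \eqref{mDupsilon-shift-L1} and $\esssup(\mu-m)<\infty$.

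The fix is to use the growth bounds instead of convexity, which is what the paper does. By \eqref{eq:assH.DpH.upper.simple}, \eqref{eq:assH.lower.simple}, \eqref{res:H.upper.simple} and Young's inequality,
\[
\mu_n\bigl(|u_t^{\text{ac}}|+|D_pH(t,x,D\upsilon,\mu_n)|(|D\upsilon|+|Du|)+|H(t,x,D\upsilon,\mu_n)|\bigr)\leq\mu\bigl(|u_t^{\text{ac}}|+C|D\upsilon|^\alpha+C|Du|^\alpha+C\mu^{\beta-1}+CV\bigr).
\]
Now write $\mu\leq m+(\mu-m)^+$ in front of $|u_t^{\text{ac}}|$, $|D\upsilon|^\alpha$ and $|Du|^\alpha$. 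Each resulting term is integrable:
\begin{itemize}
\item $(\mu-m)^+\in L^\infty$ multiplies $L^1$ functions;
\item $m|D\upsilon|^\alpha$ and $m|Du|^\alpha$ are integrable by \eqref{mDupsilon-shift-L1} and \eqref{integrability.further.mdu} at $(s,y)=(0,0)$;
\item $\mu^\beta$ and $\mu V$ are in $L^1$;
\item $m|u_t^{\text{ac}}|\in L^1$.
\end{itemize}
For the last item you need both signs. The bound on $m(u_t^{\text{ac}})^+$ comes from the equality in \eqref{def:hjb-strong}, as you noted. The bound $m(u_t^{\text{ac}})^-\leq m(u_t)^-\in L^1$ comes from \eqref{integrability.further.ut}.

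Combined with $m|H(t,x,D\upsilon,\mu_n)|\leq Cm(|D\upsilon|^\alpha+\mu^{\beta-1}+V)$, this gives a two-sided integrable bound independent of $n$. Dominated convergence then applies directly, so the one-sided obstacle you describe does not arise and the reverse Fatou step is unnecessary.
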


\begin{proof}
By Remark~\ref{rmk:m-upsilon-t-integrable}, the term
\(\int_Q m\upsilon_t^{\mathrm{ac}}\,dxdt\) is finite.
Considering a truncation $\mu_n := \min(\mu,n)$ of $\mu$ as in the proof of Lemma~\ref{lem:vi-gives-hjb}, we see that \eqref{weak-sol-upsilon-improved} gives
\begin{equation}\label{weak-sol-desired-test-functions-truncated}
\begin{aligned}
0 \leq & -\int_Qm\upsilon_t^{\text{ac}}\,dxdt + \int_Q mH(t,x,D\upsilon,\mu_n)\,dxdt\\
&+\int_Q\mu_n\bigl(u_t^{\text{ac}} + D_pH(t,x,D\upsilon,\mu_n)\cdot (D\upsilon-Du) - H(t,x,D\upsilon,\mu_n)\bigr)\,dxdt\\
&\qquad -\int_{\Tt^d}m_0(x)(\upsilon(0,x)-u(0,x))\,dx.
\end{aligned}
\end{equation}
The desired result follows once we justify passing to the limit $n\to\infty$ in \eqref{weak-sol-desired-test-functions-truncated} by the Dominated Convergence Theorem. To this end, we note that the first and the last integrals are already independent of $n$, and we carefully bound the second and the third integrands as follows. 

For the first integrand, \eqref{eq:assH.lower.simple} and \eqref{res:H.upper.simple} give
\[m|H(t,x,D\upsilon, \mu_n)|\leq Cm(|D\upsilon|^\alpha+\mu^{\beta-1}+V(t,x)),\]
which is integrable since $\mu\in L^\beta(Q)$ and $\upsilon$ satisfies \eqref{mDupsilon-shift-L1}. For the second integrand, \eqref{eq:assH.DpH.upper.simple}, \eqref{eq:assH.lower.simple}, \eqref{res:H.upper.simple}, and Young's inequality give
\[\begin{aligned}
&\mu_n\bigl(|u_t^{\text{ac}}| + |D_pH(t,x,D\upsilon,\mu_n)|(|D\upsilon|+|Du|) + |H(t,x,D\upsilon,\mu_n)|\bigr) \\
    & \qquad \leq \mu\bigl(|u_t^{\text{ac}}| + C|D\upsilon|^\alpha+C|Du|^\alpha+C\mu^{\beta-1}+CV(t,x)\bigr) \\
    & \qquad \leq C\mu(\mu^{\beta-1}+V(t,x))  + (\mu-m)^+\bigl(|u_t^{\text{ac}}| + C|D\upsilon|^\alpha+C|Du|^\alpha\bigr) \\
    & \qquad\qquad + m|u_t^{\text{ac}}| + Cm(|D\upsilon|^\alpha+|Du|^\alpha).
\end{aligned}\]
The first two terms on the right-hand side are integrable because $\mu\in L^\beta(Q)$, $V\in L^{\beta'}(Q)$, $(\mu-m)^+\in L^\infty(Q)$, $u\in BV(Q)$, and $Du,D\upsilon\in L^\alpha(Q;\RR^d)$, 
due to \eqref{integrability.further.du} and \eqref{Dupsilon.Lalpha}.
 The second term of the last line is also integrable due to \eqref{integrability.further.mdu} and \eqref{mDupsilon-shift-L1}. Thus, it only remains to show that the first term of the last line is integrable. Indeed, since the Hamilton--Jacobi equation~\eqref{def:hjb-strong} holds due to Lemma~\ref{lem:vi-gives-hjb} and $(u_t^{\text{ac}})^- = (u_t)^-$ as $u_t^{\text{s}} \geq 0$, we have
 \begin{equation}\label{mutac-L1}
 \begin{aligned}
     m|u_t^{\text{ac}}| = m(u_t^{\text{ac}} + 2(u_t^{\text{ac}})^-) & = m(H(t,x,Du,m)+2(u_t)^-) \\ & \qquad\leq Cm(|Du|^\alpha+m^{\beta-1}+V(t,x))+2m(u_t)^-.
 \end{aligned}
\end{equation}
The last expression is integrable due to~\eqref{integrability.further.m},~\eqref{integrability.further.ut}, and~\eqref{integrability.further.mdu}.
\end{proof}

\begin{lemma}\label{lem:transport-tested-upsilon.minus.u}
    Consider the setting of Problem~\ref{prob.mfg}. Suppose Assumptions \ref{onH.monotone}, \ref{onH.powergrowth} and \ref{onH.highest-order-p.has-no-txm} hold. Let $(m,u)$ be a solution to Problem~\ref{prob.vi} in the sense of Definition~\ref{def.weak.sol}, which satisfies~\eqref{integrability.further.m}, \eqref{integrability.further.du}, \eqref{integrability.further.ut}, \eqref{integrability.further.mdu}, and~\eqref{integrability.further.H}. Then
    \begin{equation}\label{transport-tested-upsilon.minus.u}
    \begin{aligned}
    & \int_Qm\bigl( -(\upsilon-u)_t^{\text{ac}} + D_pH(t,x,Du,m)\cdot D(\upsilon-u) \bigr)\,dxdt\\
    &\qquad\quad \geq \int_{\Tt^d}m_0(x)(\upsilon(0,x)-u(0,x))\,dx
    \end{aligned}
    \end{equation}
for all $\upsilon\in BV(Q)$ satisfying \eqref{Dupsilon.Lalpha}, \eqref{upsilon.jumps.forward}, \eqref{upsilon-HJB-upper},  and \eqref{mDupsilon-shift-L1}.
\end{lemma}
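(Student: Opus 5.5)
The plan is to obtain \eqref{transport-tested-upsilon.minus.u} by a convex (Minty-type) variation of the value-function argument in Lemma~\ref{full-test-functions}, keeping the density frozen at $\mu=m$. Fix $\upsilon$ as in the statement and, for $\lambda\in(0,1]$, set $\upsilon_\lambda:=u+\lambda(\upsilon-u)=(1-\lambda)u+\lambda\upsilon$. The choice $\mu=m$ is admissible in Lemma~\ref{full-test-functions}, since $m\in L^\beta(Q;\RR^+_0)$ and $\esssup(\mu-m)=0$.

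\textbf{Step 1: $\upsilon_\lambda$ is admissible.} We check that $\upsilon_\lambda$ satisfies \eqref{Dupsilon.Lalpha}, \eqref{upsilon.jumps.forward}, \eqref{upsilon-HJB-upper} and \eqref{mDupsilon-shift-L1}.
\begin{itemize}
\item For \eqref{Dupsilon.Lalpha}: $D\upsilon_\lambda=(1-\lambda)Du+\lambda D\upsilon\in L^\alpha$.
\item For \eqref{upsilon.jumps.forward}: the singular part $(\upsilon_\lambda)_t^{\text{s}}=(1-\lambda)u_t^{\text{s}}+\lambda\upsilon_t^{\text{s}}$ is nonnegative, by \eqref{u.jumps.forward} and the hypothesis on $\upsilon$. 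Likewise $\upsilon_\lambda(T,\cdot)\le u_T$ is a convex combination of two traces lying below $u_T$.
\item For \eqref{upsilon-HJB-upper}: take $\ell_\lambda:=\lambda\ell$, with $\ell$ the slack for $\upsilon$. Convexity of $H(t,x,\cdot,0)$ (Remark~\ref{rmk:cxty}) gives
\[
-(\upsilon_\lambda)_t^{\text{ac}}+H(t,x,D\upsilon_\lambda+\lambda\ell,0)\le(1-\lambda)\bigl(-u_t^{\text{ac}}+H(t,x,Du,0)\bigr)+\lambda\bigl(-\upsilon_t^{\text{ac}}+H(t,x,D\upsilon+\ell,0)\bigr).
\]
The positive part of the right-hand side lies in $L^{\beta'}$ by \eqref{integrability.further.H} and the hypothesis on $\upsilon$. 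Here $(-u_t^{\text{ac}}+H(t,x,Du,0))^+\le(-u_t+H(t,x,Du,0))^+$ because $u_t^{\text{s}}\ge0$.
\item For \eqref{mDupsilon-shift-L1}: $|D\upsilon_\lambda|^\alpha\le C(|Du|^\alpha+|D\upsilon|^\alpha)$, combined with \eqref{integrability.further.mdu} and the corresponding assumption on $\upsilon$.
\end{itemize}

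\textbf{Step 2: apply Lemma~\ref{full-test-functions} and divide by $\lambda$.} With $(\mu,\upsilon)=(m,\upsilon_\lambda)$ the first integral of \eqref{weak-sol-upsilon-improved} vanishes. Since $u_t^{\text{ac}}-(\upsilon_\lambda)_t^{\text{ac}}=-\lambda(\upsilon-u)_t^{\text{ac}}$ and $D\upsilon_\lambda-Du=\lambda D(\upsilon-u)$, we obtain
\[
0\le\lambda\Bigl[\int_Q m\bigl(-(\upsilon-u)_t^{\text{ac}}+D_pH(t,x,D\upsilon_\lambda,m)\cdot D(\upsilon-u)\bigr)\,dxdt-\int_{\Tt^d}m_0(\upsilon(0,x)-u(0,x))\,dx\Bigr].
\]
Every term here is finite. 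For $m|u_t^{\text{ac}}|\in L^1$ we use \eqref{mutac-L1}, and for $m|\upsilon_t^{\text{ac}}|\in L^1$ we use Remark~\ref{rmk:m-upsilon-t-integrable}. Dividing by $\lambda>0$ leaves the bracket nonnegative.

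\textbf{Step 3: let $\lambda\downarrow0$.} This is the only delicate step. Pointwise, $D\upsilon_\lambda\to Du$, so $D_pH(t,x,D\upsilon_\lambda,m)\to D_pH(t,x,Du,m)$ a.e., by continuity of $D_pH$ in $p$. For a $\lambda$-independent majorant, \eqref{eq:assH.DpH.upper.simple} gives
\[
m\,|D_pH(t,x,D\upsilon_\lambda,m)|\,|D(\upsilon-u)|\le Cm\bigl(|Du|^{\alpha-1}+|D\upsilon|^{\alpha-1}+m^{\frac{(\alpha-1)(\beta-1)}{\alpha}}+V^{\frac{\alpha-1}{\alpha}}\bigr)\bigl(|Du|+|D\upsilon|\bigr).
\]
By Young's inequality this is bounded by $Cm\bigl(|Du|^\alpha+|D\upsilon|^\alpha+m^{\beta-1}+V\bigr)$. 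That function is integrable by \eqref{integrability.further.mdu}, \eqref{mDupsilon-shift-L1}, \eqref{integrability.further.m} and $V\in L^{\beta'}$. Dominated convergence then yields \eqref{transport-tested-upsilon.minus.u}.

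I expect no genuine obstacle beyond checking admissibility in Step 1. The most care is needed there for the HJB upper bound, where the slack must be scaled as $\lambda\ell$ so that convexity applies.
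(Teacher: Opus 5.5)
Your proposal is correct and follows the paper's proof essentially step for step. The paper likewise applies Lemma~\ref{full-test-functions} with $\mu=m$ to the convex combination $(1-\lambda)u+\lambda\upsilon$, with the same admissibility checks, including the scaled slack $\lambda\ell$ and convexity of $H(t,x,\cdot,0)$ for \eqref{upsilon-HJB-upper}. It then divides by $\lambda$ and lets $\lambda\to0^+$ by dominated convergence, using the same majorant built from \eqref{eq:assH.DpH.upper.simple}, Young's inequality, \eqref{integrability.further.mdu} and \eqref{mDupsilon-shift-L1} at $(s,y)=(0,0)$.
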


\begin{proof}
Due to Lemma~\ref{full-test-functions}, we use \eqref{weak-sol-upsilon-improved} with $\mu = m$ to get
\begin{equation}\label{transport-tested-pre.conv.comb}
    \begin{aligned}
    0 \leq & \int_Qm\bigl( -(\upsilon-u)_t^{\text{ac}} + D_pH(t,x,D\upsilon,m)\cdot D(\upsilon-u) \bigr)\,dxdt\\
    &\qquad -\int_{\Tt^d}m_0(x)(\upsilon(0,x)-u(0,x))\,dx.
    \end{aligned}
    \end{equation}
Here, we note that $\upsilon$ in \eqref{transport-tested-pre.conv.comb} can be replaced by $(1-\lambda)u + \lambda\upsilon$ for $0\leq\lambda\leq 1$. Indeed, \eqref{Dupsilon.Lalpha}, \eqref{upsilon.jumps.forward}, and \eqref{mDupsilon-shift-L1} are preserved under convex combinations. Moreover, if $\ell\in L^\infty(Q; \RR^d)$ is as in \eqref{upsilon-HJB-upper} for $\upsilon$, then $\lambda\ell$ is admissible for $(1-\lambda)u+\lambda\upsilon$, because
\[\begin{aligned}
& -\bigl((1-\lambda)u+\lambda\upsilon\bigr)_t^{\text{ac}}
+H\bigl(t,x,(1-\lambda)Du+\lambda(D\upsilon+\ell),0\bigr) \\
& \qquad \leq (1-\lambda)\bigl(-u_t^{\text{ac}}+H(t,x,Du,0)\bigr)
+\lambda\bigl(-\upsilon_t^{\text{ac}}+H(t,x,D\upsilon+\ell,0)\bigr) \qquad \text{a.e.~in } Q,
\end{aligned}
\]
by the convexity of $H(t,x,\cdot,0)$ as in Remark~\ref{rmk:cxty}. Consequently, we have
    \begin{equation*}
    \begin{aligned}
    0 & \leq \lambda\biggl(\int_Qm\bigl( -(\upsilon-u)_t^{\text{ac}} + D_pH(t,x,(1-\lambda)Du+\lambda D\upsilon,m)\cdot D(\upsilon-u) \bigr)\,dxdt\\
    & \qquad\qquad -\int_{\Tt^d}m_0(x)(\upsilon(0,x)-u(0,x))\,dx\biggr),
    \end{aligned}
    \end{equation*}
        for $0\leq\lambda\leq 1$. 
        Dividing by \(\lambda\) for \(\lambda>0\), we pass to the limit
\(\lambda\to0^+\). The pointwise convergence follows from the continuity of
\(p\mapsto D_pH(t,x,p,m(t,x))\) for a.e.~\((t,x)\in Q\). Moreover, domination
follows from \eqref{eq:assH.DpH.upper.simple}, combined with
\eqref{integrability.further.m}, \eqref{eq:assH.potential}, and Young's
inequality, using \eqref{integrability.further.mdu} and
\eqref{mDupsilon-shift-L1} at \((s,y)=(0,0)\). Hence the Dominated Convergence
Theorem gives \eqref{transport-tested-upsilon.minus.u}.
\end{proof}

\begin{proof}[Proof of Theorem~\ref{weak.is.strong}]
We combine the preceding results to check the requirements of Definition~\ref{def.strong.sol}. 
We begin by verifying the integrability requirements in \eqref{basic-integrability}.
The condition \(Du\in L^1(Q;\RR^d)\) follows from
\eqref{integrability.further.du}, since \(\alpha>1\) and \(Q\) has finite
measure. The growth bound \eqref{eq:assH.DpH.upper.simple}, together with
\eqref{integrability.further.m}, \eqref{integrability.further.mdu}, and
\eqref{eq:assH.potential}, implies
\[
mD_pH(t,x,Du,m)\in L^1(Q;\RR^d)
\]
and
\[
mD_pH(t,x,Du,m)\cdot Du\in L^1(Q).
\]
Finally, the estimate \(m|u_t^{\mathrm{ac}}|\in L^1(Q)\), given by \eqref{mutac-L1} in the
proof of Lemma~\ref{full-test-functions}, gives
\[
m\bigl(u_t^{\mathrm{ac}}-D_pH(t,x,Du,m)\cdot Du\bigr)\in L^1(Q).
\]
Thus \eqref{basic-integrability} holds.

Lemma~\ref{lem:vi-gives-hjb} shows that \eqref{def:hjb-strong} holds. Moreover,
\eqref{u.jumps.forward} in Lemma~\ref{lem:mu-improved} gives
\(u_t^{\mathrm{s}}\ge0\) and \(u(T,\cdot)\leq u_T\).

To obtain \eqref{def:transport-strong}, fix
\(\varphi\in C^1(\bar Q)\) with \(\varphi(T,\cdot)=0\). We claim that
\(\upsilon:=u\pm\varphi\) is an admissible test function in
\eqref{transport-tested-upsilon.minus.u}. Indeed,
\eqref{integrability.further.du} implies \eqref{Dupsilon.Lalpha}, while
\(u_t^{\mathrm{s}}\ge0\), \(u(T,\cdot)\leq u_T\), and \(\varphi(T,\cdot)=0\) imply
\eqref{upsilon.jumps.forward}. Moreover, \eqref{integrability.further.H}
implies \eqref{upsilon-HJB-upper} with \(\ell:=\mp D\varphi\), because
\[
-\upsilon_t^{\mathrm{ac}}+H(t,x,D\upsilon+\ell,0)
=
-u_t^{\mathrm{ac}}\mp\varphi_t+H(t,x,Du,0),
\]
and \(\varphi_t\in L^\infty(Q)\). Finally, \eqref{integrability.further.mdu}
implies \eqref{mDupsilon-shift-L1}, since \(D\varphi\) is bounded.

Applying \eqref{transport-tested-upsilon.minus.u} with
\(\upsilon=u+\varphi\) gives
\[
\int_Q m\bigl(-\varphi_t+D_pH(t,x,Du,m)\cdot D\varphi\bigr)\,dxdt
\ge
\int_{\Tt^d}m_0(x)\varphi(0,x)\,dx.
\]
Applying it with \(\upsilon=u-\varphi\) gives the reverse inequality. Hence
\eqref{def:transport-strong} follows.

Finally, choose \(\upsilon(t,x)=u_T(x)\). This function is admissible in
\eqref{transport-tested-upsilon.minus.u}: indeed, \(D\upsilon=Du_T\in
L^\infty(Q;\RR^d)\), \(\upsilon_t=0\), \(\upsilon(T,\cdot)=u_T\), and \eqref{res:H.upper.simple} implies 
\[
\bigl(H(t,x,Du_T,0)\bigr)^+\in L^{\beta'}(Q),
\]
while \eqref{mDupsilon-shift-L1} is immediate from \(m\in L^\beta(Q)\) and
\(Du_T\in L^\infty(Q;\RR^d)\). Therefore \eqref{transport-tested-upsilon.minus.u}
with this choice of \(\upsilon\) yields
\[
\int_Q m\bigl(u_t^{\mathrm{ac}}
+D_pH(t,x,Du,m)\cdot(Du_T-Du)\bigr)\,dxdt
\ge
\int_{\Tt^d}m_0(x)(u_T(x)-u(0,x))\,dx,
\]
which is precisely \eqref{def:singular-and-terminal}.
\end{proof}

\section{Proof of Theorem~\ref{maximal-u-exists}}
\label{sec:maximal-u}

In this section, we recall the notations $\mathcal{U}(m)$ and $\mathcal{S}(m)$ introduced in Definition~\ref{def:u-classes-at-given-m}. 

\subsection{Regularity and Stability Properties}

\begin{proposition}\label{prop:uinitial.and.m.bound:mfgsolution}
    Consider the setting of Problem~\ref{prob.mfg}. Suppose Assumptions~\ref{onH.monotone}, \ref{onH.powergrowth}, and \ref{onH.highest-order-p.has-no-txm} hold. Let $m\in L^\beta(Q;\RR^+_0)$. Then any $u\in \mathcal{U}(m)$ satisfies 
    \begin{align}
        \int_Q ((u_t)^-)^{\beta'}\,dxdt & \leq C, \label{ut.minus-Lbeta':mfgsolution} \\
        \int_{\Tt^d} u^+(0,x)^{\beta'}\,dx & \leq C, \label{uplus-initial:mfgsolution} \\
        \int_{\Tt^d} m_0(x)u^-(0,x)\,dx & \leq C, \label{uminus-initial-m0:mfgsolution}  \\
        \int_Q m|Du|^\alpha \,dxdt & \leq C, \label{mDualpha-basic:mfgsolution}
    \end{align}
    for a positive constant $C$ independent of $u$.
\end{proposition}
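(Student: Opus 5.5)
The plan is to redo the three-step argument of Proposition~\ref{prop:uinitial.and.m.bound} directly on the MFG-solution $(m,u)$, with $u\in\mathcal{U}(m)$. Every use of the regularized equations will be replaced by the corresponding conditions of Definition~\ref{def.strong.sol}. The constant $C$ is allowed to depend on the data and on $\|m\|_{L^\beta(Q)}$, but not on $u$. This is natural, since here $m$ is fixed and we are not recovering the bound~\eqref{m.bound}.

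\textbf{Step 1 (pointwise bounds from the Hamilton--Jacobi inequality).} By~\eqref{def:hjb-strong}, $u_t^{\text{s}}\geq0$ and $-u_t^{\text{ac}}+H(t,x,Du,m)\leq0$ a.e. Combined with~\eqref{eq:assH.lower.simple}, this gives
\[
(u_t)^-=(u_t^{\text{ac}})^-\leq C(m^{\beta-1}+V).
\]
Raising to the power $\beta'$ and integrating yields~\eqref{ut.minus-Lbeta':mfgsolution}. Next, we integrate the distributional inequality $u_t\geq -C(m^{\beta-1}+V)$ in time, together with the terminal inequality $u(T,\cdot)\leq u_T$. This should give
\[
u(0,x)\leq u_T(x)+C\int_0^T(m^{\beta-1}+V)\,dt \qquad\text{for a.e. } x\in\Tt^d.
\]
Taking positive parts, raising to the power $\beta'$, applying Jensen, and integrating over $\Tt^d$ then gives~\eqref{uplus-initial:mfgsolution}.

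\textbf{Step 2 (energy identity from condition~\eqref{def:singular-and-terminal}).} On $\{m>0\}$ we have $u_t^{\text{ac}}=H(t,x,Du,m)$. Substituting this into~\eqref{def:singular-and-terminal} gives
\[
\int_Q m\bigl(D_pH\cdot Du-H\bigr)\,dxdt-\int_Q mD_pH\cdot Du_T\,dxdt\leq\int_{\Tt^d}m_0u(0,x)\,dx-\int_{\Tt^d}m_0u_T\,dx .
\]
All terms are finite by~\eqref{basic-integrability}. The left-hand side is bounded below as in~\eqref{first-estimate-crude}. We use~\eqref{dphdotp-minus-h.lower.bound} for the first term, and~\eqref{eq:assH.DpH.upper.simple} with Young's inequality to absorb the $Du_T$ term into $\frac1C\int_Q m|Du|^\alpha$. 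The remaining terms are controlled by $\|m\|_{L^\beta}$ and $\|V\|_{L^{\beta'}}$. The outcome is
\[
\int_{\Tt^d}m_0u^-(0,x)\,dx+\frac1C\int_Q m|Du|^\alpha\,dxdt\leq \int_{\Tt^d}m_0u^+(0,x)\,dx+C .
\]
Since $m_0$ is bounded, the right-hand side is controlled by Step~1, and this yields~\eqref{uminus-initial-m0:mfgsolution} and~\eqref{mDualpha-basic:mfgsolution}.

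\textbf{Main obstacle.} The hard part is the rigorous time integration in Step~1, since $u$ is only $BV$ and $u(0,\cdot)$, $u(T,\cdot)$ are interior traces. I would handle it by setting $G(t,x):=C\int_t^T(m^{\beta-1}+V)\,ds$, which is absolutely continuous in $t$ for a.e.~$x$. The function $w:=u-G$ then has $w_t\geq0$ as a measure, because $u_t^{\text{s}}\geq0$ and $u_t^{\text{ac}}\geq -C(m^{\beta-1}+V)$. Testing $w_t\geq0$ against $\phi(x)\chi_{(0,T)}(t)$, approximated by smooth cutoffs in time, and using the trace (Gauss--Green) formula for $BV(Q)$ gives
\[
\int_{\Tt^d}\phi\,(w(T,\cdot)-w(0,\cdot))\,dx\geq0 \qquad\text{for all } \phi\geq0.
\]
Hence $w(0,\cdot)\leq w(T,\cdot)$ a.e., that is, $u(0,\cdot)\leq u(T,\cdot)+G(0,\cdot)\leq u_T+G(0,\cdot)$. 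The remaining point is to justify that the trace of $G$ is the pointwise value $G(0,x)$. This holds because $G\in W^{1,1}$ in time with $L^1$ slices.
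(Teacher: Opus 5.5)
Your proposal is correct and follows essentially the same route as the paper. The Hamilton--Jacobi inequality with \eqref{eq:assH.lower.simple} gives \eqref{ut.minus-Lbeta':mfgsolution}, and time integration together with $u(T,\cdot)\le u_T$ gives \eqref{uplus-initial:mfgsolution}; inserting the equality on $\{m>0\}$ into \eqref{def:singular-and-terminal} and using \eqref{dphdotp-minus-h.lower.bound} and Young's inequality then gives the last two bounds.

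The one difference is cosmetic. Rather than introduce $G$, the paper pairs the measure inequality $-u_t\le C(m^{\beta-1}+V)$ with a nonnegative $\varphi(x)$ over $Q$ and applies the trace Green formula to $u$ alone. This avoids the issue that $u-G$ need not lie in $BV(Q)$, which is why your separate treatment of $G$ was needed.
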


\begin{proof}
Combining the inequality condition in \eqref{def:hjb-strong} with the lower bound \eqref{eq:assH.lower.simple}, we get
\begin{equation}\label{hjb-estimated-wo-Du:mfgsolution}
    -u_t \leq  C(m^{\beta-1} + V(t,x))
\end{equation}
in the sense of measures. Taking the positive part of the left-hand side, while noting that the right-hand side is in $L^{\beta'}(Q;\RR^+_0)$ and independent of $u$, we obtain \eqref{ut.minus-Lbeta':mfgsolution}.

Next, we multiply \eqref{hjb-estimated-wo-Du:mfgsolution} by a test function $\varphi\in C^\infty(\Tt^d; \RR^+_0)$ and integrate over $Q$ to get
\begin{equation*}
    -\int_Q \varphi(x)\,du_t \leq \int_{\Tt^d} \varphi(x) \left(C\int_0^T (m^{\beta-1} + V(t,x))\,dt\right)\,dx.
\end{equation*}
Applying the trace Green formula to the left-hand side, we obtain
\begin{equation*}
     \int_{\Tt^d} \varphi(x) \bigl(u(0,x)-u(T,x)\bigr)\,dx \leq 
    \int_{\Tt^d} \varphi(x) \left(C\int_0^T (m^{\beta-1} + V(t,x))\,dt\right)\,dx.
\end{equation*}
Since this holds for all $\varphi\in C^\infty(\Tt^d; \RR^+_0)$ with the same value of $C$, we must have
\begin{equation*}
        u(0,x)-u(T,x) \leq C\int_0^T (m^{\beta-1} + V(t,x))\,dt \qquad \text{a.e. on }\Tt^d.
\end{equation*}
  Moving the second term of the left-hand side to the right and using the inequality $u(T,x)\leq u_T(x)$, we find
  \begin{equation*}
      u^+(0,x)\leq |u_T(x)| + C\int_0^T(m^{\beta-1}+V(t,x))\,dt \qquad\text{a.e. on } \Tt^d.
  \end{equation*}
  Now, the right-hand side is in $L^{\beta'}(\Tt^d)$ and independent of $u$, hence we obtain \eqref{uplus-initial:mfgsolution}.

    Finally, plugging the equality condition of the Hamilton--Jacobi equation~\eqref{def:hjb-strong} into~\eqref{def:singular-and-terminal}, then rearranging, we get
    \begin{equation*}
     \begin{aligned}
       &\int_Q m\bigl(-H(t,x,Du,m) + D_pH(t,x,Du,m)\cdot Du\bigr) \,dxdt \\
       & \enskip \leq \int_{\Tt^d} m_0(x)u(0,x)\,dx + \int_{\Tt^d} m_0(x)|u_T(x)|\,dx + \int_Q m|D_pH(t,x,Du,m)||Du_T| \,dxdt.
     \end{aligned}
  \end{equation*}
  Estimating the left-hand side with \eqref{dphdotp-minus-h.lower.bound} and the right-hand side with \eqref{eq:assH.DpH.upper.simple}, then absorbing the terms that do not depend on $u$ into the constant $C$, we find
  \begin{equation*}
      \frac{1}{C}\int_Q m|Du|^\alpha \,dxdt  \leq \int_{\Tt^d} m_0(x)u(0,x)\,dx  + C\int_Q m|Du|^{\alpha-1} \,dxdt + C,
  \end{equation*}
  which we rearrange to obtain
  \begin{equation*}
  \begin{aligned}
      & \int_{\Tt^d} m_0(x)u^-(0,x)\,dx + \frac{1}{C}\int_Q m|Du|^\alpha \,dxdt \\
      & \qquad\qquad \leq \int_{\Tt^d} m_0(x)u^+(0,x)\,dx  + C\int_Q m|Du|^{\alpha-1} \,dxdt + C.
      \end{aligned}
  \end{equation*}
  The first term on the right-hand side is bounded due to~\eqref{uplus-initial:mfgsolution} and the middle term can be absorbed into the left-hand side with Young's inequality, yielding \eqref{uminus-initial-m0:mfgsolution} and~\eqref{mDualpha-basic:mfgsolution}.
\end{proof}

\begin{proposition}\label{prop:u-derivatives-bound:mfgsolution}
    Consider the setting of Problem~\ref{prob.mfg}. Suppose Assumptions~\ref{onH.monotone}, \ref{onH.powergrowth}, and \ref{onH.highest-order-p.has-no-txm} hold. Let $m\in L^\beta(Q;\RR^+_0)$. Then any $u\in \mathcal{U}(m)$ satisfies
    \begin{align}
        \int_Q d(u_t)^+ & \leq C + \int_{\Tt^d} u^-(0,x)\,dx, \label{ut.plus-bound:mfgsolution} \\
        \int_Q |Du|^\alpha \,dxdt & \leq C + C\int_{\Tt^d} u^-(0,x)\,dx, \label{Du-Lalpha:mfgsolution} \\
        \sup_{s\in (0,T]} \int_{\Tt^d} u^-(s^-,x)\,dx & \leq C + \int_{\Tt^d} u^-(0,x)\,dx, \label{u-lower-uniform-in-time.in-terms-of-initial:mfgsolution}
    \end{align}
    for a positive constant $C$ independent of $u$. Here, $u(s^-,\cdot)$ refers to the trace of $u$ on the hypersurface $\{s\}\times\Tt^d$ from the side of $(0,s)\times \Tt^d$.
\end{proposition}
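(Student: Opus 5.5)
The plan is to derive all three bounds from two facts. The first is the bound \eqref{ut.minus-Lbeta':mfgsolution} of Proposition~\ref{prop:uinitial.and.m.bound:mfgsolution}. Since $Q$ has finite measure, it gives $\int_Q (u_t)^-\,dxdt\leq C$ with $C$ independent of $u$. The second is the trace Green formula for $BV$ functions on subcylinders $(0,s)\times\Tt^d$. The argument is a bookkeeping of the total mass of the measure $u_t$, in the spirit of Proposition~\ref{prop:Du.bound} for the regularized problem. The differences are that $u$ is only $BV$, and that we use the terminal inequality $u(T,\cdot)\leq u_T$ from Definition~\ref{def.strong.sol} instead of an equality.

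\textbf{Bound \eqref{ut.plus-bound:mfgsolution}.} We apply the trace Green formula with the test function $\varphi\equiv 1$ on $Q$. This gives
\[
\int_Q du_t=\int_{\Tt^d}\bigl(u(T,x)-u(0,x)\bigr)\,dx\leq \int_{\Tt^d}|u_T|\,dx+\int_{\Tt^d}u^-(0,x)\,dx,
\]
where we used $u(T,\cdot)\leq u_T$ and $-u(0,\cdot)\leq u^-(0,\cdot)$. Then $\int_Q d(u_t)^+=\int_Q du_t+\int_Q (u_t)^-\,dxdt$, and the last term is bounded by $C$. (The negative part $(u_t)^-$ is absolutely continuous because $u_t^{\text{s}}\geq0$.)

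\textbf{Bound \eqref{Du-Lalpha:mfgsolution}.} The inequality part of \eqref{def:hjb-strong} gives $-u_t^{\text{ac}}+H(t,x,Du,m)\leq0$ a.e. Combined with the lower bound \eqref{eq:assH.lower.simple}, this yields
\[
\tfrac1C|Du|^\alpha\leq (u_t^{\text{ac}})^+ + C\bigl(m^{\beta-1}+V\bigr)\quad\text{a.e. in } Q.
\]
We have $(u_t^{\text{ac}})^+\,dxdt\leq d(u_t)^+$ as measures. Integrating and using \eqref{ut.plus-bound:mfgsolution}, together with $m^{\beta-1},V\in L^{\beta'}(Q)\subset L^1(Q)$, gives the claim. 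The constant depends only on $\|m\|_{L^\beta}$ and the data, so it is independent of $u$.

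\textbf{Bound \eqref{u-lower-uniform-in-time.in-terms-of-initial:mfgsolution}.} Fix $s\in(0,T]$ and $\varphi\in C^\infty(\Tt^d;\RR^+_0)$. The trace Green formula on $(0,s)\times\Tt^d$, using the interior trace from below at $t=s$, gives
\[
\int_{\Tt^d}\varphi\bigl(u(s^-,x)-u(0,x)\bigr)\,dx=\int_{(0,s)\times\Tt^d}\varphi\,du_t\geq-\int_{\Tt^d}\varphi(x)\int_0^T(u_t)^-(t,x)\,dt\,dx.
\]
Since $\varphi\geq0$ is arbitrary, we conclude, exactly as in Proposition~\ref{prop:uinitial.and.m.bound:mfgsolution}, that $u(s^-,x)\geq u(0,x)-\int_0^T (u_t)^-\,dt$ for a.e. $x$. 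Hence $u^-(s^-,x)\leq u^-(0,x)+\int_0^T(u_t)^-(t,x)\,dt$. Integrating over $\Tt^d$ and using the $L^1$ bound on $(u_t)^-$ yields the estimate uniformly in $s$.

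\textbf{Main obstacle.} The only delicate point is justifying the Green formula on the subcylinder with the left trace $u(s^-,\cdot)$ for every $s\in(0,T]$, including the time slices where $u_t^{\text{s}}$ charges $\{s\}\times\Tt^d$. I would handle this by applying the standard $BV$ trace theorem on the Lipschitz domain $(0,s)\times\Tt^d$. The measure $du_t$ restricted to the open set $(0,s)\times\Tt^d$ excludes the slice itself, which is consistent with taking the one-sided trace from below.
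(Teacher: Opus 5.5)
Your proposal is correct and follows essentially the same route as the paper. The first bound comes from the total-mass identity via the trace Green formula combined with \eqref{ut.minus-Lbeta':mfgsolution}, the second from the Hamilton--Jacobi inequality together with \eqref{eq:assH.lower.simple}, and the third from testing against $\varphi(x)\geq0$ on $(0,s)\times\Tt^d$ with the trace from below. The only cosmetic difference is in the third bound: you control $-u_t$ by $(u_t)^-$ and its $L^1$ bound, while the paper uses the pointwise bound $-u_t\leq C(m^{\beta-1}+V)$ directly, which is the same estimate taken one step earlier.
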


\begin{proof}
 First we observe
\[\begin{aligned}
    \int_Q d(u_t)^+ & = \int_Q du_t + \int_Q (u_t)^-\,dxdt \\
    & = \int_{\Tt^d} (u(T,x) - u(0,x))\,dx + \int_Q (u_t)^-\,dxdt \\
    & \leq \int_{\Tt^d} |u_T(x)|\,dx + \int_{\Tt^d} u^-(0,x)\,dx + \int_Q (u_t)^-\,dxdt,
\end{aligned}
\]
hence \eqref{ut.plus-bound:mfgsolution} follows from \eqref{ut.minus-Lbeta':mfgsolution}. On the other hand, the inequality condition of \eqref{def:hjb-strong} and the bound \eqref{eq:assH.lower.simple} imply
\begin{equation*}
    |Du|^\alpha \leq C(m^{\beta-1}+V(t,x) + (u_t)^+) 
\end{equation*}
in the sense of measures. We integrate the last expression over $Q$ and conclude \eqref{Du-Lalpha:mfgsolution} from \eqref{ut.plus-bound:mfgsolution}.

Next, we multiply \eqref{hjb-estimated-wo-Du:mfgsolution} by a test function $\varphi\in C^\infty(\Tt^d;\RR^+_0)$ as in the proof of Proposition~\ref{prop:uinitial.and.m.bound:mfgsolution} and integrate over $(0,s)\times\Tt^d$ to get
\begin{equation*}
    -\int_0^s\int_{\Tt^d} \varphi(x)\,du_t \leq \int_{\Tt^d} \varphi(x) \left(C\int_0^s (m^{\beta-1} + V(t,x))\,dt\right)\,dx.
\end{equation*}
Applying the trace Green formula to the left-hand side and expanding the bounds of time integration to $[0,T]$ on the right-hand side, we obtain
\begin{equation*}
     \int_{\Tt^d} \varphi(x) \bigl(u(0,x)-u(s^-,x)\bigr)\,dx \leq 
    \int_{\Tt^d} \varphi(x) \left(C\int_0^T (m^{\beta-1} + V(t,x))\,dt\right)\,dx.
\end{equation*}
Since this holds for all $\varphi\in C^\infty(\Tt^d; \RR^+_0)$ with the same value of $C$, we must have
\begin{equation*}
        u(0,x)-u(s^-,x) \leq C\int_0^T (m^{\beta-1} + V(t,x))\,dt \qquad \text{a.e. on } \Tt^d.
\end{equation*}
We move the first term of the left-hand side to the right and take the positive part to obtain
\begin{equation*}
    u^-(s^-,x) \leq u^-(0,x) + C\int_0^T (m^{\beta-1} + V(t,x))\,dt \qquad \text{a.e. on } \Tt^d.
\end{equation*}
Now we integrate the last expression over $\Tt^d$ and conclude \eqref{u-lower-uniform-in-time.in-terms-of-initial:mfgsolution}.
\end{proof}

\begin{proposition}\label{cross-estimate:mfgsolution}
    Consider the setting of Problem~\ref{prob.mfg}. Suppose Assumptions~\ref{onH.monotone}, \ref{onH.powergrowth},  
    \ref{onH.remedy-bound}, and
    \ref{onH.highest-order-p.has-no-txm} hold. Let $m\in L^\beta(Q;\RR^+_0)$. Then
    \begin{equation}\label{mDu-shift:mfgsolution}
        \int_0^{T-s}\int_{\Tt^d} m(t,x)|Du(t+s,x+y)|^\alpha\,dxdt \leq C + C\int_{\Tt^d} u^-(0,x)\,dx
    \end{equation}
    for a positive constant $C$ independent of $u\in \mathcal{U}(m)$, $s\in [0,T)$, and $y\in\Tt^d$.
\end{proposition}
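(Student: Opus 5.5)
The plan is to repeat the argument of Proposition~\ref{cross-estimate} at the level of MFG-solutions. The regularized transport identity will be replaced by Lemma~\ref{lem:transport-tested-upsilon.minus.u}, which may be invoked because $(m,u)$ is a VI-solution with all the required integrability. Fix $u\in\mathcal{U}(m)$, $s\in[0,T)$ and $y\in\Tt^d$, and define the shifted test function
\[
\upsilon(t,x):=\begin{cases} u(t+s,x+y)-u_T(x+y)+u_T(x), & 0<t<T-s,\\ u_T(x), & T-s<t<T.\end{cases}
\]
The correction $u_T(x)-u_T(x+y)$ gives the exact terminal value $\upsilon(T,\cdot)=u_T$, and it changes $D\upsilon$ only by a bounded field.

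\textbf{Step 1: admissibility of $\upsilon$.} We check the hypotheses of Lemma~\ref{lem:transport-tested-upsilon.minus.u} one by one.
\begin{itemize}
\item Condition \eqref{Dupsilon.Lalpha} holds because $Du\in L^\alpha$ by \eqref{integrability.further.du}.
\item For \eqref{upsilon.jumps.forward}, on $(0,T-s)$ the singular part of $\upsilon_t$ is a shift of $u_t^{\text{s}}\ge0$. The jump at $t=T-s$ equals $u_T(x+y)-u(T,x+y)\ge0$ by the terminal inequality, and the terminal trace equals $u_T$.
\item For \eqref{upsilon-HJB-upper}, take $\ell(t,x):=Du_T(x+y)-Du_T(x)$ on $t<T-s$. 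Then $-\upsilon_t^{\text{ac}}+H(t,x,D\upsilon+\ell,0)$ becomes $-u_t^{\text{ac}}+H(t,x,Du,0)$ with $u_t^{\text{ac}}$ and $Du$ evaluated at the shifted point $(t+s,x+y)$, while $H$ keeps the unshifted arguments $(t,x)$. Assumption~\ref{onH.remedy-bound} allows us to replace $H(t,x,\cdot,0)$ by $H(t+s,x+y,\cdot,0)$ at the cost of $C(|Du^{[s,y]}|^{\alpha(\beta-1)/\beta}+V+V^{[s,y]})$. This error lies in $L^{\beta'}$ because $|Du|^\alpha\in L^1$. The main term lies in $L^{\beta'}$ by \eqref{integrability.further.H} applied to $u$. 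This is the reason Assumption~\ref{onH.remedy-bound} appears in the statement.
\item For \eqref{mDupsilon-shift-L1}, a shift of $\upsilon$ is, up to bounded terms, a shift of $u$. Hence \eqref{mDupsilon-shift-L1} follows qualitatively from \eqref{integrability.further.mdu} for $u$.
\end{itemize}
Only this qualitative finiteness is used here, so the argument is not circular: the quantitative bound comes in Step~2.

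\textbf{Step 2: the estimate.} Apply \eqref{transport-tested-upsilon.minus.u} and rearrange it as
\[
\int_Q m\,\upsilon_t^{\text{ac}} \le \int_Q m\,u_t^{\text{ac}} + \int_Q m D_pH(t,x,Du,m)\cdot(D\upsilon-Du) - \int_{\Tt^d} m_0\bigl(\upsilon(0,\cdot)-u(0,\cdot)\bigr).
\]
We then bound each side.
\begin{itemize}
\item Lower bound for the left side. On $t<T-s$, the inequality part of \eqref{def:hjb-strong} at the shifted point together with \eqref{eq:assH.lower.simple} gives $\upsilon_t^{\text{ac}}\ge \frac1C|Du^{[s,y]}|^\alpha - C\bigl((m^{[s,y]})^{\beta-1}+V^{[s,y]}\bigr)$. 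On $t>T-s$ we have $\upsilon_t^{\text{ac}}=0$. Multiplying by $m$ and using H\"older's inequality with $m\in L^\beta$ and $V\in L^{\beta'}$ yields the lower bound $\frac1C\int m|Du^{[s,y]}|^\alpha - C$.
\item The term $\int_Q m\,u_t^{\text{ac}}$. By the equality part of \eqref{def:hjb-strong}, $m u_t^{\text{ac}} = mH(t,x,Du,m)\le Cm(|Du|^\alpha+V)$. This is bounded by \eqref{mDualpha-basic:mfgsolution}.
\item The $D_pH$ term. Estimate it with \eqref{eq:assH.DpH.upper.simple} and Young's inequality. The resulting $\delta\, m|Du^{[s,y]}|^\alpha$ is absorbed into the left side, and the remaining contributions are controlled by \eqref{mDualpha-basic:mfgsolution}, $\|m\|_{L^\beta}$ and $\|V\|_{L^{\beta'}}$.
\item The boundary term. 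The quantity $\int m_0 u(0,\cdot)$ is bounded above via \eqref{uplus-initial:mfgsolution}. Next, $\upsilon(0,x)$ is the trace $u(s^+,x+y)$ plus bounded data. Since $u_t^{\text{s}}\ge0$, we have $u(s^+,\cdot)\ge u(s^-,\cdot)$, so
\[
-\int_{\Tt^d} m_0\,\upsilon(0,\cdot)\le \|m_0\|_\infty\int_{\Tt^d} u^-(s^-,\cdot)+C\le C+C\int_{\Tt^d} u^-(0,\cdot),
\]
where the last inequality is \eqref{u-lower-uniform-in-time.in-terms-of-initial:mfgsolution}. For $s=0$ the initial trace is used directly.
\end{itemize}
Collecting these bounds gives \eqref{mDu-shift:mfgsolution}, with a constant that depends only on the data.

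\textbf{Main obstacle.} The delicate step is the admissibility check in Step~1, and specifically \eqref{upsilon-HJB-upper}. It requires comparing $H$ at shifted and unshifted base points, which is exactly what the slack field $\ell$ and Assumption~\ref{onH.remedy-bound} handle. A second point needing care is the trace convention at $t=s$, where the sign $u_t^{\text{s}}\ge0$ lets us use one-sided traces to control $\upsilon(0,\cdot)$.
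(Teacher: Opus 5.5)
Your proposal is correct and follows essentially the same route as the paper. Both arguments test Lemma~\ref{lem:transport-tested-upsilon.minus.u} with a shifted copy of $u$ patched after $T-s$, verify \eqref{upsilon-HJB-upper} via Assumption~\ref{onH.remedy-bound}, and close the estimate with \eqref{mDualpha-basic:mfgsolution}, \eqref{uplus-initial:mfgsolution}, $u(s^+,\cdot)\ge u(s^-,\cdot)$ and \eqref{u-lower-uniform-in-time.in-terms-of-initial:mfgsolution}. The only difference is cosmetic: the paper uses the offset $-2\|u_T\|_{L^\infty}$ and the constant $-\|u_T\|_{L^\infty}$ after $T-s$, so $\ell=0$ suffices there, whereas you correct by $u_T(x)-u_T(x+y)$ to obtain exact terminal data, at the cost of the bounded slack $\ell$. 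You should take $\ell=0$ on $(T-s,T)$, and note that for $s=0$ the terminal trace is only $\le u_T$, which still suffices.
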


\begin{proof}
    Consider $\upsilon\in BV(Q)$ defined by
    \[\upsilon(t,x) = \begin{cases}
      u(t+s,x+y) - 2\|u_T\|_{L^\infty(\Tt^d)},\enskip &\text{for } t<T-s,\\
      -\|u_T\|_{L^\infty(\Tt^d)}, &\text{for } t>T-s.
   \end{cases}\]
The conditions \eqref{Dupsilon.Lalpha} and \eqref{mDupsilon-shift-L1} follow from \eqref{integrability.further.du} and \eqref{integrability.further.mdu} by translation on the torus and by the definition of $\upsilon$ on $(T-s,T)\times\Tt^d$.
To verify
\eqref{upsilon-HJB-upper}, take \(\ell=0\). On \((0,T-s)\times\Tt^d\),
\[
-\upsilon_t^{\mathrm{ac}}+H(t,x,D\upsilon,0)
=
-u_t^{\mathrm{ac}}(t+s,x+y)+H(t,x,Du(t+s,x+y),0).
\]
Thus
\[
\begin{aligned}
&\bigl(-\upsilon_t^{\mathrm{ac}}+H(t,x,D\upsilon,0)\bigr)^+ \\
&\quad \leq
\bigl(-u_t^{\mathrm{ac}}(t+s,x+y)+H(t+s,x+y,Du(t+s,x+y),0)\bigr)^+ \\
&\qquad
+\bigl|H(t,x,Du(t+s,x+y),0)
-H(t+s,x+y,Du(t+s,x+y),0)\bigr|.
\end{aligned}
\]
The first term belongs to \(L^{\beta'}((0,T-s)\times\Tt^d)\) by
\eqref{integrability.further.H}. The second term is controlled by
Assumption \ref{onH.remedy-bound}, namely 
\eqref{remedy-bound}
together with \(Du\in L^\alpha(Q;\RR^d)\) and \(V\in L^{\beta'}(Q)\). Thus, this second term also
belongs to \(L^{\beta'}(Q)\). On \((T-s,T)\times\Tt^d\), \(\upsilon\) is
constant, and \((H(t,x,0,0))^+\in L^{\beta'}(Q)\) follows from
\eqref{res:H.upper.simple}. Hence \eqref{upsilon-HJB-upper} follows.

Moreover, $\upsilon$ also satisfies \eqref{upsilon.jumps.forward} because
   \[u_t^{\text{s}}\geq 0,\qquad u(T,x) - 2\|u_T\|_{L^\infty(\Tt^d)} \leq - \|u_T\|_{L^\infty(\Tt^d)},\qquad -\|u_T\|_{L^\infty(\Tt^d)}\leq u_T(x). \]
   
   Consequently, we may apply \eqref{transport-tested-upsilon.minus.u} to get
   \begin{equation*}
    \begin{aligned}
    & \int_0^{T-s}\int_{\Tt^d}m(t,x)\bigl( -u_t^{\text{ac}}(t+s,x+y) + D_pH(t,x,Du(t,x),m(t,x))\cdot Du(t+s,x+y) \bigr)\,dxdt\\
    & \qquad + \int_Q m\bigl(u_t^{\text{ac}} - D_pH(t,x,Du,m)\cdot Du \bigr)\,dxdt \\
    &\qquad \geq \int_{\Tt^d}m_0(x)(u(s^+,x+y)-u(0,x))\,dx - 2\|u_T\|_{L^\infty(\Tt^d)}\int_{\Tt^d} m_0(x)\,dx,
    \end{aligned}
    \end{equation*}
    where $u(s^+,\cdot)$ appearing on the right-hand side refers to the trace of $u\in BV(Q)$ on the hypersurface $\{s\}\times\Tt^d$ from the side of $(s,T)\times\Tt^d$. Since $u_t^{\text{s}}\geq 0$, we have $u(s^+,\cdot) \geq u(s^-,\cdot)$ a.e.~on $\Tt^d$, thus we replace $u(s^+,\cdot)$ by $u(s^-,\cdot)$ in the last expression. Moreover, we use the Hamilton--Jacobi conditions
    \begin{align*}
     & m(-u_t^{\text{ac}}+H(t,x,Du,m)) = 0, \\
     & -u_t^{\text{ac}}(t+s,x+y) + H(t+s,x+y,Du(t+s,x+y),m(t+s,x+y)) \leq 0,
    \end{align*}
    and rearrange to find
    \begin{equation}
     \label{ineq}
        \begin{aligned}
    & \int_0^{T-s}\int_{\Tt^d} m(t,x)H(t+s,x+y,Du(t+s,x+y),m(t+s,x+y))\,dxdt \\
    &  \leq \int_0^{T-s}\int_{\Tt^d} m(t,x)|D_pH(t,x,Du(t,x),m(t,x))||Du(t+s,x+y)|\,dxdt\\
    & \quad + \int_Q m\bigl(|H(t,x,Du,m)| + |D_pH(t,x,Du,m)||Du| \bigr)\,dxdt \\
    &\quad + \int_{\Tt^d} m_0(x)u^+(0,x)\,dx + \int_{\Tt^d} m_0(x)u^-(s^-,x+y)\,dx  \\&\quad+ 2\|u_T\|_{L^\infty(\Tt^d)}\int_{\Tt^d} m_0(x)\,dx.
    \end{aligned}
    \end{equation}
Now we estimate the left-hand side with \eqref{eq:assH.lower.simple}. This produces, besides the coercive term
\[
\frac1C\int_0^{T-s}\int_{\Tt^d}m(t,x)|Du(t+s,x+y)|^\alpha\,dxdt,
\]
the error terms
\[
C\int_0^{T-s}\int_{\Tt^d}m(t,x)m(t+s,x+y)^{\beta-1}\,dxdt
\]
and
\[
C\int_0^{T-s}\int_{\Tt^d}m(t,x)V(t+s,x+y)\,dxdt.
\]
Both are bounded uniformly in \((s,y)\) by Hölder's inequality, using
\(m\in L^\beta(Q)\) and \(V\in L^{\beta'}(Q)\).
For the right-hand side of \eqref{ineq}, we use for the first line, \eqref{eq:assH.DpH.upper.simple}, the first term on the second line, i.e.~the one involving $|H(t,x,Du,m)|$ with \eqref{eq:assH.lower.simple} and \eqref{res:H.upper.simple}, the next term by first noting $|D_pH(t,x,Du,m)||Du| \leq |D_pH(t,x,Du,m)|^{\frac{\alpha}{\alpha-1}} + |Du|^\alpha$ and then using \eqref{eq:assH.DpH.upper.simple}, and finally the remaining terms of the right-hand side, i.e.~the last line, with \eqref{uplus-initial:mfgsolution} and \eqref{u-lower-uniform-in-time.in-terms-of-initial:mfgsolution}. Meanwhile, we absorb the terms that do not depend on the solution $u$ and the coordinates $(s,y)$ into the constant $C$, hence we get
    \begin{equation*}
    \begin{aligned}
    & \frac{1}{C}\int_0^{T-s}\int_{\Tt^d} m(t,x)|Du(t+s,x+y)|^\alpha\,dxdt \\
    & \qquad\leq C\int_0^{T-s}\int_{\Tt^d} m(t,x)|Du(t+s,x+y)||Du(t,x)|^{\alpha-1}\,dxdt\\
    & \qquad + C\int_0^{T-s}\int_{\Tt^d} m(t,x)|Du(t+s,x+y)|(m(t,x)^{\frac{(\alpha-1)}{\alpha}(\beta-1)} + V(t,x)^{\frac{\alpha-1}{\alpha}})\,dxdt\\
    & \qquad + C\int_Q m|Du|^\alpha\,dxdt + C\int_{\Tt^d} u^-(0,x)\,dx +  C.
    \end{aligned}
    \end{equation*}
    Next we use Young's inequality to absorb the terms involving $|Du(t+s,x+y)|$ on the right-hand side into the left-hand side, then we once again suppress the terms that are independent of $u$ and $(s,y)$, hence we obtain
    \begin{equation*}
    \begin{aligned}
    & \int_0^{T-s}\int_{\Tt^d} m(t,x)|Du(t+s,x+y)|^\alpha\,dxdt  \leq C\int_Q m|Du|^\alpha\,dxdt + C\int_{\Tt^d} u^-(0,x)\,dx +  C.
    \end{aligned}
    \end{equation*}
    Since the first term on the right-hand side is bounded by \eqref{mDualpha-basic:mfgsolution}, we conclude.
\end{proof}

\begin{lemma}\label{strong-ae-stable}
    Consider the setting of Problem~\ref{prob.mfg}. Suppose Assumptions~\ref{onH.monotone}, \ref{onH.powergrowth}, \ref{onH.high-order-p.has-no-m}, \ref{onH.remedy-bound}, and \ref{onH.highest-order-p.has-no-txm} hold. Let $m\in L^\beta(Q;\RR^+_0)$ and let $u_1, u_2, \ldots \in \mathcal{U}(m)$ be such that $u_1 \leq u_2 \leq u_3 \leq \ldots$ a.e. Then the pointwise a.e.~limit, $u_\infty$, of the sequence $\{u_n\}$ satisfies $u_\infty\in \mathcal{U}(m)$.
\end{lemma}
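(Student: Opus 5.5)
Since \(u_1\leq u_n\leq u_\infty\), I will first bound the sequence uniformly in \(BV(Q)\), then pass to the limit to obtain a VI-solution \((m,u_\infty)\) with the integrability bounds \eqref{integrability.further.du}--\eqref{integrability.further.H}. Finally I will invoke Theorem~\ref{weak.is.strong} to conclude \(u_\infty\in\mathcal{U}(m)\).

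\textbf{Step 1 (uniform bounds).} Monotonicity gives \(u_n^-(0,\cdot)\leq u_1^-(0,\cdot)\) a.e., taking traces after checking that the ordering passes to interior traces. Hence \(\int_{\Tt^d}u_n^-(0,x)\,dx\) is bounded independently of \(n\). Then:
\begin{itemize}
\item Propositions~\ref{prop:uinitial.and.m.bound:mfgsolution} and \ref{prop:u-derivatives-bound:mfgsolution} give uniform bounds on \(\|(u_{n,t})^-\|_{L^{\beta'}}\), on the total variation of \(u_{n,t}\), and on \(\|Du_n\|_{L^\alpha}\).
\item Proposition~\ref{cross-estimate:mfgsolution} gives the shift estimate \eqref{integrability.further.mdu} uniformly in \(n\) and \((s,y)\).
\end{itemize}
Together with \(u_n(T,\cdot)\leq u_T\) and the Poincar\'e--trace inequality, \((u_n)\) is bounded in \(BV(Q)\). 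Since the convergence is monotone, \(u_n\to u_\infty\) in \(L^1(Q)\) and \(u_\infty\in BV(Q)\); along a subsequence \(Du_n\rightharpoonup Du_\infty\) in \(L^\alpha\).

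\textbf{Step 2 (bounds on \(u_\infty\)).} Bounds \eqref{integrability.further.du} and \eqref{integrability.further.ut} follow by weak lower semicontinuity, as in the proof of Theorem~\ref{weak.exists}: \(((u_n)_t)^-\rightharpoonup g\) in \(L^{\beta'}\), so \((u_{\infty,t})^-\leq g\). Bound \eqref{integrability.further.mdu} follows by truncating \(m_k=\min(m,k)\), applying lower semicontinuity, and then monotone convergence. For \eqref{integrability.further.H} I will imitate the proof in Theorem~\ref{weak.exists}. Set
\[
h_n:=-(u_n)_t+H(t,x,Du_\infty,0)+D_pH(t,x,Du_\infty,0)\cdot(Du_n-Du_\infty).
\]
By convexity, \(h_n\leq -(u_n)_t+H(t,x,Du_n,0)\). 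The right-hand side is bounded above by \(\bigl(-u_{n,t}+H(t,x,Du_n,0)\bigr)^+\). That quantity is bounded uniformly in \(L^{\beta'}\): on \(\{m>0\}\) use Assumption~\ref{onH.high-order-p.has-no-m} exactly as before, which is where the HJ inequality with \(m\) enters. Off \(\{m>0\}\), \(-u_{n,t}+H(t,x,Du_n,0)\leq0\) holds directly from \eqref{def:hjb-strong}. In the measure sense \(h_n\) converges to \(-u_{\infty,t}+H(t,x,Du_\infty,0)\), which yields \eqref{integrability.further.H}.

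\textbf{Step 3 (VI-solution).} For fixed admissible \((\mu,\upsilon)\), I will pass to the limit in \eqref{weak-sol} for \((m,u_n)\), which holds by Lemma~\ref{strong.is.weak}. The terms behave as follows:
\begin{itemize}
\item The first integral is independent of \(u_n\).
\item \(\int_Q\mu D_pH(t,x,D\upsilon,\mu)\cdot Du_n\) converges by weak \(L^\alpha\) convergence.
\item For \(\int_Q\mu\,du_{n,t}+\int_{\Tt^d}\mu(T)(u_T-u_n(T))+\int_{\Tt^d}m_0u_n(0)\), I will combine the terms through the trace Green formula, as in \eqref{green-limit-argument}. For \(\mu\in C^1\) with \(\mu(0)=m_0\) this gives \(-\int_Q u_n\mu_t+\int_{\Tt^d}\mu(T)u_T\). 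This converges by \(L^1\) convergence and then extends to continuous \(\mu\) by density.
\end{itemize}
No initial-trace defect like \(u_0\) arises, because the combination cancels it exactly when \(\mu(0)=m_0\). Thus \((m,u_\infty)\) is a VI-solution, and Theorem~\ref{weak.is.strong} shows \(u_\infty\in\mathcal{U}(m)\).

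The main obstacle is Step 2's uniform \(L^{\beta'}\) bound for \((-u_{n,t}+H(t,x,Du_n,0))^+\), that is, making the \(n\)-independent estimate from Assumption~\ref{onH.high-order-p.has-no-m} rigorous while the singular part of \(u_{n,t}\) is nonnegative and can be dropped. A secondary check is that Theorem~\ref{weak.is.strong} requires only \eqref{integrability.further.mdu} as a finite supremum, which the uniform shift estimate supplies.
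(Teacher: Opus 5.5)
Your proposal is correct and follows the paper's proof essentially step for step. It obtains uniform $BV$ bounds from Propositions~\ref{prop:uinitial.and.m.bound:mfgsolution}, \ref{prop:u-derivatives-bound:mfgsolution} and \ref{cross-estimate:mfgsolution} via $\int_{\Tt^d}u_n^-(0,x)\,dx\leq\int_{\Tt^d}u_1^-(0,x)\,dx$, passes to the limit in \eqref{weak-sol} through the Green formula with the endpoint-trace cancellation coming from $\mu(0,\cdot)=m_0$, and reuses the convexity plus Assumption~\ref{onH.high-order-p.has-no-m} argument of Theorem~\ref{weak.exists} for \eqref{integrability.further.H}.

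One small tidy-up concerns the density step. Since $m_0$ is only continuous, $C^1$ test functions with $\mu(0,\cdot)=m_0$ need not exist. Run the density argument instead on the identity $\int_Q\mu\,d(u_n)_t-\int_{\Tt^d}\mu(T,x)u_n(T,x)\,dx+\int_{\Tt^d}\mu(0,x)u_n(0,x)\,dx=-\int_Q u_n\mu_t\,dxdt$. It holds for every $\mu\in C^1(\bar Q)$, and its left-hand side is bounded by $C\|\mu\|_{L^\infty}$ uniformly in $n$.
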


\begin{proof}
    Since
    \begin{equation}\label{un-trivial-lowerbound}
        \int_{\Tt^d} u_n^-(0,x)\,dx \leq \int_{\Tt^d} u_1^-(0,x)\,dx
    \end{equation}
and the right-hand side is independent of~$n$, \eqref{ut.minus-Lbeta':mfgsolution}, \eqref{ut.plus-bound:mfgsolution}, and \eqref{Du-Lalpha:mfgsolution} show that the total variation norm of $((u_n)_t, Du_n)$ is bounded uniformly over $n$. Moreover,
    \[\int_{\Tt^d} |u_n(0,x)|\,dx = \int_{\Tt^d} u_n^+(0,x)\,dx + \int_{\Tt^d} u_n^-(0,x)\,dx \leq C,\]
    because of H\"older's inequality applied to \eqref{uplus-initial:mfgsolution}, and the uniform bound \eqref{un-trivial-lowerbound}. Hence, the Poincar\'e--Wirtinger trace inequality bounds $\{u_n\}$ uniformly in $BV(Q)$. Consequently, $u_n$ must converge to $u_\infty$ weakly-$\ast$ in $BV(Q)$.

    Now, \eqref{Du-Lalpha:mfgsolution} implies that $Du_n$ converges weakly to $Du_\infty$ in $L^\alpha(Q;\RR^d)$. 
    Thus, $u_\infty$ satisfies \eqref{integrability.further.du}. Passing to the limit in \eqref{weak-sol} for $(m,u_n)$, using Green's formula to rewrite the term $\int_Q\mu\,d(u_n)_t$ and the cancellation of the endpoint traces because $\mu(0,\cdot)=m_0$, we deduce that $(m,u_\infty)$ is a solution to Problem~\ref{prob.vi} in the sense of Definition~\ref{def.weak.sol}.
    Next, \eqref{ut.minus-Lbeta':mfgsolution} implies that $u_\infty$ satisfies \eqref{integrability.further.ut}, and \eqref{mDu-shift:mfgsolution} together with \eqref{un-trivial-lowerbound} imply that $u_\infty$ satisfies \eqref{integrability.further.mdu}. Therefore, it only remains to prove that $u_\infty$ satisfies \eqref{integrability.further.H}.

We prove this in an analogous way to the corresponding argument in the proof of Theorem~\ref{weak.exists}. Namely, the convexity of $H(t,x,\cdot,m)$ as in Remark~\ref{rmk:cxty}, as well as \eqref{def:hjb-strong} and~\eqref{eq:high-order-p.has-no-m} imply
\[\begin{aligned}
 & -(u_n)_t+H(t,x,Du_\infty,0) + D_pH(t,x,Du_\infty,0)\cdot (Du_n-Du_\infty) \\
 & \qquad \leq -(u_n)_t+H(t,x,Du_n,0) \\
& \qquad\qquad \leq H(t,x,Du_n,0) - H(t,x,Du_n,m) \\
& \qquad\qquad\qquad \leq C(|Du_n|^{\alpha\frac{(\beta-1)}{\beta}} + m^{{\beta-1}}+V(t,x)).
\end{aligned}
\]
We note that the last line is uniformly bounded in $L^{\beta'}(Q)$ and conclude as in the proof of Theorem~\ref{weak.exists}.
\end{proof}

\subsection{Comparison and Lattice Properties}

We begin with a comparison principle between solutions of Problem~\ref{prob.mfg} in the sense of Definition~\ref{def.strong.sol} and subsolutions of the Hamilton--Jacobi equation.

\begin{lemma}\label{lem:hjb-sub-under-value}
  Consider the setting of Problem~\ref{prob.mfg}. Suppose Assumptions~\ref{onH.monotone}, \ref{onH.powergrowth}, and \ref{onH.highest-order-p.has-no-txm} hold. Let $m\in L^\beta(Q;\RR^+_0)$ and let $u\in \mathcal{U}(m)$ and $\tilde{u}\in \mathcal{S}(m)$ be such that $\tilde{u}\geq u$ a.e. Then $\tilde{u} = u$ a.e.~in the set $\{m>0\}$ and $\tilde{u}(0,\cdot) = u(0,\cdot)$ a.e.~in the set $\{m_0 > 0\}$. In particular, $\tilde{u}\in \mathcal{U}(m)$.
\end{lemma}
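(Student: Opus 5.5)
The plan is to test the transport inequality \eqref{transport-tested-upsilon.minus.u} of Lemma~\ref{lem:transport-tested-upsilon.minus.u} (valid for $(m,u)$, since $u\in\mathcal U(m)$ is a VI-solution satisfying all the integrability bounds) with $\upsilon:=\tilde u$. This gives a one-sided identity which, combined with the subsolution property of $\tilde u$ and the equality part of the Hamilton--Jacobi equation for $u$, forces all defects to vanish.

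\textbf{Step 1: admissibility of $\upsilon=\tilde u$.} Condition \eqref{Dupsilon.Lalpha} is \eqref{integrability.further.du} for $\tilde u$. For \eqref{upsilon.jumps.forward}, the distributional inequality $-\tilde u_t+H(t,x,D\tilde u,m)\le 0$ gives $\tilde u_t^{\text{s}}\ge0$, and $\tilde u(T,\cdot)\le u_T$ is part of the definition of $\mathcal S(m)$. Condition \eqref{upsilon-HJB-upper} holds with $\ell=0$ by \eqref{integrability.further.H}, and \eqref{mDupsilon-shift-L1} is \eqref{integrability.further.mdu}. So \eqref{transport-tested-upsilon.minus.u} yields
\[
\int_Q m\bigl(-(\tilde u-u)_t^{\text{ac}}+D_pH(t,x,Du,m)\cdot D(\tilde u-u)\bigr)\,dxdt\ \ge\ \int_{\Tt^d}m_0(\tilde u(0,x)-u(0,x))\,dx\ \ge 0 .
\]
The last inequality uses $\tilde u\ge u$ a.e., which passes to the interior traces at $t=0$.

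\textbf{Step 2: pointwise sign of the integrand.} On $\{m>0\}$ we have $u_t^{\text{ac}}=H(t,x,Du,m)$ and $\tilde u_t^{\text{ac}}\ge H(t,x,D\tilde u,m)$. Convexity of $H(t,x,\cdot,m)$ (Remark~\ref{rmk:cxty}) gives
\[
H(D\tilde u)\ge H(Du)+D_pH(Du)\cdot(D\tilde u-Du),
\]
so that
\[
-(\tilde u-u)_t^{\text{ac}}+D_pH(Du)\cdot D(\tilde u-u)\le 0 \quad\text{a.e. on }\{m>0\}.
\]
Hence the left side of Step~1 is $\le0$, and everything is squeezed to zero. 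First, $\int m_0(\tilde u(0)-u(0))=0$, which gives $\tilde u(0,\cdot)=u(0,\cdot)$ a.e. on $\{m_0>0\}$. Second, the integrand vanishes a.e. on $\{m>0\}$, so both inequalities in Step~2 are equalities there. In particular $\tilde u_t^{\text{ac}}=H(t,x,D\tilde u,m)$ on $\{m>0\}$, which is the equality part of \eqref{def:hjb-strong} for $\tilde u$. Integrability of each term uses Remark~\ref{rmk:m-upsilon-t-integrable} and \eqref{mutac-L1}.

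\textbf{Step 3: $\tilde u=u$ on $\{m>0\}$. This is the main obstacle.} Equality in the convexity inequality alone does not force $D\tilde u=Du$, since $H$ need not be strictly convex. I would instead show $\tilde u\in\mathcal U(m)$ first and then compare. For $\tilde u$ we already have \eqref{def:hjb-strong}, the terminal inequality and the integrability. The transport equation \eqref{def:transport-strong} is the statement that the velocity field $D_pH(\cdot,D\tilde u,m)$ transports $m$. Equality in the convexity inequality, with $D_pH$ continuous and $H$ convex, implies $D_pH(D\tilde u,m)=D_pH(Du,m)$ wherever $m>0$: a convex function is affine on the segment $[Du,D\tilde u]$, and its gradient along that segment is constant in the supporting direction. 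I would verify this carefully and, if it fails, use a supporting-hyperplane argument at the midpoint $\tfrac12(Du+D\tilde u)$, which is again admissible by convexity. This gives \eqref{def:transport-strong} for $\tilde u$.

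It then remains to obtain \eqref{def:singular-and-terminal} for $\tilde u$. This follows from the identity for $u$ together with the vanishing of the Step~1 integrand and the trace equality on $\{m_0>0\}$. So $\tilde u\in\mathcal U(m)$. The remark after Definition~\ref{def.strong.sol} then formally says $\tilde u_t^{\text{s}}=0$ on $\{m>0\}$. To get $\tilde u=u$ on $\{m>0\}$, I would run Step~1 symmetrically: test the transport inequality for $(m,\tilde u)$ with $\upsilon=u$. This yields a reverse inequality $\int m_0(u(0)-\tilde u(0))\ge0$, already known to be an equality. I would then use the vanishing of $m(\tilde u-u)$ via testing \eqref{def:transport-strong} with a truncation of $\tilde u-u\ge0$, e.g. $\min(\tilde u-u,k)$, appropriately mollified. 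Comparing with the sign information gives $\int m(\tilde u-u)_t\,dxdt$ controlled by boundary terms that vanish, whence $m(\tilde u-u)=0$ a.e.
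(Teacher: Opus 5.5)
\textbf{Genuine gap: the main assertion $\tilde u=u$ a.e.\ on $\{m>0\}$ is not established.} Your Steps 1 and 2 are sound. They give $\tilde u(0,\cdot)=u(0,\cdot)$ on $\{m_0>0\}$, the Hamilton--Jacobi equality for $\tilde u$ on $\{m>0\}$, and equality in the tangent inequality there. Your claim that this equality forces $D_pH(t,x,D\tilde u,m)=D_pH(t,x,Du,m)$ is true, with a cleaner proof than the segment argument. Equality plus convexity gives $H(z)\geq H(D\tilde u)+D_pH(Du)\cdot(z-D\tilde u)$ for all $z$. So $D_pH(Du)$ is a subgradient at the differentiability point $D\tilde u$, hence equals $D_pH(D\tilde u)$. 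Your route to $\tilde u\in\mathcal{U}(m)$ therefore works.

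The trouble is what your tests can see. Testing with $\upsilon=\tilde u$, the symmetric test, and the truncation $\min(\tilde u-u,k)$ only produce the first-order quantity $m\bigl(-(\tilde u-u)_t^{\text{ac}}+D_pH(t,x,Du,m)\cdot D(\tilde u-u)\bigr)$ and the boundary term at $t=0$. You already know both vanish, and neither contains a zero-order term in $m(\tilde u-u)$. Knowing that $\int m(\tilde u-u)_t$ is controlled by vanishing boundary terms does not imply $m(\tilde u-u)=0$. The phrase ``use the vanishing of $m(\tilde u-u)$'' is also circular. An unweighted test of the transport equation can at best say something at the terminal time, not throughout $Q$.

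\textbf{The missing idea is a time weight.} Take $\upsilon=(1-\lambda(t))u+\lambda(t)\tilde u$ with $\lambda\colon[0,T]\to[0,1]$ smooth and nondecreasing. Then $\upsilon_t^{\text{ac}}=(1-\lambda)u_t^{\text{ac}}+\lambda\tilde u_t^{\text{ac}}+\lambda'(\tilde u-u)$. The nonnegative singular part and the terminal inequality are preserved. Since $-\lambda'(\tilde u-u)\leq0$, convexity of $H(t,x,\cdot,0)$ keeps \eqref{upsilon-HJB-upper} with $\ell=0$. Plugging into \eqref{transport-tested-upsilon.minus.u} and using your Step~2 sign information gives
\[
\lambda(0)\int_{\Tt^d}m_0\bigl(\tilde u(0,x)-u(0,x)\bigr)\,dx+\int_Q\lambda'(t)\,m(\tilde u-u)\,dxdt
\leq\int_Q\lambda(t)\,m\bigl(u_t^{\text{ac}}-\tilde u_t^{\text{ac}}+D_pH(t,x,Du,m)\cdot(D\tilde u-Du)\bigr)\,dxdt\leq0 .
\]
Choose $\lambda(t)=(t+1)/(T+1)$. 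Both nonnegative terms on the left then vanish, which gives $m(\tilde u-u)=0$ a.e.\ and the trace statement at once. This is the paper's proof. Afterwards $\tilde u\in\mathcal{U}(m)$ follows from locality of the absolutely continuous derivatives, or equally from your $D_pH$ argument.
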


\begin{proof}
  Let $\lambda \colon [0,T] \to [0,1]$ be a non-decreasing, smooth function. 
  We claim that  $\upsilon := (1-\lambda(t))u+\lambda(t)\tilde{u}$ satisfies \eqref{Dupsilon.Lalpha}, \eqref{upsilon.jumps.forward}, \eqref{upsilon-HJB-upper}, and \eqref{mDupsilon-shift-L1}, which allow us to apply \eqref{transport-tested-upsilon.minus.u} in Lemma~\ref{lem:transport-tested-upsilon.minus.u}. To prove the claim, we compute
  \begin{equation*}
 \begin{aligned}
D\upsilon & = (1-\lambda(t))Du+\lambda(t)D\tilde{u}, \\
\upsilon_t^{\text{s}} & = (1-\lambda(t))u_t^{\text{s}}+\lambda(t)\tilde{u}_t^{\text{s}}, \\
\upsilon_t^{\text{ac}} & = (1-\lambda(t))u_t^{\text{ac}}+\lambda(t)\tilde{u}_t^{\text{ac}} + \lambda'(t)(\tilde{u}-u),
\end{aligned}
\end{equation*}
and note that \eqref{Dupsilon.Lalpha} follows from \eqref{integrability.further.du} for $u$ and $\tilde{u}$, similarly \eqref{upsilon.jumps.forward} follows from the corresponding conditions for $u$ and $\tilde{u}$, and finally \eqref{mDupsilon-shift-L1} follows in the same manner. Moreover,
\[\begin{aligned} -\upsilon_t^{\text{ac}}+H(t,x,D\upsilon,0) & \leq -\lambda'(t)(\tilde{u}-u) \\
& \quad + (1-\lambda(t))(-u_t^{\text{ac}}+H(t,x,Du,0)) \\
& \qquad + \lambda(t)(-\tilde{u}_t^{\text{ac}}+H(t,x,D\tilde{u},0)) \qquad \text{a.e.~in } Q,
\end{aligned}
 \]
 by the convexity of $H(t,x,\cdot,0)$ as in Remark~\ref{rmk:cxty}; hence \eqref{upsilon-HJB-upper} follows from \eqref{integrability.further.H} for $u$ and $\tilde{u}$, as well as the fact that $\lambda'(t)(\tilde{u}-u) \geq 0$ a.e.

 Consequently, plugging $\upsilon := (1-\lambda(t))u+\lambda(t)\tilde{u}$ in \eqref{transport-tested-upsilon.minus.u}, we obtain
  \begin{equation*}
  \begin{aligned}
  & \lambda(0)\int_{\Tt^d}m_0(x)(\tilde{u}(0,x)-u(0,x))\,dx + \int_Q \lambda'(t) m(\tilde{u}-u)\,dxdt\\
  & \qquad \leq \int_Q \lambda(t) m\bigl( (u_t^{\text{ac}}-\tilde{u}_t^{\text{ac}}) + D_pH(t,x,Du,m)\cdot (D\tilde{u}-Du) \bigr)\,dxdt.
  \end{aligned}
  \end{equation*}
  Combining this
  with the Hamilton--Jacobi conditions $m(-u_t^{\text{ac}}+H(t,x,Du,m)) = 0$ and $-\tilde{u}_t^{\text{ac}}+H(t,x,D\tilde{u},m) \leq 0$, we get
  \begin{equation*}
  \begin{aligned}
  & \lambda(0)\int_{\Tt^d}m_0(x)(\tilde{u}(0,x)-u(0,x))\,dx + \int_Q \lambda'(t) m(\tilde{u}-u)\,dxdt\\
  & \enskip \leq \int_Q \lambda(t) m\bigl(H(t,x,Du,m)-H(t,x,D\tilde{u},m) + D_pH(t,x,Du,m)\cdot (D\tilde{u}-Du) \bigr)\,dxdt.
  \end{aligned}
  \end{equation*}
 The integrands on the left-hand side are non-negative by $\tilde{u}\geq u$ a.e., and by preservation of the order under BV traces, while the integrand on the right-hand side is non-positive by the convexity of $H(t,x,\cdot,m)$. 
The left-hand side is nonnegative, whereas the right-hand side is
nonpositive. Hence both sides are equal to zero. In particular,
\[
\lambda(0)\int_{\Tt^d}m_0(x)(\tilde u(0,x)-u(0,x))\,dx
+
\int_Q\lambda'(t)m(\tilde u-u)\,dxdt=0.
\]
Choosing, for instance, \(\lambda(t)=(t+1)/(T+1)\), we have
\(\lambda(0)>0\) and \(\lambda'(t)>0\). Since
\(\tilde u\geq u\) a.e.~in \(Q\), and since the BV trace preserves this
order, both integrands are nonnegative. Therefore
\[
m(\tilde u-u)=0 \quad\text{a.e. in }Q
\]
and
\[
m_0(\tilde u(0,\cdot)-u(0,\cdot))=0
\quad\text{a.e. on }\Tt^d.
\]
This gives \(\tilde u=u\) a.e.~on \(\{m>0\}\) and
\(\tilde u(0,\cdot)=u(0,\cdot)\) a.e.~on \(\{m_0>0\}\).

It remains to justify the final assertion. Since \(\tilde u\in\mathcal{S}(m)\),
the required integrability bounds, the Hamilton--Jacobi inequality, and
the terminal inequality are already part of the definition of
\(\mathcal{S}(m)\). Moreover, by the locality of the absolutely continuous
part of the distributional derivative for \(BV\) functions, the identity
\(\tilde u=u\) a.e.~on \(\{m>0\}\) implies
\[
\tilde u_t^{\rm ac}=u_t^{\rm ac},
\qquad
D\tilde u=Du
\quad\text{a.e. on }\{m>0\}.
\]
Consequently, all terms multiplied by \(m\) in the transport equation and
in \eqref{def:singular-and-terminal} agree for \(u\) and \(\tilde u\).
Similarly, the identity
\(\tilde u(0,\cdot)=u(0,\cdot)\) a.e.~on \(\{m_0>0\}\) implies
\[
m_0\tilde u(0,\cdot)=m_0u(0,\cdot)
\quad\text{a.e.~on }\Tt^d.
\]
Thus the transport equation and \eqref{def:singular-and-terminal} for
\((m,\tilde u)\) follow from the corresponding properties of \((m,u)\).
The Hamilton--Jacobi equality on \(\{m>0\}\) also follows from the same
locality argument. Therefore \(\tilde u\in \mathcal{U}(m)\).
\end{proof}

The next lemma provides the lattice property needed in the construction of
the maximal representative \(u^*\): Hamilton--Jacobi subsolutions are stable
under maxima, and taking the maximum with an MFG value function preserves the
full MFG structure.

\begin{lemma}\label{max-of-subsolutions}
    Consider the setting of Problem~\ref{prob.mfg}. Suppose Assumptions~\ref{onH.monotone}, \ref{onH.powergrowth}, and \ref{onH.highest-order-p.has-no-txm} hold. Let $m\in L^\beta(Q;\RR^+_0)$ and let $u_1, u_2\in \mathcal{S}(m)$. Then $\max(u_1, u_2)\in \mathcal{S}(m)$. Moreover, if either $u_1\in \mathcal{U}(m)$ or $u_2\in \mathcal{U}(m)$, then $\max(u_1, u_2)\in \mathcal{U}(m)$.
\end{lemma}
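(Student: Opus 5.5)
Set \(\tilde u:=\max(u_1,u_2)\). The plan is to check the membership conditions of \(\mathcal{S}(m)\) for \(\tilde u\) one at a time, using Corollary~\ref{cor:bv-chain} to handle the derivatives. The second assertion will then follow from the comparison Lemma~\ref{lem:hjb-sub-under-value}.

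\textbf{Step 1: derivative decomposition.} Corollary~\ref{cor:bv-chain}, applied to the full space-time derivative on \(Q\), gives a Borel \(h\colon Q\to[0,1]\) with \(\{0<h<1\}\) Lebesgue null and
\[
d\tilde u_t=h\,d(u_1)_t+(1-h)\,d(u_2)_t,\qquad D\tilde u=h\,Du_1+(1-h)\,Du_2 .
\]
Splitting into absolutely continuous and singular parts, we get the following a.e.\ in \(Q\):
\[
\tilde u_t^{\mathrm{ac}}=h(u_1)_t^{\mathrm{ac}}+(1-h)(u_2)_t^{\mathrm{ac}},\qquad \tilde u_t^{\mathrm{s}}=h(u_1)_t^{\mathrm{s}}+(1-h)(u_2)_t^{\mathrm{s}}\ge0 .
\]
Because \(h\in\{0,1\}\) Lebesgue-a.e., at a.e.\ point the triple \((\tilde u_t^{\mathrm{ac}},D\tilde u,\text{relevant quantities})\) coincides with that of \(u_1\) or of \(u_2\).

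\textbf{Step 2: the conditions of \(\mathcal{S}(m)\).} Each condition follows from Step~1.
\begin{itemize}
\item The Hamilton--Jacobi inequality holds a.e. Its \(ac\) part is inherited pointwise from \(u_1\) or \(u_2\), and its singular part is the nonnegative measure above. This is exactly the characterization given in Definition~\ref{def.strong.sol}.
\item Conditions \eqref{integrability.further.du} and \eqref{integrability.further.H} hold since \(|D\tilde u|\le|Du_1|+|Du_2|\) and the positive part of \(-\tilde u_t^{\mathrm{ac}}+H(t,x,D\tilde u,0)\) is bounded by the sum of the corresponding positive parts for \(u_1,u_2\).
\item Condition \eqref{integrability.further.ut} holds since \((\tilde u_t)^-\le((u_1)_t)^-+((u_2)_t)^-\) as measures.
\item Condition \eqref{integrability.further.mdu} holds because translation commutes with the pointwise bound \(|D\tilde u|^\alpha\le|Du_1|^\alpha+|Du_2|^\alpha\).
\item The terminal inequality holds since the trace of a maximum is the maximum of the traces, both of which are \(\le u_T\).
\item \(\tilde u\in BV(Q)\) is immediate.
\end{itemize}

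\textbf{Step 3: the \(\mathcal{U}(m)\) statement.} Without loss of generality \(u_1\in\mathcal{U}(m)\). Then \(\tilde u\in\mathcal{S}(m)\) and \(\tilde u\ge u_1\) a.e. Lemma~\ref{lem:hjb-sub-under-value}, applied with \(u=u_1\), yields \(\tilde u\in\mathcal{U}(m)\).

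The only delicate point is Step~1: we need the singular and absolutely continuous parts to split compatibly with the weights \(h\). This holds because \(h\) is a bounded Borel function, so multiplying by \(h\) preserves mutual singularity with respect to Lebesgue measure, and the Lebesgue decomposition is unique. A second point is that the statement "\(h\in\{0,1\}\) a.e." refers to Lebesgue measure, which is exactly what the \(ac\) parts and \(D\tilde u\in L^1\) require. I expect no further obstacle. The traces at \(t=T\) need a brief justification, for instance via the \(L^1\) convergence of slice averages and continuity of \(\max\) in \(L^1\).
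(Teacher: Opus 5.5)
Your proposal is correct and follows the paper's route. Corollary~\ref{cor:bv-chain} gives the a.e.\ selection $(\tilde u_t^{\mathrm{ac}},D\tilde u)\in\{((u_1)_t^{\mathrm{ac}},Du_1),((u_2)_t^{\mathrm{ac}},Du_2)\}$ and $\tilde u_t^{\mathrm{s}}=h(u_1)_t^{\mathrm{s}}+(1-h)(u_2)_t^{\mathrm{s}}\geq0$; you then check each condition of $\mathcal{S}(m)$ from this and conclude with Lemma~\ref{lem:hjb-sub-under-value}. The one step you should write out, as the paper does, is that $\tilde u_t^{\mathrm{s}}\geq0$ makes the positive part of the measure $-d\tilde u_t+H(t,x,D\tilde u,0)\,dxdt$ equal to $\bigl(-\tilde u_t^{\mathrm{ac}}+H(t,x,D\tilde u,0)\bigr)^+dxdt$; this is what reduces \eqref{integrability.further.H} to your a.e.\ bound on densities.
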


\begin{proof}
We prove the first statement by verifying that $\tilde{u}:=\max(u_1, u_2)$ satisfies the conditions in~\ref{def:Um-tilde}. 
Applying Corollary~\ref{cor:bv-chain} to \(u_1,u_2\) as \(BV\)-functions
on \(Q\), and then projecting the full space-time derivative onto its time
and spatial components, we obtain
\begin{equation}\label{maxut-s-conv.comb}
\tilde{u}_t^{\text{s}} = h(u_1)_t^{\text{s}} + (1-h)(u_2)_t^{\text{s}},
\end{equation}
as an identity of Radon measures 
for a Borel measurable function, $h \colon Q \to [0,1]$, and
\begin{equation}\label{maxuDt-conv.comb}
(\tilde{u}_t^{\text{ac}}, D\tilde{u}) \in \bigl\{ ((u_1)_t^{\text{ac}}, Du_1), \, ((u_2)_t^{\text{ac}}, Du_2) \bigr\} \qquad \text{a.e.~in } Q.
\end{equation}
Now, \eqref{integrability.further.du} and \eqref{integrability.further.mdu} for $\tilde{u}$ immediately follow from the same conditions for $u_1$ and $u_2$, considering \eqref{maxuDt-conv.comb}. Moreover, \eqref{maxut-s-conv.comb} implies $\tilde{u}_t^{\text{s}} \geq 0$ because $(u_1)_t^{\text{s}} \geq 0$ and $(u_2)_t^{\text{s}} \geq 0$. Then \eqref{integrability.further.ut} for $\tilde{u}$ follows from the same condition for $u_1$ and $u_2$, as before. Next, since \(d\tilde u_t^{\mathrm{s}}\geq0\), we have, as measures,
\[
-d\tilde u_t+H(t,x,D\tilde u,0)\,dxdt
\leq
\bigl(-\tilde u_t^{\mathrm{ac}}+H(t,x,D\tilde u,0)\bigr)\,dxdt.
\]
Therefore the positive part of the measure on the left is controlled by
\[
\bigl(-\tilde u_t^{\mathrm{ac}}+H(t,x,D\tilde u,0)\bigr)^+dxdt.
\]
By \eqref{maxuDt-conv.comb}, the density
\[
-\tilde u_t^{\mathrm{ac}}+H(t,x,D\tilde u,0)
\]
is a.e.~equal to one of the two densities
\[
-(u_1)_t^{\mathrm{ac}}+H(t,x,Du_1,0),
\qquad
-(u_2)_t^{\mathrm{ac}}+H(t,x,Du_2,0).
\]
Hence its positive part belongs to \(L^{\beta'}(Q)\), because \(u_1,u_2\in
\mathcal{S}(m)\). Thus \eqref{integrability.further.H} holds for \(\tilde u\).
 
Similarly, we have
 \begin{equation*}
-\tilde{u}_t^{\text{ac}}+H(t,x,D\tilde{u},m) \in \bigl\{-(u_1)_t^{\text{ac}}+H(t,x,Du_1,m), \enskip -(u_2)_t^{\text{ac}}+H(t,x,Du_2,m)\bigr\} \qquad \text{a.e.,}
\end{equation*}
which implies $-\tilde{u}_t+H(t,x,D\tilde{u},m) \leq 0$ by the same condition for $u_1$ and $u_2$, after noting $\tilde{u}_t^{\text{s}} \geq 0$ as above.
Finally, the trace operator is compatible with Lipschitz compositions, so
\[
\tilde u(T,\cdot)=\max\{u_1(T,\cdot),u_2(T,\cdot)\}
\quad\text{a.e. on }\Tt^d.
\]
Since \(u_1(T,\cdot)\leq u_T\) and \(u_2(T,\cdot)\leq u_T\), we obtain
\(\tilde u(T,\cdot)\leq u_T\).

For the second statement, suppose without loss of generality that $u_1\in \mathcal{U}(m)$. The desired result follows from Lemma~\ref{lem:hjb-sub-under-value} applied with $u = u_1$ and $\tilde{u} = \max(u_1,u_2)\geq u_1$, since $\tilde{u}\in \mathcal{S}(m)$ by the first statement.
\end{proof}

\begin{proof}[Proof of Theorem~\ref{maximal-u-exists}, Item~\ref{thmitem:maximal.u}]
Since \(\mathcal{U}(m)\) is non-empty and \(\mathcal{U}(m)\subset \mathcal{S}(m)\), the family
\(\mathcal{S}(m)\) is non-empty. Applying Lemma~\ref{lub-lemma} with
\(\mathcal{F}=\mathcal{S}(m)\), we obtain a sequence
\(u_1,u_2,\ldots\in\mathcal{S}(m)\) and a measurable function \(u^*\) on \(Q\)
such that
\[
\max(u_1,u_2,\ldots,u_n)\to u^*
\quad\text{a.e. in }Q,
\]
and
\[
u\leq u^* \quad\text{a.e. in }Q
\qquad\text{for every }u\in\mathcal{S}(m).
\]

Fix \(u_0\in \mathcal{U}(m)\). By Lemma~\ref{max-of-subsolutions}, the finite maximum
\[
v_n:=\max(u_0,u_1,\ldots,u_n)
\]
belongs to \(\mathcal{U}(m)\) for every \(n\). Moreover, \(v_n\) is non-decreasing in
\(n\). Since the finite maximum \(v_n\) also belongs to \(\mathcal{S}(m)\), the
maximality property above gives \(v_n\leq u^*\) a.e.~in \(Q\). On the other
hand,
\[
\max(u_1,\ldots,u_n)\leq v_n.
\]
Thus \(v_n\to u^*\) a.e.~in \(Q\). Lemma~\ref{strong-ae-stable} therefore
implies \(u^*\in \mathcal{U}(m)\). By construction, \(u^*\) dominates every element of
\(\mathcal{S}(m)\).

The uniqueness of such a maximal element is immediate. Indeed, if
\(v^*\in \mathcal{U}(m)\) also dominates every element of \(\mathcal{S}(m)\), then, since
both \(u^*\) and \(v^*\) belong to \(\mathcal{S}(m)\), we have \(u^*\leq v^*\)
and \(v^*\leq u^*\) a.e.~in \(Q\).

Now let \(u\in \mathcal{U}(m)\). Since \(u^*\in \mathcal{U}(m)\subset\mathcal{S}(m)\) and
\(u\leq u^*\) a.e.~in \(Q\), Lemma~\ref{lem:hjb-sub-under-value}, applied
with \(\tilde u=u^*\), gives
\[
u=u^* \quad\text{a.e. on }\{m>0\},
\]
and
\[
u(0,\cdot)=u^*(0,\cdot)
\quad\text{a.e. on }\{m_0>0\}.
\]
Conversely, let \(u\in\mathcal{S}(m)\) satisfy
\[
u=u^* \quad\text{a.e. on }\{m>0\},
\qquad
u(0,\cdot)=u^*(0,\cdot)
\quad\text{a.e. on }\{m_0>0\}.
\]
We prove that \(u\in \mathcal{U}(m)\). Since \(u\in\mathcal{S}(m)\), the required
integrability bounds, the Hamilton--Jacobi inequality, and the terminal
inequality are already available. By the locality of the absolutely continuous
part of the distributional derivative for \(BV\) functions, the identity
\(u=u^*\) a.e.~on \(\{m>0\}\) implies
\[
u_t^{\mathrm{ac}}=(u^*)_t^{\mathrm{ac}},
\qquad
Du=Du^*
\quad\text{a.e. on }\{m>0\}.
\]
Hence the Hamilton--Jacobi equality on \(\{m>0\}\) for \(u\) follows from
the corresponding equality for \(u^*\).

The same locality observation also shows that all terms multiplied by \(m\)
in the transport equation and in \eqref{def:singular-and-terminal} agree for
\(u\) and \(u^*\). Moreover,
\[
u(0,\cdot)=u^*(0,\cdot)
\quad\text{a.e. on }\{m_0>0\}
\]
implies
\[
m_0u(0,\cdot)=m_0u^*(0,\cdot)
\quad\text{a.e. on }\Tt^d.
\]
Therefore the transport equation and \eqref{def:singular-and-terminal} for
\((m,u)\) follow from the corresponding properties of \((m,u^*)\). Thus
\((m,u)\) is an MFG-solution with the prescribed integrability bounds, and so
\(u\in \mathcal{U}(m)\). This completes the characterization of \(\mathcal{U}(m)\).
\end{proof}

\subsection{Uniqueness of Density}

The uniqueness proof compares two solutions corresponding to two possibly
distinct densities. In this setting, the shift-integrability condition
\eqref{mDupsilon-shift-L1} is not available for one solution tested against
the other. We therefore prove a variant of
Lemma~\ref{lem:transport-tested-upsilon.minus.u} in which
\eqref{mDupsilon-shift-L1} is replaced by the stronger Hamilton--Jacobi upper
bound obtained by taking the spatial perturbation \(\ell=0\). We first
establish the approximation estimates needed for this variant.

For
\(\upsilon\in BV(Q)\), we use the same one-sided mollification introduced in \eqref{eq:upsilon-mollified}
consider the extension in \eqref{eq:upsilon-extension}, and we define
\(\upsilon_{\xi,\tau}\) by \eqref{eq:upsilon-mollified}. 
In contrast with \eqref{upsilon_n.defined}, we
now correct the terminal value only by a constant.
Let
\(\xi_n\downarrow0\), and choose \(\tau_n\downarrow0\) so that
\eqref{xi-tau-choice} holds.  For such a sequence, set
\begin{equation}\label{upsilon_n.redefined}
\upsilon_n(t,x)
:=
\upsilon_{\xi_n,\tau_n}(t,x)-\delta_n,
\end{equation}
where
\[
\delta_n:=
\sup_{x\in\Tt^d}
\left|
u_T(x)
-
\int_{\Tt^d}
u_T(x')
\frac{1}{(\xi_n)^d}
\overline{\zeta}\left(\frac{x'-x}{\xi_n}\right)
\,dx'
\right|.
\]
Since \(u_T\in C^1(\Tt^d)\), we have \(\delta_n\to0\). Moreover,
\(\upsilon_n\in C^1(\bar Q)\), and the definition gives the terminal
inequality
\[
\upsilon_n(T,x)\leq u_T(x)
\qquad\text{for all }x\in\Tt^d.
\]
The following approximation result is the analogue of
Proposition~\ref{lem:trace-approx} for this terminal-inequality
regularization.

\begin{proposition}\label{prop:terminal-ineq-approx}
Let \(\upsilon\in BV(Q)\) satisfy \(D\upsilon\in L^\alpha(Q;\RR^d)\). Let
\(\upsilon_n\) be defined by \eqref{upsilon_n.redefined}. Then, as \(n\to\infty\),
\begin{align}
  D\upsilon_n \to D\upsilon
  \quad &\text{in } L^\alpha(Q;\RR^d), \label{Dupsilon.limit-aux:repeated}\\
  \upsilon_n(0,\cdot)\to\upsilon(0,\cdot)
  \quad &\text{in } L^1(\Tt^d). \label{upsilon-trace-limit-aux:repeated}
\end{align}
\end{proposition}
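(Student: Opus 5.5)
This is a rerun of the proof of Proposition~\ref{lem:trace-approx}. The new sequence differs from the old one only in its correction term. By \eqref{upsilon_n.defined} and \eqref{upsilon_n.redefined}, the two sequences differ by
\[
u_T(x)-\int_{\Tt^d}u_T(x')\frac{1}{(\xi_n)^d}\overline{\zeta}\Bigl(\frac{x'-x}{\xi_n}\Bigr)dx' + \delta_n .
\]
This quantity converges to $0$ uniformly on $\bar Q$, since $u_T$ is uniformly continuous and $\delta_n\to0$. Its spatial gradient, $Du_T$ minus its mollification, also converges to $0$ uniformly, because $u_T\in C^1(\Tt^d)$. The old proof used nothing from Lemma~\ref{lem:upsilon-improved} beyond $D\upsilon\in L^\alpha$, the structure of the kernel \eqref{kernel-mollification}, and the choice \eqref{xi-tau-choice}. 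So I can either redo the two steps directly, or note that the old argument yields the same limits for the common part $\upsilon_{\xi_n,\tau_n}$. I will do the former for clarity.

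\textbf{Step 1 (gradient).} Differentiating \eqref{upsilon_n.redefined} kills the constant $\delta_n$, so
\[
D\upsilon_n(t,x)=\int_t^\infty\int_{\Tt^d}D\upsilon^*(t',x')K_{\xi_n,\tau_n}(t',x';t,x)\,dx'dt'.
\]
Here $D\upsilon^*$ equals $D\upsilon$ on $Q$ and $Du_T$ for $t'>T$. This is a (one-sided) space-time convolution of the function $D\upsilon^*$, which lies in $L^\alpha((0,T+1)\times\Tt^d)$. By the standard $L^\alpha$ continuity of translations and Minkowski's integral inequality, it converges to $D\upsilon$ in $L^\alpha(Q)$. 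The shifts are bounded by $(\tau_n,\xi_n)\to0$, and the kernel has unit mass by \eqref{K-int-C-vol:formula}. This gives \eqref{Dupsilon.limit-aux:repeated}.

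\textbf{Step 2 (initial trace).} I repeat the divergence identity from the proof of Proposition~\ref{lem:trace-approx}, using the vector field $\psi$ with $\div\bigl(\xi^{-d}\psi(t'/\tau,(x'-x)/\xi)\bigr)=K_{\xi,\tau}(\cdot;0,x)$. Integrating by parts as in \eqref{upsilon-mollified-at-t.equals.zero} yields the bound
\[
\Bigl|\upsilon_{\xi_n,\tau_n}(0,x)-\int_{\Tt^d}\upsilon(0,x')\xi_n^{-d}\overline{\zeta}\bigl(\tfrac{x'-x}{\xi_n}\bigr)dx'\Bigr|\leq \xi_n^{-d}\|\overline\zeta\|_{L^\infty}\int_0^{\tau_n}\!\!\int_{\Tt^d}d|\upsilon_t| .
\]
By \eqref{xi-tau-choice} the right-hand side is at most $\xi_n\|\overline\zeta\|_{L^\infty}$, uniformly in $x$. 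The spatial mollification of the $L^1$ function $\upsilon(0,\cdot)$ converges to $\upsilon(0,\cdot)$ in $L^1(\Tt^d)$, and $\delta_n\to0$. Together these give \eqref{upsilon-trace-limit-aux:repeated}.

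The only point needing care is the integration by parts at $t'=0$. It requires $\tau_n<T$, so that the kernel does not reach the extension region, and it uses the BV trace Green formula on $(0,T)\times\Tt^d$ with the vector field compactly supported in $t'<\tau_n$. Both hold exactly as before, so I expect no genuine obstacle.
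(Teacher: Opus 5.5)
Your proposal is correct and follows essentially the same argument as the paper. You obtain the gradient convergence by noting that differentiation removes $\delta_n$, so $D\upsilon_n$ is a one-sided space-time mollification of $D\upsilon^*$. You obtain the trace convergence by repeating the divergence identity from Proposition~\ref{lem:trace-approx} together with \eqref{xi-tau-choice}, the $L^1$ convergence of the spatial mollification of $\upsilon(0,\cdot)$, and $\delta_n\to0$.
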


\begin{proof}
Differentiating
\eqref{upsilon_n.redefined} in the spatial variables gives
\begin{equation}\label{Dupsilon_n.formula-new}
D\upsilon_n(t,x)
=
\int_t^\infty\int_{\Tt^d}
D\upsilon^*(t',x')
K_{{\xi_n}, {\tau_n}}(t',x'; t,x)
\,dx'dt'.
\end{equation}
Thus \(D\upsilon_n\) is the one-sided space-time mollification of
\(D\upsilon^*\). Since \(D\upsilon^*=D\upsilon\) on \(Q\), while
\(D\upsilon^*=Du_T\) for \(t>T\), the standard convergence of approximate
identities, together with \(D\upsilon\in L^\alpha(Q;\RR^d)\) and
\(Du_T\in L^\infty(\Tt^d;\RR^d)\), yields
\eqref{Dupsilon.limit-aux:repeated}.

For the initial trace, the integration-by-parts argument used in the proof of
Proposition~\ref{lem:trace-approx} gives
\[
\left|
\upsilon_{\xi_n,\tau_n}(0,x)
-
\int_{\Tt^d}
\upsilon(0,x')
\frac{1}{(\xi_n)^d}
\overline{\zeta}\left(\frac{x'-x}{\xi_n}\right)
\,dx'
\right|
\leq
\xi_n\|\overline{\zeta}\|_{L^\infty},
\]
where \eqref{xi-tau-choice} is used in the last estimate. The right-hand side
converges to zero uniformly in \(x\). Since \(\delta_n\to0\), and since the
spatial mollification of \(\upsilon(0,\cdot)\) converges to
\(\upsilon(0,\cdot)\) in \(L^1(\Tt^d)\), we obtain
\eqref{upsilon-trace-limit-aux:repeated}.
\end{proof}

\begin{proposition}[Hamilton--Jacobi limsup without shift-integrability]
\label{prop:hjb-limsup-no-shift}
Consider the setting of Problem~\ref{prob.mfg}. Suppose that
Assumptions~\ref{onH.monotone}, \ref{onH.powergrowth}, and
\ref{onH.high-order-pm.has-no-tx} hold. Let
\(m\in L^\beta(Q;\RR^+_0)\), and let \(\upsilon\in BV(Q)\) satisfy
\eqref{Dupsilon.Lalpha}, \eqref{upsilon.jumps.forward}, and
 \begin{equation}\label{upsilon-HJB-upper:stricter}
        \bigl(-\upsilon_t^{\text{ac}}+H(t,x,D\upsilon,0)\bigr)^+ \in L^{\beta'}(Q).
    \end{equation}
Let \(\upsilon_n\) be defined by
\eqref{upsilon_n.redefined}. Then
\begin{equation}\label{m-times-hjb.upsilon-limit:repeated}
\limsup_{n\to\infty}
\int_Q m\bigl(-(\upsilon_n)_t+H(t,x,D\upsilon_n,m)\bigr)\,dxdt
\leq
\int_Q m\bigl(-\upsilon_t^{\mathrm{ac}}+H(t,x,D\upsilon,m)\bigr)\,dxdt.
\end{equation}
\end{proposition}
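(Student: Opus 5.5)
We follow the proof of Proposition~\ref{lem:hjb-limsup}, changing only the treatment of the commutator term, which must now be controlled without \eqref{mDupsilon-shift-L1}. The nonnegativity $(\upsilon^*)_t^{\text{s}}\geq 0$ is unchanged, so we again have \eqref{upsilon_t.singular.removed}. Because \eqref{Dupsilon_n.formula-new} has no $Du_T$ correction, Jensen's inequality for the convex map $H(t,x,\cdot,m(t,x))$ gives
\[
H(t,x,D\upsilon_n,m)\leq\int_t^\infty\int_{\Tt^d}H\bigl(t,x,D\upsilon^*(t',x'),m(t,x)\bigr)K_{\xi_n,\tau_n}(t',x';t,x)\,dx'dt'.
\]
We then add and subtract the quantity evaluated at $(t',x')$. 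For this we set
\[
h:=-\upsilon_t^{\text{ac}}+H(t,x,D\upsilon,0)\ \text{ on }Q,\qquad h:=H(t,x,Du_T,0)\ \text{ for }t>T,
\]
and let $h^{\text{c}}(t',x';t,x):=H(t,x,D\upsilon^*(t',x'),m(t,x))-H(t',x',D\upsilon^*(t',x'),0)$. Adding the two bounds yields the analogue of \eqref{upsilon_n.hjb.bounded.by.two-terms}.

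The $h$-term is handled exactly as in the proof of \eqref{mh-limsup}. By \eqref{upsilon-HJB-upper:stricter}, together with $H(t,x,Du_T,0)^+\leq C V$ from \eqref{res:H.upper.simple}, we have $h^+\in L^{\beta'}$. Its mollification therefore converges in $L^{\beta'}(Q)$ and can be paired with $m\in L^\beta$. For $h^-$ we use a.e.\ convergence, which follows from Lemma~\ref{lem:mollification-convergence-with-adapted.Lebesgue} with $F$ independent of $(t,x)$, and Fatou's lemma. The contribution coming from $t>T$ lives only on $t\in(T-\tau_n,T)$ and vanishes in the limit.

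The main obstacle is the commutator term, since we can no longer bound it in $L^q(Q,m)$ through shifts of $D\upsilon$ weighted by $m$. Here we use Lemma~\ref{lem:highest-order-m.separated} to write
\[
h^{\text{c}}=\bigl[H(t,x,p,m(t,x))+f(m(t,x))-H(t',x',p,0)-f(0)\bigr]-f(m(t,x))+f(0),
\qquad p=D\upsilon^*(t',x').
\]
The bracket is bounded by
\[
C\bigl(|p|^{\alpha(\beta-1)/\beta}+m(t,x)^{\beta-1}+V(t,x)+V(t',x')\bigr)^{1-1/C}.
\]
The leftover term $-f(m(t,x))+f(0)$ is independent of $(t',x')$. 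The kernel integrates to one on $Q$ away from $t\in(T-\tau_n,T)$, so after mollification this term is exact and cancels against the target. The key point is that the $p$-dependence of the bracket has power $\alpha(\beta-1)/\beta$, so $|p|^{\alpha(\beta-1)/\beta}\in L^{\beta'}$ since $D\upsilon\in L^\alpha$. The mollified bracket is therefore dominated in $L^{\beta'}$, and it converges a.e.\ by Lemma~\ref{lem:mollification-convergence-with-adapted.Lebesgue}, using $\sigma=\alpha$ for the $F(t,x,\theta)$ part and the $L^1$ Lebesgue differentiation for the $(t',x')$ part. More precisely, we will show the mollified $h^{\text{c}}$, minus the exact $f$ part, is bounded in $L^{\beta'q}$ for some $q>1$, via $|\cdot|^{q}$ and Jensen with $q=C/(C-1)$ as in \eqref{mollification-Lqm-bound-calculation}. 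Together with a.e.\ convergence to $H(t,x,D\upsilon,m)-H(t,x,D\upsilon,0)$, this gives weak convergence in $L^{\beta'}$, or Vitali's theorem applies directly in $L^1(Q,m)$ through H\"older's inequality with $m\in L^\beta$.

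Summing the $h$ and $h^{\text{c}}$ limits gives the right-hand side of \eqref{m-times-hjb.upsilon-limit:repeated}. The delicate points are two. First, the $m(t,x)^{\beta-1}$ inside the $(1-1/C)$-power must be paired with the weight $m$, which is integrable since $m\cdot m^{(\beta-1)(1-1/C)q}\in L^1$ for $q$ close to $1$. Second, the $\delta_n$ shift only affects $(\upsilon_n)_t$ trivially.
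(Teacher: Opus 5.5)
Your proof is correct. It uses the same scaffolding as the paper: the one-sided mollifier, removal of the nonnegative singular part, Jensen's inequality, a split into $h$ plus a commutator, and Vitali's theorem via a uniform $L^{q\beta'}$ bound with $q=C/(C-1)$. The split itself, however, is different.

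The paper puts $-\upsilon_t^{\mathrm{ac}}+H(t,x,D\upsilon,m)+f(m)$ into $h$. Its commutator then compares $H+f$ at $(t,x,p,m(t,x))$ and at $(t',x',p,m(t',x'))$, and it only has to show that this commutator tends to $0$ in $L^{\beta'}(Q)$. That requires the two-density bound \eqref{eq:deviation-at-fixed-pm-improved-to-m1m2}. To get $h^+\in L^{\beta'}$ it also uses that $H$ is non-increasing in $m$, together with \eqref{eq:highest-order-m.separated}.

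You instead take $h$ to be exactly the quantity controlled by \eqref{upsilon-HJB-upper:stricter}, so $h^+\in L^{\beta'}$ is immediate. Your commutator then has a nonzero limit, $H(t,x,D\upsilon,m)-H(t,x,D\upsilon,0)$. You only need \eqref{eq:deviation-at-fixed-pm-improved-to-m1m2} with $m_2=0$, which is essentially the triple-deviation bound itself, and $f(m(t,x))$ passes through the mollifier exactly because it does not depend on $(t',x')$. The price is identifying a nonzero a.e.\ limit and checking that $m\bigl(H(t,x,D\upsilon,m)-H(t,x,D\upsilon,0)\bigr)\in L^1(Q)$. This does follow from your bracket bound together with $m|f(m)|\leq C(m^\beta+m)$. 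As you note, the reduced $p$-power $\alpha(\beta-1)/\beta$ is what makes both of your closing routes work: the $L^{q\beta'}$ bound, and the $L^q(Q,m)$ bound via H\"older's inequality with $m\in L^\beta$.

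One technical point needs repair. As written, you evaluate $H(t',x',Du_T(x'),0)$ for $t'>T$, both in $h$ and in $h^{\mathrm{c}}$, and implicitly also $V(t',x')$ and the deviation bound there. These are only defined on $Q$. You can fix this in either of two ways:
\begin{enumerate}
\item Extend $H$ and $V$ past $T$, for instance by reflection, so that all the assumptions are preserved.
\item Do as the paper does: set $h=0$ for $t'>T$ and put the whole $t'>T$ contribution, $H(t,x,Du_T(x'),m(t,x))$, into the commutator evaluated at $(t,x)\in Q$. It is bounded by $C(m^{\beta-1}+V+1)$, supported on the strip $T-\tau_n<t<T$, and vanishes in $L^{\beta'}(Q)$. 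The $f(0)$ leftover then becomes $f(0)\int_t^T\!\int_{\Tt^d}K_{\xi_n,\tau_n}$, which converges boundedly to $f(0)$.
\end{enumerate}
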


\begin{remark}
The integral on the right-hand side of
\eqref{m-times-hjb.upsilon-limit:repeated} is well-defined as an extended
real number in \([-\infty,\infty)\). Indeed, since
\(H(t,x,p,\cdot)\) is non-increasing by Remark~\ref{rmk:cxty}, we have
\[
-\upsilon_t^{\mathrm{ac}}+H(t,x,D\upsilon,m)
\leq
-\upsilon_t^{\mathrm{ac}}+H(t,x,D\upsilon,0).
\]
Hence
\[
\bigl(-\upsilon_t^{\mathrm{ac}}+H(t,x,D\upsilon,m)\bigr)^+
\leq
\bigl(-\upsilon_t^{\mathrm{ac}}+H(t,x,D\upsilon,0)\bigr)^+ .
\]
The right-hand side belongs to \(L^{\beta'}(Q)\) by
\eqref{upsilon-HJB-upper:stricter}; therefore, multiplying by
\(m\in L^\beta(Q)\) and using Hölder's inequality shows that the positive
part of the integrand in
\[
\int_Q m\bigl(-\upsilon_t^{\mathrm{ac}}+H(t,x,D\upsilon,m)\bigr)\,dxdt
\]
is integrable.  In the application below, the inequality obtained from
\eqref{m-times-hjb.upsilon-limit:repeated} provides a finite lower bound,
which also excludes the value \(-\infty\).
\end{remark}

    \begin{proof}
We follow the strategy of Proposition~\ref{lem:hjb-limsup}, but the error
term is treated using Assumption~\ref{onH.high-order-pm.has-no-tx} rather
than the shift-integrability condition~\eqref{mDupsilon-shift-L1}. Since \(\upsilon_t^{\mathrm{s}}\geq0\) and
\(\upsilon(T,\cdot)\leq u_T\), the singular part of the time derivative of
the extension \(\upsilon^*\) is nonnegative. Hence, as in
\eqref{upsilon_t.singular.removed},
\begin{equation}\label{upsilon_t.singular.removed:repeated}
-(\upsilon_n)_t
\leq
\int_t^\infty\int_{\Tt^d}
\bigl(-\upsilon^*(t',x')\bigr)_{t'}^{\mathrm{ac}}
\cdot K_{{\xi_n}, {\tau_n}}(t',x';t,x)\,dx'dt' .
\end{equation}
Moreover, by \eqref{Dupsilon_n.formula-new} and the convexity of
\(H(t,x,\cdot,m(t,x))\),
\begin{equation}\label{hjb-upsilon_n.cxty.upper-bound-new}
H(t,x,D\upsilon_n(t,x),m(t,x))
\leq
\int_t^\infty\int_{\Tt^d}
H(t,x,D\upsilon^*(t',x'),m(t,x))
K_{{\xi_n}, {\tau_n}}(t',x';t,x)\,dx'dt' .
\end{equation}

We take $f\colon\RR^+_0\to \RR$ as in Lemma~\ref{lem:highest-order-m.separated} and define
\[
h(t,x)
:=
\begin{cases}
-\upsilon_t^{\mathrm{ac}}(t,x)+H(t,x,D\upsilon(t,x),m(t,x)) + f(m(t,x)),
& (t,x)\in Q,\\
0, & t>T,
\end{cases}
\]
and define \(h^{\mathrm{c}}\) by

\begin{equation*}
 h^{\mathrm{c}}(t',x';t,x)
:= \begin{cases}
            \begin{aligned}
   & \Bigl(H(t,x,D\upsilon(t',x'),m(t,x)) + f(m(t,x)) \\
   & \mathrlap{\quad - H(t',x',D\upsilon(t',x'),m(t',x')) - f(m(t',x'))\Bigr),}\hphantom{- H(t',x',D\upsilon(t',x'),m(t',x')) - f(m(t',x'))\Bigr)} 
       \end{aligned} \quad\qquad &  t' < T, \\
       H(t,x,Du_T(x'),m(t,x))+f(m(t,x)), \qquad &  t' > T.
        \end{cases}
\end{equation*}
Adding \eqref{upsilon_t.singular.removed:repeated} and
\eqref{hjb-upsilon_n.cxty.upper-bound-new}, and then adding and subtracting
the corresponding terms, gives
\begin{equation}\label{upsilon_n.hjb.bounded.by.two-terms-new}
\begin{aligned}
&-(\upsilon_n)_t+H(t,x,D\upsilon_n,m)\\
&\qquad\leq
\int_t^\infty\int_{\Tt^d}
\bigl(h(t',x')+h^{\mathrm{c}}(t',x';t,x) -f(m(t,x))\bigr)
K_{{\xi_n}, {\tau_n}}(t',x';t,x)\,dx'dt' \qquad\text{a.e.~in } Q.
\end{aligned}
\end{equation}

In view of~\eqref{upsilon_n.hjb.bounded.by.two-terms-new}, it is enough to show
\begin{align}
& \limsup_{n\to\infty}
\int_Q m(t,x)
\left(\int_t^\infty\int_{\Tt^d}
h(t',x')K_{{\xi_n}, {\tau_n}}(t',x';t,x)\,dx'dt'\right)\,dxdt \nonumber\\
&\qquad\leq
\int_Q m(t,x)\bigl(-\upsilon_t^{\mathrm{ac}}
+H(t,x,D\upsilon,m) +  f(m(t,x))\bigr)\,dxdt,
\label{mh-limsup-new}
\end{align}
and
\begin{align}
& \lim_{n\to\infty}
\int_Q m(t,x)
\left(\int_t^\infty\int_{\Tt^d}
h^{\mathrm{c}}(t',x';t,x)K_{{\xi_n}, {\tau_n}}(t',x';t,x)\,dx'dt'\right)\,dxdt
=0.
\label{mhc-lim-new}
\end{align}

\medskip\noindent\textbf{Proof of \eqref{mh-limsup-new}:} We note that $h^+\in L^{\beta'}((0,\infty)\times\Tt^d)$ by~\eqref{upsilon-HJB-upper:stricter}  and \eqref{eq:highest-order-m.separated} because $H(t,x,p,\cdot)$ is non-increasing as in Remark~\ref{rmk:cxty}. Hence,
  \begin{equation*}
\int_t^\infty \int_{\Tt^d} h^+(t',x') K_{{\xi_n}, {\tau_n}}(t',x';t,x)\,dx'dt' \to h^+(t,x) \qquad\text{in } L^{\beta'}(Q),
  \end{equation*}
  which yields
  \begin{equation}\label{hplus-limit-new}
  \begin{aligned}
& \lim_{n\to\infty} \int_Q m(t,x) \left(\int_t^\infty \int_{\Tt^d} h^+(t',x') K_{{\xi_n}, {\tau_n}}(t',x';t,x)\,dx'dt'\right)\,dxdt \\
& \qquad = \int_Q m(t,x)h^+(t,x)\,dxdt.
  \end{aligned}
  \end{equation}
  On the other hand, \(h^-\in L^1((0,\infty)\times\Tt^d)\) by the \(BV\)
  regularity of \(\upsilon\), the power-growth bounds on \(H\), and
  \eqref{eq:highest-order-m.separated}. Hence, by the adapted Lebesgue
  differentiation argument,
  \begin{equation*}
\int_t^\infty \int_{\Tt^d} h^-(t',x') K_{{\xi_n}, {\tau_n}}(t',x';t,x)\,dx'dt' \to h^-(t,x) \qquad\text{a.e.~in } Q,
  \end{equation*}
  which yields
  \begin{equation}\label{hminus-limit-new}
  \begin{aligned}
  & \liminf_{n\to\infty} \int_Q m(t,x) \left(\int_t^\infty \int_{\Tt^d} h^-(t',x') K_{{\xi_n}, {\tau_n}}(t',x';t,x)\,dx'dt'\right)\,dxdt \\
& \qquad \geq \int_Q m(t,x)h^-(t,x)\,dxdt,
  \end{aligned}
  \end{equation}
  by Fatou's lemma. Subtracting~\eqref{hminus-limit-new} from~\eqref{hplus-limit-new}, we obtain~\eqref{mh-limsup-new}.

\medskip\noindent\textbf{Proof of \eqref{mhc-lim-new}:}
It is enough to prove
\[I_{n} := \int_t^\infty\int_{\Tt^d}
h^{\mathrm{c}}(t',x';t,x)K_{{\xi_n}, {\tau_n}}(t',x';t,x)\,dx'dt' \to 0 \qquad \text{in } L^{\beta'}(Q),
\]
because then \eqref{mhc-lim-new} follows from Hölder's inequality
and \(m\in L^\beta(Q)\). We first observe that
\begin{equation}\label{hc2-converges-ae}
 I_{n} \to 0 \qquad \text{a.e.~in } Q,
 \end{equation}
which can be justified in the same way as \eqref{I.n-converges-ae} by defining
\begin{equation*}
    \theta = \begin{cases}
        D\upsilon, \qquad & \text{on } Q, \\
        Du_T, & \text{on } (T,T+\tau_1)\times\Tt^d, \\
        0, & \text{on } (T+\tau_1,\infty)\times\Tt^d,
    \end{cases}
\end{equation*}
and $F(t,x,s) = H(t,x,s,m(t,x)) + f(m(t,x))$, then expressing
\begin{equation*}
    \begin{aligned}
        & I_{n} = -\int_t^T \int_{\Tt^d} \bigl(H(t',x',D\upsilon(t',x'), m(t',x')) + f(m(t',x'))\bigr) K_{{\xi_n}, {\tau_n}}(t',x'; t,x) \,dx'dt' \\
        & \qquad\qquad + \int_t^\infty\int_{\Tt^d} F(t,x, \theta(t',x')) K_{{\xi_n}, {\tau_n}}(t',x'; t,x) \,dx'dt'.
    \end{aligned}
\end{equation*}

On the other hand, we claim that
\begin{equation}\label{hc2-leftover-converges-Lbeta'}
    \tilde{I}_{n} := \int_T^\infty\int_{\Tt^d}
h^{\mathrm{c}}(t',x';t,x)K_{{\xi_n}, {\tau_n}}(t',x';t,x)\,dx'dt' \to 0 \qquad \text{in } L^{\beta'}(Q).
\end{equation}
Indeed, since $K_{{\xi_n}, {\tau_n}}(t',x';t,x) = 0$ for $t' > t+\tau_n$, the last integral is nonzero only when \(t\in(T-\tau_n,T)\). Thus
\[|\tilde{I}_{n}|  \leq \chi_{\{T-\tau_n<t<T\}}
\int_{\Tt^d}
\bigl|H(t,x,Du_T(x'),m(t,x))+f(m(t,x))\bigr|\cdot
\frac{1}{(\xi_n)^d}
\overline{\zeta}\left(\frac{x'-x}{\xi_n}\right)\,dx'.\]
Since \(Du_T\in L^\infty(\Tt^d;\RR^d)\), the bounds
\eqref{eq:assH.lower.simple}, \eqref{res:H.upper.simple}, and
\eqref{eq:highest-order-m.separated} imply
\[
\bigl|H(t,x,Du_T(x'),m(t,x))+f(m(t,x))\bigr|
\leq C\bigl(m(t,x)^{\beta-1}+V(t,x)+1\bigr),
\]
uniformly in \(x'\). Hence
\[|\tilde{I}_{n}| \leq
C\chi_{\{T-\tau_n<t<T\}}
\bigl(m(t,x)^{\beta-1}+V(t,x)+1\bigr) \qquad\text{a.e.~in } Q,
\]
uniformly in $n$. Therefore
\[
\int_Q |\tilde{I}_{n}(t,x)|^{\beta'}\,dxdt
\leq
C\int_{T-\tau_n}^T\int_{\Tt^d}
\bigl(m(t,x)^\beta+V(t,x)^{\beta'}+1\bigr)\,dxdt,
\]
which converges to zero as \(n\to\infty\), because
\(m^\beta+V^{\beta'}+1\in L^1(Q)\). Thus
\(\tilde{I}_{n}\to 0\) in \(L^{\beta'}(Q)\).

In view of \eqref{hc2-leftover-converges-Lbeta'}, it remains to prove
\(I_n-\tilde I_n\to0\) in \(L^{\beta'}(Q)\). But, by
\eqref{hc2-converges-ae} and the fact that \(\tilde I_n\to0\) a.e.~in \(Q\),
we know that \(I_n-\tilde I_n\to0\) a.e.~in \(Q\). To lift this a.e.\
convergence to convergence in \(L^{\beta'}(Q)\), we show that the sequence is
uniformly bounded in \(L^{q\beta'}(Q)\) for some \(q>1\). Indeed, since
\(q>1\), the uniform \(L^{q\beta'}(Q)\)-bound implies uniform integrability
in \(L^{\beta'}(Q)\), and Vitali's convergence theorem then gives
\(I_n-\tilde I_n\to0\) in \(L^{\beta'}(Q)\).

Before specifying $q$, we estimate the desired quantity as follows:
\begin{equation}\label{mollification-Lqbeta'-bound-calculation}
 \begin{aligned}
 & \int_Q |I_{n} - \tilde{I}_{n}|^{q\beta'}\,dxdt = \int_Q \left|\int_t^T \int_{\Tt^d} h^{\text{c}}(t',x';t,x)  K_{{\xi_n}, {\tau_n}}(t',x';t,x)\,dx'dt'\right|^{q\beta'} dxdt \\
 & \ \leq \int_Q  \left(\int_t^T \int_{\Tt^d} |h^{\text{c}}(t',x';t,x)|^{q\beta'}  K_{{\xi_n}, {\tau_n}}(t',x';t,x)\,dx'dt'\right)dxdt \\
 & \ = \int_Q \frac{1}{\tau_n} \hat{\zeta}\left(\frac{s}{\tau_n}\right)\cdot \frac{1}{(\xi_n)^d} \overline{\zeta}\left(\frac{y}{\xi_n}\right)\left(\int_0^{T-s} \int_{\Tt^d} |h^{\text{c}}(t+s,x+y;t,x)|^{q\beta'} dxdt\right) dyds \\
 & \ \leq \sup_{(s,y)\in \bar{Q}} \int_0^{T-s} \int_{\Tt^d} |h^{\text{c}}(t+s,x+y;t,x)|^{q\beta'} dxdt
 \\
& \qquad = \sup_{(s,y)\in\bar{Q}} \int_0^{T-s}\int_{\Tt^d} \Bigl|H(t,x,D\upsilon(t+s,x+y),m(t,x)) + f(m(t,x)) \\
   & \qquad\qquad - H(t+s,x+y,D\upsilon(t+s,x+y),m(t+s,x+y)) - f(m(t+s,x+y))\Bigr|^{q\beta'} \,dxdt.
 \end{aligned}
\end{equation}

Since the last expression does not depend on $n$, it remains to find some $q>1$ which makes it finite. For this, we use \eqref{eq:deviation-at-fixed-pm-improved-to-m1m2} and choose some $c>1$ satisfying
\begin{equation*}
    \begin{aligned}
        & |H(t,x,p,m_1) + f(m_1) - H(t',x',p,m_2) - f(m_2)|\\
        & \qquad \leq c\bigl(|p|^{\alpha\frac{(\beta-1)}{\beta}}+ m_1^{\beta-1} + m_2^{\beta-1} + V(t,x) + V(t',x')\bigr)^{1-1/c},
    \end{aligned}
    \end{equation*}
then we set $q := c/(c-1)$. Accordingly,
    \begin{equation*}
     \begin{aligned}
       & \Bigl|H(t,x,D\upsilon(t+s,x+y),m(t,x)) + f(m(t,x)) \\
   & \qquad - H(t+s,x+y,D\upsilon(t+s,x+y),m(t+s,x+y)) - f(m(t+s,x+y))\Bigr|^{q\beta'} \\
       & \quad \leq C\Bigl(|D\upsilon(t+s,x+y)|^{\alpha\frac{(\beta-1)}{\beta}} + m(t,x)^{\beta-1} + m(t+s,x+y)^{\beta-1}\\
       &\quad\qquad\qquad + V(t,x) + V(t+s,x+y)\Bigr)^{\beta'}\\
       & \quad \leq C\bigl(|D\upsilon(t+s,x+y)|^\alpha + m(t,x)^{\beta} + m(t+s,x+y)^{\beta} + V(t,x)^{\beta'} + V(t+s,x+y)^{\beta'}\bigr).
     \end{aligned}
  \end{equation*}
Thus the right-hand side of
\eqref{mollification-Lqbeta'-bound-calculation} is bounded uniformly in
\(n\), because \(D\upsilon\in L^\alpha(Q;\RR^d)\),
\(m\in L^\beta(Q)\), and \(V\in L^{\beta'}(Q)\).

Combining this uniform \(L^{q\beta'}(Q)\)-bound with the preceding Vitali
argument gives \(I_n-\tilde I_n\to0\) in \(L^{\beta'}(Q)\). Together with
\eqref{hc2-leftover-converges-Lbeta'}, this yields \(I_n\to0\) in
\(L^{\beta'}(Q)\), and therefore \eqref{mhc-lim-new}. Finally, combining
\eqref{mh-limsup-new} and \eqref{mhc-lim-new} with
\eqref{upsilon_n.hjb.bounded.by.two-terms-new} gives
\eqref{m-times-hjb.upsilon-limit:repeated}.
\end{proof} 

We can now prove the testing result needed for the uniqueness of the
density.

\begin{lemma}\label{lem:upsilontest-wo-mintegr}
    Consider the setting of Problem~\ref{prob.mfg}. Suppose Assumptions \ref{onH.monotone}, \ref{onH.powergrowth},  and \ref{onH.high-order-pm.has-no-tx} hold. Let $m\in L^\beta(Q;\RR^+_0)$ and let $u\in \mathcal{U}(m)$. Then
    \begin{equation}\label{transport-tested-upsilon.minus.u:repeated}
    \begin{aligned}
    & \int_Qm\bigl( -(\upsilon-u)_t^{\text{ac}} + D_pH(t,x,Du,m)\cdot D(\upsilon-u) \bigr)\,dxdt\\
    &\qquad\quad \geq \int_{\Tt^d}m_0(x)(\upsilon(0,x)-u(0,x))\,dx
    \end{aligned}
    \end{equation}
    for all $\upsilon\in BV(Q)$ satisfying \eqref{Dupsilon.Lalpha}, \eqref{upsilon.jumps.forward}, and \eqref{upsilon-HJB-upper:stricter}.
\end{lemma}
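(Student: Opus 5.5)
We mirror the chain of arguments that produced Lemma~\ref{lem:transport-tested-upsilon.minus.u}. The shift-integrability route of Lemma~\ref{lem:upsilon-improved} and Proposition~\ref{lem:hjb-limsup} is replaced by the terminal-inequality regularization \eqref{upsilon_n.redefined}, Proposition~\ref{prop:terminal-ineq-approx}, and Proposition~\ref{prop:hjb-limsup-no-shift}. Since $u\in\mathcal{U}(m)$, the pair $(m,u)$ is a VI-solution satisfying all the integrability bounds. Lemma~\ref{lem:mu-improved} therefore gives \eqref{weak-sol-mu-improved-Linfty}. We would test it with $\mu=m$ directly. This is legitimate only after the extension to unbounded $\mu$ with $\esssup(\mu-m)<\infty$. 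A cleaner choice is to test with the truncations $\mu_k=\min(m,k)$ and then pass $k\to\infty$ at the end, as in Lemma~\ref{full-test-functions}.

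\textbf{Step 1: the inequality for the smooth approximants.} Write \eqref{weak-sol-mu-improved-Linfty} for $\upsilon_n\in C^1(\bar Q)$. Since $\upsilon_n(T,\cdot)\le u_T$ rather than equal to it, we first note that the constraint $\upsilon(T,\cdot)=u_T$ in Lemma~\ref{lem:mu-improved} can be relaxed. Adding $-\delta_n$ to a valid test function changes only the constant part. Alternatively, $\upsilon_n+\delta_n$ can be compared directly: the constant $\delta_n$ cancels in the $m$ and $\mu$ time terms, and it contributes only $-\delta_n\int m_0$ to the boundary term, which vanishes as $n\to\infty$.

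We then rearrange, as in \eqref{weak-sol.upsilon_n}, into three pieces:
\begin{itemize}
\item an $m$-weighted piece $\int_Q m(-(\upsilon_n)_t+H(t,x,D\upsilon_n,\mu))$,
\item a $\mu$-weighted piece,
\item the boundary term.
\end{itemize}
With $\mu=m$, Proposition~\ref{prop:hjb-limsup-no-shift} controls the limsup of the first piece. Proposition~\ref{prop:terminal-ineq-approx} gives convergence of $D\upsilon_n$ in $L^\alpha$ and of the initial traces.

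\textbf{Step 2: the $\mu$-weighted piece, which is the main obstacle.} The $\mu$-weighted piece is
\[
\int_Q m\bigl(u_t^{\mathrm{ac}}-H(t,x,D\upsilon_n,m)+D_pH(t,x,D\upsilon_n,m)\cdot(D\upsilon_n-Du)\bigr).
\]
We need a dominating function for $m|D\upsilon_n|^\alpha$, and this is no longer available from \eqref{mDupsilon-shift-L1}.

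The plan is to avoid passing to the limit in it separately. The $m H(t,x,D\upsilon_n,m)$ terms cancel exactly between the two pieces when $\mu=m$. What remains is $\int m\, D_pH(t,x,D\upsilon_n,m)\cdot(D\upsilon_n-Du)$. By convexity this term is at most
\[
\int m\bigl(H(t,x,D\upsilon_n,m)-H(t,x,Du,m)\bigr)+\text{(monotone remainder)}.
\]
This is the wrong direction, so we will instead exploit monotonicity of $D_pH$ in $p$:
\[
D_pH(D\upsilon_n)\cdot(D\upsilon_n-Du)\ \ge\ D_pH(Du)\cdot(D\upsilon_n-Du).
\]
That is also the wrong direction for an upper bound. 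Hence the honest route is the convex-combination trick of Lemma~\ref{lem:transport-tested-upsilon.minus.u}. Replace $\upsilon$ by $\upsilon^\lambda=(1-\lambda)u+\lambda\upsilon$, which still satisfies \eqref{Dupsilon.Lalpha}, \eqref{upsilon.jumps.forward}, and \eqref{upsilon-HJB-upper:stricter} by convexity of $H(t,x,\cdot,0)$.

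First we derive, in the limit $n\to\infty$ and after cancellations, the inequality
\[
0\le\int_Q m\bigl(-(\upsilon-u)^{\mathrm{ac}}_t+D_pH(t,x,D\upsilon,m)\cdot D(\upsilon-u)\bigr)-\int m_0(\upsilon(0)-u(0)).
\]
To get the $n$-limit of $\int m D_pH(D\upsilon_n,m)\cdot(D\upsilon_n-Du)$, we first establish uniform integrability of $m|D\upsilon_n|^\alpha$ via Jensen on the one-sided mollifier. We would try to show that
\[
m|D\upsilon_n|^\alpha\le m\,\bigl(K_n*|D\upsilon^*|^\alpha\bigr),
\]
and that the right side converges in $L^1$. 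This holds if $|D\upsilon|^\alpha\in L^{\beta'}$. That membership is exactly what \eqref{upsilon-HJB-upper:stricter} combined with \eqref{eq:assH.lower.simple} would provide, provided we also control $(\upsilon^{\mathrm{ac}}_t)^+$.

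\textbf{Step 3: the limits $k\to\infty$ and $\lambda\to0^+$.} Here the finiteness of $\int m D_pH(Du,m)\cdot D\upsilon$ is the delicate point. Where it fails, we keep the inequality in extended form, with the Remark after Proposition~\ref{prop:hjb-limsup-no-shift} guaranteeing well-definedness. Finally we divide by $\lambda$ and let $\lambda\to0^+$ using dominated convergence as before, which yields \eqref{transport-tested-upsilon.minus.u:repeated}.
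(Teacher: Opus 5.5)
There is a genuine gap in Step 2, at exactly the point this lemma is designed to get around.

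You start from $\mu=m$ in the variational inequality, which linearizes at the test function. This produces the term $\int_Q m\,D_pH(t,x,D\upsilon_n,m)\cdot(D\upsilon_n-Du)$, which convexity bounds only from below, as you note. You then try to pass $n\to\infty$ through uniform integrability of $m|D\upsilon_n|^\alpha$, and you reduce this to $|D\upsilon|^\alpha\in L^{\beta'}(Q)$.

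That membership is not available. Combining \eqref{upsilon-HJB-upper:stricter} with \eqref{eq:assH.lower.simple} only gives $\frac{1}{C}|D\upsilon|^\alpha\le CV+\upsilon_t^{\mathrm{ac}}+\bigl(-\upsilon_t^{\mathrm{ac}}+H(t,x,D\upsilon,0)\bigr)^+$. Nothing in the hypotheses puts $(\upsilon_t^{\mathrm{ac}})^+$ in $L^{\beta'}(Q)$; for a $BV$ function it is only in $L^1(Q)$. In the intended application $\upsilon=u_2\in\mathcal{U}(m_2)$, only $((u_2)_t)^-$ is known to lie in $L^{\beta'}$. Indeed, if $|D\upsilon|^\alpha\in L^{\beta'}(Q)$ held, then \eqref{mDupsilon-shift-L1} would follow from H\"older's inequality, and the lemma would already be a special case of Lemma~\ref{lem:transport-tested-upsilon.minus.u}. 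The whole point of the statement is to avoid any integrability of $m|D\upsilon|^\alpha$.

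For the same reason, your last step cannot use dominated convergence as $\lambda\to0^+$. Neither $m|D\upsilon|^\alpha$ nor $m|\upsilon_t^{\mathrm{ac}}|$ is known to be integrable, so only a one-sided bound exists.

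Step 1 also has a smaller defect. Lemma~\ref{lem:mu-improved} needs $\upsilon(T,\cdot)=u_T$ exactly. But $\upsilon_n+\delta_n=\upsilon_{\xi_n,\tau_n}$ has terminal value equal to the spatial mollification of $u_T$, so shifting by a constant does not make it admissible.

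The missing idea is to linearize at $u$, where convexity points the right way.
\begin{enumerate}
\item Each $\upsilon_n$ from \eqref{upsilon_n.redefined} lies in $C^1(\bar Q)$ with $\upsilon_n(T,\cdot)\le u_T$. Hence it satisfies \eqref{Dupsilon.Lalpha}, \eqref{upsilon.jumps.forward}, \eqref{upsilon-HJB-upper}, and, trivially, \eqref{mDupsilon-shift-L1}. Since Assumption~\ref{onH.high-order-pm.has-no-tx} implies Assumption~\ref{onH.highest-order-p.has-no-txm}, Lemma~\ref{lem:transport-tested-upsilon.minus.u} applies to $\upsilon_n$ directly. This also removes the terminal-condition issue.
\item That lemma produces the term $D_pH(t,x,Du,m)\cdot D(\upsilon_n-u)$. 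Now $H(t,x,Du,m)+D_pH(t,x,Du,m)\cdot(D\upsilon_n-Du)\le H(t,x,D\upsilon_n,m)$, together with $m(-u_t^{\mathrm{ac}}+H(t,x,Du,m))=0$, gives
\[
\int_Q m\bigl(-(\upsilon_n)_t+H(t,x,D\upsilon_n,m)\bigr)\,dxdt\ge\int_{\Tt^d}m_0(\upsilon_n(0,x)-u(0,x))\,dx.
\]
\item The left side is exactly what Proposition~\ref{prop:hjb-limsup-no-shift} controls through a limsup, with no uniform integrability needed. Proposition~\ref{prop:terminal-ineq-approx} handles the right side. Together they yield
\[
\int_Q m\bigl(-\upsilon_t^{\mathrm{ac}}+H(t,x,D\upsilon,m)\bigr)\,dxdt\ge\int_{\Tt^d}m_0(\upsilon(0,x)-u(0,x))\,dx.
\]
\item Only then apply this to $(1-\lambda)u+\lambda\upsilon$, use the Hamilton--Jacobi equality, divide by $\lambda$, and let $\lambda\to0^+$. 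Use the reverse Fatou lemma, or monotone convergence of the convex difference quotients, not dominated convergence. The one-sided majorant $m\bigl(-\upsilon_t^{\mathrm{ac}}+H(t,x,D\upsilon,0)\bigr)^+\in L^1(Q)$ comes from convexity, monotonicity of $H$ in $m$, and \eqref{upsilon-HJB-upper:stricter}.
\end{enumerate}
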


\begin{remark}
Lemma~\ref{lem:upsilontest-wo-mintegr} is the testing tool needed for the
uniqueness of the density. It allows us to test the equation for one pair
\((m,u)\) against a value function associated with another density, without
assuming the cross shift-integrability condition~\eqref{mDupsilon-shift-L1}.

The difference from Lemma~\ref{lem:transport-tested-upsilon.minus.u} is the
trade-off between the hypotheses on the test function. Here
\eqref{mDupsilon-shift-L1} is replaced by the stricter Hamilton--Jacobi upper
bound~\eqref{upsilon-HJB-upper:stricter}, corresponding to the case
\(\ell=0\) in~\eqref{upsilon-HJB-upper}. This restriction lets the
Hamiltonians in the error term \(h^{\mathrm{c}}\) be evaluated at the same
momentum variable. Consequently, Assumption~\ref{onH.high-order-pm.has-no-tx}
and Lemma~\ref{lem:highest-order-m.separated} give the required
\(L^{q\beta'}\)-bound without using~\eqref{mDupsilon-shift-L1}.
\end{remark}

\begin{proof}
By Remark~\ref{rmk:last-deviation-implies-the-earlier-two},
Assumption~\ref{onH.high-order-pm.has-no-tx} implies
Assumption~\ref{onH.highest-order-p.has-no-txm} under
Assumption~\ref{onH.powergrowth}. Hence Lemma~\ref{lem:transport-tested-upsilon.minus.u}
is applicable to \((m,u)\).

Let $\upsilon_n$ be constructed as in~\eqref{upsilon_n.redefined}. For each
fixed \(n\), we have $\upsilon_n\in C^1(\bar{Q})$ and
$\upsilon_n(T,x) \leq u_T(x)$; hence $\upsilon_n$ satisfies
\eqref{Dupsilon.Lalpha}, \eqref{upsilon.jumps.forward},
\eqref{upsilon-HJB-upper}, and \eqref{mDupsilon-shift-L1}. Thus,
by Lemma~\ref{lem:transport-tested-upsilon.minus.u},
    \[\begin{aligned}
    & \int_Qm\bigl( -(\upsilon_n)_t + u_t^{\text{ac}} + D_pH(t,x,Du,m)\cdot D(\upsilon_n-u) \bigr)\,dxdt\\
    &\qquad\quad \geq \int_{\Tt^d}m_0(x)(\upsilon_n(0,x)-u(0,x))\,dx.
    \end{aligned}
    \]
    Since $m(-u_t^{\text{ac}}+H(t,x,Du,m)) = 0$ by~\eqref{def:hjb-strong}, and
    \[H(t,x,Du,m) + D_pH(t,x,Du,m)\cdot(D\upsilon_n-Du) \leq H(t,x,D\upsilon_n,m)\]
    by the convexity of $H(t,x,\cdot,m)$ as in Remark~\ref{rmk:cxty}, we get
    \begin{equation*}
        \int_Qm\bigl( -(\upsilon_n)_t + H(t,x,D\upsilon_n,m) \bigr)\,dxdt \geq \int_{\Tt^d}m_0(x)(\upsilon_n(0,x)-u(0,x))\,dx.
    \end{equation*} 
    Consequently, \eqref{upsilon-trace-limit-aux:repeated} and \eqref{m-times-hjb.upsilon-limit:repeated} imply
    \begin{equation}\label{transport-test-convexity.flip}
        \int_Qm\bigl( -\upsilon_t^{\text{ac}} + H(t,x,D\upsilon,m) \bigr)\,dxdt \geq \int_{\Tt^d}m_0(x)(\upsilon(0,x)-u(0,x))\,dx.
    \end{equation}
    
    Now, we have proven that \eqref{transport-test-convexity.flip} holds for all $\upsilon\in BV(Q)$ satisfying \eqref{Dupsilon.Lalpha}, \eqref{upsilon.jumps.forward}, and \eqref{upsilon-HJB-upper:stricter}. 
    Since these three conditions are preserved under convex combinations, using
the convexity of \(H(t,x,\cdot,0)\) for
\eqref{upsilon-HJB-upper:stricter}, we can replace \(\upsilon\) by
\((1-\lambda)u+\lambda\upsilon\) for \(0\leq\lambda\leq1\).
    Consequently, we get
    \begin{equation*}
        \begin{aligned}
    & \int_Qm\bigl( -\lambda(\upsilon-u)_t^{\text{ac}} - u_t^{\text{ac}} + H(t,x, Du + \lambda D(\upsilon-u), m) \bigr)\,dxdt \\
    & \qquad \geq \lambda\int_{\Tt^d}m_0(x)(\upsilon(0,x)-u(0,x))\,dx.
    \end{aligned}
    \end{equation*}
    Using the Hamilton--Jacobi equation \eqref{def:hjb-strong} and dividing by $\lambda$ for $\lambda>0$, we obtain
    \begin{equation}\label{transport-test-flipped-conv.comb}
        \begin{aligned}
    & \int_Qm\Bigl( -(\upsilon-u)_t^{\text{ac}} + \frac{H(t,x, Du + \lambda D(\upsilon-u),m) - H(t,x,Du,m)}{\lambda} \Bigr)\,dxdt \\
    & \qquad \geq \int_{\Tt^d}m_0(x)(\upsilon(0,x)-u(0,x))\,dx.
    \end{aligned}
    \end{equation}
    
    In the limit $\lambda\to 0$, the integrand on the left-hand side converges to
    \[m\bigl( -(\upsilon-u)_t^{\text{ac}} + D_pH(t,x,Du,m)\cdot D(\upsilon-u) \bigr),\]
    for a.e.~$(t,x)\in Q$. On the other hand, this integrand has an integrable upper bound that is uniform in $\lambda$. Indeed, the convexity of $H(t,x,\cdot,m)$ as in Remark~\ref{rmk:cxty} and the Hamilton--Jacobi equation~\eqref{def:hjb-strong} imply
    \[\begin{aligned}
        & m\Bigl( -(\upsilon-u)_t^{\text{ac}} + \frac{H(t,x, Du + \lambda D(\upsilon-u),m) - H(t,x,Du,m)}{\lambda} \Bigr) \\
        & \qquad \leq m\bigl( -(\upsilon-u)_t^{\text{ac}} + H(t,x, D\upsilon,m) - H(t,x,Du,m) \bigr) = m\bigl( -\upsilon_t^{\text{ac}} + H(t,x, D\upsilon,m)\bigr),
    \end{aligned}\]
Moreover, because \(H(t,x,p,\cdot)\) is non-increasing by
Remark~\ref{rmk:cxty}, we have
\[
m\bigl(-\upsilon_t^{\mathrm{ac}}+H(t,x,D\upsilon,m)\bigr)
\leq
m\bigl(-\upsilon_t^{\mathrm{ac}}+H(t,x,D\upsilon,0)\bigr)^+.
\]
The right-hand side belongs to \(L^1(Q)\), since \(m\in L^\beta(Q)\) and
\[
\bigl(-\upsilon_t^{\mathrm{ac}}+H(t,x,D\upsilon,0)\bigr)^+
\in L^{\beta'}(Q)
\]
by \eqref{upsilon-HJB-upper:stricter}. Thus the integrands in
\eqref{transport-test-flipped-conv.comb} are bounded above by an
\(L^1\)-function independent of \(\lambda\). Applying the reverse Fatou
lemma as \(\lambda\to 0^+\), we obtain
\eqref{transport-tested-upsilon.minus.u:repeated}.
\end{proof}

\begin{proof}[Proof of Theorem~\ref{maximal-u-exists}, item~\ref{thmitem:unique.m}]
    Let $m_1, m_2\in L^\beta(Q)$ and let $u_1\in \mathcal{U}(m_1)$ and $u_2\in \mathcal{U}(m_2)$. We can apply Lemma~\ref{lem:upsilontest-wo-mintegr} with
    \[(m,u,\upsilon) = (m_1,u_1,u_2) \qquad \text{and} \qquad  (m,u,\upsilon) = (m_2,u_2,u_1),\]
    because \eqref{integrability.further.du}, the inequalities in Definition~\ref{def.strong.sol}, and \eqref{integrability.further.H} imply that $\upsilon = u_1$ and $\upsilon = u_2$ satisfy \eqref{Dupsilon.Lalpha}, \eqref{upsilon.jumps.forward}, and \eqref{upsilon-HJB-upper:stricter}, respectively. Consequently, we obtain
    \[\begin{aligned}
    & \int_Qm_1\bigl( -(u_2)_t^{\text{ac}} + (u_1)_t^{\text{ac}} + D_pH(t,x,Du_1,m_1)\cdot (Du_2-Du_1) \bigr)\,dxdt\\
    &\qquad\quad \geq \int_{\Tt^d}m_0(x)(u_2(0,x)-u_1(0,x))\,dx 
    \end{aligned}\]
    and
        \[\begin{aligned}
    & \int_Qm_2\bigl( -(u_1)_t^{\text{ac}} + (u_2)_t^{\text{ac}} + D_pH(t,x,Du_2,m_2)\cdot (Du_1-Du_2) \bigr)\,dxdt\\
    &\qquad\quad \geq \int_{\Tt^d}m_0(x)(u_1(0,x)-u_2(0,x))\,dx.
    \end{aligned}\]
    Adding the two inequalities, we find
    \[\begin{aligned}
        & \int_Q\biggl((m_1-m_2)\bigl((u_1)_t^{\text{ac}} -(u_2)_t^{\text{ac}}\bigr) \\
        & \qquad\qquad + \bigl(m_1D_pH(t,x,Du_1,m_1) - m_2D_pH(t,x,Du_2,m_2)\bigr)\cdot (Du_2-Du_1) \biggr)\,dxdt \geq 0.
    \end{aligned}\]
    Then, using the Hamilton--Jacobi inequality and equality conditions in \eqref{def:hjb-strong} for $(u_1)_t^{\text{ac}}$ and $(u_2)_t^{\text{ac}}$, we obtain
    \[\begin{aligned}
        & \int_Q\biggl((m_1-m_2)\bigl(H(t,x,Du_1,m_1) - H(t,x,Du_2,m_2)\bigr) \\
        & \qquad\qquad + \bigl(m_1D_pH(t,x,Du_1,m_1) - m_2D_pH(t,x,Du_2,m_2)\bigr)\cdot (Du_2-Du_1) \biggr)\,dxdt \geq 0.
    \end{aligned}\]
    The last integrand is non-positive by \eqref{hmon}, hence it must vanish a.e. This implies $m_1 = m_2$ a.e.~because of Assumption~\ref{onH.strict-monotone}.
\end{proof}

\bibliographystyle{abbrv}
\bibliography{mfgv8_nn, supbib}
     
\end{document}